\documentclass[11pt]{amsart}

\usepackage{graphicx} 
\usepackage[utf8]{inputenc}
\usepackage{amsmath}
\usepackage{amssymb}
\usepackage{xcolor}
\usepackage{hyperref}
\usepackage{cleveref}
\usepackage{pgfplots}
\usepackage{float}
\pgfplotsset{compat=1.15}
\usepackage{mathrsfs}
\usetikzlibrary{arrows} 
\usepackage[margin = 2.5 cm]{geometry}

\newcommand{\II}{\mathcal{I}}
\newcommand{\n}{\mathrm{n}}

\newcommand{\depth}{\operatorname{depth}}
\newcommand{\G}{\mathfrak{G}}
\newcommand{\D}{\mathcal{D}}
\newcommand{\p}{\mathfrak{p}}
\newcommand{\new}[1]{{\color{black}#1}}

\newtheorem{theorem}{Theorem}[section]
\newtheorem{lemma}[theorem]{Lemma}
\newtheorem{proposition}[theorem]{Proposition}
\newtheorem{corollary}[theorem]{Corollary}
\newtheorem{definition}[theorem]{Definition}
\theoremstyle{definition}
\newtheorem{notation}[theorem]{Notation}
\newtheorem{example}[theorem]{Example}
\theoremstyle{remark}
\newtheorem{remark}[theorem]{Remark}

\title{Classification of unmixed parity binomial edge ideals of cactus and chordal graphs}
\author{Deblina Dey}
\email{ma20d750@smail.iitm.ac.in}
\author{A. V. Jayanthan}
\email{jayanav@iitm.ac.in}
\author{Sarang Sane}
\email{sarangsanemath@gmail.com}
\address{Department of Mathematics, Indian Institute of Technology Madras, Chennai, Tamil Nadu, India - 600036.}

\begin{document}
\begin{abstract}
In this article, we characterize all unmixed and Cohen-Macaulay parity binomial edge ideals of cactus and chordal graphs in terms of the structural properties of the graph.
\end{abstract}

\keywords{Chordal graphs, disconnector sets, parity binomial edge ideals, unmixedness, Cohen-Macaulayness}

\subjclass[2020]{Primary 05E40; Secondary 13C13, 13C14}

\maketitle

\section{Introduction}
Let $G$ be a finite simple graph on the vertex set $V(G) = [n] = \{1,\ldots,n\}$ and the edge set $E(G)$. Kahle, Sarmiento and Windisch in 2016, \cite{Kahle2016}, introduced \textit{parity binomial edge ideals}: 
\[
\II_G := (x_ix_j-y_iy_j : \{i,j\} \in E(G)) \subset R = K[x_1,\ldots,x_n,y_1,\ldots,y_n],
\]
where $K$ is a field. This ideal is a subtle variation of another class of binomial ideals, called \textit{binomial edge ideal}, defined by 
Herzog et al., \cite{HHHKR}:
\[
J_G := (x_iy_j-x_jy_i : \{i,j\} \in E(G)) \subset R = K[x_1,\ldots,x_n,y_1,\ldots,y_n],
\]
A close relation has been observed between several algebraic properties and/or invariants of these edge ideals and combinatorial properties and/or invariants of the graph $G$. 

If $G$ is a bipartite graph, then its parity binomial edge ideal is obtained from its binomial edge ideal by a simple change of variables. As noted in the paper of Kahle et al., parity binomial edge ideals share a number of properties with binomial edge ideals, but the combinatorics is subtler. In \cite{Kahle2016}, the authors described a Gr\"obner basis for $\II_G$. \new{If char$(K) \neq 2$, they show that $\II_G$ is a radical ideal and explicitly obtain the structure of the minimal primes in terms of the disconnector sets (see  Definition \ref{def: disconnector}) of the graph. Our study of the unmixed property of $\II_G$ crucially uses this description of the minimal primes and hence in this article, we assume that char$(K) \neq 2$. } 

In \cite{Kum21}, Kumar studied some of the structural properties of $\II_G$. He proved that for a bipartite graph $G$, $\II_G$ is a complete intersection if and only if $G$ is a disjoint union of paths, and, for a non-bipartite graph, $\II_G$ is a complete intersection if and only if $G$ is a cycle on an odd number of vertices. \new{He also classifies graphs with almost complete intersection parity binomial edge ideals and establishes when their associated Rees algebra $\mathcal{R}(I_G)$ is Cohen-Macaulay.}

Among the homological invariants, Betti numbers, Castelnuovo-Mumford regularity and depth are well studied for many classes of homogeneous ideals of polynomial rings. While computation of depth is in general challenging, one particular instance, of maximal depth, i.e., being Cohen-Macaulay, has received a lot of attention.

An ideal $I$ in a Noetherian ring $R$ is said to be Cohen-Macaulay if $R/I$ is Cohen-Macaulay. Understanding the Cohen-Macaulayness of an ideal is a classical topic. When it comes to ideals associated with certain geometric/topological/combinatorial structures, the attempt is to characterize the Cohen-Macaulayness in terms of the structure associated with the ideal. When $G$ is a bipartite graph, Bolognini, Macchia and Strazzanti completely characterized when the binomial edge ideal $J_G$ (and hence the parity binomial edge ideal $\II_G$) is Cohen-Macaulay in terms of the structure of $G$, \cite{BMS18}. For an arbitrary graph $G$, Bolognini et al. conjectured that $J_G$ is Cohen-Macaulay if and only if $G$ is accessible (see \cite{BMS22} for the details). They proved the `only if' part of this conjecture and proved the `if' part for chordal graphs and graphs containing a Hamiltonian path. The general case is still open. For the case of parity binomial edge ideals, no such results, and not even a conjecture is available in this direction. 

Any Cohen-Macaulay ideal must be unmixed. Hence, to understand the Cohen-Macaulayness, one should understand the unmixed property of the ideal as well. In this article, we first investigate the unmixedness of parity binomial edge ideals of non-bipartite chordal graphs. As a consequence of our investigation, we are also able to characterize their Cohen-Macaulayness as well. 

The unmixed property of the parity binomial edge ideal is much more complex than that of the binomial edge ideals. In the case of binomial edge ideals, the combinatorial object that determines the minimality of an associated prime is called a cut set and there is a unique prime ideal associated to each cut set. In the case of parity binomial edge ideals, the corresponding object is called a disconnector set and there are several prime ideals associated with one disconnector set. Our investigation of the unmixedness property crucially depends on the construction of suitable disconnector sets.

We briefly describe the structure of the article. After discussing preliminaries in \Cref{sec:prelim}, we study the unnmixedness of parity binomial edge ideals of cactus graphs in \Cref{cactus}. We prove that for a connected non-bipartite cactus graph $G$, $\II_G$ is unmixed only if it is Cohen-Macaulay if and only if it is Gorenstein if and only if it is a complete intersection if and only if $G$ is an odd cycle, \Cref{cm Cactus graph}.

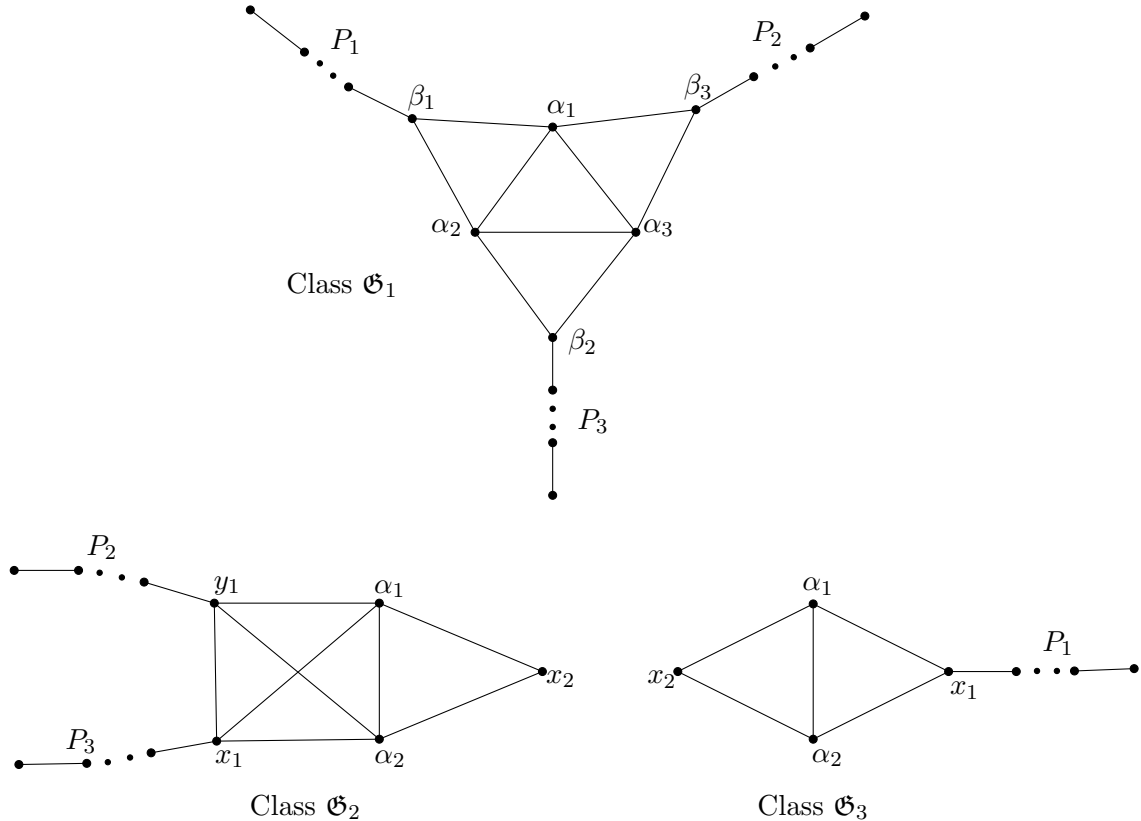
\begin{figure}[H]
\centering   
   \begin{tikzpicture}[line cap=round,line join=round,>=triangle 45,x=0.7cm,y=0.7cm]
\clip(-7,-5.5) rectangle (8,6);
\draw  (0,2)-- (-1.4733843159543591,0);
\draw  (-1.4733843159543591,0)-- (1.58,0);
\draw  (0,2)-- (1.58,0);
\draw  (0,2)-- (2.72,2.33);
\draw  (1.58,0)-- (2.72,2.33);
\draw  (0,2)-- (-2.666934000319876,2.1600537638719857);
\draw  (-1.4733843159543591,0)-- (-2.666934000319876,2.1600537638719857);
\draw  (-1.4733843159543591,0)-- (0,-2);
\draw  (1.58,0)-- (0,-2);
\draw  (2.72,2.33)-- (3.818707583389515,2.9570443593589433);
\draw  (5.929403904181309,4.109516794109229)-- (4.890144585497376,3.5025031967956703);
\draw  (-2.666934000319876,2.1600537638719857)-- (-3.8761581590214846,2.7622376317029693);
\draw  (-4.7138270879421755,3.4245805057332808)-- (-5.741801018811592,4.22603464691781);
\draw  (0,-2)-- (0,-3);
\draw  (0,-4)-- (0,-5);
\draw (-4.4174735373893,4.043209932537573) node[anchor=north west] {$P_1$};
\draw (3.590420905807785,4.2) node[anchor=north west] {$P_2$};
\draw (0.2771552211150657,-3.1711348262939736) node[anchor=north west] {$P_3$};
\draw(-4,-1) node {Class $\mathfrak{G}_1$};
\draw (-0.3063579579080759,2.7) node[anchor=north west] {$\alpha_1$};
\draw (-2.5,0.4360375531217998) node[anchor=north west] {$\alpha_2$};
\draw (1.52375155811905,0.4360375531217998) node[anchor=north west] {$\alpha_3$};
\draw (-2.952597284554512,3) node[anchor=north west] {$\beta_1$};
\draw (0.1,-1.6471186155437878) node[anchor=north west] {$\beta_2$};
\draw (2.3,3.2) node[anchor=north west] {$\beta_3$};
\begin{scriptsize}
\draw [fill=black] (0,2) circle (1.5pt);
\draw [fill=black] (-1.4733843159543591,0) circle (1.5pt);
\draw [fill=black] (1.58,0) circle (1.5pt);
\draw [fill=black] (2.72,2.33) circle (1.5pt);
\draw [fill=black] (-2.666934000319876,2.1600537638719857) circle (1.5pt);
\draw [fill=black] (0,-2) circle (1.5pt);
\draw [fill=black] (3.818707583389515,2.9570443593589433) circle (1.5pt);
\draw [fill=black] (4.227801711467062,3.1518510870149172) circle (1pt);
\draw [fill=black] (4.578453821247816,3.327177141905294) circle (1pt);
\draw [fill=black] (5.929403904181309,4.109516794109229) circle (1.5pt);
\draw [fill=black] (4.890144585497376,3.5025031967956703) circle (1.5pt);
\draw [fill=black] (-3.8761581590214846,2.7622376317029693) circle (1.5pt);
\draw [fill=black] (-4.168368250505447,2.976525032124541) circle (1pt);
\draw [fill=black] (-4.421616996458214,3.2102931053117096) circle (1pt);
\draw [fill=black] (-4.7138270879421755,3.4245805057332808) circle (1.5pt);
\draw [fill=black] (-5.741801018811592,4.22603464691781) circle (1.5pt);
\draw [fill=black] (0,-3) circle (1.5pt);
\draw [fill=black] (0,-3.7) circle (1pt);
\draw [fill=black] (0,-4) circle (1.5pt);
\draw [fill=black] (0,-5) circle (1.5pt);
\draw [fill=black] (0,-3.3546936166946137) circle (1pt);
\end{scriptsize}
\end{tikzpicture}

\centering
    
\begin{tikzpicture}[line cap=round,line join=round,>=triangle 45,x=0.9cm,y=0.9cm]
\draw  (-3.8463466690239216,2.011325981637946)-- (-3.811773565817807,-0.028487107522841483);
\draw  (-3.8463466690239216,2.011325981637946)-- (-1.408942892992807,2.011325981637946);
\draw  (-1.408942892992807,2.011325981637946)-- (-1.408942892992807,0);
\draw  (-3.811773565817807,-0.028487107522841483)-- (-1.408942892992807,0);
\draw  (-3.8463466690239216,2.011325981637946)-- (-1.408942892992807,0);
\draw  (-3.811773565817807,-0.028487107522841483)-- (-1.408942892992807,2.011325981637946);
\draw  (-1.408942892992807,2.011325981637946)-- (1,1);
\draw  (-1.408942892992807,0)-- (1,1);
\draw  (-3.811773565817807,-0.028487107522841483)-- (-4.77982045558903,-0.2013526235534167);
\draw  (-3.8463466690239216,2.011325981637946)-- (-4.883539765207375,2.3224839104929815);
\draw  (-5.851586654978598,2.4953494265235565)-- (-6.802346993146763,2.4953494265235565);
\draw  (3,1)-- (5,2);
\draw  (3,1)-- (5,0);
\draw  (5,2)-- (5,0);
\draw  (5,2)-- (7,1);
\draw  (5,0)-- (7,1);
\draw  (7,1)-- (8,1);
\draw  (8.85926875922338,1.0087059886606098)-- (9.740882890979314,1.0432790918667247);
\draw  (-5.730580793757195,-0.3569315879809344)-- (-6.733200786734533,-0.3742181395839919);
\draw (8.236952901513307,1.7520277075920812) node[anchor=north west] {$P_1$};
\draw (-5.886159758184713,3.169524939042798) node[anchor=north west] {$P_2$};
\draw (-6.197317687039749,0.3) node[anchor=north west] {$P_3$};
\draw (-1.6336680638325551,2.5) node[anchor=north west] {$\alpha_1$};
\draw (-1.6299487542142098,-0.0161) node[anchor=north west] {$\alpha_2$};
\draw (4.693209822886509,2.53736114896636) node[anchor=north west] {$\alpha_1$};
\draw (4.831502235710969,0) node[anchor=north west] {$\alpha_2$};
\draw (2.4,1.16) node[anchor=north west] {$x_2$};
\draw (6.871315324871761,1) node[anchor=north west] {$x_1$};
\draw (-3.9846390818483823,-0.0161) node[anchor=north west] {$x_1$};
\draw (-4.00192563345144,2.555) node[anchor=north west] {$y_1$};
\draw (0.9074550218169051,1.1642849530881256) node[anchor=north west] {$x_2$};

\draw (-2.5, -1) node {Class $\mathfrak{G}_2$};
\draw (5, -1) node {Class $\mathfrak{G}_3$};
\begin{scriptsize} 
\draw [fill=black] (-3.8463466690239216,2.011325981637946) circle (1.5pt);
\draw [fill=black] (-3.811773565817807,-0.028487107522841483) circle (1.5pt);
\draw [fill=black] (-1.408942892992807,2.011325981637946) circle (1.5pt);
\draw [fill=black] (-1.408942892992807,0) circle (1.5pt);
\draw [fill=black] (1,1) circle (1.5pt);
\draw [fill=black] (-4.77982045558903,-0.2013526235534167) circle (1.5pt);
\draw [fill=black] (-4.883539765207375,2.3224839104929815) circle (1.5pt);
\draw [fill=black] (-5.211984245665469,2.3916301169052114) circle (1pt);
\draw [fill=black] (-5.540428726123562,2.460776323317442) circle (1pt);
\draw [fill=black] (-5.851586654978598,2.4953494265235565) circle (1.5pt);
\draw [fill=black] (-6.802346993146763,2.4953494265235565) circle (1.5pt);
\draw [fill=black] (3,1) circle (1.5pt);
\draw [fill=black] (5,2) circle (1.5pt);
\draw [fill=black] (5,0) circle (1.5pt);
\draw [fill=black] (7,1) circle (1.5pt);
\draw [fill=black] (8,1) circle (1.5pt);
\draw [fill=black] (8.306099107925538,1.0087059886606098) circle (1pt);
\draw [fill=black] (8.599970485177517,1.0087059886606098) circle (1pt);
\draw [fill=black] (8.85926875922338,1.0087059886606098) circle (1.5pt);
\draw [fill=black] (9.740882890979314,1.0432790918667247) circle (1.5pt);
\draw [fill=black] (-5.419422864902159,-0.33964503637787685) circle (1pt);
\draw [fill=black] (-5.090978384444066,-0.2704988299656468) circle (1pt);
\draw [fill=black] (-5.730580793757195,-0.3569315879809344) circle (1.5pt);
\draw [fill=black] (-6.733200786734533,-0.3742181395839919) circle (1.5pt);
\end{scriptsize}
\end{tikzpicture}
\caption{\new{Unmixed chordal graph classes: $\mathfrak{G}_1, \mathfrak{G_2}$ and $\mathfrak{G_3}$, where $P_1, P_2, P_3$ are path graphs of length at least one}}
    \label{fig:fig 13}
\end{figure}
A chordal graph can be realized as a clique sum of complete graphs (immediate consequence of \cite[Theorem 5.3.17]{West}).
We use this structure to describe some disconnector sets which we further use for classifying when the parity binomial edge ideals are unmixed. \new{deleted "}
Let $G$ be the clique sum of $K_{n_1}, K_{n_2}, \ldots, K_{n_t}.$ For $j \geq 2$, we define $K_{r_j} = \left( \cup_{i < j} K_{n_i} \right) \cap K_{n_j}$. Note that $V(K_{r_j})$ is a disconnector set and we use it in several proofs to obtain contradictions or structural results.
First we show that if $G$ is a path, a $K_3$ or one of the graphs in $\G_1, \G_2$ or $\G_3$ (see \Cref{fig:fig 13}), then $\II_G$ is unmixed,  
\Cref{I_G is unmixed only if G has there forms}. The main purpose of this article is to prove the converse of this theorem. 

In \Cref{sec:algo}, we first propose an algorithm to construct a maximal tree such that its vertices are contained in the largest possible number of maximal cliques. The construction ensures that the set of vertices that are not in the tree but adjacent to some vertex of this tree, forms a disconnector set $S(G)$ and the tree is a component of $G\setminus S(G)$. If $\II_G$ is unmixed, then we show that there can be at most one vertex in $S(G)$ that is adjacent only to vertices of the tree. Using this, we prove the main theorem of \new{\Cref{sec:algo}, which states that} if $\II_G$ is unmixed, then any such tree must be a path, \Cref{H_n is path}. All these results are used in the next section to describe those chordal graphs for which $\II_G$ is not unmixed. 

In \Cref{sec:non-unmixed}, we consider those chordal graphs such that all maximal cliques do not have a common point of intersection in $G$. 
Most of this section is a difficult and technical analysis towards showing that if $G\setminus K_{r_i}$ has at least three components, then $\II_{G}$ is not unmixed. As a consequence, if $|V(K_{r_i})| \geq 3$ for some $i$, then $\II_G$ cannot be unmixed. We also show at the end of this section that if $|V(K_{r_i})| = 1$ for all $i$, then $\II_G$ cannot be unmixed.

In \Cref{sec:unmixed}, we complete the characterization of unmixedness of parity binomial edge ideals of non-bipartite chordal graphs. First we show that if all maximal cliques of the given non-bipartite chordal graph $G$ intersect non-trivially, then $\II_G$ is unmixed if and only if it is $K_3$, \Cref{m([t])> 0}. The only chordal graphs that remain to be classified are those where $|V(K_{r_i})| \leq 2$ and $G\setminus K_{r_i} $ has two components. We prove that in this case, $\II_G$ is unmixed only if $G \in \G_1 \cup \G_2 \cup \G_3$, Theorems \ref{I_G unmixed 1}, \ref{I_G unmixed 2}.  To characterize the Cohen-Macaulayness, we need to consider $K_3$ along with graphs in these three classes. We take the help of the computational commutative algebra software, \new{Macaulay2,} \cite{M2}, to prove that the most basic graphs in $\G_1$ and $\G_2$ are not Cohen-Macaulay and the most basic graph in $\G_3$ is Cohen-Macaulay, \Cref{fig:cm_2}. We then finally prove that for a non-bipartite chordal graph $G$, $\II_G$ is Cohen-Macaulay if and only if $G=K_3$ or $G \in \G_3$, \Cref{thm:c-m}.

\section{Preliminaries}\label{sec:prelim}
In this section, we recall all definitions and basic results from graph theory required for the rest of the article.
Throughout this paper $G$ denotes a simple connected graph on finite vertices. The vertex set and the edge set of $G$ are denoted by $V(G)$ and $E(G)$ respectively. A graph $H$ is said to be an induced subgraph of $G$ if $V(H) \subseteq V(G)$ and for $u, v \in V(H)$, $\{u,v\} \in E(H)$ if and only if $\{u,v\} \in E(G)$.  A graph $G$ is said to be \textit{chordal} if $G$ does not contain an induced cycle of length $4$ or more. Chordal graphs are clique sum of complete graphs. 
\begin{definition}\label{def_clique_sum}
  Let $G_1$ and $G_2$ be two graphs. Then the \emph{clique sum} of $G_1$ and $G_2$ represents the graph obtained by identifying the vertices of a clique on $m$ vertices of $G_1$ with a clique of equal size in $G_2$. In such case, we say that $G$ is the clique sum of $G_1$ and $G_2$ along the clique $K_m$, and we denote this by $G = G_1 \cup_{K_m} G_2$. 
  \end{definition}
A simple graph is said to be \textit{cactus} if any two cycles have at most one vertex in common.

Let $H_1$ and $H_2$ be two subgraphs of $G$. Then $H_1\cap H_2$ and $H_1\setminus H_2$ denote the induced subgraph of $G$ on $V(H_1)\cap V(H_2)$ and $V(H_1) \setminus V(H_2)$ respectively. For $S\subseteq V(G)$, $G\setminus S$ denotes the induced subgraph of $G$ on the vertex set $V(G) \setminus S$ and $G[S]$ denotes the induced subgraph of $G$ on the vertex set $S$.
For $S \subseteq V(G)$, we fix the following notation:
\begin{enumerate}
\item $\mathcal{C}_G(S)$ denotes the collection of all connected components of $G \setminus S$.
\item $\mathcal{B}_G(S)$ denotes the collection of all bipartite connected components of $G\setminus S$.
\item $c_G(S) = |\mathcal{C}_G(S)|$.
\item $b_G(S) = |\mathcal{B}_G(S)|$.
\item For $s \in S$, 
$\mathcal{C}_{G\setminus S}(s)$ denotes the collection of all those connected components of $G\setminus S$ which become one connected component in $G\setminus (S\setminus \{s\})$, i.e., all those components that get reconnected when we add $s$ back to $G\setminus S$.

\item \new{ We use the notations $b(G)$ and $c(G)$ to denote the number of bipartite components and total number of components of $G$, respectively.}
\end{enumerate}

\new{We now define cut vertices, cut sets and disconnector sets.  Cut sets determine the minimal primes of a binomial edge ideal while disconnector sets generalize cut sets and are used to describe the minimal primes of $\II_G$. We elaborate these connections after the definitions.}

\begin{definition} \cite{HHHKR}
     A subset $S\subseteq V(G)$ is said to be a cut set of $G$ if for every $s\in S$, $s$ connects at least two components of $G\setminus S$. That is $c_G(S) > c_G({S\setminus \{s\}})$ for every $s\in S$. A vertex $s\in V(G)$ is said to be a cut vertex if $\{s\}$ is a cut set of $G$.
\end{definition}
\new{A \textit{block} of a graph is a maximal connected induced subgraph that has no cut vertex. For example the blocks of a cactus graph are cycles or edges.} 

\begin{definition}\cite[Definition 4.5]{Kahle2016} \label{def: disconnector}
A set $S\subseteq V(G)$ is a disconnector of $G$ if $b_G(S) + c_G(S) > b_G({S\setminus \{s\}}) + c_G({S\setminus \{s\}})$ for every $s\in S$. The empty set is always a disconnector of any graph.
\end{definition}
For a graph $G$ on $[n]$, let $\mathcal{J}_G := \II_G : (\prod_{i\in V(G)} x_iy_i)^\infty \subset K[x_1,\ldots, x_n,y_1,\ldots,y_n].$ Throughout this article, we will assume that char$(K) \neq 2$. For a graph $G$ with an odd cycle, let $\p^+(G) := (x_i+y_i ~:~ i \in V(G))$ and $\p^-(G) := (x_i-y_i ~:~ i \in V(G))$. Let $G$ be a graph with bipartite connected components $B_1,\ldots, B_r$ and non-bipartite connected components $N_1,\ldots, N_t$. In \cite{Kahle2016}, it is proved that the minimal primes of $\mathcal{J}_G$ are of the form $\sum_{i=1}^r \mathcal{J}_{B_i} + \sum_{i=1}^t \p^{\sigma_i}(N_i)$, where $\sigma_i \in \{+,-\}$. Let $S \subset V(G)$ be a disconnector of $G$. Let $\mathfrak{q}= \sum_{i=1}^r \mathcal{J}_{B_i'} + \sum_{i=1}^t \p^{\sigma_i}({N_i'})$ be a minimal prime of $\mathcal{J}_{G\setminus S}$. Then $\mathfrak{q}$ is sign-split if for all $s \in S$ such that $C_{G\setminus S}(s)$ contains no bipartite graphs, the prime summands of $\mathfrak{q}$ corresponding to connected components in $C_{G\setminus S}(s)$ are not all equal to $\p^+$ or not all equal to $\p^-$. Not every disconnector set of a graph admits a sign-split minimal prime of $\mathcal{J}_G$, see \cite[Example 4.14]{Kahle2016}. 
For $S \subset [n]$, let $\mathfrak{m}_S = (x_i, y_i ~:~ i\in S)$. Kahle et al. proved that the minimal primes of $\II_G$ are ideals $\mathfrak{m}_S + \p$, where $S \subset V(G)$ is a disconnector set of $G$ and $\p$ is a sign-split minimal prime of $\mathcal{J}_{G\setminus S}$, see \cite[Theorem 4.15]{Kahle2016}. 
\begin{notation}
\new{A disconnector set $S\subseteq V(G)$ is said to be a sign split disconnector set if $\mathcal{J}_{G\setminus S}$ has a minimal prime, that satisfies the sign-split property}. We denote the collection of all sign-split disconnector sets of $G$ by $\mathcal{D}(G)$.
\end{notation}
\new{
\begin{definition}
The ideal $I\subseteq R$ is said to be unmixed if all its associated primes have the same height.
\end{definition} 
 If $char (K) \neq 2$, then $\II_G$ is radical. So, in this case $\II_G$ has same set of associated and minimal primes. The height of the minimal primes of $\II_G$ are given in \cite{Kum21}. Let $Q_S^\sigma(G) = m_S+ \mathfrak{q}$ be a minimal prime of $\II_G$. Then, height $Q_S^\sigma(G) = |S| + n - b_G(S)$. Therefore, $\II_G$ is unmixed if and only if $b_G(S) = |S| + b(G)$ for all $S\in \mathcal{D}(G)$. 
}
\subsection{Disconnector sets and unmixedness}
While a disconnector set may not always satisfy the sign-split property, some restriction on the set allows it to be sign-split as discussed below. 
\begin{remark}\label{cremark_5}\cite[Remark 4.12]{Kahle2016}
Let $S$ be a disconnector set of $G$ such that every vertex of $S$ is connected with some bipartite connected component of $G\setminus S$. Then $S$ satisfies the sign-split property trivially.
\end{remark}

\begin{proposition}\label{cprop_4}
Let $S$ be a disconnector set of $G$ such that every element of $S$ is adjacent to vertices of a fixed non-bipartite connected component of $G\setminus S$. Then $S$ satisfies the sign-split property.
\end{proposition}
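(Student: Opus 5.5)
Fix the non-bipartite component $N$ of $G\setminus S$ such that every $s\in S$ is adjacent to some vertex of $N$. Let the bipartite components of $G\setminus S$ be $B_1,\ldots,B_r$ and the non-bipartite ones be $N=N_1,N_2,\ldots,N_t$. I will exhibit one minimal prime of $\mathcal{J}_{G\setminus S}$ that is sign-split. By the description of minimal primes recalled above from \cite{Kahle2016}, such a prime has the form $\sum_{i=1}^r \mathcal{J}_{B_i} + \sum_{i=1}^t \p^{\sigma_i}(N_i)$, and the signs $\sigma_i\in\{+,-\}$ may be chosen independently. So the task is only to choose the signs.

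The choice is $\sigma_1 = +$ for $N$ and $\sigma_i = -$ for every $i\geq 2$.

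Now check the sign-split condition. Take any $s\in S$ such that $\mathcal{C}_{G\setminus S}(s)$ contains no bipartite components. By hypothesis $s$ is adjacent to a vertex of $N$, so $N\in\mathcal{C}_{G\setminus S}(s)$.

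Next I show that $\mathcal{C}_{G\setminus S}(s)$ has at least two members. Since $S$ is a disconnector,
\[
b_G(S)+c_G(S) > b_G(S\setminus\{s\})+c_G(S\setminus\{s\}).
\]
Suppose $s$ were adjacent only to $N$ among the components of $G\setminus S$. Then adding $s$ back just enlarges $N$ to $N\cup\{s\}$. This component is still non-bipartite and all other components are unchanged. Hence both $b$ and $c$ are unchanged, contradicting the strict inequality.

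The same computation handles the degenerate case where $s$ is adjacent to no component at all, which is excluded by hypothesis anyway. This is the only point needing care: the reconnected component containing $N$ is non-bipartite, so no bipartite count is gained that could compensate.

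So $\mathcal{C}_{G\setminus S}(s)$ contains $N$ together with some other component. By assumption that other component is non-bipartite, so it is some $N_i$ with $i\geq 2$, and its summand is $\p^-(N_i)$. The summand for $N$ is $\p^+(N)$. Thus the summands indexed by $\mathcal{C}_{G\setminus S}(s)$ are neither all $\p^+$ nor all $\p^-$, which is exactly the sign-split requirement.

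Therefore the chosen prime is a sign-split minimal prime of $\mathcal{J}_{G\setminus S}$, and $S\in\mathcal{D}(G)$.
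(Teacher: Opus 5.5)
Your proof is correct and takes the same approach as the paper: assign $\p^+$ to the distinguished non-bipartite component $N$ and $\p^-$ to every other non-bipartite component. You also use the strict disconnector inequality to check that each $s\in S$ reconnects $N$ with at least one other component of $G\setminus S$ (otherwise adding $s$ back leaves both $b$ and $c$ unchanged). The paper leaves this step implicit, but it is genuinely needed for the signs over $\mathcal{C}_{G\setminus S}(s)$ to be mixed.
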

\begin{proof}
Let $Q$ be the non-bipartite connected component of $G \setminus S$ such that $\mathcal{C}_{G\setminus S}(s) \ni Q$ for all $s\in S$. Then corresponding to $Q$, we choose $\mathfrak{p}^+(Q)$ and for any other non-bipartite connected component $Q^\prime$ of $G\setminus S$, we take $\mathfrak{p}^-(Q^\prime)$.
Hence, the sign-split property is preserved.
\end{proof}
For a non-bipartite graph $G$, $\II_G$ is unmixed if and only if $|S| = b_G(S)$ for all $S\in \mathcal{D}(G)$. The following result proposes a necessary condition on $S$ so that it satisfies the above equality.

\begin{proposition} \label{prop on the unmixed property}
Let $G$ be a non-bipartite graph and $S\in \mathcal{D}(G)$. If there exists $T \subseteq S$, such that $\mathcal{I}_{G\setminus T}$ is unmixed and $G\setminus T$ has exactly \new{$|T|$ bipartite components $(b(G\setminus T) = |T|)$, then $|S| = b_G(S)$.}
\end{proposition}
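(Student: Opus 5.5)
The idea is to pass from $G$ to $H := G\setminus T$ and use the unmixedness of $\II_H$ there. Set $S' := S\setminus T$. The key observation is that $H\setminus S' = G\setminus S$, so for every $s\in S'$,
\[ c_H(S'),\ b_H(S'),\ c_H(S'\setminus\{s\}),\ b_H(S'\setminus\{s\}) \]
coincide with $c_G(S)$, $b_G(S)$, $c_G(S\setminus\{s\})$, $b_G(S\setminus\{s\})$ respectively. (For the last two, note $H\setminus(S'\setminus\{s\}) = G\setminus(S\setminus\{s\})$ because $T\subseteq S\setminus\{s\}$.) Hence the defining inequality of a disconnector for each $s\in S'$ is inherited verbatim, and $S'$ is a disconnector of $H$.

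Next I would check that $S'$ is sign-split in $H$. For $s\in S'$, the set $\mathcal{C}_{H\setminus S'}(s)$ of components reconnected by adding $s$ back is the same collection of components of $G\setminus S$ as $\mathcal{C}_{G\setminus S}(s)$, since the adjacencies of $s$ to vertices outside $S$ are unchanged. Let $\mathfrak q$ be a sign-split minimal prime of $\mathcal{J}_{G\setminus S}$ witnessing $S\in\mathcal D(G)$. The same prime is a minimal prime of $\mathcal{J}_{H\setminus S'}$, as the graph is identical. The sign-split condition for $S'$ is exactly the sign-split condition for $S$ restricted to the vertices $s\in S'$. Therefore $S'\in\mathcal D(H)$.

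Now I apply unmixedness of $\II_H$. By the criterion recalled above, $\II_H$ unmixed means $b_H(S') = |S'| + b(H)$ for every $S'\in\mathcal D(H)$. This holds for any graph: if $H$ has no odd cycle then $\II_H$ is a change of coordinates of a binomial edge ideal and the height formula $|S|+n-b_H(S)$ still applies. Substituting $b(H)=|T|$ and $b_H(S') = b_G(S)$ gives
\[ b_G(S) = |S\setminus T| + |T| = |S|. \]

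The only delicate step is the sign-split transfer. One must confirm that the notion of ``minimal prime of $\mathcal{J}_{G\setminus S}$'' depends only on the graph $G\setminus S$, which it does. One must also confirm that the height formula and the unmixedness criterion $b_H(S)=|S|+b(H)$ are valid for $H$ even when $H$ is disconnected or bipartite. The height formula from \cite{Kum21} is stated for arbitrary $G$, so I would simply cite it. A further point is that the empty set case $T=\emptyset$ is consistent: there $H=G$ and $b(G)=0$ since $G$ is non-bipartite.
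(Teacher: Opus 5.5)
Your proof is correct and follows the same route as the paper. You pass to $H=G\setminus T$ and observe that $S\setminus T\in\mathcal{D}(H)$ with $b_H(S\setminus T)=b_G(S)$, since $H\setminus(S\setminus T)=G\setminus S$; you then apply the unmixedness criterion to $\II_H$. Your form of that criterion, $b_H(S')=|S'|+b(H)$, is the right one: the displayed equation in the paper's proof has the two sides transposed, and only your version actually gives $b_G(S)=|S|$.
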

\begin{proof} 
Since $S\in \mathcal{D}(G)$,  $S\setminus T\in \mathcal{D}(G\setminus T)$ and $b_G(S) = b_{(G\setminus T)}(S\setminus T)$. Since $\mathcal{I}_{G\setminus T}$ is unmixed, $|S\setminus T| = b_{(G\setminus T)}(S\setminus T) + b (G\setminus T)$. From the given hypothesis, $b (G\setminus T)= |T|$. So, $|S\setminus T| = b_G(S) + |T| \implies |S|= b_G(S)$.
\end{proof}
\new{
The following remark shows that the study of the unmixed and Cohen-Macaulay properties of $\II_G$ can be reduced to the case of connected graphs.

\begin{remark}
 Let $G = G_1\sqcup G_2 \sqcup \cdots \sqcup G_m$ be a graph, where $G_i$ are the connected components. 
 \begin{itemize}
     \item[(i)] By definition it follows that $\mathcal{D}(G) = \{ S_1\cup S_2\cup \cdots \cup S_m: S_i\in \mathcal{D}(G_i) \}$.

     \item[(ii)] By $(\text{i})$, it follows that $\II_G$ is unmixed if and only if $\II_{G_i}$ is unmixed for all $1\le i \le m$.
     
     \item [(iii)] Note that $\frac{R}{\II_G} \equiv \frac{R_1}{\II_{G_1}} \otimes_K \frac{R_2}{\II_{G_2}}\otimes_K \cdots \otimes_K \frac{R_m}{\II_{G_m}}$, where $R_i = K[x_j,y_j: j\in V(G_i)]$. Therefore $\II_G$ is Cohen-Macaulay if and only if $\II_{G_i}$ is Cohen-Macaulay for each $1\le i \le m$ \cite[Theorem 2.1]{bouchiba2002tensor}. 
     
 \end{itemize}

 \end{remark}
}
For the rest of the article, $G$ denotes a connected simple graph unless stated otherwise.

\section{Cohen-Macaulay parity binomial edge ideal of cactus graphs}\label{cactus}
\new{We study the parity binomial edge ideals of cactus graphs in this section. The classification of  Cohen-Macaulay parity binomial edge ideals associated with bipartite cactus graphs are known from the work of Rinaldo \cite{rinaldo2017cohen}.} So, we assume that our graphs are non-bipartite cactus graphs. The parity binomial edge ideal of an odd cycle is a complete intersection and hence Cohen-Macaulay \cite[Theorem 3.5]{Kum21}. We prove that these are the only Cohen-Macaulay cases among the connected non-bipartite cactus graphs.

First of all, we observe that given any two vertices in a cactus graph $G$, there is some kind of uniqueness in the path joining these two vertices. Let $G$ be a cactus graph and $u, v \in V(G)$. For any path $P$ from $u$ to $v$, let $\mathcal{C}_P = \{C : C \text{ is a cycle in } G \text{ such that } C \cap P \text{ is an edge in }G\}$. Then, the set $\mathcal{C}_P$ is independent of the path $P$. Suppose there are two distinct paths $P_1$ and $P_2$ from $u$ to $v$ such that $\mathcal{C}_{P_1} \neq \mathcal{C}_{P_2}$. If $V(P_1) \cap V(P_2) = \{u=x_0,x_1,\ldots,x_{r-1},x_r=v\}$, then there exists an $i \in \{0,\ldots,r-1\}$ such that for the part $\mathcal{P}_{ij}$ of $P_j$, $j = 1, 2$, joining $x_i$ and $x_{i+1}$, $\mathcal{C}_{\mathcal{P}_{i1}} \neq \mathcal{C}_{\mathcal{P}_{i2}}$. Now consider the induced subgraph on $V(\mathcal{P}_{i1}) \cup V(\mathcal{P}_{i2})\cup (\cup_{C \in \mathcal{C}_{\mathcal{P}_{i1}} \cup \mathcal{C}_{\mathcal{P}_{i2}}} V(C))$ is a block in $G$ which is neither an edge, nor a cycle. This contradicts the assumption that $G$ is a cactus. Therefore, given any two cycles $C$ and $C'$ of $G$, there exist two unique vertices $u \in V(C)$ and $u'\in V(C')$ such that any path connecting $C$ and $C'$ always passes through both $u$ and $u'$. We say that $u$ and $u'$ are the connecting vertices of $C$ and $C'$ in $G$.

\begin{definition}
An odd cycle $C$ of a graph $G$ is said to be pendant if there exists $v\in V(C)$ so that any path in $G$ from a vertex of $C$ to any vertex of an odd cycle in $G \setminus \{v\}$ always passes through $v$. We denote such a pendant odd cycle by $(C,v)$.
\end{definition}

    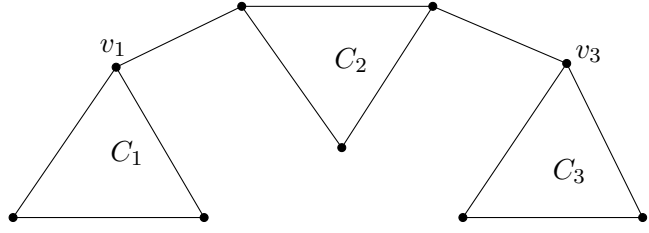
\begin{figure}[H]
        \centering
       \begin{tikzpicture}[line cap=round,line join=round,>=triangle 45,x=1cm,y=1cm]
\clip(-6,-0.6) rectangle (5,4);
\draw  (-3,2)-- (-4.37,0);
\draw  (-3,2)-- (-1.83,0);
\draw  (-4.37,0)-- (-1.83,0);
\draw  (2.99,2.05)-- (1.61,0);
\draw  (2.99,2.05)-- (4,0);
\draw  (1.61,0)-- (4,0);
\draw  (-3,2)-- (-1.33,2.81);
\draw  (-1.33,2.81)-- (1.21,2.81);
\draw  (0,0.93)-- (1.21,2.81);
\draw  (0,0.93)-- (-1.33,2.81);
\draw  (1.21,2.81)-- (2.99,2.05);
\draw (-3.21,1.15) node[anchor=north west] {$C_1$};
\draw (-0.23,2.39) node[anchor=north west] {$C_2$};
\draw (2.67,0.91) node[anchor=north west] {$C_3$};
\draw (-3.35,2.51) node[anchor=north west] {$v_1$};
\draw (2.97,2.43) node[anchor=north west] {$v_3$};
\begin{scriptsize}
\draw [fill=black] (-3,2) circle (1.5pt);
\draw [fill=black] (-4.37,0) circle (1.5pt);
\draw [fill=black] (-1.83,0) circle (1.5pt);
\draw [fill=black] (2.99,2.05) circle (1.5pt);
\draw [fill=black] (1.61,0) circle (1.5pt);
\draw [fill=black] (4,0) circle (1.5pt);
\draw [fill=black] (-1.33,2.81) circle (1.5pt);
\draw [fill=black] (1.21,2.81) circle (1.5pt);
\draw [fill=black] (0,0.93) circle (1.5pt);
\end{scriptsize}
\end{tikzpicture}
        \caption{A cactus graph. Note that $(C_1, v_1)$ and $(C_3, v_3)$ are pendant odd cycles of $G$ but $C_2$ is not.}
        \label{fig:cactus 1}
    \end{figure}
\new{
\begin{remark}
Note that the above definition is similar to, but not quite the same as the standard notion of pendant vertex in graph theory. For example in the above \Cref{fig:cactus 1}, there can be even cycles or edges attached to the vertices of $C_1$ or $C_3$, but they still satisfy the definition of pendant odd cycles. It follows from the definition of a pendant odd cycle that if $G$ has a unique odd cycle, then it is pendant.
Suppose there is more than one odd cycle. Recall from the above discussion that any odd cycle $C$ is connected to another odd cycle $C'$ through a unique vertex $v \in V(C)$. Define $\epsilon(C)$ to be the number of distinct vertices in $V(C)$ that connect $C$ with other odd cycles of $G$. Then $C$ is a pendant odd cycle if and only if $\epsilon(C) = 1$. It follows from the structure of cactus graphs that a pendant odd cycle always exists. We give a proof below.
\end{remark}

\begin{proposition}
Let $G$ be a cactus graph with at least two odd cycles. Then $G$ has a pendant odd cycle.
\end{proposition}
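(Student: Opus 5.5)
We use the block--cut-vertex tree of $G$ and take an extremal odd cycle in it. Since $G$ is a connected cactus, its blocks are edges and cycles, and the block--cut tree $\mathcal{T}$ has as nodes the blocks of $G$ together with its cut vertices. Each odd cycle $C$ of $G$ is a block and hence a node of $\mathcal{T}$.

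Let $\mathcal{O}$ be the set of nodes of $\mathcal{T}$ that are odd cycles; by hypothesis $|\mathcal{O}|\ge 2$. Let $\mathcal{T}'$ be the minimal subtree of $\mathcal{T}$ containing all of $\mathcal{O}$. It is the union of the tree paths between pairs of odd-cycle nodes, so every leaf of $\mathcal{T}'$ lies in $\mathcal{O}$. Since $|\mathcal{O}|\ge 2$, $\mathcal{T}'$ has at least two leaves. Pick a leaf $C$. It has a unique neighbour in $\mathcal{T}'$, which is a cut vertex $v\in V(C)$.

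We claim $(C,v)$ is a pendant odd cycle. Let $C'$ be an odd cycle with a vertex $w\notin V(C)$ lying in $G\setminus\{v\}$, and let $u\in V(C)$. Since $C'\ne C$, the node $C'$ lies in $\mathcal{T}'$ on the far side of $v$ from $C$, because $v$ is the only edge of $\mathcal{T}'$ at $C$. By the standard correspondence between paths in $G$ and paths in the block--cut tree, any $u$--$w$ path in $G$ must pass through every cut vertex on the tree path from the block containing $u$ to the block containing $w$. In particular it passes through $v$, provided $u\neq v$ and $w$ is not in $C$; if $u=v$ the claim is trivial. The same conclusion also follows from the connecting-vertex uniqueness established just before the definition.

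The one point needing care is the edge case where $w$ lies in some block through a cut vertex of $C$ other than $v$. That would place $C'$ in a branch of $\mathcal{T}$ at $C$ different from the branch through $v$. Then $C$ would have degree at least $2$ in $\mathcal{T}'$, contradicting the choice of $C$ as a leaf.

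Equivalently, in the language of the preceding remark, $\epsilon(C)=1$, which gives the pendant odd cycle. The main obstacle is only justifying the path/cut-vertex correspondence, which is standard for block--cut trees.
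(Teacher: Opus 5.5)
Your proof is correct, but it takes a different route from the paper's.

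\textbf{The paper's argument.} It does not use the block--cut tree. It defines $\delta(u,u')$ as the least number of odd cycles met by a path between the connecting vertices $u\in V(C)$, $u'\in V(C')$ of two odd cycles. It then chooses a pair $(v_1,v_2)$ that maximizes $\delta$. Finally, it shows $(C_1,v_1)$ is pendant: an odd cycle $C_3$ hanging off $C_1$ at some $v_1'\neq v_1$ would give $\delta(v_2,v_3)\geq\delta(v_2,v_1)+1$. This is a ``farthest odd cycle'' extremal argument, carried out by hand on top of the connecting-vertex uniqueness discussed before the definition.

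\textbf{Your argument.} You take a leaf $C$ of the minimal subtree $\mathcal{T}'$ spanning the odd-cycle nodes. The separation claim then reduces to a standard fact: the components of $G\setminus\{v\}$ correspond to the branches of $\mathcal{T}$ at $v$, and $C\setminus\{v\}$ lies in a different branch from every other odd cycle (minus $v$).

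\textbf{Comparison.} Your version rests on a citable structural fact rather than on the paper's somewhat informal connecting-vertex discussion. It also shows directly that every leaf of $\mathcal{T}'$ is pendant. The paper's version stays within its own vocabulary and needs no extra machinery.

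\textbf{Expository points.} The ``edge case'' paragraph is redundant. Once every odd cycle $C'\neq C$ lies in a branch at $v$ not containing $C$, it cannot share a vertex $w\neq v$ with $C$. So $w\notin V(C)$ is automatic, and a single separation argument covers every case. That includes odd cycles through $v$, which your reading of the definition admits. Also, $v$ is the unique \emph{neighbour} of $C$ in $\mathcal{T}'$, not its only edge.
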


\begin{proof}
We assume that $G$ is connected.
Let $C$ and $C'$ be two different odd cycles in $G$, and $u\in V(C)$ and $u'\in V(C')$ be the connecting vertices. Let us define \[\delta(u,u') = \min \{ n(P): P \text{ is a path between $u$ and $u'$} \}, \] where $n(P)$ is number of distinct odd cycles in $G$ intersecting the path $P$. Consider the set $\{ \delta(u,u') \}$ over all pairs of odd cycles and connecting vertices $u\in V(C)$ and $u'\in V(C')$. Suppose the maximum is achieved for the vertices $v_1\in V(C_1)$ and $v_2\in V(C_2)$.

\noindent
Claim: $(C_1,v_1)$ is a pendant odd cycle.\\
Proof: Suppose it is not. Then there exists another odd cycle $C_3 \neq C_1, C_2$, $v_1\neq v_1'\in V(C_1)$ and $ v_3\in V(C_3)$ such that $C_1$ and $C_3$ are connected through $v_1'$ and $v_3$. Therefore, there is a path connecting $v_3$ in $C_3$ and $v_2$ in $C_2$ passing through $v_1'$ in $C_1$. Since $G$ is a cactus graph, any path from $C_3$ to $C_2$ passes through $C_1$. So, any path from $v_3$ to $v_2$ passes through $v_1'$ and $v_1$. Since $C_3$ intersects any path from $v_3$ to $v_2$ and it does not intersect any path from $v_1$ to $v_2$, we get $\delta(v_2, v_3) \geq \delta(v_2,v_1) + 1 > \delta(v_2, v_1)$. This is a contradiction to the selection of $v_1$ and $v_2$. Hence $(C_1,v_1)$ is a pendant odd cycle.
\end{proof}
}

\begin{notation}\label{notn:pend}
Let $G$ be a cactus graph and $(C,v)$ be a pendant odd cycle of $G$. Note that $v$ is a cut vertex and $\{v\} \in \mathcal{D}(G)$. It follows that $C\setminus \{v\}$ must be contained in a bipartite connected component of $G\setminus \{v\}$, say $G_0$. Write $G \setminus \{v\}= G_0 \sqcup G^{\prime}$. Here $G^{\prime}$ is the union of connected components of $G\setminus \{v\}$ except $G_0$. 
\end{notation}

Our aim is to show that unmixedness descends from $\II_G$ to $\II_{G'}$. For this purpose, we first prove a couple of technical lemmas.

\begin{lemma}\label{int_lemma5}
With the notation as in \Cref{notn:pend}, let $S\subseteq V(G^{\prime})$. Then $c_G(S\cup\{v\}) = c_{G^{\prime}}(S) + 1$ and $b_G(S\cup\{v\}) = b_{G^{\prime}}(S) + 1$.
\end{lemma}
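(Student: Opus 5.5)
The plan is to compute the components of $G\setminus(S\cup\{v\})$ directly from the decomposition in \Cref{notn:pend}, namely $G\setminus\{v\}=G_0\sqcup G'$. Since $S\subseteq V(G')$, we have
\[
G\setminus (S\cup\{v\}) = (G\setminus\{v\})\setminus S = G_0 \sqcup (G'\setminus S).
\]
This is a disjoint union with no edges between $G_0$ and $G'\setminus S$, because $G_0$ is a union of connected components of $G\setminus\{v\}$ (in fact a single component) and $G'$ is the union of the remaining ones.

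\textbf{Components.} The graph $G_0$ is connected and untouched by removing $S$. It therefore contributes exactly one component, and the other components are exactly those of $G'\setminus S$. Hence $c_G(S\cup\{v\}) = c_{G'}(S)+1$.

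\textbf{Bipartite components.} By \Cref{notn:pend}, $G_0$ is a bipartite connected component of $G\setminus\{v\}$, and it contains $C\setminus\{v\}$. So $G_0$ contributes exactly one bipartite component, and bipartiteness of each component of $G'\setminus S$ is intrinsic to that component. Thus $b_G(S\cup\{v\}) = b_{G'}(S)+1$.

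The only step needing care is the claim from \Cref{notn:pend} that $G_0$ is bipartite. This relies on $(C,v)$ being pendant. Suppose $G_0$ contained an odd cycle $C'$. Since $G$ is a cactus and $C\setminus\{v\}$ lies in $G_0$, a path from $C\setminus\{v\}$ to $C'$ exists inside $G_0$. That path avoids $v$, and $C'$ is an odd cycle of $G\setminus\{v\}$, which contradicts the pendant condition. Beyond this check, which the paper takes as part of the notation, the argument is bookkeeping.
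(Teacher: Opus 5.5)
Your argument is correct and is essentially the paper's proof: both write $G\setminus(S\cup\{v\})=G_0\sqcup(G'\setminus S)$, so the components (resp.\ bipartite components) are those of $G'\setminus S$ together with the single bipartite component $G_0$. Your extra check that $G_0$ is bipartite, which the paper builds into \Cref{notn:pend}, is also fine, though the path inside $G_0$ exists because $G_0$ is connected, not because $G$ is a cactus.
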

\begin{proof}
Since $G\setminus \{v\} = G_0 \sqcup G'$ and $S \subseteq V(G')$, we have
\[\mathcal{C}_G(S\cup \{v\})= \mathcal{C}_{G\setminus \{v\}}(S) = \mathcal{C}_{G_0\sqcup G^{\prime}}(S) = \mathcal{C}_{G^{\prime}}(S) \sqcup\{ G_0\}.\]
Similarly, $B_G(S\cup \{v\})= B_{G^{\prime}}(S) \sqcup \{ G_0 \} $. Hence the assertion follows.
\end{proof}

\begin{lemma}\label{int_lemma6}
With the notation as in \Cref{notn:pend}, if $S \in \mathcal{D}(G^{\prime})$, then $S\cup \{v\} \in \mathcal{D}(G)$. 
\end{lemma}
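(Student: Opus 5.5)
The proof has two parts: first that $S\cup\{v\}$ is a disconnector of $G$, then that it admits a sign-split minimal prime of $\mathcal{J}_{G\setminus(S\cup\{v\})}$.

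\emph{Disconnector.} Write $f_H(T) = b_H(T)+c_H(T)$. By \Cref{int_lemma5}, $f_G(S\cup\{v\}) = f_{G'}(S)+2$. For $s\in S$, the same lemma applied to $S\setminus\{s\}\subseteq V(G')$ gives $f_G((S\setminus\{s\})\cup\{v\}) = f_{G'}(S\setminus\{s\})+2$. Since $S$ is a disconnector of $G'$, we have $f_{G'}(S) > f_{G'}(S\setminus\{s\})$, and the strict inequality for $s$ follows.

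For $s=v$, compare with $G\setminus S$. Here $v$ lies in the component containing $G_0\cup\{v\}$, together with every component of $G'\setminus S$ that is adjacent to $v$. Since $C\subseteq G_0\cup\{v\}$ and $C$ is odd, this merged component is non-bipartite.
\begin{itemize}
\item In $G\setminus(S\cup\{v\})$, the bipartite component $G_0$ contributes $2$ to $f$.
\item Each of the $k\ge 0$ components of $G'\setminus S$ adjacent to $v$ contributes $1$ or $2$.
\item After adding $v$ back, all of these collapse into one non-bipartite component contributing $1$.
\end{itemize}
Hence $f_G(S\cup\{v\}) - f_G(S) \ge 2 + k - 1 \ge 1 > 0$. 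So $S\cup\{v\}$ is a disconnector of $G$.

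\emph{Sign-split.} Take a sign-split minimal prime $\mathfrak{q}'$ of $\mathcal{J}_{G'\setminus S}$, which exists since $S\in\mathcal{D}(G')$. Set
\[\mathfrak{q}=\mathfrak{q}'+\mathcal{J}_{G_0}.\]
Since $G\setminus(S\cup\{v\}) = (G'\setminus S)\sqcup G_0$ and $G_0$ is bipartite, $\mathfrak{q}$ is a minimal prime of $\mathcal{J}_{G\setminus(S\cup\{v\})}$.

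For $s=v$: the collection $\mathcal{C}_{G\setminus(S\cup\{v\})}(v)$ contains the bipartite component $G_0$, so the sign-split condition at $v$ is vacuous.

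For $s\in S$: the components reconnected by $s$ are components of $G'\setminus S$, because $s\in V(G')$ is not adjacent to $G_0$ (its neighbours in $G\setminus\{v\}$ lie in $G'$). Thus $\mathcal{C}_{G\setminus(S\cup\{v\})}(s) = \mathcal{C}_{G'\setminus S}(s)$, and the condition for $\mathfrak{q}$ at $s$ is exactly the condition for $\mathfrak{q}'$, which holds by assumption. Therefore $S\cup\{v\}\in\mathcal{D}(G)$.

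The main obstacle is the bookkeeping at $s=v$. One has to check that $v$ is not in $S$ and that adding $v$ back genuinely merges $G_0$ into a non-bipartite component; both follow from the construction in \Cref{notn:pend}.
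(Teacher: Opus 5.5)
Your proof is correct and follows the paper's argument. \Cref{int_lemma5} gives the inequality for $s\in S$; the bipartite component $G_0$, which merges with $v$ into a component containing the odd cycle $C$, handles $s=v$; and the sign-split property is inherited because $\mathcal{C}_{G\setminus(S\cup\{v\})}(s)=\mathcal{C}_{G'\setminus S}(s)$ for $s\in S$, while $\mathcal{C}_{G\setminus(S\cup\{v\})}(v)$ contains $G_0$. Your count at $s=v$ is more explicit than the paper's, which only cites that $G_0$ is bipartite and $G$ is non-bipartite.
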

\begin{proof}
Let $S\in \mathcal{D}(G^{\prime})$. Therefore $S$ is a sign-split disconnector set of $G^{\prime}$ and for every $s\in S$, $c_{G^{\prime}}(S) + b_{G^{\prime}}(S) > c_{G^{\prime}}(S\setminus\{s\}) + b_{G^{\prime}}(S\setminus\{s\})$.
Then it follows from \Cref{int_lemma5} that for every $s \in S,$
\[c_G(S\cup \{v\}) + b_G (S\cup \{v\}) > c_{G}((S\cup\{v\})\setminus\{s\}) + b_{G}((S\cup \{v\})\setminus\{s\}).\]
Since $G_0$ is bipartite and $G$ is non-bipartite, 
$c_G(S\cup \{v\}) + b_G (S\cup \{v\}) > c_{G}(S) + b_{G}(S) $. We now prove that $S\cup \{v\}$ satisfies the sign-split property. Note that $S$ has a sign-split property in $G^\prime$. Let $s\in S\cup\{v\}$. If $s\neq v$, then $\mathcal{C}_{G\setminus (S\cup \{v\})}(s) = \mathcal{C}_{G^\prime \setminus S}(s)$. If $s = v$, then $\mathcal{C}_{G\setminus (S\cup \{v\})}(s) = \mathcal{C}_{G^\prime \setminus S}(s) \cup \{G_0\}$, and $G_0$ is bipartite. Hence in both cases $S\cup \{v\}$ satisfies sign-split property in $G$. Therefore $S\cup \{v\} \in \mathcal{D}(G)$. 
\end{proof}

We now prove that the unmixedness property descends.
\begin{proposition}\label{int_prop2}
With the notation as in \Cref{notn:pend}, if $\mathcal{I}_G$ is unmixed, then so is $\mathcal{I}_{G^{\prime}}$.
\end{proposition}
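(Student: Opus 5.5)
The plan is to combine \Cref{int_lemma6} and \Cref{int_lemma5} with the height formula for minimal primes. Recall that, since char$(K)\neq 2$, the ideal $\II_H$ of any graph $H$ (not necessarily connected) is radical. Its minimal primes have height $|S| + |V(H)| - b_H(S)$ for $S \in \mathcal{D}(H)$. The empty set lies in $\mathcal{D}(H)$ and gives height $|V(H)| - b(H)$. Hence $\II_H$ is unmixed if and only if $b_H(S) = |S| + b(H)$ for every $S\in\mathcal{D}(H)$. Since $G$ is non-bipartite and connected, $b(G)=0$, so the hypothesis says $b_G(T) = |T|$ for all $T\in\mathcal{D}(G)$. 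We must show $b_{G'}(S) = |S| + b(G')$ for all $S\in\mathcal{D}(G')$.

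First, take any $S\in\mathcal{D}(G')$. By \Cref{int_lemma6}, $S\cup\{v\}\in\mathcal{D}(G)$, and $v\notin S$ since $S\subseteq V(G')$. Unmixedness of $\II_G$ then gives $b_G(S\cup\{v\}) = |S|+1$. By \Cref{int_lemma5}, $b_G(S\cup\{v\}) = b_{G'}(S)+1$, and therefore $b_{G'}(S) = |S|$.

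Second, we identify $b(G')$. Applying the previous step to $S=\emptyset\in\mathcal{D}(G')$ gives $b(G') = b_{G'}(\emptyset) = 0$; equivalently, every component of $G'$ is non-bipartite. Combining the two steps, $b_{G'}(S) = |S| = |S| + b(G')$ for every $S\in\mathcal{D}(G')$. So all minimal primes of $\II_{G'}$ have the same height $|V(G')|$, and hence, since $\II_{G'}$ is radical, all its associated primes do too. Thus $\II_{G'}$ is unmixed.

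The only delicate point is that $G'$ may be disconnected. One must use the unmixedness criterion in the form $b_{H}(S) = |S| + b(H)$, which is valid for disconnected graphs, rather than the form $|S| = b_H(S)$, which holds only for non-bipartite connected graphs. This is handled by the observation $b(G')=0$ from the case $S=\emptyset$. Alternatively, one can apply the criterion componentwise via the remark reducing unmixedness to connected components.
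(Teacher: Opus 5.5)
Your proof is correct and follows the paper's own argument: \Cref{int_lemma6} puts $S\cup\{v\}$ in $\mathcal{D}(G)$, unmixedness of $\II_G$ gives $b_G(S\cup\{v\})=|S|+1$, and \Cref{int_lemma5} converts this to $b_{G'}(S)=|S|$. Your derivation of $b(G')=0$ from the case $S=\emptyset$ is the same step as the paper's observation that $b_G(\{v\})=1$ forces every component of $G'$ to be non-bipartite.
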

\begin{proof} 
Since $\{v\} \in \mathcal{D}(G)$ and $\mathcal{ I}_G$ is unmixed, $b_G(\{v\}) =1$. Hence the connected components of $G \setminus \{ v \}$ other than $G_0$ must be non-bipartite.
In particular, $G^{\prime}$ must be non-bipartite. 
It follows from \Cref{int_lemma6} that for all $ S\in \mathcal{D}(G^{\prime})$, $S\cup \{v\}\in \mathcal{D}(G)$. As $\mathcal{I}_G$ is unmixed, $b_G(S\cup \{v\})=|S\cup \{v\}|$. This implies $b_{G^{\prime}}(S) = |S|$, by \Cref{int_lemma5}. Thus $\II_{G'}$ is unmixed.
\end{proof}
We now proceed to characterize the cactus graphs having unmixed parity binomial edge ideals.
\begin{proposition}
\label{int_lemma7}
Let $G$ be a non-bipartite cactus graph with only one odd cycle. Then $\mathcal{I}_G$ is unmixed if and only if $G$ is an odd cycle.
\end{proposition}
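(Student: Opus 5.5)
The ``if'' direction is immediate: an odd cycle has parity binomial edge ideal a complete intersection by \cite[Theorem 3.5]{Kum21}, hence unmixed. For the converse, assume $G$ is connected, non-bipartite, cactus, with a unique odd cycle $C$, $\II_G$ unmixed, and $G\neq C$. The plan is to exhibit some $S\in\mathcal{D}(G)$ with $b_G(S)\neq |S|$, contradicting the criterion that $\II_G$ is unmixed if and only if $|S|=b_G(S)$ for all $S\in\mathcal{D}(G)$ (recall $b(G)=0$).

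Since $G\neq C$ and $G$ is connected, some vertex $v\in V(C)$ has a neighbour outside $C$. Take $S=\{v\}$. Every cycle of $G$ through $v$ other than $C$ shares only $v$ with $C$. So $C\setminus\{v\}$ (a path) together with whatever hangs off it away from $v$ lies in one component of $G\setminus\{v\}$. The remaining components meet neighbours of $v$ outside $C$, and there is at least one such component.

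All components of $G\setminus\{v\}$ are bipartite, because the only odd cycle passes through $v$. Hence $b_G(\{v\})=c_G(\{v\})\ge 2$. Removing $v$ from $\{v\}$ gives $\emptyset$, with $b_G(\emptyset)+c_G(\emptyset)=0+1=1$. Since $2b_G(\{v\})\ge 4>1$, the set $\{v\}$ is a disconnector. By \Cref{cremark_5}, $v$ is adjacent to a bipartite component, so $\{v\}$ is sign-split, giving $\{v\}\in\mathcal{D}(G)$. Unmixedness then forces $b_G(\{v\})=1$, contradicting $b_G(\{v\})\ge 2$. Hence every vertex of $C$ has degree $2$.

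Connectedness then forces $G=C$. Indeed, any vertex outside $C$ would be joined to $C$ by a path, whose first vertex on $C$ would have a neighbour off $C$, which we have just ruled out.

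The only delicate step is justifying that $v$ really separates the outside neighbour from $C\setminus\{v\}$. This is where the cactus hypothesis is used: if some other path linked them avoiding $v$, it would create a cycle sharing at least two vertices (an edge through $v$ together with part of $C$) with $C$, which is impossible in a cactus. Alternatively, one can invoke the earlier discussion that the blocks of a cactus are cycles or edges, so $C$ is a block and $v$, having a neighbour outside this block, is a cut vertex.
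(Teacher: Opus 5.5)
Your proposal is correct and is essentially the paper's argument. A vertex $v$ of $C$ with an extra neighbour is a cut vertex, so $\{v\}\in\mathcal{D}(G)$. Moreover $G\setminus\{v\}$ is bipartite with at least two components, so $b_G(\{v\})\ge 2$, which contradicts unmixedness. You also supply details the paper leaves implicit: why $\{v\}$ is a sign-split disconnector, and why $v$ separates. One point to add for $G=C$ is that $C$ has no chords, which the cactus condition (or the uniqueness of the odd cycle) gives immediately.
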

\begin{proof}
The `if part' follows from \cite[Theorem 3.5]{Kum21}. Now assume that $\II_G$ is unmixed. Let $C$ be the odd cycle in $G$. If there exists a $v \in V(C)$ such that $\deg_G( v) > 2$, then $v$ is a cut vertex. Thus $\{v\} \in \mathcal{D}(G)$. Moreover, $G \setminus \{v\}$ is bipartite and has at least two components, i.e., $b_G(\{v\}) \geq 2$. This contradicts the unmixedness of $\II_G$. Hence for every $v\in V(C)$, $\deg_G( v) = 2$. Therefore, $G=C$. 
\end{proof}
We now show that if a cactus graph contains more than one odd cycle, then the parity binomial edge ideal is not unmixed.
\begin{theorem}\label{int_lemma8}
Let $G$ be a cactus graph having at least two odd cycles as subgraphs, then $\mathcal{I}_G$ is not unmixed.
\end{theorem}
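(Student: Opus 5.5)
We argue by induction on the number $k \geq 2$ of odd cycles of $G$. We may assume $G$ is connected: by the Remark on disconnected graphs, unmixedness of $\II_G$ forces unmixedness of $\II_{G_i}$ for every component, and some component contains two odd cycles unless the odd cycles lie in different components. In that latter case each such component already has an odd cycle and is handled separately below. So suppose $G$ is connected with at least two odd cycles and, for contradiction, that $\II_G$ is unmixed.

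By the Proposition above, $G$ has a pendant odd cycle $(C,v)$. With \Cref{notn:pend}, write $G\setminus\{v\} = G_0 \sqcup G'$. By \Cref{int_prop2}, $\II_{G'}$ is unmixed, and the proof of that proposition shows every component of $G'$ is non-bipartite. Each component of $G'$ is again a cactus graph, as an induced subgraph of a cactus. The odd cycles of $G$ other than $C$ all avoid $v$, because they are edge-disjoint from $C$ and meet it in at most one vertex. Since $C$ is pendant, every path from $C$ to them passes through $v$, so they lie in $G'$ and not in $G_0$. Hence $G'$ contains exactly $k-1 \geq 1$ odd cycles.

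If some component of $G'$ contains at least two odd cycles, the induction hypothesis applied to that component, together with the Remark on disconnected graphs, contradicts unmixedness of $\II_{G'}$. Otherwise every component of $G'$ contains exactly one odd cycle, since each is non-bipartite. By \Cref{int_lemma7}, each component of $G'$ is then exactly an odd cycle $C'$. In particular this covers the base case $k=2$.

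It remains to derive a contradiction in this final configuration, which I expect to be the main step. The vertex $v$ is adjacent to some vertex $w$ lying on an odd cycle $C'$ that forms a component of $G'$. Take $S=\{v,w\}$ and check three things.
\begin{itemize}
\item[(a)] $G\setminus S$ has components $G_0$ (bipartite), the path $C'\setminus\{w\}$ (bipartite), and the other odd-cycle components of $G'$ (non-bipartite), so $b_G(S)\ge 2$.
\item[(b)] $S$ is a disconnector. Removing $w$ from $S$ merges the path $C'\setminus\{w\}$ back into an odd cycle, which lowers $b+c$ by $2$. Removing $v$ from $S$ joins $G_0$ with the neighbouring components into one component, and that component is bipartite only if no odd cycle other than $C'$ attaches there. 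Either way $b+c$ drops.
\item[(c)] $S$ is sign-split by \Cref{cremark_5}, since $v$ touches the bipartite component $G_0$ and $w$ touches the bipartite path $C'\setminus\{w\}$.
\end{itemize}
Now $|S|=2$, and unmixedness forces $b_G(S)=2$, which is consistent, so $S$ alone gives no contradiction. A better choice is to also include $w'$, the vertex of $C'$ adjacent to $w$. If instead the path from $v$ to $C'$ runs through the vertex $u$ where $C'$ attaches, take $S=\{u\}$, or $\{v,u\}$ if $u\neq w$.

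The cleanest option is $S=\{w\}$ itself when $w$ is a cut vertex of $G$. Then $G\setminus\{w\}$ has the bipartite component $C'\setminus\{w\}$, and the component containing $v$ and $C$ is non-bipartite. This gives $b=1=|S|$, which is consistent again. So the real contradiction must come from a set containing a vertex $x$ of $C$ adjacent to $v$ together with $v$. Then $C\setminus\{v,x\}$ is a bipartite path, $G_0\setminus\{x\}$ may split further, and $v$ reconnects to the odd cycle $C'$. I would first try $S=\{x\}\cup\{v\}$ with $x\in C$, $x\ne v$, and compute $b_G(S)$ carefully. The intended count is that the bipartite pieces of $G_0\setminus\{x\}$, together with the bipartite remainders, number at least $3>|S|$, with sign-split coming from \Cref{cremark_5}. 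Verifying this count is the delicate case analysis.
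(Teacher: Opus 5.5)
Your setup (the pendant odd cycle $(C,v)$, descent via \Cref{int_prop2}, and reduction to the configuration where every component of $G'$ is a single odd cycle) matches the paper's. But the proof stops exactly where the real work is. You never derive a contradiction in that final configuration, and since it contains the base case $k=2$, neither the base case nor the inductive step is actually established.

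Moreover, the set you propose to finish with, $S=\{x,v\}$ with $x\in V(C)\setminus\{v\}$, does not work. Take $G$ to be $C$ and one odd cycle $C_1$ joined by a bridge $\{v,w\}$; this graph is an instance of your final configuration and must be ruled out.
\begin{itemize}
\item If $x$ is a neighbour of $v$ on $C$, then putting $x$ back into $G\setminus S$ does not lower $b+c$ (it stays $3$), so $S$ is not a disconnector.
\item If $x$ is not adjacent to $v$, then $G\setminus S$ consists of two paths and the odd cycle $C_1$, so $b_G(S)=2=|S|$, which is consistent with unmixedness.
\end{itemize}
The hoped-for count ``at least $3>|S|$'' never occurs. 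In fact the contradiction the paper finds is of the opposite kind: too few bipartite components.

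The missing idea is to pass to a second pendant odd cycle. In the final configuration $G'=C_1\sqcup\cdots\sqcup C_m$. By the cactus property, $v$ is joined to each $C_i$ by a single bridge $\{v,w_i\}$, so each $(C_i,w_i)$ is itself a pendant odd cycle. Hence \Cref{int_prop2} applies at $w_1$: $G\setminus\{w_1\}=(C_1\setminus\{w_1\})\sqcup G''$ with $G''=G_0\cup\{v\}\cup C_2\cup\cdots\cup C_m$ connected and $\II_{G''}$ unmixed.
\begin{itemize}
\item If $m\ge 2$, then $G''$ is a cactus with at least two odd cycles but fewer than $G$, contradicting the induction hypothesis.
\item If $m=1$, then $G''=G_0\cup\{v\}$ has $C$ as its only odd cycle, so \Cref{int_lemma7} forces $G''=C$. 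Thus $G$ is two odd cycles joined by a bridge. Take $S=\{x_1,x_2\}$ with $x_1\in V(C)\setminus\{v\}$ and $x_2\in V(C_1)\setminus\{w_1\}$. Then $G\setminus S$ is a single tree, and returning either vertex makes it non-bipartite, so $b+c$ drops from $2$ to $1$ and $S$ is a disconnector. It is sign-split by \Cref{cremark_5}, and $b_G(S)=1<2=|S|$, a contradiction.
\end{itemize}

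Two smaller remarks. First, your reduction to connected graphs is wrong as stated: for a disjoint union of two odd cycles, $\II_G$ is a complete intersection and hence unmixed. The theorem genuinely needs connectedness, which the paper assumes throughout, so simply assume it. Second, your argument that $G'$ has exactly $k-1$ odd cycles does not hold, since meeting $C$ in at most one vertex still allows another odd cycle through $v$. You do not need this count, however, because your dichotomy only uses that every component of $G'$ is non-bipartite.
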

\begin{proof}
We prove the statement by induction on $r(G)$, the number of odd cycles in $G$. First assume that $r(G)=2$ and $\mathcal{I}_G$ is unmixed. Since $G$ is cactus with only two odd cycles, both must be pendant odd cycles, say $(C_1,v_1)$ and $(C_2,v_2)$. If $v_1 = v_2$, then $\{v_1\} \in \D(G)$ and $b_G(\{v_1\}) = 2$ which is a contradiction to the unmixedness of $\II_G$. Suppose $v_1 \neq v_2$.  First, we take the disconnector set $\{v_1\}$. Suppose $G\setminus \{v_1\} = G_0 \sqcup G^{\prime}$, where $G_0$ is the bipartite component containing $C_1 \setminus\{v_1\}$ and $G^{\prime}$ is the non-bipartite graph containing $C_2$. Since $b_G(\{v_1\}) = 1$ and since there are only two odd cycles in $G$, $G'$ is also connected. \new{Moreover,} \Cref{int_prop2} implies that $\mathcal{I}_{G^ \prime}$ is unmixed. But $G^{\prime}$ is a connected cactus graph with only one odd cycle. So, from \Cref{int_lemma7}, it follows that $G^{\prime}$ is an odd cycle, i.e., $G^{\prime} = C_2$. Using the same argument for $\{v_2\}$, we get $G\setminus \{v_2\} = (C_2\setminus \{v_2\}) \sqcup C_1$. So, $V(G)= V(C_1) \cup V(C_2)$ and $E(G) = E(C_1) \cup E(C_2) \cup \{e\}$, where $e=\{v_1,v_2\}$ is the only edge between $C_1$ and $C_2$.
Choose $x_1\in V(C_1) \setminus \{v_1\}$ and $x_2\in V(C_2) \setminus \{v_2\}$. Then $\{x_1,x_2\} \in \mathcal{D}(G)$, but $b_G(\{x_1,x_2\}) = 1$, a contradiction to the unmixedness of $\II_G$. Therefore $\mathcal{I}_G$ is not unmixed. 

Now let $r(G)\geq 3$ and assume by induction that if $H$ is a connected non-bipartite cactus graph with $2\leq r(H) < r(G)$, then $\II_H$ is not unmixed. Suppose $\mathcal{I}_G$ is unmixed. Let $(C, v)$ be a pendant odd cycle of $G$. Then $\{v\}\in \mathcal{D}(G).$ If $v$ is contained in more than one pendant odd cycles, then $b_G(\{v\}) > 1$ which contradicts the unmixedness of $\II_G$. 
Thus we can write $G\setminus \{v\} = G_0 \sqcup G'$, where $G_0$ is the bipartite component containing $C\setminus \{v\}$ and $G'$ is a non-bipartite  graph with each component a non-bipartite cactus graph. Observe that $G'$ will not have any bipartite component, since otherwise $b_G(\{v\}) > 1$ contradicting the unmixedness of $\II_G$. Note that for any non-bipartite connected component $H$ of $G'$, $r(H) < r(G)$. We know from \Cref{int_prop2} that $\mathcal{I}_{G'}$ is unmixed. Now we have two cases.
\vskip 2mm
\noindent\textbf{Case 1:} If any of the connected components of $G'$ has at least two odd cycles, then by induction hypothesis $\mathcal{I}_{G'}$ is not unmixed, which contradicts \Cref{int_prop2}. 
\vskip 2mm
\noindent \textbf{Case 2:} Suppose every non-bipartite connected component of $G'$ has only one odd cycle. Since $\II_G$ is unmixed, so is $\II_{G'}$, by \Cref{int_prop2}. Therefore, by \Cref{int_lemma7}, we get that all these connected components of $G'$ are cycles. Write  $G' = C_1\sqcup C_2 \sqcup \cdots \sqcup C_k$, where $C_i$ are odd cycles. Let $v_i \in V(C_i)$ be such that $\{v,v_i\} \in E(G)$. Since $G$ is a cactus graph, these $v_i$ are the unique vertices of $C_i$ that are adjacent to $v$. Hence $(C_i,v_i)$ are all pendant odd cycles of $G$, for $1\leq i\leq k$. Now consider $G\setminus \{v_1\} = G_0^\prime \sqcup G''$, where $G_0'$ is the bipartite component containing $C_1 \setminus\{v_1\}$ and $G''$ is the non-bipartite connected component containing $C,C_2,\ldots, C_k$. Since $r(G) \geq 3$, $k \geq 2$. Hence by induction $\II_{G''}$ is not unmixed. But this contradicts the conclusion by \Cref{int_prop2} that $\II_{G''}$ is unmixed. Hence $\II_G$ is not unmixed.
\end{proof}
Summarizing all the above results, we get:
\begin{corollary} \label{cm Cactus graph}
Let $G$ be a non-bipartite cactus graph. Then the following are equivalent:
\begin{enumerate}
    \item $\II_G$ is unmixed;
    \item $\II_G$ is Cohen-Macaulay;
   \new{ \item $\II_G$ is Gorenstein;
   \item $\II_G$ is a complete intersection;}
    \item $G$ is an odd cycle.
\end{enumerate}
\end{corollary}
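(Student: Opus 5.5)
The plan is to prove the cycle of implications (5) $\Rightarrow$ (4) $\Rightarrow$ (3) $\Rightarrow$ (2) $\Rightarrow$ (1) $\Rightarrow$ (5). The middle implications are standard commutative algebra. A complete intersection quotient of a polynomial ring is Gorenstein. A Gorenstein ring is Cohen-Macaulay. A Cohen-Macaulay ideal in a polynomial ring is unmixed, since all its associated primes are minimal of the same height; here $\II_G$ is moreover radical because char$(K)\neq 2$. Thus (4) $\Rightarrow$ (3) $\Rightarrow$ (2) $\Rightarrow$ (1) need no graph theory.

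For (5) $\Rightarrow$ (4), I will cite \cite[Theorem 3.5]{Kum21}, which says that for a non-bipartite graph $G$, $\II_G$ is a complete intersection exactly when $G$ is an odd cycle. In particular the parity binomial edge ideal of an odd cycle is a complete intersection.

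The substantive implication is (1) $\Rightarrow$ (5), and it is already assembled from the preceding results. I will work with a connected $G$, as the paper fixes for this section. Suppose $\II_G$ is unmixed and $G$ is non-bipartite. Then $G$ contains an odd cycle, and by \Cref{int_lemma8} it cannot contain two or more odd cycles, so it has exactly one. \Cref{int_lemma7} then forces $G$ to be that odd cycle.

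The only point needing care is connectedness. If disconnected graphs were allowed, the earlier remark reduces unmixedness and Cohen-Macaulayness to the connected components. In that case "odd cycle" would have to be read componentwise, with the bipartite components treated separately. Since the paper assumes $G$ is connected, I will state this convention explicitly and not treat it further. I therefore expect no real obstacle: the difficult work was done in \Cref{int_lemma8}, and this corollary just chains it with Kumar's complete intersection theorem and the standard hierarchy complete intersection $\Rightarrow$ Gorenstein $\Rightarrow$ Cohen-Macaulay $\Rightarrow$ unmixed.
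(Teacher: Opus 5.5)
Your proposal is correct and matches the paper's proof: the standard chain (4) $\Rightarrow$ (3) $\Rightarrow$ (2) $\Rightarrow$ (1), Kumar's complete intersection theorem \cite[Theorem 3.5]{Kum21} to link (4) and (5), and \Cref{int_lemma8} combined with \Cref{int_lemma7} for (1) $\Rightarrow$ (5). Your remark on connectedness is a useful clarification of the paper's standing convention, since an odd cycle disjoint from a path would otherwise be unmixed without being an odd cycle.
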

\new{\begin{proof}
The implications $(4) \Rightarrow (3) \Rightarrow (2) \Rightarrow (1)$ are always true. By \cite{Kum21}, $(4)$ and $(5)$ are equivalent. The implication $(1) \Rightarrow (5)$ follows from \Cref{int_lemma7} and \Cref{int_lemma8}.
\end{proof}}

\section{The algorithm}\label{sec:algo} 

 \new{Unlike cactus graphs, it is not clear how to study the unmixed property of $\II_G$ using hereditary behavior for chordal graphs.
  To understand the unmixed property, we first propose an algorithm for chordal graphs to construct a maximal induced tree $H_{\n}$ along with a disconnector set $S^{\mathrm{n}}(G)$. We show that if $\II_G$ is unmixed, then these trees are always path graphs.
\begin{notation}\label{not:chordal}
Let $G$ be a chordal graph which is a clique sum of $K_{n_1}, \ldots, K_{n_t}.$ We define $K_{r_j} = \left( \cup_{i < j} K_{n_i} \right) \cap K_{n_j}$ for $j \geq 2$, i.e., the graph $G$ is obtained by attaching $K_{n_j}$ on $\cup_{i < j} K_{n_i}$ along $K_{r_j}$.
\end{notation}

We fix the above notation for the rest of the paper.}
\begin{definition} \label{definition of lambda}
\new{Let $j\in \{ 2, \ldots, t\}$.} Then, $\lambda(j) = \{i<j: K_{r_j} \subset K_{n_i}\}$. 
\end{definition}
 \noindent Thus, $\displaystyle{K_{r_j} =K_{n_j}\bigcap_{l\in \lambda(j)} K_{n_l} }$. Without loss of generality we may assume that $K_{r_j} \neq K_{n_i}$ for every $i \leq j$.

\begin{remark} \label{cRemark_1}
Suppose $i,j\in [t]$ such that $i<j$, then \[ K_{n_i} \cap K_{n_j} = K_{n_i} \cap K_{r_j} = K_{n_i} \cap (K_{n_j}\bigcap_{l\in \lambda(j)} K_{n_l}). \]
\end{remark}

\begin{lemma}
For any $i\in [t]\setminus \{1\}$, $V(K_{r_i}) \in \mathcal{D}(G)$. 
\end{lemma}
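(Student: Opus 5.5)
The plan is to verify the two defining conditions for $S := V(K_{r_i})$ directly: that $S$ is a disconnector of $G$, and that it is sign-split. For the second condition I will use \Cref{cremark_5} or \Cref{cprop_4}, depending on whether a certain fixed component of $G\setminus S$ is bipartite.

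\emph{Step 1 (separation).} First I show that $G\setminus S$ has at least two components. One should contain $V(K_{n_i})\setminus S$. Another should contain the vertices of $\left(\cup_{l<i} K_{n_l}\right)\setminus S$.
\begin{itemize}
\item Both sets are nonempty. This uses the standing assumption that $K_{r_j}\neq K_{n_l}$ for $l\le j$, so that $S$ is a proper subset of $V(K_{n_i})$ and of each $V(K_{n_l})$, $l\in\lambda(i)$.
\item The main point is that no later clique $K_{n_j}$, $j>i$, joins the two sides. Each gluing clique $K_{r_j}$ is a clique in the graph built so far. In a clique sum along $S$ there are no edges between the two sides outside $S$, so any clique lies entirely within one side together with $S$.
\item An induction on $j$ therefore shows that every $K_{n_j}$, $j>i$, attaches to one side only. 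So the separation persists in $G$.
\end{itemize}
I expect this step to be the main obstacle. The rest is formal bookkeeping.

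\emph{Step 2 (disconnector inequality).} Fix $s\in S$. Since $S\subseteq V(K_{n_i})\cap V(K_{n_l})$ for $l\in\lambda(i)$, the vertex $s$ is adjacent to vertices in at least two distinct components of $G\setminus S$. So passing from $G\setminus S$ to $G\setminus(S\setminus\{s\})$ merges some $k\ge 2$ components into one, and hence $c$ drops by $k-1\ge1$. For $b$ there are two cases.
\begin{itemize}
\item If the merged component is bipartite, then all $k$ merged components were bipartite, and $b$ drops by $k-1$.
\item If the merged component is non-bipartite, then $b$ does not increase.
\end{itemize}
In either case $b_G(S)+c_G(S) > b_G(S\setminus\{s\})+c_G(S\setminus\{s\})$, so $S$ is a disconnector.

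\emph{Step 3 (sign-split).} Let $Q$ be the component of $G\setminus S$ containing $V(K_{n_i})\setminus S$. Every $s\in S$ is adjacent to all of $K_{n_i}$, so $Q\in\mathcal{C}_{G\setminus S}(s)$ for all $s\in S$.
\begin{itemize}
\item If $Q$ is bipartite, every vertex of $S$ is connected to a bipartite component, and \Cref{cremark_5} gives the sign-split property.
\item If $Q$ is non-bipartite, \Cref{cprop_4} applies verbatim.
\end{itemize}
Hence $V(K_{r_i})\in\mathcal{D}(G)$.
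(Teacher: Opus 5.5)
Your proof is correct and follows the same route as the paper. The paper likewise takes the components of $G\setminus V(K_{r_i})$ containing $K_{n_i}\setminus V(K_{r_i})$ and $K_{n_j}\setminus V(K_{r_i})$ for some $j\in\lambda(i)$, notes that every vertex of $V(K_{r_i})$ is adjacent to both, and then invokes \Cref{cremark_5} or \Cref{cprop_4}. Your Steps 1 and 2 spell out the separation and the disconnector inequality that the paper leaves implicit. One phrase is imprecise: $(\cup_{l<i}K_{n_l})\setminus S$ need not lie in a single component (e.g.\ when several earlier cliques contain $S$), but your argument only uses that no component meets both sides, and you do prove that.
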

\begin{proof}
Suppose $j\in \lambda(i)$. Let $G_i$ and $G_j$ be the connected components of $G\setminus V(K_{r_i})$ containing $V(K_{n_i})\setminus V(K_{r_i})$ and $V(K_{n_j}) \setminus V(K_{r_i})$ respectively. Then for all $v\in V(K_{r_i})$, $v$ connects both $G_1$ and $G_2$. So, if any one of them is bipartite, then by \Cref{cremark_5}, $V(K_{r_i}) \in \mathcal{D}(G)$. If both are non-bipartite, then by fixing either $G_1$ or $G_2$ and applying \Cref{cprop_4}, we get $V(K_{r_i}) \in \mathcal{D}(G)$.
\end{proof}
\begin{proposition} \label{x and y adjacent}
    Let $i, j \in [t]$ be such that $i<j$. Then for any $x\in V(K_{n_i})$ and $y\in  V(K_{n_j})$, if $x\neq y$ and $\{x,y\} \in E(G)$, then either $x$ or $y$ must belong to $K_{r_j}$.
\end{proposition}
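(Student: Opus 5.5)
Since an edge lies in some maximal clique, the plan is to exploit the clique-sum structure: the edge $\{x,y\}$ must lie in some $K_{n_k}$. We argue by contradiction. Suppose $x\notin V(K_{r_j})$ and $y\notin V(K_{r_j})$. Because $y\in V(K_{n_j})\setminus V(K_{r_j})$, and $K_{r_j}=(\bigcup_{l<j}K_{n_l})\cap K_{n_j}$, the vertex $y$ is not in any $K_{n_l}$ with $l<j$. In particular $y\neq x$ and $y\notin V(K_{n_i})$ since $i<j$. Also $x\notin V(K_{n_j})$: otherwise $x\in V(K_{n_i})\cap V(K_{n_j})\subseteq V(K_{r_j})$.

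The key step is to use the structure of the clique sum as an iterative construction. Consider the graph $G_j=\bigcup_{l\le j}K_{n_l}$. This graph is obtained from $G_{j-1}=\bigcup_{l<j}K_{n_l}$ by gluing $K_{n_j}$ along $K_{r_j}$, so its edges are exactly the edges of $G_{j-1}$ together with the edges of $K_{n_j}$. Hence any edge of $G_j$ incident to a vertex of $V(K_{n_j})\setminus V(K_{r_j})$ lies in $K_{n_j}$.

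Later steps $l>j$ only glue $K_{n_l}$ along the clique $K_{r_l}\subseteq G_{l-1}$. The new edges introduced at such a step have at least one endpoint in $V(K_{n_l})\setminus V(K_{r_l})$, which is a set of brand new vertices, not in $G_{l-1}$. Since $x,y\in V(G_j)\subseteq V(G_{l-1})$, the edge $\{x,y\}$ cannot be created at a later step, so it must already be an edge of $G_j$.

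Now $\{x,y\}$ is an edge of $G_j$, and $y\in V(K_{n_j})\setminus V(K_{r_j})$ is a new vertex at step $j$. By the previous paragraph, the edge $\{x,y\}$ must lie in $K_{n_j}$, which forces $x\in V(K_{n_j})$. Combined with $x\in V(K_{n_i})$, this gives $x\in V(K_{n_i})\cap V(K_{n_j})\subseteq V(K_{r_j})$, contradicting our assumption. This proves that $x$ or $y$ lies in $K_{r_j}$; equivalently, one can quote \Cref{cRemark_1}.

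The main obstacle is making the claim "edges only come from the glued cliques" rigorous. This relies on the clique-sum convention that gluing adds no edges beyond those of the summands, and that the clique sum $G=\bigcup K_{n_l}$ has edge set $\bigcup E(K_{n_l})$. I would state this explicitly, noting that $G$ is the union of its maximal cliques $K_{n_1},\dots,K_{n_t}$. With that in hand, the argument is a short induction on the gluing order.
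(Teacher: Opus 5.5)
Your proof is correct and follows essentially the same route as the paper, which likewise passes to the partial clique sum $\bigcup_{l\le j}K_{n_l}$ and notes that in it any vertex of $K_{n_j}$ adjacent to a vertex outside $K_{n_j}$ must lie in $K_{r_j}$. You go further than the paper in two respects: you justify that this partial clique sum is an induced subgraph of $G$, which genuinely needs each later $K_{r_l}$ to already be a clique of $\bigcup_{s<l}K_{n_s}$, and you treat the case $x\in V(K_{n_j})$ explicitly.
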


\begin{proof}
    Let $H$ be the induced subgraph of $G$, which is the clique sum of $K_{n_i}$ up to $j$th. Then, the vertices of $K_{n_j}$ which are adjacent to other vertices of $H$, must belong to $K_{r_j}$.
\end{proof}

Now we introduce three subclasses of chordal graphs $\mathfrak{G}_1$, $\mathfrak{G}_2$ and $\mathfrak{G}_3$  as described in the  \Cref{fig:fig 13}. Here $P_1$, $P_2$ and $P_3$ are arbitrary path graphs.

\begin{theorem} \label{I_G is unmixed only if G has there forms}
    Let $G$ be a chordal graph. Then $\II_G$ is unmixed if $G$ is any one of the following:
    \begin{enumerate}
        \item a path graph.
        \item $K_3$
        \item $G\in \mathfrak{G}_1 \cup \mathfrak{G}_2 \cup \mathfrak{G}_3$.
    \end{enumerate}
\end{theorem}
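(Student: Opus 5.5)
The plan splits into the three cases of the statement. Paths are bipartite, and by \cite{Kum21} a disjoint union of paths has complete intersection parity binomial edge ideal. $K_3$ is an odd cycle, so $\II_{K_3}$ is a complete intersection by \cite[Theorem 3.5]{Kum21}. In both cases $\II_G$ is Cohen--Macaulay, hence unmixed. The real content is $G\in\G_1\cup\G_2\cup\G_3$. Since each such $G$ is connected and non-bipartite, it suffices to show $b_G(S)=|S|$ for every $S\in\D(G)$. I will use the reading of \Cref{fig:fig 13} in which the non-path part (the \emph{core}) is: for $\G_1$, the triangle $\alpha_1\alpha_2\alpha_3$ with triangles on its three edges having apexes $\beta_1,\beta_2,\beta_3$; for $\G_2$, the $K_4$ on $\{x_1,y_1,\alpha_1,\alpha_2\}$ glued along $\alpha_1\alpha_2$ to the triangle $\alpha_1\alpha_2x_2$; for $\G_3$, the diamond on $\{x_2,\alpha_1,\alpha_2,x_1\}$. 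The paths $P_i$ hang off the degree-two or simplicial vertices of the core.

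I would argue by induction on the total length of the attached paths, using \Cref{prop on the unmixed property}. First, suppose $S$ contains a vertex $p$ of some $P_i$ other than its far endpoint. Take $T=\{p\}$. Then $G\setminus T$ is a path (bipartite, with unmixed ideal) disjoint union a graph $G'$. Here $G'$ is either in the same class with a shorter path, or is the core with that path deleted. In the first case $\II_{G'}$ is unmixed by induction, and $b(G\setminus T)=1=|T|$, so $b_G(S)=|S|$ follows. The second case forces the base step to include the \emph{core graphs with some paths removed}. I would therefore strengthen the induction to cover these degenerate cores as well, or verify them directly. The far endpoint of a path is a leaf, and a leaf can never lie in a disconnector: adding it back changes neither $c$ nor $b$.

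This reduces everything to disconnectors $S$ contained in the core. The core has at most six vertices, so this is a finite check. For each candidate $S$, I would compute the components of $G\setminus S$, keeping in mind that an attached path contributes to a bipartite component exactly when its attaching core vertex survives in a bipartite piece. I would then test the disconnector inequality $b+c$ for each $s\in S$ and the sign-split condition. \Cref{cremark_5} and \Cref{cprop_4} settle sign-splitting in most cases. For the survivors, I would check $b_G(S)=|S|$. For example, in $\G_1$, $S=\{\alpha_1,\alpha_2,\alpha_3\}$ leaves three bipartite paths through $\beta_1,\beta_2,\beta_3$. Subsets like $\{\alpha_1,\alpha_2\}$ leave the non-bipartite piece through $\alpha_3$ together with one bipartite path through $\beta_1$, so $b=1<2$; but they fail the disconnector inequality, because removing $\alpha_1$ from $S$ merges $\beta_1$'s piece into a non-bipartite component. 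Symmetry cuts the list considerably: $\G_1$ has $S_3$-symmetry, and $\G_2$ and $\G_3$ have the reflection swapping $\alpha_1,\alpha_2$.

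The main obstacle is this exhaustive verification, especially for $\G_1$ and $\G_2$. There one must correctly rule out every core subset $S$ with $b_G(S)\neq|S|$ by exhibiting a vertex $s\in S$ violating the disconnector inequality. The subtle point is that a simplicial vertex can belong to a disconnector when its return makes a bipartite component non-bipartite. I would organize the check by $|S\cap\{\alpha_1,\alpha_2,\alpha_3\}|$ (respectively $|S\cap\{\alpha_1,\alpha_2\}|$). If the case list proves unwieldy, I would fall back on the Macaulay2 verification mentioned in the introduction for the basic graphs, combined with the induction above.
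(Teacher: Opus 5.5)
\textbf{Gap: the induction fails at the path vertex next to the core.} Let $p_1$ be the non-core vertex adjacent to the attachment vertex $c_0$ of a path. With $T=\{p_1\}$, \Cref{prop on the unmixed property} needs $\II_{G'}$ to be unmixed, where $G'$ is $G$ with that whole path deleted. This is false in all three classes, so you can neither strengthen the induction to include these graphs nor verify them directly.
\begin{itemize}
\item For $\G_3$, $G'$ is the diamond. There $\{x_1,x_2\}$ is a sign-split disconnector (\Cref{cremark_5}) with $b=1<2$.
\item For $\G_1$ without $P_1$, the set $\{\beta_1,\alpha_3\}$ is a sign-split disconnector, and $G'\setminus\{\beta_1,\alpha_3\}$ is a single tree, so $b=1<2$.
\item For $\G_2$ without $P_3$, the set $\{x_1,\alpha_1\}$ is a sign-split disconnector, and $G'\setminus\{x_1,\alpha_1\}$ is a single tree, so $b=1<2$.
\end{itemize}
These are precisely the sets the paper later uses to force the paths to exist. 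Disconnectors containing $p_1$ occur as soon as the path has two non-core vertices, e.g.\ $\{p_1\}$ itself. The Macaulay2 fallback does not help, because passing from the basic graphs to longer paths is exactly this failing step.

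\textbf{What is missing.} You need the observation that $c_0$ and $p_1$ never lie together in a disconnector. More generally, every path vertex in $S$ is a non-leaf with no neighbour in $S$, and it contributes exactly one bipartite component. With this you have two options.
\begin{itemize}
\item Restrict the inductive claim to those $S'\in\D(G')$ with $c_0\notin S'$.
\item Follow the paper. It first applies \Cref{prop on the unmixed property} with $T$ inside the core, chosen so that $G\setminus T$ is a union of paths and at most one $\G_3$ graph. The choices are $\{x_1\}$, $\{y_1\}$ or $\{\alpha_1,\alpha_2\}$ in $\G_2$, and $\{\beta_i,\beta_j\}$, $\{\beta_1,\alpha_3\}$ (up to symmetry) or $\{\alpha_i,\alpha_j\}$ in $\G_1$. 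The class $\G_3$ itself comes from the Cohen--Macaulayness in \cite[Theorem 4.13]{Kum21}. In the remaining cases the disconnector condition forces $S$ to avoid the core, except possibly one $\beta_i$ in $\G_1$. Then $b_G(S)=|S|$ follows by the count above.
\end{itemize}

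\textbf{Your sample check applies the disconnector inequality backwards.} Since $\beta_2\beta_3\notin E(G)$, $G\setminus\{\alpha_1,\alpha_2\}$ is a union of two paths. So $\{\alpha_1,\alpha_2\}$ is a genuine disconnector with $b=2$, and it is one of the paper's sets $T$. More importantly, re-inserting a vertex that merges a bipartite piece into a non-bipartite one \emph{decreases} $b+c$, which is exactly what \Cref{def: disconnector} demands. Had your description of the components been right, $\{\alpha_1,\alpha_2\}$ would have been a sign-split disconnector with $b=1$, i.e.\ a counterexample to the theorem. An exhaustive check run with that reading of the inequality would therefore discard precisely the sets that matter.
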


 \begin{proof}
If $G$ is bipartite, then it must be a tree graph. In this case $\II_G \equiv J_G$. So, $\II_G$ is unmixed if and only if $J_G$ is unmixed if and only if $G$ is a path graph \cite[Corollary 1.2]{EHHNMJ}. Now we take $G$ to be non-bipartite. 
If $G= K_3$, then $\II_G$ is unmixed by \cite[Theorem 3.5]{Kum21}. Next we show that if $G$ obtains the structure of \Cref{fig:fig 13}, then $\II_G$ is unmixed. 
For that purpose, it is sufficient to prove that $b_G(S) = |S|$ for all $S\in \mathcal{D}(G)$. Note that if $S\in \mathcal{D}(G)$ and $T\subseteq S$, then $S\setminus T \in \mathcal{D}(G\setminus T)$. 

\noindent \textbf{Case 1:} \new{Let $G\in \mathfrak{G}_3$. Then by \cite[Theorem 4.13 (i)]{Kum21}, $\frac{R}{\II_G}$ is Cohen-Macaulay. Therefore, $\II_G$ is unmixed.} 

\noindent \textbf{Case 2:} Let $G\in \mathfrak{G}_2$ and $S\in \mathcal{D}(G)$.

\noindent \textbf{$x_1\in S$ or $y_1\in S$:} Then both $G\setminus \{x_1\}$ and $G\setminus \{y_1\}$ have exactly one bipartite component. By Case 1, $\II_{G\setminus \{x_1\}}$ and $\II_{G\setminus \{y_1\}}$ are unmixed. So, $b_G(S) = |S|$ (\Cref{prop on the unmixed property}).\\
\noindent \textbf{ $x_1, y_1 \notin S$:} Since $S$ is a disconnector set, $\alpha_1\in S$ if and only if $\alpha_2 \in S$.
 Suppose both $\alpha_1$, $\alpha_2$ belong to $S$. Now $G\setminus \{\alpha_1, \alpha_2\}$ is union of a path graph and a singleton graph. So, by \Cref{prop on the unmixed property}, $b_G(S) = |S|$.
 Now if $\alpha_1,\alpha_2 \notin S$, then $S\subset V(P_2)\cup V(P_3)$. This implies that $S$ contains \new{no} consecutive vertices of $P_2$ and $P_3$. \new{Therefore, $b(P_2\setminus S) = c(P_2\setminus S)= |P_2\cap S|+1$ and $b(P_3\setminus S) = c(P_3\setminus S)=|P_3\cap S|+1$. Note that, in $G\setminus S$ two components of $P_2\setminus S$ and $P_3\setminus S$ containing the end vertices $y_1$ and $x_1$ are contained in a single non-bipartite component.} Hence $b_G(S) = |S|$. \\

 \noindent \textbf{Case 3:} Let $G\in \mathfrak{G}_1$ and $S\in \mathcal{D}(G)$.\\
\noindent \textbf{ $\{\beta_i$, $\beta_j\}$ $\subseteq S$ for some $i,j\in [3]$: }
 Then $G\setminus \{\beta_i$, $\beta_j\}$ is union of two path graphs and one component in the class $\mathfrak{G}_1$. Hence by \Cref{prop on the unmixed property}, with $T= \{\beta_i, \beta_j\}$ we get $b_G(S) = |S|$.\\ 
\noindent \textbf{$\beta_i \in S$ for exactly one $i\in [3]$ :} Without loss of generality assume that $\beta_1 \in S$ and $\beta_2, \beta_3 \notin S$. If $\alpha_3 \in S$, then taking $T = \{ \beta_1, \alpha_3\}$ in \Cref{prop on the unmixed property} we get $b_G(S) = |S|$. So, now we assume that $\alpha_3 \notin S$. Note that if $\alpha_1, \alpha_2 \in S$ then taking $T= \{\alpha_1, \alpha_2\}$ in \Cref{prop on the unmixed property} we get $b_G(S) = |S|$. So, \new{we may assume that} both $\alpha_1$ and $\alpha_2$ do not belong to $S$. Since $\alpha_3$, $\beta_2$ or $\beta_3$ do not belong to $S$, $\alpha_1 \in S$ if and only if $\alpha_2 \in S$. Therefore, $\alpha_1$, $\alpha_2$, $\alpha_3$, $\beta_2$, $\beta_3$ do not belong to $S$. Thus $S\subset V(P_1 \cup P_2 \cup P_3)$. Hence $b_G(S) = |S|$.\\
\noindent \textbf{$\beta_1, \beta_2, \beta_3 \notin S$:} Then $\alpha_i \in S$ if and only if $\alpha_j\in S$ for $j\neq i$ and $i,j \in [3]$. Suppose $\{ \alpha_i, \alpha_j\} \subseteq S$. Then taking $T= \{ \alpha_i, \alpha_j\}$ in \Cref{prop on the unmixed property} we get $b_G(S) = |S|$. Suppose $\alpha_1$, $\alpha_2$, $\alpha_3$ do not belong to $S$. Since we already know $\beta_1 \beta_2, \beta_3 \notin S$, $S \subseteq  V(P_1 \cup P_2 \cup P_3).$ Hence $b_G(S) = |S|$.    
\end{proof}

The above theorem gives us some classes of unmixed parity binomial edge ideals. Later, we show that those are the only possible classes \new{among chordal graphs}. The unmixed property of $\mathcal{I}_G$ depends on the sign-split disconnector sets of $G$. Unlike binomial edge ideals, for parity binomial edge ideals of chordal graphs, the disconnector sets does not behave well under the graph operations as studied in \cite{BMS22}. Also, we do not have a short exact sequence as given in the papers \cite{EHHNMJ} and \cite{AR1} which enables one to use induction on certain related graphs. In short, what we see is that the standard techniques that work for the binomial edge ideals are not really useful for parity binomial edge ideals. So, to understand the behavior of disconnector sets, we construct an algorithm based on the structure of the graph and this algorithm leads us to some specific disconnector sets. These disconnector sets satisfy certain combinatorial properties which play an important role in characterizing $G$ when $\II_G$ is unmixed.
    
To simplify further exposition, we define the following function $m$. 
\[  m: \mathcal{P}([t]) \rightarrow \mathbb{N} \cup \{0\}, \quad 
\qquad \text{where} \qquad 
 m(\Gamma) = |V(\cap_{i\in \Gamma}K_{n_i})| .\] 
\new{Here $\mathcal{P}([t])$ represents the power set of $[t]= \{1, 2, \ldots, t\}$.}

\subsection{Algorithm:}\label{algo} 
\new{Now we describe the algorithm below. Recall that $G$ is a clique sum of $K_{n_1}, \ldots, K_{n_t}.$ At each step and each iteration of the algorithm, we choose a maximal subset of $\{1,\ldots, t\}$ such that it takes a nonzero value under the function $m$. Then we choose a vertex from the intersection of all complete graphs corresponding to that set. The construction of the algorithm ensures that all these chosen vertices form a connected subgraph of $G$. Later on we show that this connected graph is an induced tree and it is a maximal tree of $G$. To get a better understanding of the algorithm, we recommend the reader to follow Example \ref{example 1} alongside the algorithm:}

\vskip 2mm
\noindent
\textbf{Global Input:} The function $m$ and the graph $G$.
\begin{align*}
\textbf{Step $0$:} \qquad &  \textit{Iteration $1$}: \text{Define} \\
& T_1^0 = [t], \\
& \Gamma_1^0 = \{L\subseteq [t] : m(L) >0 \}, \\
& M(\Gamma_1^0) = \{ \text{maximal elements of}~ \Gamma_1^0 ~ \text{with respect to the order relation $\subseteq$} \}.\\
& \text{Fix }  L_1^0 \in M(\Gamma^0_1). \\
& \textit{Outputs of iteration $1$:} \quad L_1^0.\\
& \textit{Proceed to Iteration $2$ in Step $0$.}
\end{align*}

\begin{align*}
& \textit{Iteration $q\geq 2$}: \quad \text{Inputs:} \quad L_i^0,~ i<q.\\
& \text{Define} \\
& T_q^0 = T_{q-1}^0 \setminus L_{q-1}^0 = [t]\setminus \big( \bigcup_{j=1}^{q-1} L_j^0 \big) , \\
& \Gamma_q^0 = \{ L\subseteq [t] : L \cap T_q^0 \neq \emptyset, ~ m(L) >0\},\\
& M(\Gamma_q^0) = \{ \text{maximal elements of}~ \Gamma_q^0 ~ \text{with respect to the order relation $\subseteq$} \} , \\
& M_1(\Gamma_q^0) = \{ L\in M(\Gamma_q^0): L\cap L_i^0 \neq \emptyset ~\text{for exactly one $i<q$} \} , \\
& \text{If } M_1(\Gamma_q^0) = \emptyset , \text{then the } 0^{th} \text{ step terminates}.\\
&\text{Else } M_2(\Gamma_q^0) = \{ L\in M_1(\Gamma_q^0): |L\cap L_i^0| ~ \text{is minimum}\}.\\
& \text{Choose $L_q^0 \in  M_2(\Gamma_q^0)$ and $\displaystyle{x_q^0 \in \bigcap_{i\in L_q^0}V(K_{n_i})}$.}\\
& \textit{Outputs of Iteration $q$:}\quad L_q^0. \\
& \textit{Proceed to Iteration $q+1$ in Step $0$.}
\end{align*}
Once the $0^{th}$ step terminates, define
\begin{align*}
D_0 & = \{ 1 \} \cup \{ q: M_1(\Gamma_q^0) \neq \emptyset \} ~\text{and}~ H_0 = G[\{x_q^0: q\in D_0\}].\\
A_2^0 & = \{ i\new{\in [t]}: i\in (L_{q_1}^0 \cap L_{q_2}^0) ~ \text{for } q_1 \neq q_2 \in D_0 \}, \\
A_1^0 & = \{ i\new{\in [t]}: i\in L_q^0 ~ \text{ for exactly one } q\in D_0\}, \\
A_0^0 & = \{ i\new{\in [t]}: i\notin L_q^0 ~ \text{for all } q\}.
\end{align*}  

\textbf{Outputs of Step $0$:} $A_2^0, A_1^0, A_0^0, H_0, D_0$.   \\
Proceed to Step $1$.

\begin{align*}
\textbf{Step $p$ for $p\geq 1$:} \qquad & \text{Inputs: } A_0^{p-1}, A_1^{p-1}, A_2^{p-1}. \\
& \text{Define}\\
& T_0^p = A_1^{p-1} \cup A_0^{p-1}, \\
& L_0^p = \emptyset, \\
& \textit{Iteration $q, q\geq 1:$} \quad \text{Inputs:} \quad L_i^p,~ i<q.\\ 
& \text{Define}\\
& T_q^p = T_{q-1}^p \setminus L_{q-1}^p \\
& \Gamma_q^p =  \{ L\subseteq T_q^p: L\cap A_1^{p-1} \neq 0 ~\text{and}~ \bigcap_{i\in L}V(K_{n_i}) \setminus \bigcup _{j\in A_2^{p-1}\cup_{s=0}^{q-1} L_{s}^p} V(K_{n_j}) \neq \emptyset \}. \\
& \text{If } \Gamma_q^p = \emptyset , \text{then the } p^{th} \text{ step terminates} . \\
& \text{Else } M(\Gamma_q^p)= \{ \text{maximal elements of}~ \Gamma_q^p ~ \text{with respect to the order relation } \subseteq \},\\
& \text{Choose $L_q^p \in M(\Gamma_q^p)$
and $\displaystyle{x_q^p \in \bigcap_{i\in L_q^p}V(K_{n_i}) \setminus \bigcup _{j\in A_2^{p-1}\cup_{s=0}^{q-1} L_{s}^p} V(K_{n_j}})$.}\\
& \textit{Outputs of Iteration $q$:}\quad L_q^p. \\
& \textit{Proceed to Iteration $q+1$ in Step $p$.}
\end{align*}
Once the $p^{th}$ step terminates, define
\begin{align*}
D_p &= \{q: M(\Gamma_q^p) \neq \emptyset \}
~\text{and}~ H_p = G[\{x^i_q: i\in [p], q\in \cup _{j\in [p]}D_j\}].\\
A_2^p & = A_2^{p-1} \cup A_1^{p-1}, \\
A_1^p & = \{ i\in A_0^{p-1} : i \in L_q^p ~ \text{ for some}~  q\in D_p\}, \\
 A_0^p & = [t] \setminus (A_2^p\cup A_1^p ). 
\end{align*}
 \textbf{Outputs of Step $p:$} $A_2^p, A_1^p, A_0^p, D_p, H_p$. \\
 If $A_1^{p} = \emptyset$, then the algorithm terminates. Else proceed to Step $p+1$.

\begin{remark} \label{remarks on algorithm}
    \begin{enumerate}
    \item Note that the sets $A_0^p$, $A_1^p$, $A_2^p$, the graphs $H_p$, and the number of steps of the algorithm will vary depending on the choices of $L_q^p$ and $x_q^p$. Let $\mathcal{ L}$ be the collection of all data in one run of the algorithm with fixed choices of $L_q^p$, which we denote by $L_q^p (\mathcal{L})$, and $\mathrm{n}(\mathcal{L})$ denote the last step of the algorithm for these fixed choices. Specifically $\mathcal{L}$ represents one run of iteration of the algorithm. If $\mathcal{L}$ is fixed, then we use $\n$ instead of $\n(\mathcal{L})$ and $L_q^p$ instead of $L_q^p (\mathcal{L})$. 
    
        \item Observe that $A_0^{p} \sqcup A_1^{p} = A_0^{p-1}$ and $A_2^p, A_1^p$ and $A_0^p$ is a partition of $[t]$.

        \item At each step of the algorithm we construct a subgraph $H_p$ of $G$ and for $p< p^\prime$, $H_p$ is a subgraph of $H_{p^\prime}$. 
    \end{enumerate}
\end{remark}

\new{
\begin{example} \label{example 1}
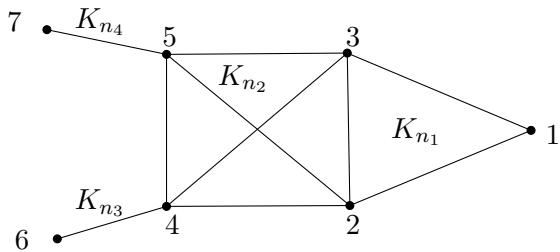
\begin{figure}[H]
    \centering
\begin{tikzpicture}[line cap=round,line join=round,>=triangle 45,x=1cm,y=1cm]
\clip(-7.5,-1.5) rectangle (2,4);
\draw  (-3.8463466690239216,2.011325981637946)-- (-3.8463466690239216,-0.011200555919785929);
\draw  (-3.8463466690239216,2.011325981637946)-- (-1.4435159961989221,2.0286125332410014);
\draw  (-1.4435159961989221,2.0286125332410014)-- (-1.408942892992807,0);
\draw  (-3.8463466690239216,-0.011200555919785929)-- (-1.408942892992807,0);
\draw  (-3.8463466690239216,2.011325981637946)-- (-1.408942892992807,0);
\draw  (-3.8463466690239216,-0.011200555919785929)-- (-1.4435159961989221,2.0286125332410014);
\draw  (-1.4435159961989221,2.0286125332410014)-- (1,1);
\draw  (-1.408942892992807,0)-- (1,1);
\draw  (-3.8463466690239216,-0.011200555919785929)-- (-5.298417003680756,-0.443364345996224);
\draw  (-3.8463466690239216,2.011325981637946)-- (-5.4367094165052166,2.339770462096037);
\draw (-5.2,2.76594230122228) node[anchor=north west] {$K_{n_4}$};
\draw (-5.2,0.35) node[anchor=north west] {$K_{n_3}$};
\draw (-3.3,2) node[anchor=north west] {$K_{n_2}$};
\draw (-0.9940656545194256,1.3) node[anchor=north west] {$K_{n_1}$};
\draw (1.0630339862444231,1.2) node[anchor=north west] {$1$};
\draw (-1.59909496062644,0) node[anchor=north west] {$2$};
\draw (-1.596455,2.5) node[anchor=north west] {$3$};
\draw (-4,0) node[anchor=north west] {$4$};
\draw (-4.036498736657555,2.5) node[anchor=north west] {$5$};
\draw (-6,-0.2) node[anchor=north west] {$6$};
\draw (-6.1,2.7) node[anchor=north west] {$7$};

\begin{scriptsize}
\draw [fill=black] (-3.8463466690239216,2.011325981637946) circle (1.5pt);
\draw [fill=black] (-3.8463466690239216,-0.011200555919785929) circle (1.5pt);
\draw [fill=black] (-1.4435159961989221,2.0286125332410014) circle (1.5pt);
\draw [fill=black] (-1.408942892992807,0) circle (1.5pt);
\draw [fill=black] (1,1) circle (1.5pt);
\draw [fill=black] (-5.298417003680756,-0.443364345996224) circle (1.5pt);
\draw [fill=black] (-5.4367094165052166,2.339770462096037) circle (1.5pt);
\end{scriptsize}
\end{tikzpicture}

    \caption{The smallest graph $G$ in the class $\mathfrak{G}_2$}
    \label{fig:exam_1}
\end{figure}  

Let $G$ be the clique sum of $K_{n_1}$, $K_{n_2}$, $K_{n_3}$ and $K_{n_4}$ as shown in \Cref{fig:exam_1}.\\
\textbf{Step $0$:} Iteration $1$: $T_1^0  = \{1,2,3,4\},$ So, $\Gamma_1^0 = \{ \{1\}, \{2\}, \{3\}, \{4\}, \{1,2\}, \{2,3\}, \{2,4\} \}$. This implies $M(\Gamma_1^0) = \{ \{1,2\}, \{2,3\}, \{2,4\}\}$.\\
Choose $L_1^0 = \{1,2\} \in M(\Gamma_1^0)$ and $x_1^0 = 2 \in  V(K_{n_1})\cap V(K_{n_2})$.\\
\textit{Outputs of Iteration 1:} $L_1^0 = \{ 1,2\}$.\\ 

Iteration $2$: Inputs: $L_1^0$.\\
$T_2^0  = T_{1}^0 \setminus L_{1}^0 = \{ 3,4\}$. So, $\Gamma_2^0 = \{ \{3\}, \{4\}, \{2,3\}, \{2,4\}$ and $M(\Gamma_2^0) = \{ \{2,3\}, \{2,4\} \}$. Therefore, $M_1(\Gamma_q^0) = \{ \{2,3\}, \{2,4\} \}$.
We Choose $L_2^0 = \{2,3\}$ and $x_2^0 = 4 \in  V(K_{n_2})\cap V(K_{n_3})$.\\ 
\textit{Outputs of Iteration 2:} $L_2^0 = \{ 2,3\}$.\\

Iteration $3$: Inputs: $L_1^0, L_2^0$.\\
$T_3^0  = T_{2}^0 \setminus L_{2}^0 = \{ 4\}$ and $M(\Gamma_3^0) = \{ \{2,4\} \}$ but $M_1(\Gamma_3^0) = \emptyset$.  So, Step $0$ terminates.\\

\begin{figure}[H]
\begin{minipage}{0.45\textwidth}
\textbf{Outputs of Step $0$:} $D_0 = \{1,2\}$, $H_0 = G[\{x_q^0: q\in D_0\}] = G[\{2,4\}]$.
$A_2^0 = \{ 2\}$, $A_1^0 = \{ 1,3 \}$ and $A_0^0 = \{4\}$,
\end{minipage}
\hfill
\begin{minipage}{0.45\textwidth}
\centering
   \begin{tikzpicture}[scale=0.9, line cap=round, line join=round, >=triangle 45, x=1cm, y=1cm]

\clip(-6.1,-1.4) rectangle (2,3.8);
\tikzset{every path/.style={black, line width=1pt, dash pattern=on 1pt off 3.5pt}}
\draw (-3.8463466690239216,2.011325981637946)-- (-1.4435159961989221,2.0286125332410014);
\draw (-1.4435159961989221,2.0286125332410014)-- (-1.408942892992807,0);
\draw (-1.4435159961989221,2.0286125332410014)-- (1,1);
\draw (-1.408942892992807,0)-- (1,1);
\draw (-3.8463466690239216,-0.011200555919785929)-- (-5.298417003680756,-0.443364345996224);
\draw (-3.8463466690239216,2.011325981637946)-- (-5.4367094165052166,2.339770462096037);

\draw[black, line width=1.5pt, dash pattern=] (-1.408942892992807,0) -- (-3.8463466690239216,-0.011200555919785929);

\draw (-3.8463466690239216,2.011325981637946)-- (-3.8463466690239216,-0.011200555919785929);
\draw (-3.8463466690239216,2.011325981637946)-- (-1.408942892992807,0);
\draw (-3.8463466690239216,-0.011200555919785929)-- (-1.4435159961989221,2.0286125332410014);

\draw (-5.2,2.76594230122228) node[anchor=north west] {$K_{n_4}$};
\draw (-5.2,0.35) node[anchor=north west] {$K_{n_3}$};
\draw (-3.3,2) node[anchor=north west] {$K_{n_2}$};
\draw (-0.9940656545194256,1.3) node[anchor=north west] {$K_{n_1}$};
\draw (-1.59909496062644,0) node[anchor=north west] {$  2$};
\draw (-4,0) node[anchor=north west] {$4$};

\begin{scriptsize}
\draw [fill=black] (1,1) circle (1pt);
\draw [fill=black] (-5.298417003680756,-0.443364345996224) circle (1pt);
\draw [fill=black] (-5.4367094165052166,2.339770462096037) circle (1pt);
\draw [fill=black] (-3.8463466690239216,2.011325981637946) circle (1.5pt);
\draw [fill=black] (-3.8463466690239216,-0.011200555919785929) circle (2.5pt);
\draw [fill=black] (-1.4435159961989221,2.0286125332410014) circle (1pt);
\draw [fill=black] (-1.408942892992807,0) circle (2.5pt);
\end{scriptsize}

\end{tikzpicture}
    \caption{Step $0$: $H_0$}
    \label{fig:step_0}
   \end{minipage} 
\end{figure}
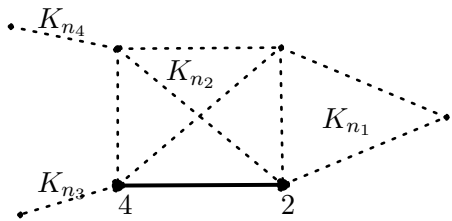

\textbf{Step $1$:} Inputs: $A_2^0, A_1^0, A_0^0.$\\ 
Iteration $1$: $T_1^1 = A_1^{0} \cup A_0^{0} = \{ 1,3,4 \}$. Then  $\Gamma_1^1= M(\Gamma_1^1) = \{ \{1\}, \{3\} \}.$\\
Choose $L_1^1 = \{1\}$ and $x_1^1 = 1\in V(K_{n_1}) \setminus V(K_{n_2})$.\\
\textit{Outputs of Iteration 1:} $L_1^1 = \{1\}$\\

Iteration $2$: Inputs: $L_1^1$.\\
$T_2^1 = T_1^1\setminus L_1^1 = \{3,4\}$. Then $\Gamma_2^1= M(\Gamma_2^1) = \{\{3\}\}$.\\
Choose  $L_2^1 = \{3\}$ and $x_2^1= 6$.\\
\textit{Outputs of Iteration 2:} $L_2^1 = \{3\}$\\

Iteration $3$: Inputs: $L_1^1, L_2^1$.\\
$T_3^1 = T_2^1\setminus L_2^1 = \{4\}$. Since $T_3^1 \cap A_1^0 = \emptyset$, $\Gamma_3^1 = \emptyset$. Hence, the Step $1$ terminates.

\begin{figure}[H]
\begin{minipage}{0.45\textwidth}
\textbf{Outputs of Step $1$:} $D_1 = \{1,2\}$ and the induced subgraph $H_1 = G[x_1^0, x_2^0, x_1^1, x_2^1] = G[\{2,4,1,6 \}]$. $A_2^1 = A_2^{0} \cup A_1^{0} = \{ 1, 2,3 \}$, $A_1^1 = \emptyset$ and $A_0^1 = \{4\}$.
    
\end{minipage}
\hfill
\begin{minipage}{0.45\textwidth}
\centering
\begin{tikzpicture}[scale=0.9, line cap=round, line join=round, >=triangle 45, x=1cm, y=1cm]

\clip(-5.9,-1.4) rectangle (2,3.8);
\tikzset{every path/.style={black, line width=1pt, dash pattern=on 1pt off 3.5pt}}
\draw (-3.8463466690239216,2.011325981637946)-- (-1.4435159961989221,2.0286125332410014);
\draw (-1.4435159961989221,2.0286125332410014)-- (-1.408942892992807,0);
\draw (-1.4435159961989221,2.0286125332410014)-- (1,1);
\draw [black, line width=1.5pt, dash pattern=](-1.408942892992807,0)-- (1,1);
\draw [black, line width=1.5pt, dash pattern=](-3.8463466690239216,-0.011200555919785929)-- (-5.298417003680756,-0.443364345996224);

\draw[black, line width=1.5pt, dash pattern=] (-1.408942892992807,0) -- (-3.8463466690239216,-0.011200555919785929);
\draw (-3.8463466690239216,2.011325981637946)-- (-5.4367094165052166,2.339770462096037);
\draw (-3.8463466690239216,2.011325981637946)-- (-3.8463466690239216,-0.011200555919785929);
\draw (-3.8463466690239216,2.011325981637946)-- (-1.408942892992807,0);
\draw (-3.8463466690239216,-0.011200555919785929)-- (-1.4435159961989221,2.0286125332410014);

\draw (-5.2,2.76594230122228) node[anchor=north west] {$K_{n_4}$};
\draw (-5.2,0.38) node[anchor=north west] {$K_{n_3}$};
\draw (-3.3,2) node[anchor=north west] {$K_{n_2}$};
\draw (-0.9940656545194256,1.3) node[anchor=north west] {$K_{n_1}$};
\draw (-1.59909496062644,0) node[anchor=north west] {$2$};
\draw (-4,0) node[anchor=north west] {$4$};
\draw (-6,-0.25) node[anchor=north west] {$6$};
\draw (0.54,1.1) node[anchor=north west] {$1$};

\begin{scriptsize}
\draw [fill=black] (1,1) circle (2.5pt);
\draw [fill=black] (-5.298417003680756,-0.443364345996224) circle (2.5pt);
\draw [fill=black] (-5.4367094165052166,2.339770462096037) circle (1pt);
\draw [fill=black] (-3.8463466690239216,2.011325981637946) circle (1.5pt);
\draw [fill=black] (-3.8463466690239216,-0.011200555919785929) circle (2.5pt);
\draw [fill=black] (-1.4435159961989221,2.0286125332410014) circle (1pt);
\draw [fill=black] (-1.408942892992807,0) circle (2.5pt);
\end{scriptsize}

\end{tikzpicture}
\caption{Step $1$: $H_1$}
    \label{fig:step_1}
\end{minipage}

\end{figure}
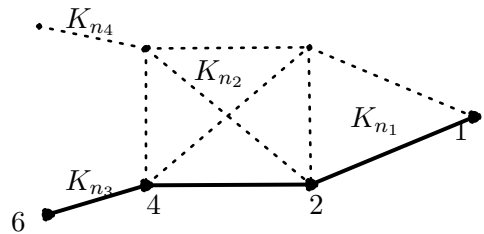
Since $A_1^1 = \emptyset$, the algorithm terminates after Step $1$. Note that the induced induced subgraph $H_1$ is a maximal tree of $G$. 
Let us construct the set $S = (\cup_{i\in A_2^1} V(K_{n_i})) \setminus V(H_1) = \{3,5 \}$. then $S$ is a sign-split disconnector set of $G$ and $H_1$ is a component of $G\setminus S$.

\end{example}

}

\begin{lemma}
The algorithm described above terminates after \new{a} finite number of steps.
\end{lemma}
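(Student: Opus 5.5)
We argue termination at two levels: every step consists of finitely many iterations, and there are finitely many steps. Both follow from strictly decreasing finite sets of indices in $[t]$.

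\emph{Step $0$ terminates.} Let $q \geq 2$ be an iteration that does not terminate Step $0$. Then $L_q^0 \in M_1(\Gamma_q^0) \subseteq \Gamma_q^0$, so $L_q^0 \cap T_q^0 \neq \emptyset$. Hence
\[
T_{q+1}^0 = T_q^0 \setminus L_q^0
\]
is a proper subset of $T_q^0$. Iteration $1$ also shrinks $T^0$, since $L_1^0$ is nonempty: singletons $\{i\}$ satisfy $m(\{i\}) = n_i > 0$, so $\Gamma_1^0 \neq \emptyset$. The sets $T_q^0$ are therefore strictly decreasing subsets of $[t]$, so $T_q^0$ becomes empty after at most $t$ iterations. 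Once $T_q^0 = \emptyset$, no $L$ meets $T_q^0$, so $\Gamma_q^0 = \emptyset$. Then $M_1(\Gamma_q^0) = \emptyset$ and Step $0$ terminates.

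\emph{Step $p \geq 1$ terminates.} The argument is the same. Any $L \in \Gamma_q^p$ satisfies $L \subseteq T_q^p$ and $L \cap A_1^{p-1} \neq \emptyset$, so $L \neq \emptyset$. Thus every chosen $L_q^p$ is a nonempty subset of $T_q^p$, and $T_{q+1}^p = T_q^p \setminus L_q^p$ is strictly smaller. After at most $|A_1^{p-1} \cup A_0^{p-1}| \leq t$ iterations, $T_q^p = \emptyset$, which forces $\Gamma_q^p = \emptyset$. So Step $p$ terminates.

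\emph{Finitely many steps.} By \Cref{remarks on algorithm}(2),
\[
A_0^{p-1} = A_0^p \sqcup A_1^p .
\]
If the algorithm proceeds from Step $p$ to Step $p+1$, then $A_1^p \neq \emptyset$, so $|A_0^p| < |A_0^{p-1}|$. Equivalently, $A_2^{p+1} = A_2^p \cup A_1^p$ strictly grows inside $[t]$. Since $|A_0^0| \leq t$, this can happen at most $t$ times. After that $A_1^p = \emptyset$ and the algorithm stops, so it runs for at most $t+1$ steps.

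The only point needing care is that each chosen $L_q^p$ really meets the current $T_q^p$, so the index set strictly shrinks. For Step $0$ with $q \geq 2$ this is built into the definition of $\Gamma_q^0$, and for $q = 1$ it is the nonemptiness of $L_1^0$. For $p \geq 1$ it comes from the requirement $L \cap A_1^{p-1} \neq \emptyset$. No structural property of chordal graphs is needed, since termination is purely a finiteness statement about $[t]$.
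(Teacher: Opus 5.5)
Your proof is correct, and its bound on the number of steps is exactly the paper's argument: the partition $A_0^{p-1}=A_0^p\sqcup A_1^p$ forces $\{A_0^p\}$ to strictly decrease inside $[t]$ as long as the algorithm continues. You also verify that each individual step has only finitely many iterations, since every chosen $L_q^p$ meets $T_q^p$ and so $T_q^p$ strictly shrinks; the paper's proof leaves this point implicit.
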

\begin{proof}
Since $A_0^{p} \sqcup A_1^{p} = A_0^{p-1}$, it follows that $A_1^{p} = \emptyset$ if and only if $A_0^{p} = A_0^{p-1}$. Hence, the algorithm terminates if $A_0^{p-1} = A_0^{p}$. Since $\displaystyle{\big\{ A_0^p \big\}_{p\geq 1}}$ is a strictly descending chain of subsets of $[t]$, this implies that the algorithm must terminate in at most $t+1$ steps.
\end{proof}

Now we prove two lemmas that give some detailed information about certain outcomes of the algorithm. The proof essentially follows from our construction of the sets $\Gamma_q^p$ in the algorithm. These two results are useful in understanding disconnector sets.

\begin{lemma}\label{cremark_3}
For $p\geq 1$, if $i\in A_1^{p-1}$, then either $i\in L_q^p$ for some $q\in D_p$ or $\displaystyle{V(K_{n_i}) \subseteq \bigcup _{j\in A_2^{p-1}\cup_{q\in D_p} L_{q}^p} V(K_{n_j})}$.
\end{lemma}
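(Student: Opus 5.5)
Fix a run $\mathcal{L}$ of the algorithm and a step $p \geq 1$. Take $i \in A_1^{p-1}$ with $i \notin L_q^p$ for every $q \in D_p$. Write $q_0$ for the iteration at which Step $p$ terminates, so that $\Gamma_{q_0}^p = \emptyset$ and $D_p = \{1, \ldots, q_0 - 1\}$. The plan is to read the conclusion off the emptiness of $\Gamma_{q_0}^p$ by testing the singleton $L = \{i\}$ against the definition.

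\textbf{Step 1: $i$ is still available at iteration $q_0$.} By definition $T_0^p = A_1^{p-1} \cup A_0^{p-1}$, so $i \in T_0^p$. The sets shrink only by $T_q^p = T_{q-1}^p \setminus L_{q-1}^p$, and $L_0^p = \emptyset$. Since $i \notin L_q^p$ for all $q \in D_p$, induction on $q$ gives $i \in T_q^p$ for every $q \leq q_0$. In particular $\{i\} \subseteq T_{q_0}^p$, and $\{i\} \cap A_1^{p-1} = \{i\} \neq \emptyset$. So $\{i\}$ satisfies the first two conditions for membership in $\Gamma_{q_0}^p$.

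\textbf{Step 2: read off the inclusion.} Since $\Gamma_{q_0}^p = \emptyset$, the remaining condition must fail for $L = \{i\}$. That condition is
\[
V(K_{n_i}) \setminus \bigcup_{j \in A_2^{p-1} \cup \bigcup_{s=0}^{q_0 - 1} L_s^p} V(K_{n_j}) \neq \emptyset .
\]
Its failure means exactly that $V(K_{n_i})$ is contained in that union. The index set is $A_2^{p-1} \cup \bigcup_{s=0}^{q_0-1} L_s^p$. Because $L_0^p = \emptyset$, it equals $A_2^{p-1} \cup \bigcup_{q \in D_p} L_q^p$, which is the required inclusion.

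\textbf{Main obstacle.} The only delicate point is the bookkeeping: identifying $D_p$ with the iterations $1, \ldots, q_0 - 1$, which are exactly those where $\Gamma_q^p \neq \emptyset$ and hence $M(\Gamma_q^p) \neq \emptyset$. One must also confirm that Step $p$ really reaches a terminating iteration. This holds because the $T_q^p$ strictly decrease: each chosen $L_q^p$ is a nonempty subset of $T_q^p$, as it meets $A_1^{p-1}$.
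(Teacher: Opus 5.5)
Your proposal is correct and follows essentially the same argument as the paper: both take the terminating iteration of Step $p$, where $\Gamma^p$ is empty, test the singleton $L=\{i\}$ against its definition, and read off the inclusion. Your additional bookkeeping (the induction showing $i\in T_{q_0}^p$, the observation $L_0^p=\emptyset$, and the strict decrease of the $T_q^p$ guaranteeing termination) makes explicit what the paper dismisses with ``clearly''.
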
 

\begin{proof}
From the algorithm, it follows that $L_q^p \cap A_1^{p-1} \neq \emptyset$. So, either $i\in L_q^p$ for some $q\in D_p$ or $i\notin \cup_{q\in D_p} L_q^p$. Suppose $s$ is the highest index in $D_p$, i.e., $\Gamma_{s+1}^p = \emptyset$. Note that 
    \[ \Gamma_{s+1}^p =  \{ L\subseteq T_{s+1}^p: L\cap A_1^{p-1} \neq 0 ~\text{and}~ \bigcap_{i\in L}V(K_{n_i}) \setminus \bigcup _{j\in A_2^{p-1}\cup_{j=1}^{s} L_{j}^p} V(K_{n_j}) \neq \emptyset \}. \]
    
Now, if $i\notin L_q^p$ for all $q\in D_p$, then take the set $L=\{i\}$. Clearly $L\cap A_1^{p-1} \neq \emptyset$ and $L\subseteq T_{s+1}^p$. Since $\Gamma_{s+1}^p = \emptyset$,  $V(K_{n_i})$ should be contained in $ \bigcup _{j\in A_2^{p-1}\cup_{q\in D_p} L_{q}^p} V(K_{n_j})$.
\end{proof}

\new{\begin{remark} \label{cremark_4}
    If the algorithm terminates after the $\n$th step, then there are two possibilities.

    \begin{enumerate}
        \item either $\Gamma_1^\n = \emptyset$, i.e., for any $i\in A^{\n -1}_1$, $V(K_{n_i}) \subseteq \bigcup _{j\in A_2^{\n -1}} V(\new{K}_{n_j})$.

        \item or for any $i\in L^\n_q$, $i\notin A_0^{\n-1}$.
    \end{enumerate}
\end{remark}}


Observe that the iteration at the $0$th step is different from the iterations at $p$th step for $p\geq 1$. In fact, for $p\geq 1$, there is repetition in the algorithm. We notice that irrespective of our choices, the sets $L_q^0$ satisfy some combinatorial properties. We first prove them. Later, using those properties, we show that $ H_0 $ is a tree. 
Furthermore, we establish that $ H_p $ is a tree for all \( p \) and for any choices of \( L_q^p \).

\begin{notation}

For any set $L \subseteq [t]$, $\mu(L)$ denotes the minimum element of the set $L$.
\end{notation}

\begin{lemma}\label{cprop_1}
For every $i\in L_q^0$, if $i\neq \mu(L_q^0)$, then $\lambda(i) \subseteq L_q^0$.
\end{lemma}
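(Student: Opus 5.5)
The plan is to use the maximality of $L_q^0$ among index sets $L$ with $m(L)>0$, together with the way the clique sum is built, as recorded in \Cref{cRemark_1}. Write $\mu=\mu(L_q^0)$ and fix $i\in L_q^0$ with $i\neq\mu$, so $i>\mu\ge 1$ and $\lambda(i)$ is defined. First I will show that the common vertex set $V_q:=\bigcap_{l\in L_q^0}V(K_{n_l})$, which is nonempty because $m(L_q^0)>0$, is contained in $V(K_{r_i})$. Since $\mu<i$, \Cref{cRemark_1} gives $K_{n_\mu}\cap K_{n_i}=K_{n_\mu}\cap K_{r_i}$. Hence
\[V_q\subseteq V(K_{n_\mu})\cap V(K_{n_i})\subseteq V(K_{r_i}).\]

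Next I will use the description $K_{r_i}=K_{n_i}\cap\bigcap_{l\in\lambda(i)}K_{n_l}$ stated after \Cref{definition of lambda}. It gives $V(K_{r_i})\subseteq V(K_{n_l})$ for every $l\in\lambda(i)$. Combined with the first step, $V_q\subseteq V(K_{n_l})$ for all $l\in\lambda(i)$. Therefore
\[\bigcap_{l\in L_q^0\cup\lambda(i)}V(K_{n_l})=V_q\neq\emptyset,\]
that is, $m(L_q^0\cup\lambda(i))>0$.

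Finally I will invoke maximality. If $q=1$, the set $L_1^0$ is maximal in $\Gamma_1^0$, and $L_1^0\cup\lambda(i)$ lies in $\Gamma_1^0$, so $\lambda(i)\subseteq L_1^0$. If $q\ge 2$, then $L_q^0\in M(\Gamma_q^0)$, so $L_q^0\cap T_q^0\neq\emptyset$. Enlarging $L_q^0$ keeps this intersection nonempty, so $L_q^0\cup\lambda(i)\in\Gamma_q^0$, and maximality again forces $\lambda(i)\subseteq L_q^0$. This uses only that $L_q^0$ is taken from $M(\Gamma_q^0)$; it does not depend on the further filters $M_1$ and $M_2$.

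The main obstacle is the first step: making sure \Cref{cRemark_1} really gives $V(K_{n_\mu})\cap V(K_{n_i})\subseteq V(K_{r_i})$ for $\mu<i$. This follows from the definition $K_{r_i}=\big(\bigcup_{j<i}K_{n_j}\big)\cap K_{n_i}$, because $K_{n_\mu}$ is one of the cliques in that union. It is also essential that $\mu$ is the minimum of $L_q^0$, not just any element different from $i$. That is why the statement excludes $i=\mu(L_q^0)$: for $i=\mu$ there may be no element of $L_q^0$ smaller than $i$, and then $V_q$ need not lie in $K_{r_i}$.
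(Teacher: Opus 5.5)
Your proof is correct and is the same argument as the paper's: since $\mu(L_q^0)<i$, \Cref{cRemark_1} places $\bigcap_{l\in L_q^0}V(K_{n_l})$ inside $K_{r_i}\subseteq\bigcap_{l\in\lambda(i)}K_{n_l}$. Hence $m(L_q^0\cup\lambda(i))>0$, and maximality of $L_q^0$ in $\Gamma_q^0$ forces $\lambda(i)\subseteq L_q^0$. You also check explicitly that $L_q^0\cup\lambda(i)$ still meets $T_q^0$, so it lies in $\Gamma_q^0$; the paper leaves this step implicit.
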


\begin{proof}
Since $i\neq \mu(L_q^0)$, $i > \mu(L_q^0)$. So, from \Cref{cRemark_1}, $K_{n_{\mu(L_q^0)}} \cap K_{n_i} = K_{n_{\mu(L_q^0)}} \cap (K_{n_i}\cap_{l\in \lambda(i)} K_{n_l}) $.  This implies that $m(L_q^0) = m(L_q^0 \cup \lambda(i) ) $. Since $L_q^0$ is a maximal element of $\Gamma_q^0$, $\lambda(i) \subseteq L_q^0$.
\end{proof}

\begin{example}
    In \Cref{fig:exam_1}, $\lambda(2) = \{1\}$ and $\lambda(3) = \{2\}$. Note that $\lambda(2) \subset L_1^0$ and $\lambda(3) \subset L_2^0$.
\end{example}

\begin{lemma} \label{cprop_7}
    Let $i\in [t]$ be such that $i< \mu(L_q^0)$, for some $q\in D_0$. Then for every $j\in L_q^0$, $K_{n_j} \cap K_{n_i} \subseteq K_{n_{\mu(L_q^0)}} \cap K_{n_i}  $.
\end{lemma}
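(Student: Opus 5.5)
The plan is to argue by induction on $j \in L_q^0$, taken in increasing order, writing $\mu = \mu(L_q^0)$. The base case $j = \mu$ is trivial. For $j > \mu$, the tools are \Cref{cRemark_1}, applied to the pair $i < j$, and \Cref{cprop_1}, which gives $\lambda(j) \subseteq L_q^0$.

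\textbf{Inductive step.} Fix $j \in L_q^0$ with $j > \mu$. Since $i < \mu < j$, \Cref{cRemark_1} gives
\[
K_{n_j} \cap K_{n_i} \;=\; K_{n_i} \cap K_{r_j} \;=\; K_{n_i} \cap \Big(K_{n_j} \bigcap_{l\in\lambda(j)} K_{n_l}\Big).
\]
The right-hand side is contained in $K_{n_i} \cap K_{n_l}$ for every $l \in \lambda(j)$. By \Cref{cprop_1}, $\lambda(j) \subseteq L_q^0$, and every $l \in \lambda(j)$ satisfies $l < j$. We also need $\lambda(j) \neq \emptyset$. This holds because $K_{r_j}$ is the intersection of $K_{n_j}$ with the union of the earlier cliques. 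Here $\mu < j$ and $j,\mu$ both lie in $L_q^0$, which has $m(L_q^0) > 0$. So $K_{n_\mu} \cap K_{n_j} \neq \emptyset$, hence $K_{r_j}$ is nonempty. By the clique sum structure, $K_{r_j}$ is contained in some earlier $K_{n_l}$, so $\lambda(j) \neq \emptyset$.

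Now pick any $l \in \lambda(j)$. Then $l \in L_q^0$ and $l < j$, so the induction hypothesis applies whenever $l \geq \mu$. It gives
\[
K_{n_j} \cap K_{n_i} \subseteq K_{n_l} \cap K_{n_i} \subseteq K_{n_\mu} \cap K_{n_i}.
\]
The case $l < \mu$ cannot occur, since $\mu$ is the minimum of $L_q^0$ and $l \in L_q^0$. This completes the inductive step.

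\textbf{Main obstacle.} The delicate point is guaranteeing that $\lambda(j)$ is nonempty and lies inside $L_q^0$. The nonemptiness comes from the clique sum convention in \Cref{not:chordal}: $K_{r_j}$ is a clique of the previously built graph, so it lies in some earlier $K_{n_l}$. Containment in $L_q^0$ is exactly \Cref{cprop_1}. If $K_{r_j}$ could be empty (disconnected gluing), the inclusion would be immediate anyway, since then $K_{n_j} \cap K_{n_i} = \emptyset$. So I would treat that degenerate case separately, first, and then run the induction above.
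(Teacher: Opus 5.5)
Your proposal is correct and follows essentially the same argument as the paper: induction on $j\in L_q^0$, where \Cref{cRemark_1} takes you from $j$ down to some $j_1\in\lambda(j)$ and \Cref{cprop_1} guarantees $j_1\in L_q^0$. Your version additionally spells out two points the paper leaves implicit, namely that $\lambda(j)\neq\emptyset$ and that $j_1\geq \mu(L_q^0)$, so the induction hypothesis really applies.
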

\begin{proof}
 If $j= \mu(L_q^0)$, then it is trivial. If $j > \mu(L_q^0)$, then $K_{n_j} \cap K_{n_i} \subseteq K_{n_{j_1}}\cap K_{n_i}$ for some $j_1 \in \lambda(j)$, by \Cref{cRemark_1}. Then by \Cref{cprop_1}, $j_1\in L_q^0$, and hence the proof follows by induction.
\end{proof}
According to the algorithm, every $L_i^0$ for $i>1$ intersects with some $L_j^0$ for some $j<i$. We show that the minimum element of their intersection must be the minimum element of $L_i^0$ or $L_j^0$. So, either $\mu(L_i^0)$ or $\mu(L_j^0)$ belong to $A_2^0$. This helps us to prove the next lemma.

\begin{lemma}\label{cprop_6}
If $L_i^0$ and $L_j^0$ intersect for some $i\neq j$, then $\mu(L_i^0 \cap L_j^0)$ is either $\mu(L_i^0)$ or $\mu(L_j^0)$.
\end{lemma}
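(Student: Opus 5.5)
Let $k = \mu(L_i^0 \cap L_j^0)$, and assume $k \neq \mu(L_i^0)$ and $k \neq \mu(L_j^0)$. We aim for a contradiction. Then $k > \mu(L_i^0)$ and $k > \mu(L_j^0)$. By \Cref{cprop_1} applied to $L_i^0$ and $k$, we get $\lambda(k) \subseteq L_i^0$. Applying it again to $L_j^0$ and $k$, we get $\lambda(k) \subseteq L_j^0$. Hence $\lambda(k) \subseteq L_i^0 \cap L_j^0$.

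Now $k \geq 2$, because $k$ exceeds $\mu(L_i^0) \geq 1$. So $\lambda(k)$ is defined. It is nonempty, since $K_{r_k}$ is nonempty for a connected clique sum and is contained in some earlier clique $K_{n_l}$ with $l<k$. Every element of $\lambda(k)$ is strictly less than $k$ by definition. Pick any $l \in \lambda(k)$. Then $l \in L_i^0 \cap L_j^0$ and $l < k = \mu(L_i^0 \cap L_j^0)$, which contradicts the minimality of $k$. Therefore $k$ equals $\mu(L_i^0)$ or $\mu(L_j^0)$.

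The main point to check is that $\lambda(k) \neq \emptyset$. This holds because $G$ is connected: $K_{n_k}$ is glued along a nonempty $K_{r_k}$, and $K_{r_k}$ lies in some $K_{n_l}$ with $l<k$. If the clique-sum convention ever permits an empty gluing, I would add the standing assumption that $G$ is connected, so $m(\{l,k\})>0$ for some $l<k$. The other point to check is that \Cref{cprop_1} applies to every $L_q^0$ with $q\in D_0$, including $q=1$. This is fine, because its proof uses only the maximality of $L_q^0$ in $\Gamma_q^0$, and that maximality holds in every iteration.
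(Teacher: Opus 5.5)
Your proof is correct and is essentially the paper's argument: assuming $k=\mu(L_i^0\cap L_j^0)$ differs from both minima, \Cref{cprop_1} applied to both sets gives $\lambda(k)\subseteq L_i^0\cap L_j^0$, and since $\lambda(k)$ is a nonempty set of indices smaller than $k$, this contradicts the minimality of $k$. Your observation that $k>\mu(L_i^0)\geq 1$ forces $k\geq 2$ absorbs the paper's separate $k=1$ case. Your justification of $\lambda(k)\neq\emptyset$ via connectedness of the clique sum fills in a point the paper only asserts.
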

\begin{proof}
Let $\mu(L_1^0 \cap L_2^0) = l$. \new{If $l=1$, then $l$ must be the minimum index of both $L_1^0$ and $L_2^0$. Therefore, $\mu(L_i^0 \cap L_j^0)= \mu(L_i^0) = \mu(L_j^0)$ = 1.} Now, suppose $l>1$ and $l\neq \mu(L_1^0)$ as well as $l\neq \mu(L_2^0)$. Then by \Cref{cprop_1}, $\lambda(l) \subseteq L_1^0 \cap L_2^0$. \new{Since $l>1$, $\lambda(l) \neq \emptyset$} and it contains indices smaller than $l$, this yields a contradiction. So, $l$ must be one of $\mu(L_i^0)$ or $\mu(L_j^0)$.   
\end{proof}

\begin{example}
    In \Cref{example 1}, the minimum element of $L_1^0\cap L_2^0$ is $2$ and it is the minimum element of $L_2^0.$
\end{example}

Recall that $x_q^0 \in \cap_{l\in L_q^0} V(K_{n_l})$ and $L_q^0$ are maximal sets that contain all $l\in[t]$ such that $x_q^0 \in V(K_{n_l})$. We also know that if two vertices $x$ and $y$ of $G$ are adjacent, then there must be some $l\in [t]$ so that $x,y \in V(K_{n_l})$. Thus, for some $r_1, r_2 >0$, if $L_{r_1}^0 \cap L_{r_2}^0 \neq \emptyset$, then $x_{r_1}^0$ and $x_{r_2}^0$ are adjacent in $H_0$ and the converse is also true. Now suppose that for some vertex $x$, $\{x,x_{r_1}^0,x_{r_2}^0\}$ is a $3$ cycle. So, there exists $i,j\in [t]$ so that $x, x_{r_1}^0 \in V(K_{n_i})$ and $x, x_{r_2}^0 \in V(K_{n_j})$. The next lemma shows that if both $i,j$ are in $A_1^0$, then there exists $u\in A_2^0$ such that $x\in K_{n_u}$. Later, using this result, we show that $H_1$ is a tree.
\begin{lemma} \label{clemma_1}
Let $i,j\in A_1^0$ be such that $i\in L_{r_1}^0$ and $j\in L_{r_2}^0$, where $r_1 \neq r_2$ and $L_{r_1}^0 \cap L_{r_2}^0 \neq \emptyset$. Then there exists $u\in A_2^0$ such that $K_{n_i} \cap K_{n_j} \subseteq K_{n_u}.$ 
\end{lemma}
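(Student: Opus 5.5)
The plan is to shrink the pair $(i,j)$ step by step. At each step the larger index is replaced by a smaller index from the same $L_r^0$, in a way that the intersection $K_{n_i}\cap K_{n_j}$ can only get larger (contained in the new intersection). We stop once one of the two current indices lies in $A_2^0$. Two observations about the endpoint: if $K_{n_i}\cap K_{n_j}=\emptyset$, any $u\in L_{r_1}^0\cap L_{r_2}^0\subseteq A_2^0$ works; and if a current index already lies in $A_2^0$, or the two current indices coincide (so that index lies in $L_{r_1}^0\cap L_{r_2}^0\subseteq A_2^0$), we take $u$ to be that index.

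Concretely, keep a pair $(a,b)$ with $a\in L_{r_1}^0$, $b\in L_{r_2}^0$ and $K_{n_i}\cap K_{n_j}\subseteq K_{n_a}\cap K_{n_b}$; start with $(a,b)=(i,j)$. Suppose without loss of generality that $a<b$.
\begin{itemize}
\item If $b\neq \mu(L_{r_2}^0)$, \Cref{cRemark_1} gives $K_{n_a}\cap K_{n_b}=K_{n_a}\cap K_{r_b}\subseteq K_{n_l}$ for every $l\in\lambda(b)$. By \Cref{cprop_1}, $\lambda(b)\subseteq L_{r_2}^0$.
\item Moreover $K_{n_a}\cap K_{r_b}\subseteq K_{n_a}\cap K_{n_l}$, so replacing $b$ by any $l\in\lambda(b)$ preserves the invariant.
\item Since $l<b$, the maximum of the pair strictly decreases.
\end{itemize}
The case $b<a$ is symmetric and uses $L_{r_1}^0$. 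Hence the process terminates, either at one of the stopping situations above or in the state $a<b=\mu(L_{r_2}^0)$ (or symmetrically $b<a=\mu(L_{r_1}^0)$).

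In the terminal state we use \Cref{cprop_6}. Put $c=\mu(L_{r_1}^0\cap L_{r_2}^0)$, which lies in $A_2^0$; \Cref{cprop_6} says $c=\mu(L_{r_1}^0)$ or $c=\mu(L_{r_2}^0)$.
\begin{itemize}
\item If $c=\mu(L_{r_2}^0)=b$, then $u=b\in A_2^0$ works.
\item Otherwise $c=\mu(L_{r_1}^0)\le a<b=\mu(L_{r_2}^0)\le c$, using $a\in L_{r_1}^0$ and $c\in L_{r_2}^0$. This is a contradiction.
\end{itemize}
So in every case we obtain $u\in A_2^0$ with $K_{n_i}\cap K_{n_j}\subseteq K_{n_u}$. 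Alternatively, \Cref{cprop_7} can be used directly in the case $a<\mu(L_{r_2}^0)$ to jump straight to $b=\mu(L_{r_2}^0)$.

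The main obstacle I expect is verifying the descent step carefully. I need to check that \Cref{cRemark_1} gives containment in $K_{n_l}$ for an $l\in\lambda(b)$ that stays inside the same $L_{r_2}^0$; this is exactly where \Cref{cprop_1} is needed, and it requires $b\ne\mu(L_{r_2}^0)$. I also need to handle the bookkeeping when the intermediate indices leave $A_1^0$. If an intermediate index lands in $A_2^0$ we simply stop, and otherwise we only need membership in $L_{r_1}^0$ or $L_{r_2}^0$, not in $A_1^0$.
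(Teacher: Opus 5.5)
Your proof is correct and is essentially the paper's argument: you descend along $\lambda$ inside the same $L_r^0$ using \Cref{cRemark_1} and \Cref{cprop_1}, and use \Cref{cprop_6} to see that the descent ends in $A_2^0$. The only difference is bookkeeping. The paper applies \Cref{cprop_6} first (to arrange $\mu(L_{r_1}^0)\in A_2^0$) and then runs separate inductions for $j<i$ and $j>i$, whereas your single descent on $\max(a,b)$ handles the switching between those two cases more cleanly.
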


\begin{proof}
Since $L_{r_1}^0 \cap L_{r_2}^0 \neq \emptyset$, by \Cref{cprop_6}, $\mu(L_{r_1}^0 \cap L_{r_2}^0)$ is either $\mu(L_{r_1}^0)$ or $\mu(L_{r_2}^0)$. Without loss of generality we assume that $\mu(L_{r_1}^0 \cap L_{r_2}^0)= \mu(L_{r_1}^0)$. Then $\mu(L_{r_1}^0)\in A_2^0$. Define $L=  L_{r_1}^0 \cap A_1^0\ $. Then $i\in L$ and  $i> \mu(L_{r_1}^0) $. We have two cases.

\begin{itemize}
    \item[\textbf{Case 1:}] $j< i$. Then $K_{n_i} \cap K_{n_j} = K_{r_i}\cap K_{n_j} \subseteq K_{n_{i_1}}\cap K_{n_j}$ for some $i_1\in \lambda(i)$. By \Cref{cprop_1}, $\lambda(i) \subseteq L_{r_1}^0$. Hence $i_1\in L_{r_1}^0$. Now we proceed by induction on $i$, considered as an element of $L$. Suppose $i= \mu(L)$. Then $i_1<i$ and $i_1\in L_{r_1}^0$. Since $i =\mu(L)$, $i_1 \notin A_1^0$ so that $i_1\in A_2^0$. Then choose $u= i_1$. Now suppose $i> \mu(L)$. 
    If $i_1\in A_2^0$, then choose $i_1= u$. If $i_1 \notin A_2^0$, then $i_1\in A_1^0$ and hence $i_1\in L$. Since $i_1 <i$, by the induction hypothesis, there exists $u\in A_2^0$ such that $ K_{n_{i_1}}\cap K_{n_j} \subseteq K_{n_u}$. Since $K_{n_i} \cap K_{n_j} \subseteq K_{n_{i_1}}\cap K_{n_j} $, the assertion follows. 

    \item[\textbf{Case 2:}] $j>i$. Let $L'= \{ s\in L_{r_2}^0 \cap A_1^0: s>i \} $. Then $j\in L'$. Note that \new{ $j> \mu( L_{r_2}^0),$ because $j \in  L_{r_2}^0 $.} Therefore, by \Cref{cprop_1}, $\lambda(j) \subseteq L_{r_2}^0$. Since $j>i$, $K_{n_i} \cap K_{n_j} \subseteq K_{n_i}\cap K_{n_{j_1}} $ for some $j_1\in \lambda(j)$. 
    We proceed by induction on $j$ considered as an element of $L'$. Suppose $j= \mu(L')$. If $j_1\in A_2^0 $, then choose $u = j_1$. If $j_1 \notin A_2^0$, then $j_1\in A_1^0$ and $j_1<i$. So, by \textbf{Case 1} there exists $u\in A_2^0$ such that $K_{n_i}\cap K_{n_{j_1}} \subseteq K_{n_u}$, hence $K_{n_i} \cap K_{n_j}  \subseteq K_{n_u} $. Now assume that $j> \mu(L')$. If $j_1\in A_2^0$, then choose $u = j_1$. If $j_1 \notin A_2^0$, then $j_1<j$ and $j_1\in A_1^0$. If $j_1 < i$, then the proof follows by \textbf{Case 1}. If $j_1>i$, then $j_1 \in L'$, hence the proof follows by induction hypothesis on $j_1$.
\end{itemize} 
\end{proof}

The next two propositions give some detailed information on how the vertices $x_q^p \in \cap_{l\in L_q^p} V(K_{n_l})$ are connected to each other. This helps us to understand the structure of $H_p$.

\begin{proposition} \label{cprop_2}
Let $p\geq 1$. Then for given $q\in D_p$, there exists $\tilde{q} \in D_{p-1}$, such that the vertices $x_q^p$ and $x_{\tilde{q}}^{p-1}$ are adjacent. Moreover, if $p' < p-1$, then $x^p_q$ and $x^{p'}_{q'}$ are not adjacent for any $q' \in D_{p'}$.
\end{proposition}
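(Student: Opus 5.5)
The plan is to exploit two features of the algorithm. First, every $L_q^p$ meets $A_1^{p-1}$. Second, each chosen vertex $x_q^p$ is chosen outside every clique indexed by $A_2^{p-1}$. Adjacency of two vertices of $G$ means they lie in a common $K_{n_l}$, so both parts reduce to bookkeeping with the index sets $A_0^p, A_1^p, A_2^p$, using the partition in \Cref{remarks on algorithm}(2).

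\emph{Existence of $\tilde q$.} Fix $q\in D_p$. By definition of $\Gamma_q^p$ there is $i\in L_q^p\cap A_1^{p-1}$.
\begin{itemize}
\item If $p-1\ge 1$, the definition of $A_1^{p-1}$ gives $i\in L_{\tilde q}^{p-1}$ for some $\tilde q\in D_{p-1}$.
\item If $p=1$, then $i\in A_1^0$ lies in exactly one $L_{\tilde q}^0$ with $\tilde q\in D_0$.
\end{itemize}
In either case both $x_q^p$ and $x_{\tilde q}^{p-1}$ lie in $V(K_{n_i})$, so they are adjacent once we know they are distinct. For $p-1\ge1$, $L_{\tilde q}^{p-1}$ contains some index $j\in A_1^{p-2}$, and $A_1^{p-2}\subseteq A_2^{p-1}$. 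Thus $x_{\tilde q}^{p-1}\in V(K_{n_j})$ with $j\in A_2^{p-1}$, while $x_q^p$ was chosen outside all such cliques, so $x_q^p\neq x_{\tilde q}^{p-1}$.

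For $p=1$, the same argument works whenever $L_{\tilde q}^0$ contains an index of $A_2^0$. The degenerate situation is $L_{\tilde q}^0\subseteq A_1^0$, for instance $D_0=\{1\}$, so that $A_2^0=\emptyset$. This is the step I expect to be the real obstacle. There I would try to use the fact that $L_{\tilde q}^0$ is a maximal element of $\Gamma^0$, so $L_{\tilde q}^0$ is exactly the set of all cliques containing $x_{\tilde q}^0$. If $x_q^1=x_{\tilde q}^0$, then $L_q^1\subseteq L_{\tilde q}^0$. I would then try to derive a contradiction from the maximality of $L_q^1$ in $\Gamma_q^1$, or else show that a different vertex $x_{\tilde q'}^{0}$ is adjacent to $x_q^1$. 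If neither works, the statement should be read as asserting adjacency or coincidence in this degenerate case.

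\emph{Non-adjacency for $p'<p-1$.} The key claim is that every clique $K_{n_l}$ containing $x_{q'}^{p'}$ has $l\in A_2^{p'+1}$. Granting this, suppose $x_q^p$ and $x_{q'}^{p'}$ are adjacent. Then they share some $K_{n_l}$, and the claim gives $l\in A_2^{p'+1}\subseteq A_2^{p-1}$ because the $A_2$ sets increase. This contradicts the choice of $x_q^p$ outside $\bigcup_{j\in A_2^{p-1}}V(K_{n_j})$. (The same argument also shows $x_q^p\neq x_{q'}^{p'}$.)

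To prove the claim, first take $p'=0$. The set $L_{q'}^0\cup\{l\}$ still meets $T_{q'}^0$ and has positive $m$-value, since $x_{q'}^0$ lies in every clique it indexes. Maximality of $L_{q'}^0$ therefore forces $l\in L_{q'}^0$. Hence $l\notin A_0^0$, so $l\in A_1^0\cup A_2^0=A_2^1$.

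Now take $p'\ge1$ and suppose $l\notin A_2^{p'+1}=A_2^{p'}\cup A_1^{p'}$. Then $l\in A_0^{p'}$. Two consequences follow:
\begin{itemize}
\item $l\notin A_2^{p'-1}$, since $A_0^{p'}\subseteq A_0^{p'-1}$;
\item $l$ lies in no $L_s^{p'}$, since otherwise $l\in A_1^{p'}$.
\end{itemize}
Hence $l\in T_{q'}^{p'}$. The vertex $x_{q'}^{p'}$ lies in $V(K_{n_l})$ and in every clique indexed by $L_{q'}^{p'}$, and it avoids the excluded cliques. So $L_{q'}^{p'}\cup\{l\}\in\Gamma_{q'}^{p'}$, contradicting the maximality of $L_{q'}^{p'}$.
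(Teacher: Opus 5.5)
Your argument is the paper's argument in both parts. For existence, the paper also takes $i\in L_q^p\cap A_1^{p-1}$, finds $\tilde q$ with $i\in L_{\tilde q}^{p-1}$, and concludes from $x_q^p,x_{\tilde q}^{p-1}\in V(K_{n_i})$. For non-adjacency, it also shows that every clique containing $x_{q'}^{p'}$ is indexed in $L_{q'}^{p'}\subseteq A_2^{p'}\cup A_1^{p'}\subseteq A_2^{p-1}$, and $x_q^p$ avoids all such cliques. You are more careful than the paper in two places. Your maximality argument justifies the membership $j\in L_{q'}^{p'}$, which the paper only asserts. The paper also never checks that $x_q^p\neq x_{\tilde q}^{p-1}$; your distinctness argument for $p\ge 2$, and for $p=1$ when $L_{\tilde q}^0\cap A_2^0\neq\emptyset$, is correct.

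The degenerate case you isolate cannot be rescued, because the statement is false there as written.
\begin{itemize}
\item \emph{When it occurs.} $L_2^0$ must meet $L_1^0$, and each later $L_q^0$ meets an earlier one. So $L_{\tilde q}^0\subseteq A_1^0$ happens exactly when $D_0=\{1\}$. For connected $G$ this means $L_1^0=[t]$, i.e.\ $m([t])>0$.
\item \emph{What the algorithm then does.} $A_2^0=\emptyset$, and the unique maximal element of $\Gamma_1^1$ is $[t]$. Hence $L_1^1=[t]=L_1^0$, $D_1=\{1\}$, and $x_1^1$ is an arbitrary vertex of $\bigcap_{i\in[t]}V(K_{n_i})$.
\item \emph{A counterexample.} Nothing prevents $x_1^1=x_1^0$. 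For two triangles glued at a single vertex ($m([t])=1$) it is forced. Then $x_1^1$ has no neighbour among the $x_{\tilde q}^0$.
\end{itemize}
So both of your proposed rescues fail: $L_1^1=[t]$ is legitimately maximal, and there is no second $\tilde q$. The defect is not only in this proposition. In the glued-triangles example $S^{\n}(G)=V(G)\setminus\{x_1^0\}$ is not a disconnector, so the later triangle lemma and the sign-split theorem for $S^{\n}(G)$ also break. Moreover, the paper invokes this very proposition in the case $m([t])\neq 0$.

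What is actually provable is your weakened reading: $x_q^p$ is adjacent or equal to some $x_{\tilde q}^{p-1}$, and equality is possible only when $p=1$ and $D_0=\{1\}$. Alternatively, one can add the requirement $x_1^1\neq x_1^0$ to the algorithm, which can be met exactly when $m([t])\ge 2$.
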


\begin{proof}
Recall that $\displaystyle{ x_q^p \in \bigcap_{i\in L_q^p}V(K_{n_i}) \setminus \bigcup _{j\in A_2^{p-1}\cup_{s=1}^{q-1} L_{s}^p} V(K_{n_j})}$ and $L_q^p \cap A_1^{p-1} \neq \emptyset$. This implies that there exists $\tilde{q}\in D_{p-1}$ such that $L_q^p \cap L_{\tilde{q}}^{p-1} \neq \emptyset$. Thus for any $i\in L_q^p \cap L_{\tilde{q}}^{p-1}$, $x_q^p, x_{\tilde{q}}^{p-1} \in V(K_{n_i}) $. Therefore, they are adjacent.

Consider $x_{q^\prime}^{p^\prime}$ where $p^\prime <p-1$ and $q^\prime \in D_{p^\prime}$. Suppose $x_{q^\prime}^{p^\prime} \in V(K_{n_j})$ for some $j\in [t]$. Then $j\in L_{q^\prime}^{p^\prime}$, which implies that $j$ is either in $A_2^{p^\prime}$ or $A_1^{p^\prime}$. Since $p^\prime <p-1$, $A_2^{p-1} \supseteq A_2^{p^\prime} \cup A_1^{p^\prime} $. Since $x_q^p \notin \cup_{i\in A_2^{p-1}} V(K_{n_i})$, it follows that $x_q^p \notin V(K_{n_j})$. Hence $x_q^p$ is not adjacent to $x_{q^\prime}^{p^\prime}$.
\end{proof}

\begin{proposition} \label{cprop_3}
 For $p\geq 1$, the vertices $x_q^p$ and $x_{q^\prime}^p$ are not adjacent for any $q \neq q^\prime \in D_p$.
\end{proposition}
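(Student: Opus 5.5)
Two vertices of $G$ are adjacent exactly when some clique $K_{n_l}$ contains both. So the plan is to assume that $x_q^p$ and $x_{q'}^p$ are adjacent for some $q\neq q'$ in $D_p$, take $l\in[t]$ with $x_q^p, x_{q'}^p\in V(K_{n_l})$, and show that no such $l$ can exist. Without loss of generality $q<q'$, so $L_q^p$ was chosen at an earlier iteration than $L_{q'}^p$. By construction,
\[
x_{q'}^p \in \bigcap_{i\in L_{q'}^p}V(K_{n_i}) \setminus \bigcup_{j\in A_2^{p-1}\cup\bigcup_{s=0}^{q'-1} L_s^p} V(K_{n_j}),
\]
and this excluded union contains $\bigcup_{j\in L_q^p}V(K_{n_j})$ and $\bigcup_{j\in A_2^{p-1}}V(K_{n_j})$. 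It follows that $l\notin L_q^p$ and $l\notin A_2^{p-1}$. Likewise $l\notin L_s^p$ for every $s<q'$.

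Next I locate $l$ in the partition of \Cref{remarks on algorithm}(2). Since $l\notin A_2^{p-1}$, we have $l\in A_1^{p-1}\cup A_0^{p-1}=T_0^p$. Moreover $l\notin L_s^p$ for all $s<q'$, and $T_{q'}^p = T_0^p\setminus \bigcup_{s<q'}L_s^p$, so $l\in T_{q'}^p$, and in fact $l\in T_{q}^p$ as well. Now consider $L := L_q^p\cup\{l\}\subseteq T_q^p$. It meets $A_1^{p-1}$ because $L_q^p$ does. The vertex $x_q^p$ lies in $\bigcap_{i\in L}V(K_{n_i})$, and by its choice it avoids $\bigcup_{j\in A_2^{p-1}\cup\bigcup_{s<q}L_s^p}V(K_{n_j})$. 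Hence $L\in\Gamma_q^p$. Since $l\notin L_q^p$, $L$ strictly contains $L_q^p$, which contradicts the maximality of $L_q^p$ in $M(\Gamma_q^p)$.

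The only delicate point is checking that $l\in T_q^p$, that is, that $l$ was not removed at an iteration before $q$. This is exactly what the exclusion of $\bigcup_{s<q'}L_s^p$ in the definition of $x_{q'}^p$ gives us. If one prefers not to rely on $x_q^p$ directly, the same contradiction arises by noting that $x_q^p$ itself witnesses nonemptiness of the required intersection for $L$. In either form, the only ingredients are the definitions of $\Gamma_q^p$, $T_q^p$ and $x_q^p$, so I expect no real obstacle beyond bookkeeping of indices. This is in the same spirit as the proof of \Cref{cprop_2}.
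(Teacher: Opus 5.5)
Your proof is correct and is essentially the paper's argument. Both obtain a contradiction from the maximality of $L_q^p$ in $M(\Gamma_q^p)$, combined with the exclusion conditions in the choices of $x_q^p$ and $x_{q'}^p$. The only difference is the order of the steps. The paper uses the exclusion for $x_q^p$ to place the common clique index in $T_q^p$, deduces by maximality that it lies in $L_q^p$, and then contradicts the exclusion for $x_{q'}^p$. You instead use the exclusion for $x_{q'}^p$ first, and let $x_q^p$ witness that $L_q^p\cup\{l\}\in\Gamma_q^p$.
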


\begin{proof}
Suppose $q< q^\prime$ and $x_q^p$ and $x_{q^\prime} ^p$ are adjacent. Then there exists a complete graph $K_{n_j}$ containing both the vertices. We know that $[t] = A_2^{p-1} \cup A_1^{p-1} \cup A_0^{p-1}$. Since by our choice, $x_q^p \notin \bigcup _{i\in A_2^{p-1}\cup_{s=1}^{q-1} L_{s}^p} V(K_{n_i})$, $j\notin A_2^{p-1}\cup_{s=1}^{q-1} L_{s}^p$. So, $j\in T_q^p $. But then $j$ must belong to $L_q^p$, as it is a maximal element of $\Gamma_q^p$. Hence it follows from the algorithm that $x_{q^\prime}^p \notin \cup_{i\in L_q^p} V(K_{n_i})$. In particular, $x_{q^\prime}^p \notin V(K_{n_j})$, a contradiction.
\end{proof}
Now we have all the prerequisites to prove the key outcome of the algorithm: that $H_p$ is a tree.

\begin{proposition}\label{H_0 tree}
$H_0$ is a tree.
\end{proposition}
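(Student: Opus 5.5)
We have to show that $H_0$, the induced subgraph on $\{x_q^0 : q\in D_0\}$, is connected and contains no cycle. The plan rests on two observations about $L_q^0$ and $x_q^0$, which is the form of the remark preceding \Cref{clemma_1}.

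First, each $L_q^0$ is maximal in $\Gamma_q^0$, so $L_q^0=\{l : x_q^0\in V(K_{n_l})\}$. Indeed, if $x_q^0\in V(K_{n_l})$, then $m(L_q^0\cup\{l\})>0$, and by maximality $l\in L_q^0$.

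Second, two vertices of $G$ are adjacent exactly when some maximal clique $K_{n_l}$ contains both. Combining the two facts, $x_{r_1}^0$ and $x_{r_2}^0$ are adjacent if and only if $L_{r_1}^0\cap L_{r_2}^0\neq\emptyset$. Distinct $q$ give distinct vertices: for $q\ge 2$ we have $L_q^0\cap T_q^0\neq\emptyset$, so $L_q^0\neq L_r^0$ for $r<q$.

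So $H_0$ is isomorphic to the intersection graph of the family $\{L_q^0\}_{q\in D_0}$. Connectivity follows at once: each $L_q^0$ with $q\ge2$ lies in $M_1(\Gamma_q^0)$, so it meets some earlier $L_i^0$. Induction on $q$ then shows $H_0$ is connected.

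For acyclicity, the same membership condition says each new vertex $x_q^0$ is adjacent to exactly one earlier vertex $x_i^0$ with $i<q$. Adding vertices one at a time, each joined by exactly one edge to the already-built graph, produces a tree. The catch is that $M_1$ is defined relative to the sets chosen before iteration $q$, namely $L_i^0$ for $i<q$, which are exactly the earlier vertices. Later vertices impose their own condition when they are added. Hence every edge of $H_0$ is counted exactly once, namely from its larger index, and $|E(H_0)|=|D_0|-1$. A connected graph with $|D_0|$ vertices and $|D_0|-1$ edges is a tree. One bookkeeping point remains: the set $D_0=\{1\}\cup\{q : M_1(\Gamma_q^0)\neq\emptyset\}$ is an initial segment $\{1,\dots,k\}$, because the step terminates at the first empty $M_1$.

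The main obstacle is to confirm that ``$L\cap L_i^0\neq\emptyset$ for exactly one $i<q$'' really captures all adjacencies of $x_q^0$ to earlier vertices. This is exactly the claim that adjacency equals intersection of the $L$'s, which relies on the maximality description of $L_q^0$. I would check carefully that maximality in $\Gamma_q^0$ gives closure under adding any $l$ with $x_q^0\in V(K_{n_l})$. Adding $l$ keeps $L\cap T_q^0\neq\emptyset$, so the enlarged set stays in $\Gamma_q^0$, and the closure holds. If any doubt remained, \Cref{cprop_6} and \Cref{cprop_1} could be used to argue directly that a triangle among the $x_q^0$ forces a common index in three $L$'s, contradicting the ``exactly one'' condition. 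That route is not needed for the plan above, however.
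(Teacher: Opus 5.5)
Your proof is correct. It shares its core with the paper's proof. Maximality of $L_q^0$ in $\Gamma_q^0$ makes $L_q^0$ exactly the set of indices $l$ with $x_q^0\in V(K_{n_l})$. So two vertices $x_{r_1}^0, x_{r_2}^0$ are adjacent precisely when $L_{r_1}^0\cap L_{r_2}^0\neq\emptyset$, and the $M_1$ condition then gives connectivity.

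The difference is in how you rule out cycles. The paper observes that $H_0$, being an induced subgraph of a chordal graph, is chordal, so it suffices to exclude triangles. A triangle $x_{q_1}^0,x_{q_2}^0,x_{q_3}^0$ with $q_1$ the largest index would force $L_{q_1}^0$ to meet both $L_{q_2}^0$ and $L_{q_3}^0$. That contradicts $L_{q_1}^0\in M_1(\Gamma_{q_1}^0)$.

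You instead read the $M_1$ condition as saying that each $x_q^0$ with $q\ge 2$ has exactly one neighbour among $x_1^0,\dots,x_{q-1}^0$. Hence $H_0$ is built by successively attaching leaves, equivalently $|E(H_0)|=|D_0|-1$ for a connected graph.

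Your route does not use chordality at all. It also shows that the order in which the algorithm produces the vertices is a leaf-attachment order of the tree. The price is that you need both directions of the adjacency--intersection correspondence. You also need the vertices $x_q^0$ to be distinct, which you correctly verify via $L_q^0\cap T_q^0\neq\emptyset$; the paper leaves this point implicit. The paper's triangle argument needs only the direction ``adjacent implies a common clique index'', but it leans on chordality of $G$ to reduce arbitrary cycles to triangles.
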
 

\begin{proof}
Recall that $H_0 = G[\{x_q^0: q\in D_0\}]$ and $x_q^0 \in \cap_{i\in L_q^0} V(K_{n_i})$. Furthermore, given any $q\geq 1$, $L_q^0 \cap L_{q^\prime}^0 \neq \emptyset$ for exactly one $q^\prime <q$. Suppose $i\in [t]$ is such that $L_q^0 \cap L_{q^\prime}^0 \ni i$, then  $V(K_{n_i})$ contains both $x_q^0$ and $x_{q^\prime}^0$. This implies that for every $q\geq 1$, $x_q^0$ is adjacent to $x_{q^ \prime}^0$ for some $q^\prime <q$, i.e., $H_0$ is connected. Now $H_0$ is an induced subgraph of the chordal graph, so it is also chordal. Let us assume that $H_0$ has a $3$-cycle formed by the vertices $x_{q_1}^0, x_{q_2}^0$ and $x_{q_3}^0$ such that $q_1>q_2>q_3$. Then $\{  x_{q_1}^0, x_{q_2}^0 \}, \{x_{q_1}^0, x_{q_3}^0 \} \in E(G)$. Thus, there exist $i_1, i_2 \in[t]$ such that $V(K_{n_{i_1}}) \ni x_{q_1}^0, x_{q_2}^0$ and $V(K_{n_{i_2}}) \ni x_{q_1}^0, x_{q_3}^0$. Since $L_q^0$ are maximal subsets of $[t]$ with $m(L_q^0) \new{> 0}$, $i_1\in  L_{q_1}^0 \cap L_{q_2}^0$ and $i_2 \in L_{q_1}^0 \cap L_{q_3}^0$. This implies that $L_{q_1}^0$ intersects with $L_{q_2}^0$ and $L_{q_3}^0$, which is a contradiction. Hence $H_0$ does not have a $3$-cycle. So that it is a tree.
\end{proof}

\begin{proposition} \label{H_1 tree}
    $H_1$ is a tree.
\end{proposition}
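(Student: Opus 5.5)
We prove that $H_1$ is a tree in two parts: first that it is connected, then that it has no cycle. Since $H_1$ is an induced subgraph of the chordal graph $G$, it is chordal. A connected chordal graph with no $3$-cycle is a tree, so it suffices to show that $H_1$ is connected and contains no triangle.

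\textbf{Connectivity.} $H_1$ is the induced subgraph on the vertices of $H_0$ together with the vertices $x_q^1$, $q\in D_1$. By \Cref{H_0 tree}, $H_0$ is connected. By \Cref{cprop_2} (with $p=1$), each $x_q^1$ is adjacent to some $x_{\tilde q}^0$ with $\tilde q\in D_0$. Hence $H_1$ is connected.

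\textbf{No triangles.} Suppose $H_1$ has a $3$-cycle, and split into cases according to how many of its vertices come from Step $1$.
\begin{itemize}
\item No vertex from Step $1$: the triangle lies in $H_0$, which is impossible by \Cref{H_0 tree}.
\item Two or three vertices from Step $1$: two vertices $x_q^1, x_{q'}^1$ with $q\neq q'$ would be adjacent, contradicting \Cref{cprop_3}.
\end{itemize}

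The remaining case is a triangle $\{x, x_{r_1}^0, x_{r_2}^0\}$ with $x = x_q^1$ and $r_1\neq r_2\in D_0$. Since $x_{r_1}^0$ and $x_{r_2}^0$ are adjacent, some $K_{n_l}$ contains both. By the maximality of the sets $L^0$ (as in the proof of \Cref{H_0 tree}), $l\in L_{r_1}^0\cap L_{r_2}^0$, so $L_{r_1}^0\cap L_{r_2}^0\neq\emptyset$.

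Since $x$ is adjacent to $x_{r_1}^0$, there is $i\in[t]$ with $x, x_{r_1}^0 \in V(K_{n_i})$; likewise there is $j$ with $x, x_{r_2}^0\in V(K_{n_j})$. By maximality of $L^0_{r_1}$ and $L^0_{r_2}$, we get $i\in L_{r_1}^0$ and $j\in L_{r_2}^0$. By construction $x\notin \bigcup_{u\in A_2^0}V(K_{n_u})$, so $i,j\notin A_2^0$. Since $i$ and $j$ lie in some $L^0$, they are not in $A_0^0$. Therefore $i,j\in A_1^0$.

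Moreover $i\neq j$. If $i=j$, then $i$ would lie in both $L_{r_1}^0$ and $L_{r_2}^0$, hence in $A_2^0$, a contradiction.

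Now \Cref{clemma_1} applies to $i\in L_{r_1}^0$ and $j\in L_{r_2}^0$, and gives $u\in A_2^0$ with $K_{n_i}\cap K_{n_j}\subseteq K_{n_u}$. Since $x\in V(K_{n_i})\cap V(K_{n_j})$, this yields $x\in V(K_{n_u})$ with $u\in A_2^0$. This contradicts the choice of $x_q^1$, which by construction avoids all $V(K_{n_u})$ for $u\in A_2^0$. So this case is impossible too, and $H_1$ has no triangle.

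\textbf{Main obstacle.} The delicate step is justifying that the cliques $i$ and $j$ witnessing the edges from $x$ really lie in $L_{r_1}^0$ and $L_{r_2}^0$. This is where the maximality of the $L^0$'s enters: $L_q^0$ should be exactly $\{l : x_q^0\in V(K_{n_l})\}$, since adding any such $l$ keeps $m>0$ and preserves $L\cap T^0_q\neq\emptyset$. I would verify this carefully, as the paper does implicitly in the proof of \Cref{H_0 tree}.
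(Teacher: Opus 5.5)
Your proof is correct and follows essentially the same route as the paper: connectivity via \Cref{cprop_2}, no edges among the Step-$1$ vertices via \Cref{cprop_3}, and the triangle $\{x_q^1, x_{r_1}^0, x_{r_2}^0\}$ ruled out by \Cref{clemma_1} together with the fact that $x_q^1$ avoids every $V(K_{n_u})$ with $u\in A_2^0$. The only difference is packaging: the paper shows each $x_q^1$ has exactly one neighbour in $H_0$, using chordality to reduce two neighbours to an adjacent pair, whereas your reduction ``connected, chordal and triangle-free implies tree'' reaches the same triangle case a little more cleanly.
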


\begin{proof}
    We already know that $H_0$ is a tree. So, to prove that $H_1$ is a tree, it is sufficient to prove the following statements:
    \begin{itemize}
        \item[\textbf{(1)}] Each $x_q^1$ is adjacent to exactly one $x_i^0$ for $q\in D_1$ and $i\in D_0$.

        \item[\textbf{(2)}] $x_q^1$ and $x_{q^\prime} ^1$ are not adjacent, for any pair $q, q^\prime \in D_1$.
    \end{itemize} 
The statement $\textbf{(2)}$ follows from \Cref{cprop_3}. We now prove $\textbf{(1)}$. \new{Note that by \Cref{cprop_2}, $x_q^1$ is adjacent to one $x_i^0$ for some $i\in D_0$. We show that it cannot be adjacent to more than one such $x_i^0$.} Assume that $i_1,i_2 \in D_0$ are two distinct indices such that both $ x_{i_1}^0$ and $x_{i_2}^0$ are adjacent to $x_q^1$ for some $q\in D_1$. \new{We show that we can choose $x_{i_1}^0$ and $x_{i_2}^0$ such that they are adjacent. If they are not, then there exists an induced path $P:x_{i_1}^0, z_1, \ldots, z_{a-1}, z_a= x_{i_2}^0$ in $H_0$ and $P\cup \{x_q^1\}$ gives an induced cycle in $G$. Since $G$ is chordal and $H_0$ is a tree, $x_q^1$ must be adjacent to $z_1$. Then we replace $x_{i_2}^0$ by $z_1$. Therefore, $x_q^1, x_{i_1}^0, x_{i_2}^0$ forms an induced cycle in $G$.} Then there exist $l,s \in [t]$ such that $x_{i_1}^0, x_q^1 \in V(K_{n_l} )$ and $x_{i_2}^0, x_q^1 \in V(K_{n_s} )$. Then $l\in L_{i_1}^0$ and $s\in L_{i_2}^0$. Since $ x_{i_1}^0$ and $x_{i_2}^0$ are adjacent in $H_0$, $L_{i_1}^0 \cap L_{i_2}^0 \neq \emptyset$. 
From the algorithm, we get $x_q^1\notin  V(K_{n_j})$ for any $j\in A_2^0$. So, $l,s\in A_1^0$ and $l\neq s$. Then it follows from \Cref{clemma_1} that there exists $u\in A_2^0$ such that $x_q^1 \in V(K_{n_s}) \cap V(K_{n_l} ) \subseteq V(K_{n_u})$, which is a contradiction. Hence $\textbf{(1)}$ holds.
\end{proof}

\begin{proposition} \label{H_p tree}
    $H_p$ is a tree. for $p\geq 0$.
\end{proposition}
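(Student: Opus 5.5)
We argue by induction on $p$. The cases $p=0$ and $p=1$ are \Cref{H_0 tree} and \Cref{H_1 tree}. Let $p\geq 2$ and assume $H_{p-1}$ is a tree. The vertex set of $H_p$ is that of $H_{p-1}$ together with the new vertices $x_q^p$, $q\in D_p$. It therefore suffices to prove two statements:
\begin{itemize}
\item[\textbf{(1)}] each $x_q^p$ has exactly one neighbour in $H_{p-1}$;
\item[\textbf{(2)}] the new vertices are pairwise non-adjacent.
\end{itemize}
Under these two statements, $H_p$ is obtained from a tree by attaching pendant vertices, so it is a tree. Statement \textbf{(2)} is exactly \Cref{cprop_3}. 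For \textbf{(1)}, \Cref{cprop_2} gives at least one neighbour $x_{\tilde q}^{p-1}$, and also shows that $x_q^p$ has no neighbour in any level $p'<p-1$. So every neighbour of $x_q^p$ in $H_{p-1}$ lies among the level-$(p-1)$ vertices $x_{i}^{p-1}$, $i\in D_{p-1}$.

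To get uniqueness, suppose $x_q^p$ is adjacent to two distinct vertices $x_{i_1}^{p-1}$ and $x_{i_2}^{p-1}$. We join them by the unique path in the tree $H_{p-1}$ and use chordality, as in the proof of \Cref{H_1 tree}. The path together with $x_q^p$ forms a cycle. Since $H_{p-1}$ is induced, chordality forces $x_q^p$ to be adjacent to an interior vertex of the path. The interior of the path passes through levels $\le p-2$, because by \Cref{cprop_3} and \Cref{cprop_2} two level-$(p-1)$ vertices are never adjacent and are joined only through lower levels. This contradicts \Cref{cprop_2}, which says $x_q^p$ has no neighbour at levels $p'<p-1$. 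Note the tail of \Cref{cprop_2} applies only to $p'<p-1$, so the interior vertices at level $p-2$ are not yet excluded; the contradiction for them comes from the argument in the next paragraph.

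A cleaner way to organise this is to take a shortest counterexample path. Since $H_{p-1}$ is a tree and both endpoints are at level $p-1$, the path between $x_{i_1}^{p-1}$ and $x_{i_2}^{p-1}$ goes down to a common ancestor. If that ancestor is at level $p-2$, it is the common parent $y=x_{s}^{p-2}$, and chordality of the cycle $x_q^p,x_{i_1}^{p-1},y,x_{i_2}^{p-1}$ forces an edge from $x_q^p$ to $y$. This again contradicts \Cref{cprop_2}, since $p-2<p-1$. If the ancestor is lower, then $x_q^p$ must be adjacent to some interior vertex at level $\le p-2$, which is excluded by the same proposition.

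The main obstacle is justifying that the cycle $x_q^p, x_{i_1}^{p-1},\dots,x_{i_2}^{p-1}$ yields a chord at $x_q^p$ rather than a chord inside $H_{p-1}$; this is ruled out because $H_{p-1}$ is an induced tree. One also has to be careful that \Cref{cprop_2} indeed forbids adjacency to level $p-2$ for $p\geq 2$. It does: the condition is $p'<p-1$, and $p'=p-2$ satisfies it. Unlike the case $p=1$, no analogue of \Cref{clemma_1} is needed, because for $p\ge2$ there is always a lower level available to produce the contradiction.
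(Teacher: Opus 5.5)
Your proof is correct and follows the paper's argument. Both use induction, invoke \Cref{cprop_2} and \Cref{cprop_3} to confine the neighbours of $x_q^p$ to level $p-1$ and to make the two endpoints non-adjacent, and then apply chordality to the cycle formed by the tree path and $x_q^p$. The only differences are minor. The paper disposes of interior neighbours by replacing an endpoint with them, since such a neighbour must itself lie at level $p-1$. Your claim that the interior lies at levels $\le p-2$ tacitly uses that level-$(p-1)$ vertices are leaves of $H_{p-1}$, which is the degree-one fact from the previous step and should be carried in the induction. Your aside in the first paragraph that level $p-2$ is ``not yet excluded'' by \Cref{cprop_2} is simply wrong, as you yourself note at the end.
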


\begin{proof}
We proceed by induction on $p$. The statement is true for $p= 0,1$ by \Cref{H_1 tree} and \Cref{H_0 tree}. Take $p>1$. By induction hypothesis $H_{p-1}$ is a tree and $H_p$ is the induced subgraph of $G$ on the vertex set $V(H_{p-1}) \cup \{ x_q^p: q\in D_p\}$. The assertion follows once we prove the following:
\vskip 2mm
\noindent
\textbf{Claim}: $\deg_{H_p}(x_q^p) =1$ for any $q\in D_p$.

\noindent
\textit{Proof of the Claim}: Fix a $q\in D_p$. It follows from \Cref{cprop_2,cprop_3} that, in the graph $H_p$, $x_q^p$ is adjacent only to the vertices of the form $x_{q^\prime}^{p-1}$ for some $q^\prime \in D_{p-1}$. \new{Further, \Cref{cprop_2} ensures that $x_q^p$ is adjacent to some $x_{q^\prime}^{p-1}$.} Now suppose $x_q^p$ is adjacent to $x_{q_1}^{p-1}$ and $x_{q_2}^{p-1}$ for $q_1,q_2 \in D_{p-1}$ and $q_1 \neq q_2$. Since $H_{p-1}$ is connected, there exists a path $P_m$, for some $m>1$, in $H_{p-1}$, connecting $x_{q_1}^{p-1}$ and $x_{q_2}^{p-1}$. Without loss of generality, we assume that $x_q^p$ is adjacent only to the end vertices of $P_m$, for, if any interior vertex of $P_m$ is adjacent to $x_q^p$, then we may replace $x_{q_1}^{p-1}$ or $x_{q_2}^{p-1}$ with that vertex.  As we know $p-1 \geq 1$, by \Cref{cprop_3}, there is no edge between $x_{q_1}^{p-1}$ and $x_{q_2}^{p-1}$. This implies that $m>2$. Therefore, we see that the path $P_m$ together with $x_q^p$ form an induced cycle in $H_p$ of length grater than $3$, a contradiction. So, $x_q^p$ is adjacent to exactly one vertex in $H_p$. 
\end{proof}

\begin{example}
Note that in \Cref{example 1}, both $H_0= G[\{2,4\}]$ and $H_1 = G[\{ 1, 2, 4, 6\} ]$ are path graphs.
\end{example}

Now, based on the algorithm, we introduce the following set \[S^{\n}(G)= \big(\bigcup_{i\in A_2^{\n}} V(K_{n_i}) \big) \setminus V(H_{\n}),\] where $\n$ is the last step of the algorithm. Note that this set varies depending on the choices of $L_q^p$. 
\begin{example}
According to our choices in \Cref{example 1}, the algorithm terminates after $1$st step and $A_2^1= \{ 2,1,3\}$. So, $S^1(G) = \{ 3,5\} $. This is a sign-split disconnector set.
\end{example}
We show that $S^\n(G)$ is a disconnector set and it satisfies the sign-split property. The next two lemmas are preparations to prove this result.
First we show that the vertices of $S^\n(G)$ make a $3$-cycle with some edge of the tree $H_\n$. In fact we prove a general statement by taking any arbitrary step $p$ instead of the last step $\n$ of the algorithm.

\begin{lemma} \label{H_p with v nbipartite}
    For every vertex $v \in  \big( \bigcup_{i\in A_2^p} V(K_{n_i}) \setminus V(H_p) \big),$ there exist $v_1, v_2 \in V(H_p)$ such that $\{v,v_1,v_2\}$ forms a triangle. In particular, the induced subgraph on the vertex set $V(H_p) \cup \{v\}$ is non-bipartite.
\end{lemma}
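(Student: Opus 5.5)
Fix $v \in \bigcup_{i\in A_2^p} V(K_{n_i}) \setminus V(H_p)$ and choose $i \in A_2^p$ with $v \in V(K_{n_i})$. The plan is to show that the clique $K_{n_i}$ contains at least two vertices of $H_p$. Since $K_{n_i}$ is complete, any two such vertices $v_1, v_2$ together with $v$ span a triangle. The subgraph on $V(H_p)\cup\{v\}$ then contains an odd cycle, so it is non-bipartite.

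The key step is that, for $i \in A_2^p$, the clique $K_{n_i}$ meets $V(H_p)$ in at least two vertices. By the construction, $A_2^p = A_2^0 \cup A_1^0 \cup \cdots \cup A_1^{p-1}$. I will split into two cases.

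\textbf{Case $i\in A_2^0$.} By definition $i \in L_{q_1}^0\cap L_{q_2}^0$ for some $q_1\neq q_2$ in $D_0$. Hence both $x_{q_1}^0$ and $x_{q_2}^0$ lie in $V(K_{n_i})$. These are distinct vertices of $H_0\subseteq H_p$. They are distinct because each $L_q^0$ is maximal, so it contains every index of a clique through $x_q^0$; if $x_{q_1}^0 = x_{q_2}^0$, then $L_{q_1}^0=L_{q_2}^0$, which the algorithm forbids since $L_{q_2}^0\cap T_{q_2}^0\ne\emptyset$.

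\textbf{Case $i\in A_1^{p'}$ for some $p'<p$.} Here $i \in L_{q}^{p'}$ for some $q\in D_{p'}$, so $x_q^{p'}\in V(K_{n_i})$.
\begin{itemize}
\item If $p'=0$: the relevant $L_q^0$ intersects some other $L_{\tilde q}^0$. If the intersection meets $i$, we are back in Case $A_2^0$. Otherwise we instead use the step $1$ vertex. By Lemma~\ref{cremark_3}, either $i \in L_{q'}^1$ for some $q'\in D_1$, giving $x_{q'}^1\in V(K_{n_i})$ distinct from $x_q^0$, or $V(K_{n_i})$ is covered by other cliques.
\item If $p'\ge 1$: $i$ lies in $A_1^{p'}\subseteq A_0^{p'-1}$. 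Then, as in the proof of Proposition~\ref{cprop_2}, $L_q^{p'}$ meets some $L^{p'-1}_{\tilde q}$. Since $p'<p$, Lemma~\ref{cremark_3} applied at step $p'+1$ shows that either $i\in L_{q'}^{p'+1}$, so $x_{q'}^{p'+1}\in K_{n_i}$, or $V(K_{n_i})$ is covered by the other chosen cliques.
\end{itemize}

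The main obstacle is the covered alternative: $K_{n_i}$ might contain only the single tree vertex $x_q^{p'}$, so that $v$ sees just one vertex of $H_p$. To handle it, I would instead pick a different clique containing $v$. The covering statement of Lemma~\ref{cremark_3} gives $v\in V(K_{n_j})$ for some $j\in A_2^{p'}\cup\bigcup_s L_s^{p'+1}$ with strictly smaller "level". I then argue by induction on the level, with Case $A_2^0$ as the base, to find a clique through $v$ containing two tree vertices. Alternatively, I could use the fact that $v$ is adjacent to $x_q^{p'}$, which lies in two cliques of different levels, together with the chordality argument of Proposition~\ref{H_1 tree}: a second neighbour of $v$ on the tree, joined by a tree path, forces a triangle with consecutive tree vertices.
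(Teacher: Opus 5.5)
\textbf{There is a genuine gap.} Your plan rests on the claim that for $i\in A_2^p$ the clique $K_{n_i}$ itself contains two vertices of $H_p$. This claim is false, and it fails exactly in the covered alternative you were worried about.

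Take the fan with maximal cliques $K_{n_1}=\{x_2,u,w\}$, $K_{n_2}=\{x_1,x_2,u\}$, $K_{n_3}=\{x_2,x_3,w\}$, $K_{n_4}=\{x_1,y_1\}$, $K_{n_5}=\{x_3,y_3\}$ (a valid clique-sum order). Then $M(\Gamma_1^0)=\{\{2,4\},\{1,2,3\},\{3,5\}\}$. Every run gives $H_0$ equal to the path $x_1x_2x_3$, with $A_2^0=\{2,3\}$ and $A_1^0=\{1,4,5\}$. Since $V(K_{n_1})\subseteq V(K_{n_2})\cup V(K_{n_3})$, the index $1$ never enters any $\Gamma_q^1$. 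Step $1$ adds $y_1$ and $y_3$, and the algorithm stops with $H_1$ the path $y_1x_1x_2x_3y_3$ and $1\in A_2^1$. Yet $V(K_{n_1})\cap V(H_1)=\{x_2\}$, and $u\in V(K_{n_1})\setminus V(H_1)$ gets its triangle $\{u,x_1,x_2\}$ only from the different clique $K_{n_2}$. So the proof cannot be organized around finding two tree vertices inside the given clique $K_{n_i}$.

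Your repair does not close the gap either. You say the covering clique $K_{n_j}$, with $j\in A_2^{p'}\cup\bigcup_s L_s^{p'+1}$, has strictly smaller level. That is wrong for $j\in L_s^{p'+1}$, since $L_s^{p'+1}\subseteq A_1^{p'}\cup A_0^{p'}$. When $j\in A_0^{p'}$, such a $K_{n_j}$ may again meet $H_p$ only in $x_s^{p'+1}$. So the induction on level is not well-founded and need not end at a clique with two tree vertices.

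The missing idea, which is how the paper argues, is to stop looking for a single clique and only produce two distinct neighbours of $v$ in $H_p$:
\begin{itemize}
\item The first neighbour is $x_q^{p'}$, via $K_{n_i}$.
\item By \Cref{cremark_3}, if $i\in L_{q'}^{p'+1}$, the second neighbour is $x_{q'}^{p'+1}$.
\item In the covered case with $v\in V(K_{n_l})$ and $l\in L_s^{p'+1}$, the second neighbour is $x_s^{p'+1}$.
\item In the covered case with $l\in A_2^{p'}$, the induction hypothesis on $p$ applies to $v$ directly, since $v\notin V(H_{p'})$.
\end{itemize}
Then, since $H_p$ is a tree (\Cref{H_p tree}), $v$ together with the tree path joining its two neighbours gives a cycle in the chordal graph $G[V(H_p)\cup\{v\}]$. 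A shortest cycle there is a triangle, and it must contain $v$ because $H_p$ is triangle-free.

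Your closing ``alternatively'' sentence points at this chordality step, but never actually supplies the second neighbour. The fact that $x_q^{p'}$ lies in cliques of different levels does not give it. Two smaller points: in your $p'=0$ bullet, the subcase ``the intersection meets $i$'' is vacuous, because $i\in A_1^0$ lies in exactly one $L_q^0$. Your distinctness argument for $x_{q_1}^0\neq x_{q_2}^0$ is correct and fills a detail the paper leaves implicit.
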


\begin{proof}
We prove the assertion by induction on $p$. Let $S^p= \big( \bigcup_{i\in A_2^p} V(K_{n_i}) \setminus V(H_p) \big)$.
We show that for any vertex $v\in S^p$, $v$ is adjacent to two vertices of $H_p$. Then the induced subgraph $G[V(H_p) \cup \{v\}]$ has a cycle. Since $G$ is chordal \new{and $H_p$ is a tree, $G[V(H_p) \cup \{v\}]$ has a $3$-cycle of the form $\{v, v_1, v_2\}$, where $v_1, v_2 \in V(H_p)$. Hence, it is non-bipartite.}
\begin{itemize}
\item [$\mathbf{p=0}:$] Suppose $v\in V(K_{n_j}) \setminus V(H_0)$, for some $j\in A_2^0$. Hence $j\in L_{q_1}^0 \cap L_{q_2}^0$ for some $q_1,q_2 \in D_0$ and $q_1 < q_2$. Then $x_{q_1}^0$ and $x_{q_2}^0$ belong to $K_{n_j}$ which completes this case. 

\item [$\mathbf{p>0}:$] Assume by induction that the assertion is true for $r \leq p-1$. Let  $v\in V(K_{n_j})$, for some $j\in A_2^p$. We know that $A_2^p = A_2^{p-1} \cup A_1^{p-1}$ and $H_{p-1}$ is an induced subgraph of $H_p$. So, if $j\in A_2^{p-1}$, then $v\in S^{p-1}$ and by induction hypothesis, $V(H_{p-1}) \cup \{v\}$ is non-bipartite. Hence $V(H_p) \cup \{v\}$ is also non-bipartite. The remaining case is when $j\in A_1^{p-1}$. Then it follows from the definition of $A_1^{p-1}$ that $j\in L_q^{p-1}$ for some $q\in D_{p-1}$. So, $x_q^{p-1} \in V(K_{n_j})$. Hence $v$ and $x_q^{p-1}$ are adjacent. Now we find another vertex of $H_p$ that is adjacent to $v$. According to \Cref{cremark_3}, either $j\in L_{q^\prime}^p$ for some $q^{\prime}\in D_p$ or $\displaystyle{V(K_{n_j}) \subseteq \bigcup _{l\in A_2^{p-1}\cup \cup_{q\in D_p} L_{q}^p} V(K_{n_l})}$.
\begin{itemize}
    \item[Case 1:] If $j\in L_{q^\prime}^p$, then $x_{q^\prime}^p \in V(K_{n_j})$. So, $x_{q^\prime}^p$ is adjacent to $v$.  

    \item[Case 2:] If $\displaystyle{V(K_{n_j}) \subseteq \bigcup _{l\in A_2^{p-1}\cup \cup_{q\in D_p} L_{q}^p} V(K_{n_l})}$, then there exists $l\in A_2^{p-1}\cup_{s\in D_p} L_{s}^p$, such that $v\in V(K_{n_l})$.
\end{itemize}
If $l \in A_2^{p-1}$, then by induction we are done. If $l \in L_s^p$, then $v$ is adjacent to $x_s^p$. This completes the proof.
\end{itemize}
\end{proof}

The next lemma is used further to show that $H_n$ becomes a connected component of $G\setminus S^n(G)$. 

\begin{lemma}  \label{A_0 ncontain H_p}
    For $p\geq 0$ and every $i\in A_0^p$, $V(K_{n_i})$ does not contain any vertex of $H_p$.
\end{lemma}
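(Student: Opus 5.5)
Proof proposal. We argue by induction on $p$. Throughout we use the elementary fact that a vertex $x^{p'}_q$ produced by the algorithm lies in $V(K_{n_l})$ only if $l\in L^{p'}_q$. For $p'=0$ this holds because $L_q^0$ is a maximal element of $\Gamma_q^0$: if $x_q^0\in V(K_{n_l})$, then $m(L_q^0\cup\{l\})>0$, and $L_q^0\cup\{l\}$ still meets $T_q^0$. For $p'\ge 1$ it is the maximality argument already used in the proof of \Cref{cprop_3}. Since $x^{p'}_q\in\bigcap_{i\in L^{p'}_q}V(K_{n_i})$ and $x^{p'}_q$ avoids $\bigcup_{j\in A_2^{p'-1}\cup\bigcup_{s<q}L_s^{p'}}V(K_{n_j})$, any $l$ with $x^{p'}_q\in V(K_{n_l})$ lies in $T_q^{p'}$. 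Then $L^{p'}_q\cup\{l\}$ belongs to $\Gamma_q^{p'}$, because it still meets $A_1^{p'-1}$ and $x^{p'}_q$ witnesses the nonemptiness condition. Maximality now forces $l\in L^{p'}_q$.

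\textbf{Base case $p=0$.} Take $i\in A_0^0$, so $i\notin L_q^0$ for every $q\in D_0$. If some $x_q^0\in V(K_{n_i})$, the fact above would give $i\in L_q^0$, a contradiction. Hence $V(K_{n_i})\cap V(H_0)=\emptyset$.

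\textbf{Inductive step $p\ge1$.} Take $i\in A_0^p$. By \Cref{remarks on algorithm}, $A_0^p\subseteq A_0^{p-1}$, so the induction hypothesis shows that $V(K_{n_i})$ misses $V(H_{p-1})$. It remains to show that $x_q^p\notin V(K_{n_i})$ for each $q\in D_p$. Suppose instead that $x_q^p\in V(K_{n_i})$. By the fact above, $i\in L_q^p$. Since $i\in A_0^{p-1}$ and $q\in D_p$, the definition of $A_1^p$ then places $i$ in $A_1^p$. This contradicts $i\in A_0^p=[t]\setminus(A_2^p\cup A_1^p)$.

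\textbf{Main obstacle.} The only delicate point is the maximality fact for $p\ge1$. One has to check carefully that $l\in T_q^p$. This holds because $x^p_q$ avoids every clique indexed by $A_2^{p-1}\cup\bigcup_{s<q}L^p_s$, and $T_q^p$ is exactly the complement of that index set. We also need that adding $l$ preserves membership in $\Gamma_q^p$. The condition $L\cap A_1^{p-1}\neq\emptyset$ is inherited from $L_q^p$, and the set-difference condition is witnessed by $x_q^p$ itself. I would write out this verification explicitly. After that, the induction is routine.
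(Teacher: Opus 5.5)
Your proof is correct and follows the paper's argument. Both use induction on $p$: the inclusion $A_0^p\subseteq A_0^{p-1}$ handles $V(H_{p-1})$, and the maximality of $L_q^p$ in $\Gamma_q^p$ rules out $x_q^p\in V(K_{n_i})$. Your explicit check that $l\in T_q^p$, so that $L_q^p\cup\{l\}$ really lies in $\Gamma_q^p$, fills in a step the paper leaves implicit, and you correctly start the inductive step at $p\ge 1$ rather than the paper's $p>1$.
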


\begin{proof}
First we show that for any $i\in A_0^p$ and $q\in D_p$, $x_q^p \notin V(K_{n_i})$. By definition $x_q^p \in \cap_{j\in L_q^p}V(K_{n_j})$. Since $i\in A_0^p$, $i\notin L_q^p$. So, if $x_q^p \in V(K_{n_i})$, then $L_q^p \cup \{i\} \in \Gamma_q^p$, which contradicts the maximality of $L_q^p$. Hence, $x_q^p \notin V(K_{n_i})$ which proves the assertion of the lemma for $p=0$. We now proceed by induction on $p$. Let $p>1$ and $i$ be some element of $A_0^p$. Note that for $p\geq 1$, $A_0^p \subseteq A_0^{p-1}$. So, by induction hypothesis $V(K_{n_i})$ does not contain any vertex of $H_{p-1}$. Since $V(H_p) = V(H_{p-1}) \cup \{ x_q^p : q\in D_p\}$, $V(K_{n_i}) \cap V(H_p) = \emptyset$.
\end{proof}

\begin{corollary} \label{ccoro_1}
    Let $i\in [t]$ be such that $V(K_{n_i}) \cap V(H_{p}) \neq \emptyset$, then $i\in A_2^{p} \cup A_1^{p}$.
\end{corollary}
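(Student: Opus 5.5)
This is the contrapositive of \Cref{A_0 ncontain H_p}, so the plan is a one-line argument by contradiction that uses only the partition of $[t]$ produced by the algorithm.

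First, by \Cref{remarks on algorithm}(2), for every step $p\geq 0$ the sets $A_2^p$, $A_1^p$ and $A_0^p$ form a partition of $[t]$. For $p=0$ this holds directly from the definitions of $A_2^0$, $A_1^0$ and $A_0^0$: every index $i\in[t]$ lies in no $L_q^0$, in exactly one, or in at least two. For $p\geq 1$ it holds because $A_0^p$ is defined as the complement of $A_2^p\cup A_1^p$, and $A_1^p\subseteq A_0^{p-1}$ is disjoint from $A_2^p = A_2^{p-1}\cup A_1^{p-1}$.

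Now let $i\in[t]$ with $V(K_{n_i})\cap V(H_p)\neq\emptyset$, and suppose $i\notin A_2^p\cup A_1^p$. By the partition, $i\in A_0^p$. \Cref{A_0 ncontain H_p} then says that $V(K_{n_i})$ contains no vertex of $H_p$, which contradicts the hypothesis. Hence $i\in A_2^p\cup A_1^p$.

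There is no real obstacle here. The only point that needs checking is that the three sets really do partition $[t]$ at every step, including the base step $p=0$, and that check is exactly the one done above.
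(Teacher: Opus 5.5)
Your proof is correct and matches the paper's: the paper likewise notes that $[t] = A_2^{p} \sqcup A_1^{p} \sqcup A_0^{p}$ and concludes directly from the lemma that no $V(K_{n_i})$ with $i \in A_0^p$ meets $V(H_p)$. Your explicit check that the three sets partition $[t]$ at every step, including $p=0$, fills in a detail the paper cites from its earlier remark.
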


\begin{proof}
    Since $[t] = A_2^{p} \sqcup A_1^{p} \sqcup A_0^{p}$, the assertion follows from \Cref{A_0 ncontain H_p}.
\end{proof}

\begin{theorem} \label{cthm_1}
 Let $\n$ be the last step of the algorithm and $S^{\n}(G)= \big(\bigcup_{i\in A_2^{\n}} V(K_{n_i}) \big) \setminus V(H_{\n})$. Then $S^{\n}(G)$ is a sign-split disconnector set of $G$.
\end{theorem}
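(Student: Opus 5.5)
The plan is to show that the tree $H_{\n}$ is itself a bipartite connected component of $G\setminus S^{\n}(G)$, and that every vertex of $S^{\n}(G)$ is adjacent to it. Both the disconnector inequality and the sign-split property then follow almost formally. Write $S=S^{\n}(G)$. By definition $S\cap V(H_{\n})=\emptyset$, so $H_{\n}$ is an induced subgraph of $G\setminus S$. It is connected and bipartite because it is a tree, by \Cref{H_p tree}.

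\textbf{Step 1: $H_{\n}$ is a component of $G\setminus S$.} Let $x\in V(H_{\n})$, and let $w\notin V(H_{\n})$ be a neighbour of $x$. The edge $\{x,w\}$ lies in some clique $K_{n_i}$. Since $V(K_{n_i})\cap V(H_{\n})\neq\emptyset$, \Cref{ccoro_1} gives $i\in A_2^{\n}\cup A_1^{\n}$. The algorithm terminates at step $\n$ exactly when $A_1^{\n}=\emptyset$, so $i\in A_2^{\n}$. Hence $w\in \bigcup_{i\in A_2^{\n}}V(K_{n_i})\setminus V(H_{\n})=S$. Thus no edge leaves $H_{\n}$ inside $G\setminus S$, and $H_{\n}\in\mathcal{B}_G(S)$.

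\textbf{Step 2: disconnector inequality.} Fix $s\in S$. By \Cref{H_p with v nbipartite} applied with $p=\n$, there are $v_1,v_2\in V(H_{\n})$ with $\{s,v_1,v_2\}$ a triangle. So when $s$ is added back, the component of $G\setminus(S\setminus\{s\})$ containing $s$ contains $H_{\n}$ together with an odd cycle, and is therefore non-bipartite. Adding a single vertex never increases the number of components, so $c_G(S\setminus\{s\})\le c_G(S)$.

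For the bipartite count, compare the two families of components. Every component of $G\setminus S$ not in $\mathcal{C}_{G\setminus S}(s)$ is unchanged. The components in $\mathcal{C}_{G\setminus S}(s)$, which include $H_{\n}$, are merged with $s$ into a single non-bipartite component. Hence at least one bipartite component, namely $H_{\n}$, is lost and none is created, so $b_G(S\setminus\{s\})\le b_G(S)-1$. Adding the two inequalities gives
\[
b_G(S)+c_G(S)>b_G(S\setminus\{s\})+c_G(S\setminus\{s\})
\]
for every $s\in S$, so $S$ is a disconnector set.

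\textbf{Step 3: sign-split.} Each $s\in S$ is adjacent to $v_1\in V(H_{\n})$, so $H_{\n}\in\mathcal{C}_{G\setminus S}(s)$, and $H_{\n}$ is bipartite. By \Cref{cremark_5}, $S$ satisfies the sign-split property, so $S\in\mathcal{D}(G)$.

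\textbf{Main obstacle.} The only delicate point is Step 1: one must ensure that every clique meeting $H_{\n}$ is indexed in $A_2^{\n}$. This rests on \Cref{A_0 ncontain H_p} and on the emptiness of $A_1^{\n}$ at termination, and I would check these carefully. If $S$ happens to be empty, the statement holds trivially.
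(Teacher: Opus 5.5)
Your proof is correct and follows the paper's argument. The paper also uses \Cref{ccoro_1} to place every clique meeting $H_{\n}$ in $A_2^{\n}\sqcup A_1^{\n}$, and then concludes $i\in A_2^{\n}$ from termination (\Cref{cremark_4}, which amounts to your observation $A_1^{\n}=\emptyset$). From this it makes $H_{\n}$ a bipartite component of $G\setminus S^{\n}(G)$, to which every vertex of $S^{\n}(G)$ attaches through a triangle.

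Your Steps 2 and 3 spell out the disconnector count and the appeal to \Cref{cremark_5}, which the paper leaves implicit. One wording fix: ``adding a single vertex never increases the number of components'' is false for an isolated vertex. The inequality $c_G(S\setminus\{s\})\le c_G(S)$ holds here only because $s$ has a neighbour in $H_{\n}$.
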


\begin{proof}
By \Cref{H_p tree}, $H_{\n}$ is a tree and by \Cref{H_p with v nbipartite}, for every $v\in S^{\n}(G)$, $G[V(H_{\n}) \cup \{v\}]$ is non-bipartite. Hence if we show that $H_\n$ is a connected component of $G\setminus S^{\n}(G)$, then it implies that $S^\n(G)$ is a sign-split disconnector set. We now show that $H_\n$ is a connected component of $G\setminus S^{\n}(G)$. Let $v\in V(G)$ be such that $v\notin V(H_{\n})$ and $\{v, x_q^p\} \in E(G)$. Then there exists $K_{n_i}$ that contains the edge $\{v, x_q^p\}$. Since $[t] = A_2^{\n} \sqcup A_1^{\n} \sqcup A_0^{\n}$, from \Cref{A_0 ncontain H_p}, $i\in A_2^{\n} \sqcup A_1^{\n} $. Then by \Cref{cremark_4}, $i\in A_2^{\n}$. Hence $v\in V(K_{n_i}) \setminus V(H_{\n}) \subset S^{\n}(G)$. So, every vertex that does not belong to $H_{\n}$, but adjacent to some vertex of this graph, should belong to $S^{\n}(G)$. Hence $H_{\n}$ is a connected component of $G\setminus S^{\n}(G)$.
\end{proof}

Note that by definition, $S^{\n}(G) \subseteq \big(\bigcup_{i\in A_2^{\n}} V(K_{n_i}) \big)$. We know from \Cref{remarks on algorithm}(2) that $A_2^n \cap A_0^n = \emptyset$. But it may also happen that $S^{\n}(G) \cap (\cup_{i\in A_0^{\n}}V(K_{n_i}) \neq \emptyset$, as described in the proposition below.
\begin{proposition}
If $G\setminus S^{\n}(G)$ has more than one component, then  $S^{\n}(G)\cap (\cup_{i\in A_0^{\n}}V(K_{n_i}) \neq \emptyset$.
\end{proposition}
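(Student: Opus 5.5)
We argue by contraposition: assume $S^{\n}(G)\cap \bigl(\bigcup_{i\in A_0^{\n}}V(K_{n_i})\bigr)=\emptyset$ and show that $G\setminus S^{\n}(G)$ is connected, in fact equal to $H_{\n}$. By \Cref{cthm_1}, $H_{\n}$ is a connected component of $G\setminus S^{\n}(G)$, so it suffices to show that every vertex of $G$ lies in $V(H_{\n})\cup S^{\n}(G)$.

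Take any vertex $v\in V(G)$ and choose $i\in[t]$ with $v\in V(K_{n_i})$. By \Cref{remarks on algorithm}(2), $[t]=A_2^{\n}\sqcup A_1^{\n}\sqcup A_0^{\n}$, and by the termination rule $A_1^{\n}=\emptyset$. So either $i\in A_2^{\n}$ or $i\in A_0^{\n}$.

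If $i\in A_2^{\n}$, then by the definition of $S^{\n}(G)$ the vertex $v$ lies in $V(H_{\n})$ or in $S^{\n}(G)$, and we are done.

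The remaining case is when every clique containing $v$ has index in $A_0^{\n}$; we show this cannot happen. First, $A_0^{\n}\neq [t]$, since $L_1^0$ is nonempty and contained in $A_2^{\n}$. Because $G$ is connected, some $i\in A_0^{\n}$ and $j\notin A_0^{\n}$ have $V(K_{n_i})\cap V(K_{n_j})\neq\emptyset$; this holds because the cliques cover $G$ and connectivity forces overlapping cliques across any partition of the index set. Then $j\in A_2^{\n}$. Let $w\in V(K_{n_i})\cap V(K_{n_j})$. By \Cref{A_0 ncontain H_p}, $w\notin V(H_{\n})$, so $w\in S^{\n}(G)$ by the definition of $S^{\n}(G)$. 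But $w\in V(K_{n_i})$ with $i\in A_0^{\n}$, contradicting our assumption.

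Hence under the assumption, every clique index of $G$ lies in $A_2^{\n}$. Every vertex of $G$ is then in $H_{\n}$ or $S^{\n}(G)$, so $G\setminus S^{\n}(G)=H_{\n}$ has one component. This is the contrapositive of the statement.

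The main obstacle is the connectivity step that produces $i\in A_0^{\n}$ and $j\in A_2^{\n}$ with intersecting cliques. We handle it by taking a shortest path from a vertex lying only in $A_0^{\n}$-cliques to a vertex of $H_{\n}$, and observing that each edge of this path lies in a clique. Following the path, the clique containing the last edge that still touches an $A_0^{\n}$-clique shares a vertex with an $A_0^{\n}$-clique and has index outside $A_0^{\n}$. A careful index check here suffices.
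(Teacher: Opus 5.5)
Your proof is correct. It follows the same basic idea as the paper: use $A_1^{\n}=\emptyset$, so that $[t]=A_2^{\n}\sqcup A_0^{\n}$, and then use connectivity of $G$ to find a crossing point between $A_0^{\n}$-cliques and the rest. The difference is where the witness vertex comes from.

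\textbf{The paper's witness.} The paper starts from a second component $G_1$ and picks $s\in S^{\n}(G)$ adjacent to some $s'\in V(G_1)$. The clique containing the edge $\{s,s'\}$ cannot have index in $A_2^{\n}$, because $s'\notin V(H_{\n})\cup S^{\n}(G)$. Since $A_1^{\n}$ is empty, its index lies in $A_0^{\n}$. Here $s'\notin V(H_{\n})$ comes for free from $s'$ lying in a component other than $H_{\n}$.

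\textbf{Your witness.} You argue contrapositively with a vertex $w$ shared by an $A_0^{\n}$-clique and an $A_2^{\n}$-clique. You then need \Cref{A_0 ncontain H_p} to keep $w$ out of $H_{\n}$, which places it in $S^{\n}(G)$.

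\textbf{What your version buys.} It gives a slightly sharper dichotomy. The intersection $S^{\n}(G)\cap\bigcup_{i\in A_0^{\n}}V(K_{n_i})$ is nonempty as soon as $A_0^{\n}\neq\emptyset$. If $A_0^{\n}=\emptyset$, then $G\setminus S^{\n}(G)=H_{\n}$ outright. The paper's argument instead reads the conclusion off directly from a second component.

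\textbf{Streamlining.} The fixed vertex $v$ in your ``remaining case'' only serves to guarantee $A_0^{\n}\neq\emptyset$. So the argument really shows $A_0^{\n}=\emptyset$ under your assumption. Your closing shortest-path paragraph is unnecessary, and as written its ``last edge'' description is vague. The clique-cover argument you already gave settles the connectivity step completely: every edge of $G$ lies in some $K_{n_i}$, so vertex-disjoint unions over $A_0^{\n}$ and its nonempty complement would disconnect $G$.
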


\begin{proof}
Note that $G\setminus S^{\n}(G)= H_{\n} \sqcup \{\big(\bigcup_{i\in A_0 ^{\n}} K_{n_i} \big) \setminus S^{\n}(G) \} $. We know that $H_{\n}$ is one component of $G\setminus S^{\n}(G)$. If $G_1$ is another component of $G\setminus S^{\n}(G)$, then $V(G_1) \subseteq  \big(\bigcup_{s\in A_0 ^{\n}} V(K_{n_s}) \big) \setminus S^{\n}(G)$. Since $G$ is connected, there exists $s\in S^{\n}(G)$ such that $s$ connects $H_{\n}$ to $G_1$. Let $s'\in V(G_1)$ be such that $s$ and $s'$ are adjacent. Then there exists $l\in [t]$ so that $s,s'\in V(K_{n_l})$. Since $s'\notin S^{\n}(G)$, $l\notin A_2^{\n}$ and  by \Cref{cremark_4}, $l\notin A_1^{\n}$. So, $l\in A_0^{\n}$. Hence $s\in S^{\n}(G)\cap (\cup_{i\in A_0^{\n}}V(K_{n_i})$.
\end{proof}
\begin{example}
    In \Cref{example 1}, $A_0^1 = \{4\}$ and $S^1(G) \cap V(K_{n_4}) = \{5\}$.
\end{example}

Based on the above observation we introduce the following sets. 
Let \[ S^{\n}_2(G) = \{ s\in S^{\n}(G): \mathcal{C}_{G\setminus S^{\n}(G)}(s) = \{H_{\n} \} \} \] and 
\[ S^{\n}_0(G) = S^{\n}(G) \setminus S_2^{\n} \subseteq S^{\n}(G) \cap (\cup_{i\in A_0^{\n}} V(K_{n_i})). \]
So, $S^{\n}(G) = S_2^\n(G)\sqcup S_0^{\n}(G)$. The vertices of $S_0^{\n}(G)$ connect at least two connected components of $G\setminus S^{\n}(G)$ and the vertices of $S_2^{\n}(G)$ are adjacent to vertices of $H_{\n}$ only. From now on, we suppress $\n$ and denote $S^{\n}(G), S_2^{\n}(G)$ and $S_0^{\n}(G)$ by $S(G), S_2(G)$ and  $S_0(G)$ respectively. We now show that \new{also $S_0(G)$ is a} sign-split disconnector set. 

\begin{proposition} \label{S_0 disconnector set}
The set $S_0(G)$ is a sign-split disconnector set of $G$.
\end{proposition}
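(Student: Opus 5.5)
To prove that $S_0(G)$ is a sign-split disconnector set, I would compare the components of $G\setminus S_0(G)$ with those of $G\setminus S(G)$. The key structural observation is this. Each $s\in S_2(G)$ is adjacent, among vertices outside $S(G)$, only to vertices of $H_\n$. Consequently, in $G\setminus S_0(G)$ the vertices of $S_2(G)$ are reattached to $H_\n$ and to nothing else outside $S(G)$.

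Some care is needed, since two vertices of $S_2(G)$ could be adjacent to each other. Even so, every vertex of $S_2(G)$ is glued onto $H_\n$. Hence the component of $G\setminus S_0(G)$ containing $H_\n$ is exactly
\[ H' := G[V(H_\n)\cup S_2(G)]. \]
Every other component of $G\setminus S(G)$ remains a component of $G\setminus S_0(G)$, unchanged. If $S_2(G)\neq\emptyset$, then $H'$ is non-bipartite by \Cref{H_p with v nbipartite}; if $S_2(G)=\emptyset$, then $S_0(G)=S(G)$ and the claim is just \Cref{cthm_1}.

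\textbf{Disconnector property.} Fix $s\in S_0(G)$. By definition of $S_0(G)$, the vertex $s$ reconnects at least two components of $G\setminus S(G)$. All of these components, except possibly $H_\n$, are untouched components of $G\setminus S_0(G)$, and $H_\n$ is replaced by $H'$. Therefore $s$ still merges at least two components of $G\setminus S_0(G)$.

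I then need to verify that $c+b$ strictly drops when $s$ is added back. If $s$ merges $k\ge 2$ components, then $c$ drops by $k-1\ge 1$. Moreover, $b$ can increase by at most $1$, and only when all merged components are bipartite. In that case $b$ drops from $k$ to $1$, which is not an increase at all, so $b$ never goes up. Hence $c+b$ strictly decreases, and $S_0(G)$ is a disconnector set.

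\textbf{Sign-split property.} I would choose signs on the non-bipartite components of $G\setminus S_0(G)$ so that, for every $s\in S_0(G)$ whose set $\mathcal{C}_{G\setminus S_0(G)}(s)$ contains no bipartite component, the signs on that set are not all equal.

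The difficulty is that $S$ is sign-split in $G$ via some sign assignment, but the component $H_\n$ (bipartite) has been replaced by $H'$ (non-bipartite). This makes the condition nontrivial at those $s$ adjacent to $H'$.

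My first attempt is to use \Cref{cprop_4}-type reasoning: assign $\p^+$ to $H'$ and $\p^-$ to every other non-bipartite component. Then any $s$ adjacent to $H'$ together with another non-bipartite component sees both signs, so the condition holds there.

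The main obstacle is a vertex $s\in S_0(G)$ that is not adjacent to $H'$ and all of whose neighbouring components are non-bipartite. For such $s$, the uniform $\p^-$ assignment fails. Here I would fall back on the sign-split property of $S(G)$ from \Cref{cthm_1}. That property supplies a sign-split prime of $\mathcal{J}_{G\setminus S(G)}$, and I would keep its signs on all components other than $H'$.

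The point to check is that the sign-split condition for $S(G)$ at such an $s$ involves only components other than $H_\n$, so the same signs work for $S_0(G)$ at $s$. For each $s$ adjacent to $H'$, I would then set the sign of $H'$ opposite to the sign of some other non-bipartite component in $\mathcal{C}(s)$. The remaining concern is conflicting requirements between different such $s$, since the sign of $H'$ is a single choice. If this happens, I would resolve it by noting that a single non-bipartite component $H'$ with arbitrary sign meets each constraint unless all other components at $s$ share one sign. In that case, recolouring one of those components is harmless because it is only constrained by other vertices of $S_0(G)$; for this last step I would argue via a choice component by component.
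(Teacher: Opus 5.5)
Your first attempt at the sign-split part is exactly the paper's argument: put $\p^+$ on $H'=G[V(H_\n)\cup S_2(G)]$ and $\p^-$ on every other non-bipartite component, i.e.\ apply \Cref{cprop_4}. It already completes the proof. The gap is that you did not see this. The ``main obstacle'' you describe, a vertex $s\in S_0(G)$ not adjacent to $H'$, cannot occur. By \Cref{H_p with v nbipartite} with $p=\n$, every vertex of $S(G)$ forms a triangle with two vertices of $H_\n$. So every $s\in S_0(G)$ is adjacent to $H_\n\subseteq H'$, and $H'$ is the fixed non-bipartite component that \Cref{cprop_4} requires. The same fact is what justifies your ``by definition'' step in the disconnector part. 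Since $H_\n\in\mathcal{C}_{G\setminus S(G)}(s)$ for every $s\in S(G)$, the condition $s\notin S_2(G)$ forces a second component in $\mathcal{C}_{G\setminus S(G)}(s)$.

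The fallback you propose instead is not a valid argument, for two reasons.
\begin{enumerate}
\item The sign-split property of $S(G)$ gives you nothing to keep. The bipartite component $H_\n$ lies in $\mathcal{C}_{G\setminus S(G)}(s)$ for every $s\in S(G)$, so the sign-split condition for $S(G)$ is vacuous (\Cref{cremark_5}), and every choice of signs satisfies it.
\item The recolouring step is unjustified. Requiring that the non-bipartite components at each $s$ do not all carry the same sign is a $2$-colouring problem for a hypergraph, and in general it has no solution; this is exactly why a disconnector need not be sign-split. Flipping one component to repair the condition at one vertex can break it at another vertex of $S_0(G)$, and ``a choice component by component'' does not address this.
\end{enumerate}
Discard the fallback and close the proof with the adjacency observation above.

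A minor point: your sentence saying $b$ can increase ``by at most $1$'' is garbled. The clean statement is that adding back a vertex adjacent to $k\ge 1$ components never increases $b$ and lowers $c$ by $k-1$. This gives a strict decrease of $b+c$ when $k\ge 2$.
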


\begin{proof}
If $S_2(G) = \emptyset$, then $S(G)= S_0(G)$. So, it is a sign-split disconnector set as proved in \Cref{cthm_1}.

If  $S_2(G) \neq \emptyset$, then for any $s\in S_2(G)$, $\mathcal{C}_{G\setminus S(G)}(s) = \{H_{\n}\}$. So, $G[V(H_{\n}) \cup S_2(G)]$ is a connected component of $G\setminus S_0$, which is non-bipartite (by \Cref{H_p with v nbipartite}). Other connected components of $G\setminus S_0(G)$ are same as the connected components of $G\setminus S(G)$. Then each element of $S_0(G)$ connects at least two connected components of $G\setminus S_0(G)$ and one such component is $G[V(H_{\n}) \cup S_2(G)]$. So, it is a disconnector set and using \Cref{cprop_4} we can say $S_0(G)$ is a sign-split disconnector set.
\end{proof}
We have discussed about the connected components of $G\setminus S(G)$ and $G\setminus S_0(G)$ in the proof of above proposition. The next result gives a necessary condition for the unmixedness of $\II_G$. 
\begin{proposition} \label{S_2 cardinality 1}
If $G$ is a non-bipartite chordal graph such that $\mathcal{I}_G$ is unmixed, then $|S_2(G)| \leq 1.$
\end{proposition}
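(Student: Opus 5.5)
The plan is to compare the two sign-split disconnector sets $S(G)=S^{\n}(G)$ and $S_0(G)$, apply the unmixedness criterion to both, and subtract. Since $G$ is non-bipartite and connected, $b(G)=0$. So unmixedness of $\II_G$ means $b_G(S)=|S|$ for every $S\in\mathcal{D}(G)$. By \Cref{cthm_1} we have $S(G)\in\mathcal{D}(G)$, and by \Cref{S_0 disconnector set} we have $S_0(G)\in\mathcal{D}(G)$. Hence
\[
b_G(S(G)) = |S(G)| \quad\text{and}\quad b_G(S_0(G)) = |S_0(G)|.
\]
Since $S(G)=S_2(G)\sqcup S_0(G)$, it suffices to show that $b_G(S(G)) - b_G(S_0(G)) \le 1$.

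If $S_2(G)=\emptyset$ there is nothing to prove. So assume $S_2(G)\neq\emptyset$ and compare the components of $G\setminus S(G)$ with those of $G\setminus S_0(G)$, as in the proof of \Cref{S_0 disconnector set}.
\begin{enumerate}
\item Every $s\in S_2(G)$ has $\mathcal{C}_{G\setminus S(G)}(s)=\{H_{\n}\}$. Therefore putting $S_2(G)$ back merges only with $H_{\n}$, and $G[V(H_{\n})\cup S_2(G)]$ is a connected component of $G\setminus S_0(G)$.
\item All other components of $G\setminus S_0(G)$ coincide with the components of $G\setminus S(G)$ different from $H_{\n}$.
\item By \Cref{H_p tree}, $H_{\n}$ is a tree, so it is a bipartite component of $G\setminus S(G)$.
\item By \Cref{H_p with v nbipartite}, any $v\in S_2(G)\subseteq S(G)$ forms a triangle with two vertices of $H_{\n}$. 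Hence $G[V(H_{\n})\cup S_2(G)]$ is non-bipartite.
\end{enumerate}
Passing from $S_0(G)$ to $S(G)$ therefore replaces one non-bipartite component by the single bipartite component $H_{\n}$ and leaves everything else unchanged. This gives
\[
b_G(S(G)) = b_G(S_0(G)) + 1.
\]

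Combining this with the two unmixedness equalities yields $|S(G)| = |S_0(G)| + 1$, that is, $|S_2(G)| = 1$. In all cases, $|S_2(G)|\le 1$.

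The only step needing care is the component bookkeeping in the second paragraph. One must check that vertices of $S_2(G)$ do not touch any component other than $H_{\n}$, which holds by the definition of $S_2(G)$. One must also check that the merged component really is non-bipartite, which is supplied by \Cref{H_p with v nbipartite}. Everything else is a direct application of the earlier results.
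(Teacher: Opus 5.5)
Your proof is correct and is essentially the paper's argument: apply the unmixedness equality $b_G(S)=|S|$ to both $S(G)$ and $S_0(G)$, and use $b_G(S(G))=b_G(S_0(G))+1$ when $S_2(G)\neq\emptyset$. The only difference is that you spell out the component bookkeeping behind that last identity, which the paper states directly, relying implicitly on the proof that $S_0(G)$ is a sign-split disconnector set.
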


\begin{proof}
Since $S_0(G) \in \mathcal{D}(G)$ and $\mathcal{I}_G$ is unmixed, $|S_0(G)|= b(G\setminus S_0(G))$. If $|S_2(G)| \neq \emptyset$, then $|S(G)| = |S_0(G)| + |S_2(G)|$ and $b(G\setminus S(G)) = b(G\setminus S_0(G))+ 1$. Again $S(G) \in \mathcal{D}(G)$, so, using the unmixed property of $\mathcal{I}_G$ we get $|S_2(G)| =1$.
\end{proof}

Now we prove the main result of this section. This also provides a sufficient condition for $\II_G$ to be unmixed.

\begin{theorem}\label{H_n is path}
For any non-bipartite chordal graph $G$, if $\mathcal{I}_G$ is unmixed, then $H_{\n}$ is a path.
\end{theorem}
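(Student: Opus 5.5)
We argue by contradiction: suppose $\II_G$ is unmixed but $H_{\n}$ is not a path. Since $H_{\n}$ is a tree (\Cref{H_p tree}), it then has a vertex $w$ with $\deg_{H_{\n}}(w)\ge 3$. The plan is to build a sign-split disconnector set containing $S(G)$ and $w$ whose number of bipartite components exceeds its cardinality. This violates the criterion $b_G(S)=|S|$ for all $S\in\mathcal{D}(G)$, which characterizes unmixedness for non-bipartite $G$.

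\textbf{Step 1: computing $b_G(S(G))$.} By \Cref{cthm_1}, $S(G)\in\mathcal{D}(G)$, and $H_{\n}$ is a bipartite component of $G\setminus S(G)$. Unmixedness therefore gives $|S(G)|=b_G(S(G))$. By \Cref{S_0 disconnector set}, $|S_0(G)|=b_G(S_0(G))$ as well. By \Cref{S_2 cardinality 1} we know $|S_2(G)|\le 1$.

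\textbf{Step 2: the enlarged set.} Consider $S'=S(G)\cup\{w\}$. Removing $w$ splits the tree $H_{\n}$ into $\deg_{H_{\n}}(w)\ge 3$ subtrees, all bipartite. The other components of $G\setminus S(G)$ are unchanged, so
\[
b_G(S')\ \ge\ b_G(S(G))-1+3\ =\ |S(G)|+2\ >\ |S'|.
\]
It remains to show that $S'$ is a sign-split disconnector set, or can be pruned to one while keeping the excess.

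\begin{itemize}
\item The vertex $w$ itself is fine: it reconnects at least three bipartite components, so deleting it from $S'$ strictly lowers $b+c$.
\item For $s\in S_0(G)$: such an $s$ already joins at least two components of $G\setminus S(G)$, one of them $H_{\n}$. After $w$ is removed, $s$ still joins a piece of $H_{\n}$ to another component. If that piece is bipartite, adding $s$ back merges components and decreases $c$ (and possibly $b$). If the merged piece is non-bipartite, $c$ still drops. So the disconnector inequality survives.
\item For $s\in S_2(G)$: by \Cref{H_p with v nbipartite}, $s$ forms a triangle with two adjacent vertices $v_1,v_2$ of $H_{\n}$. If $w\notin\{v_1,v_2\}$, or if $s$ is adjacent to vertices of two different branches, then $s$ is still a disconnector vertex. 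Otherwise, we simply drop $s$ from $S'$. This costs at most one bipartite component, and since $|S_2(G)|\le1$, the inequality $b>|S'|$ survives.
\end{itemize}

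Sign-split then follows. Every vertex of the final set is adjacent to a bipartite component (a branch of $H_{\n}$), except possibly vertices of $S_0(G)$ in degenerate cases. These are handled by \Cref{cremark_5} or by \Cref{cprop_4}, fixing the component containing the remnant of $H_{\n}$.

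\textbf{Step 3 (the main obstacle): bookkeeping for $S_0(G)$ and $S_2(G)$.} Removing $w$ can change which components each $s\in S(G)$ reconnects, and a vertex may end up adjacent to only one piece. If the disconnector property fails for some $s$, the first thing I would try is to discard $s$ and recount. When $s$ reconnects only non-bipartite components or a single piece, discarding it decreases $|S'|$ by one while lowering $b$ by at most one. The strict excess $b_G(S')\ge|S'|+1$ is therefore preserved by iterating this, using the Step 1 equalities.

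The delicate case is when a vertex of $S(G)$ is adjacent to $w$ and to exactly one branch. Removing that vertex could merge a bipartite branch with $w$'s neighbourhood into something non-bipartite. Here I expect to use chordality, together with the triangle structure from \Cref{H_p with v nbipartite}, to argue that at least three branches remain bipartite and separated. This contradicts unmixedness.
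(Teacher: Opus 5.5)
Your argument is correct and is essentially the paper's. The paper also takes a vertex $x_q^p$ of degree at least $3$ in $H_{\n}$ and adjoins it to $S(G)$ when the unique vertex $\alpha$ of $S_2(G)$ has at least two neighbours in $H_{\n}\setminus\{x_q^p\}$, and to $S_0(G)$ otherwise (including the case $S_2(G)=\emptyset$); this is essentially your keep-or-drop rule for $\alpha$.

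The ``delicate case'' in your Step~3 is already closed by your Step~2 count, since dropping $\alpha$ lowers $|S'|$ by one and $b$ by at most one. So you do not need three branches to stay bipartite, which in fact need not hold when $\deg_{H_{\n}}(w)=3$. If you drop $\alpha$ only when it has a single neighbour in $H_{\n}\setminus\{w\}$, as the paper does, every branch stays a tree and the sign-split property follows directly from \Cref{cremark_5}.
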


\begin{proof}
By \Cref{H_p tree}, $H_{\n}$ is a tree. From \Cref{cthm_1} and \Cref{S_0 disconnector set}, we know that $S(G), S_0(G) \in \mathcal{D}(G)$. Since $\mathcal{I}_G$ is unmixed, $b(G\setminus S(G)) = |S(G)|$ and $b_G(S_0(G)) = |S_0(G)|$. We now prove that $\deg_{H_{\n}}(v) \new{\leq} 2$ for every $v \in V(H_{\n})$. Suppose there exists $x_q^p \in V(H_{\n})$ such that $\deg_{H_{\n}}(x_q^p)\geq 3$. Then $H_{\n}\setminus \{x_q^p\}$ has at least three bipartite components. \new{Note that for any $u\in S_0(G)$, $u$ connects $H_{\n}$ with some other component of $G\setminus S(G)$ and by \Cref{H_p with v nbipartite} $u$ is adjacent to two end points of an edge of $H_{\n}$. Therefore, each vertex of $ S_0(G)$ connects $H_{\n}\setminus \{x_q^p\}$ with some other component of $G\setminus S(G)$. Hence, $S_0(G) \cup \{x_q^p\} \in \mathcal{D}(G)$.} Now if $S_2(G) = \emptyset$, i.e., $S(G) = S_0(G)$, then $H_{\n}$ is a component of $G\setminus S_0(G)$. Since $\deg_{H_{\n}}(x_q^p)\geq 3$, $b_G(S_0(G) \cup \{x_q^p\}) \geq |S_0(G)| +2$, a contradiction. Therefore $S_2(G) \neq \emptyset$. Set $S_2(G) = \{\alpha\}$. We know that $\alpha$ is adjacent to at least two vertices of $H_{\n}$. So, $|V(H_{\n} \setminus \{x_q^p\}) \cap N_G(\alpha)| \geq 1$. Now we have two cases:

\noindent \textbf{$|V(H_{\n} \setminus \{x_q^p\}) \cap N_G(\alpha)| \geq 2$:} In this case $\alpha$ is either adjacent to an edge or it connects two components of $H_{\n}\setminus \{x_q^p\}$. Therefore $S(G)\cup \{x_q^p\} \in \mathcal{D}(G)$. Then $b_G(S(G) \cup \{x_q^p\}) \geq |S(G)| +2$, a contradiction to the unmixed property of $\II_G$.

\vskip 2mm \noindent \textbf{$|V(H_{\n} \setminus \{x_q^p\}) \cap N_G(\alpha)| = 1$:} Then $\alpha$ is adjacent to exactly two vertices of an edge in $H_{\n}$ and one of them is $x_q^p$. We already proved earlier that $S_0(G) \cup \{x_q^p\} \in \mathcal{D}(G)$. Now we get $b_G(S_0(G) \cup \{x_q^p\}) \geq |S_0(G)| +2$, a contradiction. 

\noindent Hence $H_n$ cannot have a vertex of degree $3$ and since $H_\n$ is a tree, it is a path.
\end{proof}

\begin{corollary} \label{ccoro_2}
    If $\mathcal{I}_G$ is unmixed, then $H_p$ is  a path graph for all $0\leq p \leq \n$.
\end{corollary}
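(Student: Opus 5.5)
The statement to prove is \Cref{ccoro_2}: if $\II_G$ is unmixed, then $H_p$ is a path for every $0\le p\le \n$. I would derive it directly from \Cref{H_n is path}, using the nesting of the trees built by the algorithm. By \Cref{remarks on algorithm}(3), $H_p$ is a subgraph of $H_{p'}$ whenever $p<p'$, so each $H_p$ sits inside $H_\n$. Moreover, $H_p$ is defined as the induced subgraph of $G$ on its vertex set, so $H_p = H_\n[V(H_p)]$ is an induced subgraph of $H_\n$. By \Cref{H_n is path}, $H_\n$ is a path, and by \Cref{H_p tree}, $H_p$ is a tree, in particular connected.

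It therefore suffices to observe that a connected induced subgraph of a path is again a path. Indeed, every vertex of $H_p$ has degree at most $2$ in $H_p$, since its degree there is bounded by its degree in $H_\n$, which is at most $2$. A tree with maximum degree at most $2$ is a path, or a single vertex, which we regard as the trivial path. This gives the corollary for every $0\le p\le \n$.

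The only point needing care is that the statement is about one fixed run $\mathcal{L}$ of the algorithm. Within that run, \Cref{H_n is path} applies to the same $H_\n$ that contains all the $H_p$. Since \Cref{H_n is path} holds for any choices of the $L_q^p$, the conclusion holds for every run. I do not expect any real obstacle; the argument is just the degree bound above combined with connectedness from \Cref{H_p tree}.
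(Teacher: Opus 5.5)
Your proposal is correct and matches the paper's reasoning. The paper states this corollary without proof as an immediate consequence of \Cref{H_n is path}, and the implicit argument is yours: $H_p$ is a connected (\Cref{H_p tree}) induced subgraph of $H_{\n}$ (\Cref{remarks on algorithm}(3)), so it is a subpath of the path $H_{\n}$.
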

Note that the algorithm provides us \new{with} two special disconnector sets $S(G)$ and $S_0(G)$, and using them we proved two important results  \Cref{S_2 cardinality 1} and \Cref{ccoro_2} in this section. We use all these data to characterize the unmixed chordal graphs in the next section.

\section{Conditions when non-bipartite chordal graphs are not unmixed}\label{sec:non-unmixed}

In this section we consider those chordal graphs $G$ for which $\cap_{i\in [t]}K_{n_i} = \emptyset $, i.e., $m([t]) =0$. \new{We show that for a large subclass of chordal graph for which $\II_G$, the algorithm produces a tree which is not a path and hence all such graphs are not unmixed.}

The following remark will be implicitly used in many of the subsequent hypotheses.
\begin{remark}\label{rmk-exists-L02}
Note that $m([t]) = 0$ if and only if $L_1^0$ is a proper subset of $[t]$ if and only if $L_2^0$ exists in any run of the algorithm. Therefore $m([t]) = 0$ if and only if there exist two different sets $L_1$ and $L_2$ in $M(\Gamma_1^0)$ so that $L_1\cap L_2 \neq \emptyset$.
\end{remark}
We now prove a lemma that plays a crucial role in showing that $\II_G$ is not unmixed, for a large subclass of chordal graphs.
If there exists a subset $S$ of $V(G)$ so that $G\setminus S$ has at least three components, then the following lemma gives a sufficient condition for $\II_G$ to not be unmixed. Recall that by $\mathcal{L}$ we denote one run of the algorithm.

\begin{lemma} \label{prelem:G-K-has-3-comp new}
Let $S\subset V(G)$ and $l_0, l_0', l_0'' \in [t]$ be such that $K_{n_{l_0}} \setminus S$, $ K_{n_{l_0'}} \setminus S$ and $ K_{n_{l_0''}} \setminus S$ are in different connected components of $G\setminus S$. Further suppose that there exist $L_1, L_2 \in M(\Gamma_1^0)$ such that $ l_0, l_0', l_0'' \in L_1$ and $l_0\in L_2$. Then $\II_G$ is not unmixed if any one of the following holds:
\begin{enumerate}
    \item Each run $\mathcal{L}$ of the algorithm with $L_1^0(\mathcal{L}) = L_2$ and $L_2^0(\mathcal{L}) = L_1$ satisfies  $V(H_{n(\mathcal{L})}) \cap S = \{x_2^0\}$.
    \item Each run $\mathcal{L}$ of the algorithm with $L_1^0(\mathcal{L}) = L_1$ and $L_2^0(\mathcal{L}) = L_2$ satisfies $V(H_{n(\mathcal{L})}) \cap S = \{x_1^0\}$.
\end{enumerate}
\end{lemma}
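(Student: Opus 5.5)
The plan is to argue by contradiction. Assume $\II_G$ is unmixed; by \Cref{H_n is path} and \Cref{ccoro_2}, every run $\mathcal{L}$ of the algorithm then produces trees $H_p$ that are paths. I treat hypothesis (2); case (1) is symmetric, with the roles of $L_1,L_2$ and of $x_1^0,x_2^0$ exchanged. Since $L_1,L_2\in M(\Gamma_1^0)$ and $L_1\cap L_2\ni l_0$, I start a run $\mathcal{L}$ with $L_1^0=L_1$. For this to be legal at iteration $2$ I need $L_2\in M_2(\Gamma_2^0)$, and I would check this from the minimality requirement, choosing $L_2$ appropriately among the admissible sets. The vertex $x_1^0$ lies in $\bigcap_{i\in L_1}V(K_{n_i})$, so it is adjacent to every vertex of $K_{n_{l_0}}\cup K_{n_{l_0'}}\cup K_{n_{l_0''}}$. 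By hypothesis $V(H_{\n})\cap S=\{x_1^0\}$.

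The key step is to show that $x_1^0$ has degree at least $3$ in $H_{\n}$, contradicting the path property. Since $l_0,l_0',l_0''\in L_1=L_1^0$, each of these indices lies in $A_2^0\cup A_1^0$, hence in $A_2^{\n}$. By \Cref{cremark_3} and the way each step picks new vertices, I expect $H_{\n}$ to contain a vertex of $K_{n_{l}}\setminus\{x_1^0\}$ for each $l\in\{l_0,l_0',l_0''\}$. For $l_0$ this is immediate, since $x_2^0\in\bigcap_{i\in L_2}V(K_{n_i})\subseteq V(K_{n_{l_0}})$ and $x_2^0\neq x_1^0$ by maximality. For $l_0'$ and $l_0''$ I would argue as follows. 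If $l\in A_1^0$, then step $1$ either picks some $L_q^1\ni l$, giving $x_q^1\in V(K_{n_l})\setminus S$ adjacent to $x_1^0$, or, by \Cref{cremark_3}, $V(K_{n_l})$ is covered by cliques indexed in $A_2^0\cup\bigcup_q L_q^1$. In the latter case I use $K_{n_l}\not\subseteq S$ (because $K_{n_l}\setminus S$ is a nonempty piece of a component) to locate a vertex of $H_{\n}$ in $K_{n_l}$ other than $x_1^0$. If $l\in A_2^0$, then $l\in L_q^0$ for some $q\ge 2$, and $x_q^0\in V(K_{n_l})$.

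This gives three vertices $y_0,y_0',y_0''\in V(H_{\n})\setminus\{x_1^0\}$ lying in $K_{n_{l_0}}\setminus S$, $K_{n_{l_0'}}\setminus S$ and $K_{n_{l_0''}}\setminus S$ respectively. They lie in three distinct components of $G\setminus S$. Because $V(H_{\n})\cap S=\{x_1^0\}$, the graph $H_{\n}\setminus\{x_1^0\}$ is contained in $G\setminus S$, so $y_0,y_0',y_0''$ lie in three different components of $H_{\n}\setminus\{x_1^0\}$. Since $H_{\n}$ is connected, $x_1^0$ is a cut vertex of the tree $H_{\n}$ separating at least three branches, so $\deg_{H_{\n}}(x_1^0)\ge 3$. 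This contradicts \Cref{H_n is path}, so $\II_G$ is not unmixed.

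The main obstacle is producing the vertices of $H_{\n}$ in $K_{n_{l_0'}}$ and $K_{n_{l_0''}}$ outside $S$, especially when those cliques are swallowed by the covering alternative of \Cref{cremark_3}. There, a vertex of $K_{n_l}\setminus S$ lies in some other clique $K_{n_j}$ with $j\in L_q^p$, and I must trace the corresponding $x_q^p$ back into the same component of $G\setminus S$. I would handle this by induction on the step $p$, using \Cref{cprop_2} and the hypothesis $V(H_{\n})\cap S=\{x_1^0\}$, to show that $H_{\n}\setminus\{x_1^0\}$ meets every component of $G\setminus S$ containing some $K_{n_l}\setminus S$ with $l\in L_1$.
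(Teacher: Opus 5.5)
Your argument is correct, but it is organized differently from the paper's. The paper works under (1), where the triple point is $x_2^0$. It keeps the step-$0$ choices, uses that $H_0$ is a path, and splits into three cases according to whether $K_{n_{l_0'}}\setminus S$ and $K_{n_{l_0''}}\setminus S$ meet $\bigcup_{l\in A_2^0}V(K_{n_l})$.
\begin{itemize}
\item If one of them does, some $x_q^0$ with $q\ge 3$ lies in that component, and the $H_0$-path from $x_1^0$ to it passes through $x_2^0$.
\item If one does not, the paper steers the run at step $1$: it chooses the sets $L_q^1$ to contain whichever of $l_0',l_0''$ is missed, so that the corresponding $x_q^1$ become further neighbours of $x_2^0$.
\end{itemize}
This steering is where the paper really uses that the hypothesis holds for every run. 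You replace all of this by a single component count. Since $V(H_{\n})\cap S=\{x_1^0\}$, the forest $H_{\n}\setminus\{x_1^0\}$ lies in $G\setminus S$, so it suffices that it meets the three components. \Cref{cremark_3} with $p=1$ supplies the witnesses in an arbitrary run. This removes the case split and needs the hypothesis for a single run only.

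Your ``main obstacle'' needs no induction on $p$. In the covering alternative of \Cref{cremark_3}, take $v\in V(K_{n_l})\setminus S$ and $j\in A_2^0\cup\bigcup_q L_q^1$ with $v\in V(K_{n_j})$. Then $K_{n_j}$ contains $x_q^1$ (if $j\in L_q^1$), or $x_q^0$ for some $q\ge 2$ (if $j\in A_2^0$). This vertex differs from $x_1^0$, so it lies outside $S$ by hypothesis. It is equal or adjacent to $v$, hence lies in the component containing $K_{n_l}\setminus S$.

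Only this component-level statement is available. In the covering alternative you have no guarantee of a vertex of $H_{\n}$ in $K_{n_l}$ other than $x_1^0$. So $y_0',y_0''$ should be described as lying in the respective components of $G\setminus S$, not in the cliques $K_{n_{l_0'}},K_{n_{l_0''}}$ themselves.

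Finally, you cannot ``choose $L_2$ appropriately'': $L_2$ is given in the statement. Taking $L_2^0=L_2$ is legal only if $L_2\in M_2(\Gamma_2^0)$. The paper justifies the existence of such a run only from $L_1\neq L_2$ and $L_1\cap L_2\neq\emptyset$, so on this point you are on the same footing as the paper. It is still not something you can arrange by choice.
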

\begin{proof}
Note first that runs as in (1) and (2) exist since $L_1\neq L_2$ and $L_1\cap L_2 \neq \emptyset$. Let us assume the statement (1) in the hypothesis and that $\II_G$ is unmixed. Let $\mathcal{L}$ be a run of the algorithm such that $L_1^0 = L_2$ and $L_2^0 = L_1$. We construct a run $\mathcal{L}'$ of the algorithm which contradicts \Cref{ccoro_2}. First take $L_q^0(\mathcal{L'}) = L_q^0 (\mathcal{L})$ and $x_q^0(\mathcal{L'}) = x_q^0(\mathcal{L})$ for $q\geq 1$, i.e., $\mathcal{L}$ and $\mathcal{L'}$ have same choices at the $0$th step.
Note that no vertices $x_q^0(\mathcal{L})$ other than $x_2^0(\mathcal{L})$ are in $S$. From now on we write $x_q^0$ instead of $x_q^0(\mathcal{L'})$.

Let $C_1$, $C_2$ and $C_3$ be the  connected components of $G\setminus S$ containing  $K_{n_{l_0}} \setminus S$, $ K_{n_{l_0'}} \setminus S$ and $ K_{n_{l_0''}} \setminus S$ respectively.  Suppose there exists $l\in A_2^0(\mathcal{L'})$ such that $ V(K_{n_l}) \cap V(K_{n_{l_0'}} \setminus S)  \neq \emptyset$. Then all vertices in $ K_{n_l} \setminus S$ are in the connected component $C_2$. Since $K_{n_{l_0}} \setminus S$ is in the component $C_1$, $V(K_{n_l})\cap V(K_{n_{l_0}}) \subseteq S$. Since $x_1^0 \notin S$ by the hypothesis of $(1)$, it follows that $l\notin L_1^0(\mathcal{L'})$. Since $l\in A_2^0(\mathcal{L'})$, there exists $q \in D_0(\mathcal{L'})$ such that $l\in L_{q}^0(\mathcal{L'})$, where $q \notin \{1,2\}$. So, $x_q^0 \in V(K_{n_l} \setminus S) \subseteq V(C_2)$. Since $\II_G$ is unmixed, $H_0(\mathcal{L'})$ is a path graph (\Cref{H_n is path}). Therefore there exists a unique path from $x_1^0$ to $x_{q}^0$ in $H_0(\mathcal{L'})$. Since $x_1^0$ is in $C_1$ and $x_{q}^0$ is in $C_2$, the path must pass through $S$, and hence through $x_2^0$.

Similarly, if there exists $l' \in A_2^0(\mathcal{L'})$ such that $ V(K_{n_l'}) \cap V(K_{n_{l_0''}} \setminus S ) \neq \emptyset$, then there exists $x_{q'}^0 \in C_3$ such that the path from $x_1^0$ to $x_{q'}^0$ passes through $x_2^0$. Thus we have the following cases based on whether or not such $l$ and $l'$ exist.

\noindent \textbf{Case 1:   $V(K_{n_{l_0'}} \setminus S )\cap \left(\bigcup_{l\in A_2^0} V(K_{n_l}) \right) \neq \emptyset$, $V(K_{n_{l_0''}} \setminus S )\cap \left( \bigcup_{l\in A_2^0} V(K_{n_l}) \right) \neq \emptyset$: }\\
As discussed above, there exist paths from $x_1^0$ to $x_{q}^0$ and $x_{q'}^0$ passing through $x_2^0$. Note that these paths are distinct because 
$x_{q}^0$ and $x_{q'}^0$ are in distinct components. Hence $\deg_{H_0(\mathcal{L'})}(x_2^0) \geq 3$, a contradiction to \Cref{ccoro_2}.

\noindent \textbf{Case 2:   $V(K_{n_{l_0'}} \setminus S )\cap \left(\bigcup_{l\in A_2^0} V(K_{n_l}) \right) \neq \emptyset$, $V(K_{n_{l_0''}} \setminus S )\cap \left( \bigcup_{l\in A_2^0} V(K_{n_l}) \right) = \emptyset$ or vice versa: }\\
We consider only the first situation since the other one has the same proof with the roles of $l_0'$ and $l_0''$ interchanged. In the first situation, 
since $(K_{n_{l_0'}} \setminus S )\cap \left(\bigcup_{l\in A_2^0(\mathcal{L'})} K_{n_l}\right) \neq \emptyset$, from the above discussion there exists a path from $x_1^0$ to $x_{q}^0$ that contains $x_2^0$. Hence $\deg_{H_0(\mathcal{L'})} x_2^0 \geq 2$. Now $(K_{n_{l_0''}} \setminus S )\cap \bigcup_{l\in A_2^0} K_{n_l} = \emptyset$ implies $K_{n_{l_0''}} \nsubseteq \bigcup_{l\in A_2^0} K_{n_l} $. So, $\{ l_0''\} \in \Gamma_1^1(\mathcal{L'})$. We choose $L_1^1(\mathcal{L'}) \in M(\Gamma_1^1(\mathcal{L'}))$ such that $l_0'' \in L_1^1(\mathcal{L'})$. Since $L_2^0(\mathcal{L'})\cap L_1^1(\mathcal{L'}) \neq \emptyset$, $x_2^0$ and $x_1^1$ are adjacent. Hence $\deg_{H_1(\mathcal{L'})} x_2^0 \geq 3$, a contrad
iction to \Cref{ccoro_2}.

\noindent \textbf{Case 3: $(K_{n_{l_0'}} \setminus S )\cap \bigcup_{l\in A_2^0} K_{n_l} = \emptyset$, $(K_{n_{l_0''}} \setminus S )\cap \bigcup_{l\in A_2^0} K_{n_l} = \emptyset$: }\\
As in Case 2, $K_{n_{l_0'}} \nsubseteq \bigcup_{l\in A_2^0} K_{n_l} $ and  we choose $L_1^1(\mathcal{L'}) \in M(\Gamma_1^1(\mathcal{L'}))$ such that $l_0' \in L_1^1(\mathcal{L'})$. Since $l_0' \in L_1^1(\mathcal{L'}) \cap L_2^0(\mathcal{L'})$, we see that $x_2^0$ and $x_1^1$ are adjacent. Note that for any $l\in L_1^1(\mathcal{L'})$, $V(K_{n_l})\setminus S$ and $V(K_{n_{l_0''}}) \setminus S$ are in distinct components $C_2$ and $C_3$ respectively. This implies $K_{n_{l_0''}} \nsubseteq \bigcup_{l\in A_2^0(\mathcal{L'}) \cup L_1^1(\mathcal{L'})} K_{n_l}$. Hence $\{ l_0''\} \in \Gamma_2^1(\mathcal{L'})$. Choose $L_2^0 \in M(\Gamma_2^1(\mathcal{L'}))$ such that $l_0'' \in L_2^1(\mathcal{L'})$. So, $l''_0 \in L_2^1(\mathcal{L'}) \cap L_2^0(\mathcal{L'})$ and hence $x_2^0$ and $x_2^1$ are adjacent. Therefore $\deg_{H_1(\mathcal{L'})} x_2^0 \geq 3$, a contradiction.

This completes the proof assuming hypothesis (1). Since hypothesis (2) is obtained by interchanging the roles of $L_1$ and $L_2$, the same proof as above with the roles of $L_1$ and $L_2$ interchanged, and $x_1^0$ and $x_2^0$ interchanged, works.
\end{proof}
The following corollary gives us a combinatorial condition on the graph so that $\II_G$ is not unmixed and this condition does not depend on the choices in the algorithm. 
\begin{corollary}\label{prelem:G-K-has-3-comp}
     Let $L \subseteq [t]$ be such that $m(L) >0$. Further, let $l_0\in L$ and $S \subseteq V(K_{n_{l_0}})$ be a disconnector set such that the following conditions hold:
\begin{enumerate}
    \item[(i)] $\cap_{l \in L}V( K_{n_l}) \cap S = \emptyset$.
    \item[(ii)] there exist $l_0', l_0'' \in [t]$ such that $K_{n_{l_0}} \setminus S$ , $ K_{n_{l_0'}} \setminus S$ and $ K_{n_{l_0''}} \setminus S$ are in different connected components of $G\setminus S$.
    \item[(iii)] $V(K_{n_{l_0}}) \cap V( K_{n_{l_0'}}) \cap V(K_{n_{l_0''}}) \neq \emptyset$.
\end{enumerate}
Then $\II_G$ is not unmixed.
\end{corollary}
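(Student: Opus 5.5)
The plan is to reduce the corollary to \Cref{prelem:G-K-has-3-comp new}, using hypothesis (2) of that lemma. The sets $L_1, L_2 \in M(\Gamma_1^0)$ and the set $S$ there will be built from the data $L$, $l_0$, $l_0'$, $l_0''$, $S$ given here.

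First, pick a vertex $w \in V(K_{n_{l_0}}) \cap V(K_{n_{l_0'}}) \cap V(K_{n_{l_0''}})$, which exists by (iii). Let $L_1$ be a maximal element of $\Gamma_1^0$ containing $\{l_0, l_0', l_0''\}$; this set has $m>0$ because it contains $w$. Next, enlarge $L$ to a maximal element $L_2 \in M(\Gamma_1^0)$ with $L \subseteq L_2$, so $l_0 \in L_2$.

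We must check that $L_1 \neq L_2$. Every vertex of $\cap_{l\in L_2} V(K_{n_l})$ lies in $\cap_{l\in L}V(K_{n_l})$, hence is not in $S$ by (i). If $L_1 = L_2$, then a common vertex $x$ of $K_{n_{l_0}}, K_{n_{l_0'}}, K_{n_{l_0''}}$ lies outside $S$. But then $K_{n_{l_0}}\setminus S$ and $K_{n_{l_0'}}\setminus S$ share $x$, so they lie in the same component, contradicting (ii). This argument also shows that every vertex of $\cap_{l\in L_1}V(K_{n_l})$ lies in $S$.

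Now verify hypothesis (2) of \Cref{prelem:G-K-has-3-comp new}. Take any run $\mathcal{L}$ with $L_1^0 = L_1$ and $L_2^0 = L_2$. Then $x_1^0 \in \cap_{l\in L_1} V(K_{n_l}) \subseteq S$, as just shown. It remains to show that no other vertex of $H_{\n(\mathcal{L})}$ lies in $S$. Since $S \subseteq V(K_{n_{l_0}})$, any vertex of $H_{\n}$ in $S$ lies in the clique $K_{n_{l_0}}$. Two such vertices would therefore be adjacent, and together with $x_1^0$ (also in $K_{n_{l_0}}$) three of them would form a triangle, which is impossible in a tree (\Cref{H_p tree}). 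So $|V(H_\n)\cap S|\le 2$, and the case of exactly two must be excluded.

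This exclusion is the main obstacle. Since $l_0 \in L_1^0\cap L_2^0$, we have $l_0\in A_2^0$. Any vertex $x_q^p$ with $p\ge1$ avoids $\cup_{j\in A_2^{p-1}}V(K_{n_j})\supseteq V(K_{n_{l_0}})$, so it cannot lie in $S$. For $p=0$, the vertex $x_2^0 \in \cap_{l\in L_2}V(K_{n_l})$ is not in $S$ by (i). For $q\ge3$, suppose $x_q^0\in S\subseteq V(K_{n_{l_0}})$. Then $l_0\in L_q^0$ by maximality of $L_q^0$, so $L_q^0$ meets both $L_1^0$ and $L_2^0$. This contradicts the requirement in $M_1(\Gamma_q^0)$ that $L_q^0$ meet exactly one earlier $L_i^0$.

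Hence $V(H_\n)\cap S=\{x_1^0\}$ for every such run, and \Cref{prelem:G-K-has-3-comp new}(2) gives that $\II_G$ is not unmixed. The remaining hypothesis of that lemma, namely that the three cliques lie in different components of $G\setminus S$, is exactly (ii). The disconnector property of $S$ does not seem to be needed beyond this.
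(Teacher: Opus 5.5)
Your proof is correct, and its skeleton matches the paper's. You choose $L_1\in M(\Gamma_1^0)$ containing $\{l_0,l_0',l_0''\}$ and $L_2\in M(\Gamma_1^0)$ containing $L$, and apply \Cref{prelem:G-K-has-3-comp new}. The paper runs the algorithm in the opposite order ($L_1^0=L_2$, $L_2^0=L_1$) and invokes part (1); by the symmetry of that lemma this makes no difference.

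Where you diverge is in showing that $S$ meets $H_{\n}$ in a single vertex. The paper observes that $l_0\in L_1^0\cap L_2^0$ puts both $x_1^0$ and $x_2^0$ in $K_{n_{l_0}}$, with exactly one of them in $S$. Any further $v\in S\subseteq V(K_{n_{l_0}})$ would then give a triangle $\{v,x_1^0,x_2^0\}$ inside the tree $H_{\n}$. The same holds in your ordering, since $l_0\in L\subseteq L_2$ gives $x_2^0\in V(K_{n_{l_0}})\setminus S$. You did not use this, so your triangle count only yielded $|V(H_{\n})\cap S|\le 2$. You then closed the gap with the algorithm's rules: for $q\ge 3$, $L_q^0$ must meet exactly one earlier $L_i^0$, and for $p\ge 1$, $x_q^p$ avoids $\bigcup_{j\in A_2^{p-1}}V(K_{n_j})$. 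That argument is valid and does not even need the tree property of $H_{\n}$, but the one-line triangle argument makes it unnecessary.

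In exchange, you explicitly verify $L_1\neq L_2$ and $\bigcap_{l\in L_1}V(K_{n_l})\subseteq S$ from (ii). The paper uses these only tacitly: in its ordering it needs $x_2^0\in S$ without saying why.
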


\begin{proof}
Since $m(L) >0$, $L\in \Gamma_1^0$. Choose $L_2 \in M(\Gamma_1^0)$ so that it contains $L$. Now $\{ l_0, l_0', l_0''\} \in M( \Gamma_1^0)$ due to condition (iii). We take $L_1 \in \Gamma_1^0$ so that it contains $\{ l_0, l_0', l_0''\}$. Now we run the algorithm with $L_1^0 = L_2$ and $L_2^0 = L_1$. Then $x_1^0 \notin S$ (by (i)). Since $S\subset V(K_{n_{l_0}})$ and $l_0\in L_1^0\cap L_2^0$, for every vertex $v\in S$, $\{v, x_1^0, x_2^0\}$ gives a $3$-cycle. Hence no vertices of $H_n$ other than $\{x_2^0\}$ belong to $S$, i.e.,  $V(H_n)\cap S= \{x_2^0\}$. Thus from \Cref{prelem:G-K-has-3-comp new}(1), $\II_G$ is not unmixed.
\end{proof}

Now we consider all those non-bipartite chordal graphs which has a clique $K_{r_i}$, $i\in [t]$, such that $G\setminus V(K_{r_i})$ has at least three components. We show that for such a graph $G$, $\II_G$ is not unmixed. We fix some notation to start with. 
Recall that for any set $S\subseteq V(G)$, $c_G(S)$ denotes the number of connected components of $G\setminus S$.
\begin{notation}\label{notn:i1i2i3}
Let 
\[ i_2 = \min \{ j\in \{2,\ldots,t\}: c_G(V(K_{r_j})) \geq 3 \}\] 
and $i_1\in \lambda(i_2)$. Let $C$ be a connected component of $G\setminus V(K_{r_{i_2}})$ such that $C \cap (K_{n_{i_1}}\cup K_{n_{i_2}}) = \emptyset$. Since $c_G(V(K_{r_{i_2}})) \geq 3$, such a component $C$ exists. Let
\[  i_3 = \min\{j\in[t]: K_{n_j}\cap C \neq \emptyset ~\text{ and }~ K_{n_j}\cap K_{r_{i_2}} \neq \emptyset \}.\]
\end{notation}

\begin{remark} \label{remark on choices of i}
   Since $i_1\in \lambda(i_2)$, $K_{n_{i_1}} \cap K_{n_{i_2}} = K_{r_{i_2}}$. From the choice of $i_3$, $K_{n_{i_3}} \cap K_{r_{i_2}} \neq \emptyset$ and hence $K_{r_{i_2}} \supseteq K_{n_{i_1}} \cap K_{n_{i_2}} \cap K_{n_{i_3}} \neq \emptyset $. Note that $K_{n_{i_1}} \setminus V(K_{r_{i_2}})$, $K_{n_{i_2}} \setminus V(K_{r_{i_2}})$ and $K_{n_{i_3}} \setminus V(K_{r_{i_2}})$ are in distinct components of $G\setminus V(K_{r_{i_2}})$. 
   \end{remark}

The next proposition, along with \Cref{x and y adjacent}, is used in the proof of \Cref{clemma_8}.

 \begin{proposition} \label{one compo non-bip}
Let $l\in [t]$ and $\mathcal{S}\subset V(K_{n_{l}})$ be such that $V(K_{n_l}\setminus \mathcal{S}) \cap V(K_{r_{i_2}}) \neq \emptyset$. If $| V(K_{r_{i_2}}) \cap \mathcal{S}| = 1$, then the connected component of $G\setminus \mathcal{S}$ containing $K_{n_{l}} \setminus \mathcal{S}$ is non-bipartite.
\end{proposition}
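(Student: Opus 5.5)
The natural approach is to produce a triangle inside the component of $G\setminus \mathcal{S}$ containing $K_{n_l}\setminus \mathcal{S}$. Write $V(K_{r_{i_2}})\cap \mathcal{S}=\{s\}$. By hypothesis there is a vertex $w\in V(K_{r_{i_2}})\setminus \mathcal{S}$ with $w\in V(K_{n_l})$, so $w$ lies in the component $C_l$ of $G\setminus\mathcal{S}$ containing $K_{n_l}\setminus\mathcal{S}$.

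Since $s$ is the only vertex of $K_{r_{i_2}}$ that is removed, all of $V(K_{r_{i_2}})\setminus\{s\}$ survives and lies in $C_l$, because $K_{r_{i_2}}$ is a clique containing $w$. If $|V(K_{r_{i_2}})|\geq 4$, these surviving vertices already contain a triangle and we are done. In general I would use the two maximal cliques $K_{n_{i_1}}$ and $K_{n_{i_2}}$ from \Cref{notn:i1i2i3}. Both contain $K_{r_{i_2}}$ and both properly contain it, since we assumed $K_{r_j}\neq K_{n_i}$ and $K_{n_{i_1}}\cap K_{n_{i_2}}=K_{r_{i_2}}$. So we can pick $a\in V(K_{n_{i_1}})\setminus V(K_{r_{i_2}})$ and $b\in V(K_{n_{i_2}})\setminus V(K_{r_{i_2}})$, and $\{a,w,b\}$ is a path.

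The key point is that $\mathcal{S}\subseteq V(K_{n_l})$, and a single clique $K_{n_l}$ cannot contain both $a$ and $b$. Indeed, $a$ and $b$ are non-adjacent: they lie in distinct components of $G\setminus V(K_{r_{i_2}})$ by \Cref{remark on choices of i}, or alternatively by \Cref{x and y adjacent}. So at least one of them, say $a$, is not in $\mathcal{S}$. Then $a$ is adjacent to $w$, so $a\in C_l$.

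Now I look for a triangle. If $|V(K_{r_{i_2}})|\geq 2$, take a second vertex $w'\neq w$ of $K_{r_{i_2}}$. If $w'\neq s$, then $\{a,w,w'\}$ is a triangle in $C_l$ and we are done. The remaining cases are:
\begin{enumerate}
\item[(i)] $V(K_{r_{i_2}})=\{s,w\}$;
\item[(ii)] $V(K_{r_{i_2}})$ is a single vertex, which is impossible since it must contain both $s$ and $w$.
\end{enumerate}
So case (i) is the real case, and it is the main obstacle. There I use that $K_{n_{i_1}}$ is a clique of size at least $3$. This holds since it properly contains $\{s,w\}$, and I would also use that $G$ is non-bipartite and the maximal cliques are non-trivial, justifying $n_{i_1}\geq 3$ from $K_{r_{i_2}}\subsetneq K_{n_{i_1}}$.

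If $K_{n_{i_1}}$ has two vertices $a,a'$ outside $K_{r_{i_2}}$ that both avoid $\mathcal{S}$, then $\{a,a',w\}$ is a triangle in $C_l$. The same argument works with $K_{n_{i_2}}$. Since $\mathcal{S}\subseteq V(K_{n_l})$ meets at most one of $V(K_{n_{i_1}})\setminus V(K_{r_{i_2}})$ and $V(K_{n_{i_2}})\setminus V(K_{r_{i_2}})$, one of the two cliques is untouched outside $K_{r_{i_2}}$. Call it $K_{n_{i_j}}$. Then every vertex of $K_{n_{i_j}}$ other than $s$ lies in $C_l$. Since $|V(K_{n_{i_j}})|\geq 3$, after removing $s$ at least two vertices remain, forming an edge together with $w$.

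I expect the delicate step to be guaranteeing $|V(K_{n_{i_j}})|\geq 4$, or otherwise finding a third vertex. If $K_{n_{i_j}}$ is just a triangle $\{s,w,a\}$, I would try to find a triangle through $w$ elsewhere. One option is inside $K_{n_l}$ itself, if $|V(K_{n_l}\setminus\mathcal{S})|\geq 3$. Otherwise I would use chordality to pass to the component $C$ from \Cref{notn:i1i2i3} via $K_{n_{i_3}}$, whose intersection with $K_{r_{i_2}}$ is nonempty. In that case I need to show $w\in K_{n_{i_3}}$ or handle $s$ separately.
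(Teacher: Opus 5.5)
There is a genuine gap: your case (i) is left open, and it cannot be closed without an extra hypothesis.

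Your argument is complete when $|V(K_{r_{i_2}})|\geq 3$. Take $w'\in V(K_{r_{i_2}})\setminus\{s,w\}$. Whichever of $a,b$ avoids the clique $\mathcal{S}$ forms a triangle with $w,w'$, and that triangle lies in $C_l$. This is essentially the paper's proof: $\mathcal{S}$ misses $V(K_{n_{i_j}})\setminus V(K_{r_{i_2}})$ for some $j\in\{1,2\}$, so $K_{n_{i_j}}\setminus\mathcal{S}$ is a clique on at least $2+1=3$ vertices inside $C_l$.

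The case $V(K_{r_{i_2}})=\{s,w\}$ fails because the statement is false there. Take $K_{n_1}=\{s,w,a\}$, $K_{n_2}=\{s,w,b\}$, $K_{n_3}=\{s,c\}$ and $K_{n_4}=\{a,d\}$; the last clique is only there to make $m([t])=0$. Then $c_G(\{s\})=c_G(\{a\})=2$ and $c_G(\{s,w\})=3$, so $i_2=2$, $i_1=1$, $C=\{c\}$ and $i_3=3$. With $l=1$ and $\mathcal{S}=\{s,a\}$ the hypotheses hold: $K_{n_1}\setminus\mathcal{S}=\{w\}$ meets $K_{r_{i_2}}$, and $V(K_{r_{i_2}})\cap\mathcal{S}=\{s\}$. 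Yet the component of $G\setminus\mathcal{S}$ containing $w$ is the single edge $\{w,b\}$, which is bipartite. This is exactly the scenario you flagged, where the untouched clique $K_{n_{i_j}}$ is a triangle. Neither fallback helps: $K_{n_l}\setminus\mathcal{S}$ is the single vertex $w$, and $C=\{c\}$ is attached only through $s\in\mathcal{S}$.

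The missing ingredient is the hypothesis $r_{i_2}\geq 3$. The paper's proof uses it explicitly (``Since $r_{i_2}\geq 3$ \ldots''), and it is the standing assumption of \Cref{clemma_8}, the only place this proposition is applied. Under it, $|V(K_{r_{i_2}})\setminus\mathcal{S}|\geq 2$, so case (i) cannot occur and your triangle argument finishes the proof. You should drop the attempted general treatment of case (i) and add $r_{i_2}\geq 3$ to the hypotheses.
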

\begin{proof}
   Let $C'$ be the connected component containing $K_{n_{l}} \setminus \mathcal{S}$. Since $V(K_{n_l}\setminus \mathcal{S}) \cap V(K_{r_{i_2}}) \neq \emptyset$, $(K_{n_{i_1}} \cup K_{n_{i_2}}) \setminus \mathcal{S} \subseteq C'$. Since $G[\mathcal{S}]$ is a clique, and $K_{n_{i_1}} \setminus V(K_{r_{i_2}})$ and $K_{n_{i_2}} \setminus V(K_{r_{i_2}})$ are in distinct components of $G\setminus V(K_{r_{i_2}})$, it follows that $\mathcal{S}\cap V(K_{n_{i_j}} \setminus V(K_{r_{i_2}})) = \emptyset $ for some $j\in \{1,2\}$. Since $r_{i_2} \geq 3$ and $| V(K_{r_{i_2}}) \cap \mathcal{S}| = 1$, $|V(K_{r_{i_2}}\setminus \mathcal{S})| \geq 2$. Since $V(K_{r_{i_2}}) \subset V(K_{n_{i_j}})$, $K_{n_{i_j}} \setminus \mathcal{S}$ has cardinality at least $3$, and hence $C'$ is non-bipartite.
\end{proof}

Recall that $M(\Gamma_1^0)$ is the collection of all maximal subsets $L$ of $[t]$ such that $m(L)>0$. We choose and fix $L_1 \in M(\Gamma_1^0)$ such that $\{ i_1, i_2, i_3\} \subseteq L_1$ for the rest of this section. 
We define the set
\[\Lambda_c = \{l \in [t] \setminus L_1 ~:~ V(K_{n_l}) \cap V(K_{n_{i_c}} \setminus V(K_{r_{i_2}})) \neq \emptyset ~ \text{and} ~ V(K_{n_l}) \cap V(K_{r_{i_2}})= \emptyset\},\]
where $c\in [3]$, in order to study how the complete graphs $K_{n_l}$, where $l \notin L_1$, intersect with $K_{n_{i_1}}$, $K_{n_{i_2}}$ or $K_{n_{i_3}}$ under the condition that $\II_G$ is unmixed. The next lemma shows that if there is an $l \not\in L_1$ such that $K_{n_l}$ intersects one of $K_{n_{i_1}} , K_{n_{i_2}}$ or $K_{n_{i_3}}$ outside $K_{r_{i_2}}$, then there is an $l' \not\in L_1$ such that $K_{n_{l'}}$ intersects it strictly outside $K_{r_{i_2}}$, i.e. $l' \in \Lambda_c$. This statement is crucially used to invoke \Cref{prelem:G-K-has-3-comp new} in the proof of \Cref{cthm_3},  and show that if $r_i \geq 3$, then $\II_G$ is not unmixed.
\begin{lemma}\label{clemma_8}
Let $\II_G$ be unmixed and $r_{i_2} \geq 3$. If $\Lambda_c = \emptyset$, then $K_{n_l} \cap (K_{n_{i_c}} \setminus V(K_{r_{i_2}})) = \emptyset$ for every $l \notin L_1$.
\end{lemma}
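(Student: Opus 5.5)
We argue by contradiction. Assume $\II_G$ is unmixed, $r_{i_2}\geq 3$ and $\Lambda_c=\emptyset$, and suppose some $l\notin L_1$ has $K_{n_l}\cap (K_{n_{i_c}}\setminus V(K_{r_{i_2}}))\neq\emptyset$. Since $\Lambda_c=\emptyset$, every such $l$ also meets $K_{r_{i_2}}$. We will produce a sign-split disconnector set $\mathcal{S}$ with $b_G(\mathcal{S})<|\mathcal{S}|$, contradicting unmixedness.

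\textbf{Structure of $K_{n_l}$.} Fix such an $l$. Then $K_{n_l}$ contains a vertex $u\in V(K_{n_{i_c}})\setminus V(K_{r_{i_2}})$ and a vertex $w\in V(K_{r_{i_2}})$. Since $l\notin L_1$ and $L_1$ is maximal in $\Gamma_1^0$, we get $V(K_{n_l})\cap \bigcap_{j\in L_1}V(K_{n_j})=\emptyset$. The components of $G\setminus V(K_{r_{i_2}})$ containing $K_{n_{i_1}}\setminus V(K_{r_{i_2}})$, $K_{n_{i_2}}\setminus V(K_{r_{i_2}})$ and $K_{n_{i_3}}\setminus V(K_{r_{i_2}})$ are distinct (\Cref{remark on choices of i}). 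Hence $K_{n_l}\setminus V(K_{r_{i_2}})$ lies entirely in the component of $u$, and $V(K_{n_l})\cap V(K_{r_{i_2}})$ is a proper nonempty subset of $V(K_{r_{i_2}})$. I would use \Cref{x and y adjacent} together with the ordering $i_1<i_2$ and the minimality defining $i_2$ and $i_3$ to control how $K_{n_l}$ sits relative to $K_{n_{i_1}},K_{n_{i_2}},K_{n_{i_3}}$.

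\textbf{The disconnector.} Let $\mathcal{S}\subset V(K_{n_l})$ be a clique separator: the part of $K_{n_l}$ separating $K_{n_l}\setminus \mathcal{S}$ from the rest of the $i_c$-side, for instance $\mathcal{S}=V(K_{n_l})\cap(V(K_{n_{i_c}})\cup\bigcup_{j\neq l}V(K_{n_j}))$ trimmed to a minimal separator. We want to arrange $|V(K_{r_{i_2}})\cap\mathcal{S}|=1$, and then check two things.
\begin{enumerate}
\item By \Cref{one compo non-bip}, the component of $G\setminus\mathcal{S}$ containing $K_{n_l}\setminus\mathcal{S}$ is non-bipartite; it also contains all of $(K_{n_{i_1}}\cup K_{n_{i_2}})\setminus\mathcal{S}$.
\item Every vertex of $\mathcal{S}$ is adjacent to this fixed non-bipartite component, so \Cref{cprop_4} gives $\mathcal{S}\in\mathcal{D}(G)$.
\end{enumerate}
Counting then gives $b_G(\mathcal{S})\neq|\mathcal{S}|$. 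The intended mechanism is that $\mathcal{S}$ cuts off at most one small piece, which is typically non-bipartite or absent, while $|\mathcal{S}|\geq1$. When $|\mathcal{S}|$ is larger, I would instead apply \Cref{prop on the unmixed property} with a single vertex $T=\{w\}$, or use the fact that $\mathcal{S}$ together with the non-bipartite component forces an inequality.

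\textbf{Main obstacle.} The hardest step is choosing $\mathcal{S}$ so that the hypothesis $|V(K_{r_{i_2}})\cap\mathcal{S}|=1$ of \Cref{one compo non-bip} holds while the bipartite count still fails. If $K_{n_l}$ meets $K_{r_{i_2}}$ in several vertices, I would first replace $l$ by an index minimizing $|V(K_{n_l})\cap V(K_{r_{i_2}})|$. I would then use the minimality of $i_2$ (every earlier $K_{r_j}$ separates $G$ into at most two components) to argue that a single shared vertex suffices. Otherwise we fall back on the algorithm: run it with $L_1^0=L_1$ and $L_2^0\ni l$ to obtain a vertex of degree $3$ in $H_\n$, contradicting \Cref{ccoro_2}.
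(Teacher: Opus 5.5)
There is a genuine gap at the step that is supposed to produce the contradiction. The sentence ``Counting then gives $b_G(\mathcal{S})\neq|\mathcal{S}|$'' is never justified, and for the separators you describe it is false in general: further cliques can pass through $\mathcal{S}$ and hang bipartite pieces off it.

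For example, take the maximal cliques $\{a,b,c,d\},\{a,b,c,e\},\{a,b,f\},\{c,d,g\},\{c,d,h\}$ in this order. Then $i_1=1$, $i_2=2$, $i_3=3$, $r_{i_2}=3$, $L_1=\{1,2,3\}$ and $\Lambda_1=\emptyset$. The index $l=4$ meets $K_{n_1}\setminus V(K_{r_{i_2}})$ in $d$. Your separator is $\mathcal{S}=\{c,d\}$. It meets $K_{r_{i_2}}$ in exactly one vertex and is sign-split. Yet $G\setminus\mathcal{S}$ has components $\{g\}$, $\{h\}$ and a non-bipartite one, so $b_G(\mathcal{S})=2=|\mathcal{S}|$. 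This $G$ is not unmixed, but the contradiction cannot be read off from $\mathcal{S}$.

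In the paper the count is used in the opposite direction. With $l_c=\min\mathcal{L}_c$ and, when $l_c>i_c$, $\mathcal{S}=V(K_{r_{l_c}})$, one shows $|\mathcal{S}|\geq 2$, and that either $|\mathcal{S}|\geq 3$ or some component of $G\setminus\mathcal{S}$ is non-bipartite. Unmixedness then \emph{forces} $c_G(\mathcal{S})\geq 3$. The contradiction comes from \Cref{prelem:G-K-has-3-comp}:
\begin{itemize}
\item $\mathcal{S}\subseteq V(K_{n_{s_0}})$ with $s_0\in\lambda(l_c)\subseteq L_1$;
\item $\mathcal{S}$ misses $\bigcap_{j\in L_1}V(K_{n_j})$;
\item $K_{n_{s_0}}$, $K_{n_{l_c}}$ and a third clique $K_{n_{l_c'}}$ share a vertex of $\mathcal{S}$, while their parts outside $\mathcal{S}$ lie in three different components.
\end{itemize}
So some run of the algorithm has a vertex of degree at least $3$ in $H_\n$, contradicting \Cref{ccoro_2}. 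Your closing ``fall back on the algorithm'' sentence names this mechanism, even with the right runs ($L_1^0=L_1$, $L_2^0\ni l$). But it supplies none of the inputs: the third component, the third clique $K_{n_{l_c'}}$, an index of $L_1$ whose clique contains $\mathcal{S}$, or the fact that $V(H_\n)\cap\mathcal{S}=\{x_2^0\}$.

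The auxiliary devices you propose do not fill this in.
\begin{itemize}
\item Minimizing $|V(K_{n_l})\cap V(K_{r_{i_2}})|$ gives no control over $\lambda(l)$. It is minimality of the \emph{index} $l_c$ that yields $\lambda(l_c)\subseteq L_1$, via $K_{n_{l_c}}\cap K_{n_{i_c}}\subseteq K_{r_{l_c}}\subseteq K_{n_{s'}}$ for $s'\in\lambda(l_c)$.
\item The condition $|V(K_{r_{i_2}})\cap\mathcal{S}|=1$ is neither always attainable nor needed. In the paper the intersection can be empty; non-bipartiteness then comes from $K_{r_{i_2}}$ (with $r_{i_2}\geq 3$) lying in the component of $K_{n_{s_0}}\setminus\mathcal{S}$.
\item \Cref{prop on the unmixed property} can only certify $b_G(S)=|S|$, and it requires $\II_{G\setminus T}$ to be unmixed. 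It cannot produce your inequality.
\end{itemize}

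Finally, two cases are missing. You must exclude $|\mathcal{S}|=1$; the paper does this with \Cref{x and y adjacent}, taking $x\in\bigcap_{j\in L_1}V(K_{n_j})$ and $y\in V(K_{n_{l_c}}\cap K_{r_{i_2}})$. You must also treat $l_c<i_c$ separately. There only $c=1$ can occur, and unmixedness together with minimality of $i_2$ forces $|V(K_{r_{i_1}})|=2$ and $l_1\in\lambda(i_1)$. Then $\mathcal{S}=V(K_{r_{i_1}})$ plays the role above, with $i_1$ in place of $s_0$.
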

\begin{proof}  
Let us assume that for some $c\in [3]$, $\Lambda_c = \emptyset$ but $K_{n_l} \cap (K_{n_{i_c}} \setminus V(K_{r_{i_2}})) \neq \emptyset$ for some $l\notin L_1$. Take $\mathcal{L}_c = \{ l\notin L_1: K_{n_l} \cap (K_{n_{i_c}} \setminus V(K_{r_{i_2}})) \neq \emptyset \}$. Since $\Lambda_c = \emptyset$, $K_{n_{l}} \cap K_{r_{i_2}} \neq \emptyset$ for all $l\in \mathcal{L}_c$. Let $l_c= \min \mathcal{L}_c$. Then $K_{n_{l_c}} \cap K_{r_{i_2}} \neq \emptyset$. Let $y \in V(K_{n_{l_c}} \cap K_{r_{i_2}}) $. Let $x \in \cap_{l\in L_1}V(K_{n_l})$. So, $x \in V(K_{r_{i_2}})$. 
\\
 \textbf{Case 1 - $l_c> i_c$:} \\
In this case, we first show that $\lambda(l_c) \subseteq L_1$.
Let $s'\in \lambda(l_c)$. Since $l_c= \min \mathcal{L}_c$ and $s'<l_c$, $s'\notin \mathcal{L}_c$. However, $K_{n_{l_c}} \cap K_{n_{i_c}} \subseteq K_{r_{l_c}} \subseteq K_{n_{s'}}$ and hence
\begin{align*}
   K_{n_{s'}} \cap (K_{n_{i_c}} \setminus V(K_{r_{i_2}})) 
 & \supseteq & K_{r_{l_c}} \cap (K_{n_{i_c}} \setminus V(K_{r_{i_2}})) & \supseteq & K_{n_{l_c}} \cap K_{n_{i_c}} \cap (K_{n_{i_c}} \setminus V(K_{r_{i_2}})) \\
 &  & &= &  K_{n_{l_c}} \cap (K_{n_{i_c}} \setminus V(K_{r_{i_2}}))
 \neq \emptyset . \label{eq:eq1}\tag{*}
 \end{align*}
Thus, $s'$ satisfies the defining identity of $\mathcal{L}_c$ and hence it follows that $s' \in L_1$. So, $\lambda(l_c) \subseteq L_1$.

We take $\mathcal{S} = V(K_{r_{l_c}})$. If $ |\mathcal{S}| = 1$, then from equation \eqref{eq:eq1}, $ \mathcal{S} =  \mathcal{S}\cap (K_{n_{i_c}} \setminus V(K_{r_{i_2}}) )$. This implies $\mathcal{S} \subset V(K_{n_{i_c}})$ and $S\cap V(K_{r_{i_2}}) = \emptyset$. Since $y \in V(K_{n_{l_c}} \cap K_{r_{i_2}}) $, $y \notin \mathcal{S}$. Observe that $L_1$ is maximal and $l_c \not\in L_1$, so, $x\notin V(K_{n_{l_c}})$. Since $\mathcal{S} \subset V(K_{n_{l_c}})$, $x\notin \mathcal{S}$. Hence we obtain $x, y \in V(K_{r_{i_2}}) \setminus \mathcal{S}$.  Invoking \Cref{x and y adjacent} with $i = i_c$ and $j = l_c$, we get a contradiction. So, $|\mathcal{S}| \geq 2$.  

Let us fix some element $s_0 \in \lambda(l_c)$.  Note that $\mathcal{S}\in \mathcal{D}(G)$, and $K_{n_{l_c}}\setminus \mathcal{S}$ and $K_{n_{s_0}}\setminus \mathcal{S}$ are in distinct connected components of $G\setminus \mathcal{S}$, say $C_1$ and $C_2$ respectively.  

\vskip 1mm \noindent
Claim: $c_G(\mathcal{S}) \geq 3$. \\
Proof of claim: Since $\II_G$ is unmixed, if $|\mathcal{S}| \geq 3$, then $b_G(\mathcal{S}) \geq 3$ and hence $ c_G(\mathcal{S}) \geq 3$. So we may assume that $|\mathcal{S}| = 2$. We know that $V(K_{n_{l_c}} \cap K_{n_{i_c}}) \subseteq \mathcal{ S}$. 
By equation \eqref{eq:eq1}, $|\mathcal{S} \cap V(K_{n_{i_c}} \setminus V(K_{r_{i_2}}))| \geq 1 $. Hence, $|\mathcal{S}\cap V(K_{r_{i_2}})| \leq 1$. \\
Subcase 1: Suppose $|\mathcal{S}\cap V(K_{r_{i_2}})| = 1$. Note that $x \in \cap_{l\in L_1}V(K_{n_l}) \subseteq K_{n_{i_1}} \cap K_{n_{i_2}} \cap K_{n_{i_3}} \subseteq K_{r_{i_2}}$. We have already seen that $x \not\in K_{n_{l_c}}$ and hence does not belong to $\mathcal{S}$. Hence, $x \in (K_{n_{i_c}} \setminus \mathcal{S}) \cap K_{r_{i_2}}$. Therefore \Cref{one compo non-bip} applies with $l= i_c$, and we see that $C_2$ is non-bipartite. Since $\II_G$ is unmixed, $b_G(\mathcal{S})= |\mathcal{S}| = 2$. So, apart from $C_1$ and $C_2$, there exists at least one more connected component of $G\setminus \mathcal{S}$ which is bipartite. Hence $c_G(\mathcal{S}) \geq 3$. \\
Subcase 2: Suppose $|\mathcal{S}\cap V(K_{r_{i_2}})| = 0$. Recall that $x\in \cap_{l\in L_1}V(K_{n_l}) \setminus \mathcal{S}$. Since $s_0\in \lambda(l_c) \subseteq L_1$, it follows that $x \in V(K_{s_0}) \setminus \mathcal{S}$ and hence that 
$\cup _{j\in L_1} K_{n_j} \setminus \mathcal{S} \subseteq C_2$. Thus $K_{r_{i_2}} \subset K_{n_{i_1}} \subset C_2$. Therefore $C_2$ is non-bipartite as $r_{i_2} \geq 3$. Similar to subcase 1, it now follows that there exists at least one more connected component of $G\setminus \mathcal{S}$ which is bipartite. Hence $c_G(\mathcal{S}) \geq 3$.

Let $C_3$ be a connected component of $G\setminus \mathcal{S}$ different from $C_1$ and $C_2$. Suppose $l_c^\prime\in [t]$ is such that $K_{n_{l_c^\prime}}\cap C_3\neq \emptyset$ and $V(K_{n_{l_c^\prime}})\cap \mathcal{S} \neq \emptyset$. Then $V(K_{n_{l_c^\prime}}) \cap V(K_{n_{l_c}}) \cap V(K_{n_{s_0}}) = V(K_{n_{l_c^\prime}}) \cap \mathcal{S} \neq \emptyset $. Recall that $L_1$ is a maximal subset of $[t]$ with $\cap_{l\in L_1}V(K_{n_l}) \neq \emptyset$. Since $l_c \notin L_1$, and $\mathcal{S} = V(K_{r_{l_c}}) \subset V(K_{n_{l_c}})$, $(\cap_{l\in L_1}V(K_{n_l}))\cap \mathcal{S} = \emptyset$. Since $\mathcal{S} \subseteq K_{n_{s_0}}$, \Cref{prelem:G-K-has-3-comp} applies with $L= L_1$, $l_0 = s_0$, $l_0'= l_c'$, $l_0''= l_c$ and $S= \mathcal{S}$ showing that $\II_G$ is not unmixed, which is a contradiction.\\
 
\noindent \textbf{Case 2 - $l_c< i_c$:} \\
\noindent\textbf{ c = 1:} Since $l_1 < i_1$, $K_{n_{l_1}} \cap K_{n_{i_1}} \subseteq K_{r_{i_1}}$. We know that $K_{r_{i_2}}\subset K_{n_{i_1}}$. Since $l_1\in \Lambda_1$, $ K_{n_{l_1}} \cap (K_{n_{i_1}} \setminus V(K_{r_{i_2}})) \neq \emptyset$ and by our assumption $K_{n_{l_1}}\cap K_{r_{i_2}} \neq \emptyset$. Therefore $|V( K_{n_{l_1}} \cap K_{n_{i_1}})| \geq 2$. Hence $|V(K_{r_{i_1}})| \geq 2$. Note that $V(K_{r_{i_1}}) \in \mathcal{D}(G)$ and $\II_G$ is unmixed. So, if $|V( K_{n_{l_1}} \cap K_{n_{i_1}})| > 2$, then $c_G(V(K_{r_{i_1}})) \geq 3$. Since $i_1 <i_2$, this contradicts the choice of $i_2$ (\Cref{notn:i1i2i3}). So, $|V(K_{r_{i_1}})| = 2$. Thus $K_{n_{l_1}} \cap K_{n_{i_1}} = K_{r_{i_1}}$. This implies $l_1\in \lambda(i_1)$ (\Cref{definition of lambda}). In this case choose $\mathcal{S} = V(K_{r_{i_1}}) \in \mathcal{D}(G)$. Then $K_{n_{i_1}} \setminus \mathcal{S}$ and $K_{n_{l_1}} \setminus \mathcal{S}$ are in different connected components of $G\setminus \mathcal{S}$ say $C_1'$ and $C_2'$ respectively. Note that $|\mathcal{S}| =2$.

We now check that the hypotheses of \Cref{one compo non-bip} are satisfied. Recall first the vertex $x$ chosen in $\cap_{l\in L_1}V(K_{n_l})$. Since $L_1$ is maximal and $l_1\notin L_1$, it follows that $ x \notin K_{n_{l_1}}$. Hence, $x \notin V(K_{r_{i_1}})$ and so $x\in (V(K_{n_{i_1}}) \setminus \mathcal{S}) \cap V(K_{r_{i_2}})$. In particular, $(V(K_{n_{i_1}}) \setminus \mathcal{S}) \cap V(K_{r_{i_2}}) \neq \emptyset$.
Note further that $$\mathcal{S} = V(K_{n_{l_1}}) \cap V(K_{n_{i_1}}) = \big(V(K_{n_{l_1}}) \cap (V(K_{n_{i_1}}) \cap V(K_{r_{i_2}})\big) \sqcup \big(V(K_{n_{l_1}}) \cap V(K_{n_{i_1}} \setminus V(K_{r_{i_2}}))\big) . $$ As observed earlier, both sets on the right are non-empty and since it is a disjoint union, and $|\mathcal{S}| = 2$, it follows that $|V(K_{n_{l_1}}) \cap V(K_{n_{i_1}}) \cap V(K_{r_{i_2}})| =1$, i.e., $|\mathcal{S}\cap V(K_{r_{i_2}})| = 1$. Hence, applying 
\Cref{one compo non-bip}, we obtain that $C_1'$ is non-bipartite. 

Since $\II_G$ is unmixed, $b_G(\mathcal{S}) = 2$. Hence $c_G(\mathcal{S}) \geq 3 $. Let $C_3'$ be a component different from $C_1'$ and $C_2'$. Choose $l_1^\prime\in [t]$ such that $K_{n_{l_1^\prime}}\cap C_3'\neq \emptyset$ and $K_{n_{l_1^\prime}}\cap \mathcal{S} \neq \emptyset$. Since $\mathcal{S}\subseteq V(K_{n_{l_1}})$, $L_1$ is maximal and $l_1\notin L_1$, it follows that $(\cap_{l\in L_1}V(K_{n_l})) \cap \mathcal{S} = \emptyset$. Now taking $L=L_1$, $l_0 = i_1$, $l_0'= l_1'$, $l_0''= l_1$ and $S= \mathcal{S}$ in \Cref{prelem:G-K-has-3-comp}, we conclude that $\II_G$ is not unmixed, which is a contradiction.

\noindent\textbf{c = 2:}  If $l_2 <i_2$, then $K_{n_{l_2}} \cap K_{n_{i_2}} \subseteq K_{r_{i_2}}$. Since $l_2 \in \mathcal{L}_2$, $K_{n_{l_2}}\cap( K_{n_{i_2}} \setminus V(K_{r_{i_2}})) \neq \emptyset$, a contradiction.

\noindent\textbf{c = 3:} Since $l_3\in \mathcal{L}_3$, $K_{n_{l_3}}\cap( K_{n_{i_3}} \setminus V(K_{r_{i_2}})) \neq \emptyset$. Since $\Lambda_c = \emptyset$, $K_{n_{l_3}} \cap K_{r_{i_2}} \neq \emptyset$. Hence from the choice of $i_3$, $l_3>i_3$. So, this case cannot occur.
\end{proof} 

\begin{corollary} \label{Coro for two Lambda_c empty}
Let $\II_G$ be unmixed and $\Lambda_1= \Lambda_2 = \emptyset$. If for some run $\mathcal{L}$ of the algorithm $L_1^0(\mathcal{L}) = L_1$, then  $V(H_{n(\mathcal{L})}) \cap V(K_{r_{i_2}}) = \{x_1^0\}$.
\end{corollary}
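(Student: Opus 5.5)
Since $L_1^0(\mathcal{L})=L_1$ and $\{i_1,i_2,i_3\}\subseteq L_1$, we have
$$x_1^0\in \bigcap_{l\in L_1}V(K_{n_l})\subseteq V(K_{n_{i_1}})\cap V(K_{n_{i_2}})=V(K_{r_{i_2}}).$$
So the plan is to show the reverse inclusion, namely that no other vertex of $H_{\n(\mathcal{L})}$ lies in $V(K_{r_{i_2}})$. I would sort the vertices $x_q^p$ of $H_{\n}$ by the step $p$ at which they are produced and treat three ranges of $p$ separately.

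\emph{Steps $p\ge 2$.} Since $i_1,i_2\in L_1^0$, both indices lie in $A_2^0\cup A_1^0$, and hence in $A_2^{p-1}$ for every $p\ge 2$. The algorithm chooses $x_q^p$ outside $\bigcup_{j\in A_2^{p-1}}V(K_{n_j})$. Therefore $x_q^p\notin V(K_{n_{i_1}})\supseteq V(K_{r_{i_2}})$, and this range is done.

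\emph{Step $0$, $q\ge 2$.} Suppose $x_q^0\in V(K_{r_{i_2}})$.
\begin{itemize}
\item By maximality of $L_q^0$, every $l$ with $x_q^0\in V(K_{n_l})$ lies in $L_q^0$; in particular $i_1,i_2\in L_q^0$.
\item Since $L_q^0\ne L_1$ and $L_1$ is maximal, some $l\in L_q^0\setminus L_1$ has $x_q^0\in V(K_{n_l})\cap V(K_{r_{i_2}})$.
\item \Cref{clemma_8} with $\Lambda_1=\Lambda_2=\emptyset$ gives $V(K_{n_l})\cap V(K_{n_{i_c}}\setminus V(K_{r_{i_2}}))=\emptyset$ for $c=1,2$ and all $l\notin L_1$.
\end{itemize}
I would combine this with the choice rule $M_2(\Gamma_q^0)$, which minimizes $|L\cap L_1^0|$, and with \Cref{cprop_6} and \Cref{cprop_1}. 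The aim is to show that $L_q^0\cap L_1$ contains $\lambda$-closed data forcing $K_{n_l}$ to meet $K_{n_{i_1}}$ or $K_{n_{i_2}}$ outside $K_{r_{i_2}}$, which contradicts the previous bullet. Failing that, I would show that some competing $L\in M_1(\Gamma_q^0)$ has strictly smaller intersection with $L_1$, contradicting the $M_2$ choice. A third route is to use that $H_0$ is a path (\Cref{ccoro_2}): a second vertex of $K_{r_{i_2}}$ in $H_0$, together with a vertex of the component $C$ reached through $K_{n_{i_3}}$, would give $x_1^0$ degree at least $3$. I would try the $\lambda$-closure argument first.

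\emph{Step $1$.} Here $x_q^1$ avoids $K_{n_j}$ for all $j\in A_2^0$, but $i_1$ or $i_2$ may lie only in $A_1^0$. If $x_q^1\in V(K_{r_{i_2}})$, then $x_q^1$ is adjacent to $x_1^0$. Since $H_1$ is a tree (\Cref{H_1 tree}), $x_1^0$ is its unique neighbour among the $x^0$'s.
\begin{itemize}
\item I would argue that some $l\in L_q^1$ lies outside $L_1$, because $x_q^1\neq x_1^0$ and $L_1$ is maximal.
\item As before, $K_{n_l}$ then meets $K_{r_{i_2}}$ but, by \Cref{clemma_8}, avoids $K_{n_{i_c}}\setminus V(K_{r_{i_2}})$ for $c=1,2$.
\item Using \Cref{cremark_3} and \Cref{H_p with v nbipartite}, I would derive either a triangle $\{x_q^1,x_1^0,x_{q'}^0\}$ inside $H_1$, contradicting that $H_1$ is a tree, or a vertex of degree at least $3$, contradicting \Cref{ccoro_2}.
\end{itemize}

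The main obstacle is the step-$0$ case with $q\ge 2$: the intersection pattern of $L_q^0$ with $L_1$ is determined only by the tie-breaking rules $M_1$ and $M_2$. This is where the hypotheses $\Lambda_1=\Lambda_2=\emptyset$, through \Cref{clemma_8}, must be brought in carefully.
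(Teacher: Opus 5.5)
\textbf{There is a genuine gap.}

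Your treatment of the steps $p\ge 2$ is correct, since $L_1^0\subseteq A_2^0\cup A_1^0=A_2^1\subseteq A_2^{p-1}$. The remaining cases, step $0$ with $q\ge 2$ and step $1$, carry all the content, and there you only list strategies. None of the tools you list can close them, because all of them are compatible with the conclusion failing.

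Take $K_{n_1}=\{a,b,c,u\}$, $K_{n_2}=\{a,b,c,v\}$, $K_{n_3}=\{a,w\}$, $K_{n_4}=\{b,z\}$, choosing $C=\{w\}$. Then:
\begin{itemize}
\item $i_1=1$, $i_2=2$, $i_3=3$, $L_1=\{1,2,3\}$ and $\Lambda_1=\Lambda_2=\emptyset$, and the conclusion of \Cref{clemma_8} holds.
\item Once $L_1^0=L_1$, the algorithm is forced to take $L_2^0=\{1,2,4\}$ with $x_2^0=b\in V(K_{r_{i_2}})$.
\item $H_{\n}$ is the induced path $w,a,b,z$. There is no triangle, no vertex of degree $3$, and nothing contradicting the $M_2$ rule or $\lambda$-closure.
\end{itemize}
Replacing $K_{n_4}$ by $\{w,z\}$ gives a run with $L_1^1=\{1,2\}\subseteq L_1$ and $x_1^1=b$. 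This refutes your step-$1$ claim that some $l\in L_q^1$ must lie outside $L_1$.

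These graphs are not unmixed, of course. But the consequence of unmixedness that rules them out is one your plan never invokes: in both, $S(G)=S_2(G)=\{c,u,v\}$, which violates \Cref{S_2 cardinality 1}.

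That is the missing idea, and it also makes the split by steps unnecessary. Suppose some $x_q^p\ne x_1^0$ of $H_{\n}$ lies in $V(K_{r_{i_2}})\subseteq V(K_{n_{i_1}})\cap V(K_{n_{i_2}})$.
\begin{itemize}
\item Every other vertex of $K_{n_{i_1}}\cup K_{n_{i_2}}$ is adjacent to both $x_1^0$ and $x_q^p$. So it is not in the tree $H_{\n}$, and it lies in $S(G)$ because $H_{\n}$ is a component of $G\setminus S(G)$.
\item No vertex of $K_{n_{i_j}}\setminus V(K_{r_{i_2}})$, $j=1,2$, lies in a $K_{n_l}$ with $l\in A_0^{\n}\subseteq A_0^0$; note that such $l\notin L_1$.
\item For $r_{i_2}\ge 3$ this is \Cref{clemma_8}.
\item For $r_{i_2}=2$ one has $V(K_{r_{i_2}})=\{x_1^0,x_q^p\}\subseteq V(H_{\n})$. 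Then \Cref{ccoro_1} gives $V(K_{n_l})\cap V(K_{r_{i_2}})=\emptyset$, and $\Lambda_1=\Lambda_2=\emptyset$ finishes it. Your appeal to \Cref{clemma_8} silently assumes $r_{i_2}\ge 3$, so this case is also missing from your plan.
\item Since $S_0(G)\subseteq\bigcup_{l\in A_0^{\n}}V(K_{n_l})$, these vertices lie in $S_2(G)$.
\end{itemize}
As $K_{n_{i_1}}\setminus V(K_{r_{i_2}})$ and $K_{n_{i_2}}\setminus V(K_{r_{i_2}})$ are nonempty and disjoint, $|S_2(G)|\ge 2$. This contradicts \Cref{S_2 cardinality 1}.
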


\begin{proof}
By choice of $L_1^0(\mathcal{L})$, $x_1^0 \in \cap_{l\in L_1}V(K_{n_l}) \subseteq V(K_{n_{i_1}} \cap K_{n_{i_2}} \cap K_{n_{i_3}})\subseteq V(K_{r_{i_2}})$. Suppose there exists $x_q^p \in V(H_{n(\mathcal{L})}) \setminus \{x_1^0\} $ such that $x_q^p \in V(K_{r_{i_2}})$. We know that $K_{r_{i_2}} \subseteq K_{n_{i_1}}\cap K_{n_{i_2}}$. So, for $j\in \{1,2\}$, if $x\in V(K_{n_{i_j}}) \setminus \{ x_1^0, x_q^p \}$, then $\{x, x_1^0, x_q^p \}$ forms a $3$-cycle. Since $H_{\n({\mathcal{L})}}$ is a path (\Cref{H_n is path}), $x\notin V(H_{\n({\mathcal{L})}})$. So, $x\in S(G)$. Hence $V(K_{n_{i_j}} \setminus V(K_{r_{i_2}})) \subseteq V(K_{n_{i_j}}) \setminus \{ x_1^0, x_q^p \} \subseteq S(G)$. \\

\noindent
Claim: $V(K_{n_{i_j}} \setminus V(K_{r_{i_2}})) \subseteq S_2(G)$ for $j\in \{1,2\}$. \\
Proof of claim: 
We have proved that $V(K_{n_{i_j}} \setminus V(K_{r_{i_2}})) \subseteq S(G)$. Now $S(G)= S_2(G) \sqcup S_0(G)$ and $S_0(G) \subseteq \cup_{l \in A^n_0} V(K_{n_l})$. So, to prove the claim, it is sufficient to show that for any $l\in A_0^{\n(\mathcal{L})}$, $K_{n_l} \cap (K_{n_{i_j}} \setminus V(K_{r_{i_2}})) = \emptyset$. Let  $l\in A_0^{\n(\mathcal{L})}$. Note that $x_1^0, x_q^p \in V(K_{r_{i_2}})$ implies $|V(K_{r_{i_2}})| \geq 2$.
 
\noindent \textbf{Case 1: $|V(K_{r_{i_2}})|\geq 3$:} Note that $l \in A_0^0$ (\Cref{remarks on algorithm}(2)). So, from the definition of $A_0^0$, $l\notin L_1^0(\mathcal{L}) = L_1$. Since $\Lambda_1 = \Lambda_2 = \emptyset$ and $l\notin L_1$, by \Cref{clemma_8} $K_{n_l} \cap (K_{n_{i_j}} \setminus V(K_{r_{i_2}})) = \emptyset$ for $j= 1,2$. 

\noindent \textbf{Case 2: $|V(K_{r_{i_2}})|= 2$:} 
 Then $V(K_{r_{i_2}}) = \{x_1^0, x_q^p\}$. Therefore $V(K_{r_{i_2}}) \subseteq V(H_{\n(\mathcal{L})})$. Then $V(K_{n_l})\cap V(K_{r_{i_2}}) = \emptyset$ ( see \Cref{ccoro_1}). Since $\Lambda_1= \Lambda_2 = \emptyset$, $V(K_{n_l}) \cap V(K_{n_{i_j}} \setminus V(K_{r_{i_2}}))$ should be empty for $j= 1,2$.

Since both $K_{n_{i_1}} \setminus V(K_{r_{i_2}})$ and $K_{n_{i_2}} \setminus V(K_{r_{i_2}})$ are non-empty, and their vertex sets are disjoint (\Cref{remark on choices of i}), $|S_2(G)| \geq 2$. This contradicts \Cref{S_2 cardinality 1} because $\II_G$ is unmixed.
\end{proof}


Now we are ready to prove one of the important theorems of this section.

\begin{theorem} \label{cthm_3}
If there exists $i\in [t]$ such that $c_G(V(K_{r_i})) \geq 3$, then $\mathcal{I}_G$ is not unmixed.
\end{theorem}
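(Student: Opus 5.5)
Assume for contradiction that $\II_G$ is unmixed, and use the data of \Cref{notn:i1i2i3}: $i_2$ is minimal with $c_G(V(K_{r_{i_2}}))\ge 3$, and $i_1,i_3$ and $L_1\supseteq\{i_1,i_2,i_3\}$ are as fixed there. By \Cref{remark on choices of i}, the sets $K_{n_{i_c}}\setminus V(K_{r_{i_2}})$ for $c=1,2,3$ lie in three distinct components of $G\setminus V(K_{r_{i_2}})$. Also $V(K_{r_{i_2}})\in\mathcal{D}(G)$. Unmixedness then gives $b_G(V(K_{r_{i_2}}))=r_{i_2}$.

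\textbf{Case $r_{i_2}\ge 3$.} Here \Cref{clemma_8} is available.

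First suppose $\Lambda_c\neq\emptyset$ for some $c$, and pick $l\in\Lambda_c$. Since $l\notin L_1$, there is $L_2\in M(\Gamma_1^0)$ with $L_2\ni l$, and we may choose it so that $L_2\ni i_c$ as well, because $V(K_{n_l})\cap V(K_{n_{i_c}})\ne\emptyset$. Then $V(H)\cap V(K_{r_{i_2}})$ meets $L_1$ only in the $L_1$-vertex, for any run with $L_1^0=L_2$ and $L_2^0=L_1$. Indeed, $x_1^0\in V(K_{n_l})$ avoids $K_{r_{i_2}}$ by the definition of $\Lambda_c$. Any further tree vertex in $K_{r_{i_2}}$ would create a triangle with $x_2^0$ inside $K_{n_{i_1}}$, which forces it into $S(G)$. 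With $S=V(K_{r_{i_2}})$ and $l_0=i_c$, \Cref{prelem:G-K-has-3-comp new}(1) then yields non-unmixedness. Verifying the hypothesis $V(H_{\n})\cap S=\{x_2^0\}$ needs \Cref{H_n is path} together with the three-component structure.

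Otherwise $\Lambda_c=\emptyset$ for all $c$, in particular $\Lambda_1=\Lambda_2=\emptyset$. Then \Cref{Coro for two Lambda_c empty} gives $V(H_\n)\cap V(K_{r_{i_2}})=\{x_1^0\}$ for every run with $L_1^0=L_1$. We choose a second maximal set $L_2$ meeting $L_1$; it exists by \Cref{rmk-exists-L02}, since $r_{i_2}\ge3$ and three components force $m([t])=0$. Applying \Cref{prelem:G-K-has-3-comp new}(2) with $S=V(K_{r_{i_2}})$ and $l_0,l_0',l_0''=i_1,i_2,i_3$ gives the contradiction.

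\textbf{Case $r_{i_2}\le 2$.} If $r_{i_2}=1$, the single cut vertex $v$ has $b_G(\{v\})=1$. So at least two of the three components are non-bipartite and the rest bipartite. Take a triangle-free tree direction: the run starting with $L_1^0=L_1$, with $x_1^0=v$, sends edges of $H_\n$ into each of the three components. This happens because each $K_{n_{i_c}}\setminus\{v\}$ is nonempty and only reachable through $v$. Hence $\deg_{H_\n}(v)\ge3$, contradicting \Cref{H_n is path}.

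If $r_{i_2}=2$, say $V(K_{r_{i_2}})=\{x,y\}$, then exactly two components are bipartite and at least one is non-bipartite. Run the algorithm with $x_1^0\in\{x,y\}$ chosen in $\cap_{L_1}$. We then apply \Cref{prelem:G-K-has-3-comp} with $S=\{x,y\}$, $L$ a maximal set containing an index for which the other vertex lies outside, $l_0=i_1$, $l_0'=i_2$ and $l_0''=i_3$. Condition (iii) holds by \Cref{remark on choices of i}. Condition (i) requires some maximal clique-intersection avoiding $S$ but inside $K_{n_{i_1}}$. If no such $L$ exists, then every vertex of $K_{n_{i_1}}$ other than $x,y$ lies only in cliques containing $x$ or $y$, and we instead count $S_2(G)$ as in the proof of \Cref{Coro for two Lambda_c empty} to get $|S_2(G)|\ge2$, contradicting \Cref{S_2 cardinality 1}.

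The main obstacle is the sub-case $r_{i_2}\ge 3$ with some $\Lambda_c\ne\emptyset$. There one must check carefully that no other vertex of the tree lands in $S$, using the minimality of $i_2$ and $i_3$. I would first try to reduce it via \Cref{clemma_8}-style minimal-index arguments applied to $\min\Lambda_c$.
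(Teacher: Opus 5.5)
There is a genuine gap, and it is exactly where you place your ``main obstacle''. In the branch $\Lambda_c\neq\emptyset$ you order the run as $L_1^0=L_2\ni l,i_c$ and $L_2^0=L_1$, aiming at \Cref{prelem:G-K-has-3-comp new}(1). Your triangle argument, which keeps further tree vertices out of $S=V(K_{r_{i_2}})$, needs $K_{r_{i_2}}\subseteq K_{n_{i_c}}$. That holds for $c=1,2$ but not for $c=3$, since one only knows $K_{n_{i_3}}\cap K_{r_{i_2}}\neq\emptyset$. So the case $\Lambda_1=\Lambda_2=\emptyset\neq\Lambda_3$ is left unproved. Minimal-index arguments on $\min\Lambda_3$ are not what is missing.

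The paper uses the opposite ordering, $L_1^0=L_1$ and $L_2^0=L_2$. For every such run, \Cref{Coro for two Lambda_c empty} then gives $V(H_{\n})\cap S=\{x_1^0\}$ directly, and \Cref{prelem:G-K-has-3-comp new}(2) applies. That corollary uses only $\Lambda_1=\Lambda_2=\emptyset$. So the right dichotomy, whatever $\Lambda_3$ is, is:
\begin{itemize}
\item $\Lambda_1\cup\Lambda_2\neq\emptyset$: your triangle argument, the analogue of the paper's Subcase 1;
\item $\Lambda_1=\Lambda_2=\emptyset$: the corollary plus part (2).
\end{itemize}

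Your split by the size of $r_{i_2}$ is unnecessary, and its $r_{i_2}=2$ branch fails. \Cref{Coro for two Lambda_c empty} is not restricted to $r_{i_2}\ge 3$. Its proof treats $|V(K_{r_{i_2}})|=2$ separately, size $1$ is trivial, and \Cref{clemma_8} enters only the $\ge 3$ subcase.

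Your fallback for $r_{i_2}=2$ (``if no such $L$ exists, count $S_2(G)$ to get $|S_2(G)|\ge 2$'') does not follow. That count presupposes two tree vertices inside $K_{r_{i_2}}$ and both $\Lambda_1$ and $\Lambda_2$ empty, and neither is established. Even granting that, its output is only $V(H_{\n})\cap S=\{x_1^0\}$, not a contradiction, so you still need \Cref{prelem:G-K-has-3-comp new}(2).

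Your $r_{i_2}=1$ argument is sound once it is justified by \Cref{H_p with v nbipartite}. Each vertex of $K_{n_{i_c}}\setminus\{v\}$ lies on $H_{\n}$ or in $S(G)$. In the latter case it has a tree neighbour other than $v$ in the same component of $G\setminus\{v\}$. Hence $H_{\n}$ reaches all three components through $v$, and $\deg_{H_{\n}}(v)\ge 3$.

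There are two smaller points. First, \Cref{prelem:G-K-has-3-comp new} requires $l_0\in L_1\cap L_2$, and an arbitrary $L_2$ meeting $L_1$ need not contain $i_1$. The paper instead takes $l_0=s\in L_1\cap L_2$, and $l_0',l_0''$ to be two of the $i_c$ whose components $K_{n_s}\setminus S$ avoids. Second, ``three components force $m([t])=0$'' is false: three copies of $K_4$ glued along one triangle give a counterexample. The existence of $L_2$ comes from the section's standing assumption $m([t])=0$.
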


\begin{proof}
We prove the assertion by contradiction. Suppose $\mathcal{I}_G$ is unmixed. 
Let $\{i_1,i_2,i_3\}$ be as introduced in \Cref{notn:i1i2i3} and $L_1 \in \Gamma_1^0$ the set that contains $\{i_1, i_2, i_3\}$.\\ 

\noindent \textbf{Case 1 : $\Lambda_c \neq \emptyset $ for some $c\in [3]$:} Let $l_c\in \Lambda_c$. Then $l_c \notin L_1$, $ K_{n_{l_c}} \cap (K_{n_{i_c}} \setminus V(K_{r_{i_2}})) \neq \emptyset$ and $K_{n_{l_c}} \cap K_{r_{i_2}} = \emptyset$. We choose $L_2\in M(\Gamma_1^0)$ so that $i_c, l_c \in L_2$. 
Let us run the algorithm choosing $L_1^0 = L_1$ and $L_2^0 = L_2$. Then $x_1^0 \in V(K_{n_{i_1}} \cap K_{n_{i_2}} \cap K_{n_{i_3}}) \subseteq V(K_{r_{i_2}})$ and $x_2^0 \in V(K_{n_{i_c}} \cap K_{n_{l_c}})$. Since $K_{n_{l_c}} \cap K_{r_{i_2}} = \emptyset$, $x_2^0 \notin V(K_{r_{i_2}})$. The rest of the proof will involve applying \Cref{prelem:G-K-has-3-comp new} with $\{l_0,l_0',l_0''\} = \{i_1,i_2,i_3\}$ (as unordered sets), $l_0=i_c$ and $S = V(K_{r_{i_2}})$. Note that for $c\in[3]$, $K_{n_{i_c}} \setminus V(K_{r_{i_2}})$ are in three different components of $G\setminus S$ (\Cref{remark on choices of i}). To invoke \Cref{prelem:G-K-has-3-comp new}(2), we only need to verify that apart from $x_1^0$, no other $x_q^p$ are in $S$.\\
\noindent\textbf{Subcase 1 : $c= 1$ or $c=2$ :} Then $K_{r_{i_2}} \subset K_{n_{i_c}}$. So, any vertex $v\in V(K_{r_{i_2}})$ is adjacent to both $x_1^0$ and $x_2^0$. Since $H_{\n}$ is a path (\Cref{H_n is path}), $v\notin V(H_{\n})$. Therefore, $V(H_n) \cap S = \{x_1^0\}$. Thus, \Cref{prelem:G-K-has-3-comp new}(2) applies and hence $\II_G$ is not unmixed yielding a contradiction. \\
\noindent\textbf{Subcase 2 :  $c \neq 1,2$ :} This means $\Lambda_1 = \Lambda_2 =\emptyset$ and $\Lambda_3 \neq \emptyset$. Invoking \Cref{Coro for two Lambda_c empty}, we see that $V(H_n) \cap S = \{x_1^0\}$. Hence \Cref{prelem:G-K-has-3-comp new}(2) applies, resulting in a contradiction to the unmixed property of $\II_G$.\\

\noindent \textbf{Case 2 : $\Lambda_c = \emptyset $ for all $c\in [3]$:} In this case we choose $L_2\in M(\Gamma_1^0)$ such that $L_2 \neq L_1$ and $L_2\cap L_1 \neq \emptyset$. Since $m([t]) = 0$, $L_2$ exists (\Cref{rmk-exists-L02}). Let $s\in L_1\cap L_2$. Suppose $C_1$, $C_2$ and $C_3$ be the three distinct components of $G\setminus V(K_{r_{i_2}})$ containing $K_{n_{i_1}} \setminus V(K_{r_{i_2}}) $, $K_{n_{i_2}} \setminus V(K_{r_{i_2}})$ and $K_{n_{i_3}} \setminus V(K_{r_{i_2}})$. Since $K_{n_s}\setminus V(K_{r_{i_2}})$ is a connected subgraph, it can intersect at most one of $C_1$, $C_2$, or $C_3$. Without loss of generality assume that $K_{n_s}\setminus V(K_{r_{i_2}})$ does not intersect with the components $C_2$ and $C_3$. 
Then we take $l_0= s$, $\{l_0', l_0''\}= \{i_2,i_3\}$ and $S=V(K_{r_{i_2}})$ in \Cref{prelem:G-K-has-3-comp new}. For any run $\mathcal{L}$ of the algorithm with $L_1^0(\mathcal{L}) = L_1$ and $L_2^0(\mathcal{L}) = L_2$, $x_1^0 \in \cap_{l\in L_1}V(K_{n_l}) \subseteq V(K_{r_{i_2}})$. Since $\Lambda_1 = \Lambda_2 = \emptyset$, by \Cref{Coro for two Lambda_c empty}, $V(H_{n(\mathcal{L})}) \cap S = \{x_1^0\}$. Hence invoking \Cref{prelem:G-K-has-3-comp new}(2), we get a contradiction.
\end{proof}

An immediate consequence of the above theorem is that a chordal graph in which there exists $i$ such that $r_i \geq 3$ cannot be unmixed.
We now consider chordal graphs $G$ where any two complete graphs intersect on a single vertex. \new{This graph class is well known in literature as block graphs.

\begin{definition}\cite{Harary_1963}\label{def:block_graph}
    A graph is a block graph if and only if every block in the graph is a complete graph (a clique).
\end{definition}

The next theorem \Cref{r_i=1} shows that if the graph $G$ follow the above mentioned assumptions and it is a block graph, then $\II_G$ is not unmixed.} Note that if there exist at least three complete graphs with common intersection then this follows from \Cref{cthm_3}. Otherwise, at most two complete graphs intersect in the clique sum. This condition, along with the unmixed property of $\II_G$, allows us to derive the possible structures of $G$. Based on these structures, we construct certain disconnector sets and show that they do not satisfy the unmixed condition, i.e., $b_G(T) = |T|$. One can easily verify that the disconnector sets $T_0, \ldots, T_4$ and others, chosen in the proof of \Cref{r_i=1} satisfy the sign-split property due to \Cref{cremark_5}.

\begin{theorem}\label{r_i=1}
    Let $|V(K_{r_i})| = 1$ for all $i\in [t]\setminus \{1\}$. Then $\mathcal{ I}_G$ is not unmixed.
\end{theorem}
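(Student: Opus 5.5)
We argue by contradiction: assume $\II_G$ is unmixed, so that $b_G(T)=|T|$ for every $T\in\mathcal{D}(G)$. We then exhibit an explicit set $T$ of cut vertices lying in $\mathcal{D}(G)$ with $b_G(T)>|T|$.

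Since $|V(K_{r_i})|=1$ for all $i$, every $K_{r_i}$ is a single cut vertex and $G$ is a block graph whose blocks are $K_{n_1},\dots,K_{n_t}$. We are in the situation $m([t])=0$, so $t\ge 2$. As $G$ is non-bipartite, some block has at least $3$ vertices.

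The first step is to restrict how blocks meet. If some cut vertex $v$ lies in at least three blocks, then $v=V(K_{r_i})$ for a suitable $i$ and $c_G(\{v\})\ge 3$, so \Cref{cthm_3} already gives a contradiction. Hence every cut vertex lies in exactly two blocks. The block--cut-vertex tree then shows that the number of blocks equals the number of cut vertices plus one.

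For a set $T$ of cut vertices, each component of $G\setminus T$ is built from blocks with the vertices of $T$ removed. Such a piece is bipartite as soon as every block in it keeps at most two vertices and there is no cycle, which is automatic in a block graph. Three facts will be used repeatedly:
\begin{itemize}
\item[(a)] $T$ is a disconnector set as long as each $v\in T$ joins two nonempty components of $G\setminus T$.
\item[(b)] $T$ is sign-split by \Cref{cremark_5} whenever each $v\in T$ touches a bipartite component.
\item[(c)] Otherwise $T$ is sign-split by \Cref{cprop_4} when all vertices of $T$ touch one fixed non-bipartite component.
\end{itemize}

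The main step is the choice of $T$. Fix a block $B=K_{n_l}$ with $n_l\ge 3$, and choose three distinct vertices $a,b,c\in V(B)$. Since $t\ge 2$ and $G$ is connected, $B$ contains at least one cut vertex; choose $a$ to be a cut vertex. Take $T_0=\{a,b\}$, replacing $b$ by a cut vertex of $B$ whenever one is available. The vertex $c$ survives in $B\setminus T_0$. If $n_l=3$, the component containing $c$ consists of $c$ together with whatever hangs off $c$. Each of $a$ and $b$ separates off its other block's side, so each vertex of $T_0$ joins the $B$-side component to its own hanging part, and by (a) $T_0$ is a disconnector set.

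When every hanging part and the piece containing $c$ are bipartite, this gives $b_G(T_0)=3>2=|T_0|$, exactly as in the example of a triangle with pendant paths. When some of these pieces are non-bipartite, we enlarge $T_0$ by adding, for each non-bipartite piece, a cut vertex that peels off a pendant block (or a further large block). We organise these enlargements into a few cases, $T_1,\dots,T_4$, according to:
\begin{itemize}
\item[(i)] whether $B$ has only one cut vertex, in which case $b$ is a non-cut vertex and we work instead with $\{a\}$ plus cut vertices of the neighbouring block;
\item[(ii)] whether $n_l\ge 4$, in which case $B\setminus T_0$ is itself non-bipartite and we must find bipartite pieces elsewhere, using a leaf block on each side;
\item[(iii)] whether the adjacent blocks are edges or larger cliques.
\end{itemize}
In every case the count uses that blocks are one more than cut vertices, so removing $k$ cut vertices that are pairwise in different blocks yields $k+1$ components. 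We then arrange that at least $k+1$ of them are bipartite, which forces $b_G(T)\ge|T|+1$.

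The hardest part will be ensuring that each enlarged $T$ remains sign-split. When a cut vertex connects two non-bipartite components, neither \Cref{cremark_5} nor \Cref{cprop_4} applies directly. To avoid this, I will always remove cut vertices adjacent to a leaf block of size at most $2$, or a single vertex of a pendant triangle, so that the hypothesis of \Cref{cremark_5} holds. I will also fall back on \Cref{prop on the unmixed property} with $T=\{a\}$ to reduce to a smaller block graph when a case gets out of hand.
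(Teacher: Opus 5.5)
Your first reduction, using \Cref{cthm_3} to make every cut vertex lie in exactly two blocks, matches the paper. There is, however, a genuine gap, and it lies in the \emph{target} of your argument, not only in the unwritten case analysis. You aim to exhibit a set $T$ of cut vertices with $T\in\mathcal{D}(G)$ and $b_G(T)>|T|$. Under the hypotheses of the theorem such a set need not exist.

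Let $G$ be two triangles $\{v_1,p,q\}$ and $\{v_2,r,s\}$ joined by the edge $v_1v_2$. Its blocks are $K_3,K_2,K_3$, every $K_{r_i}$ is a single vertex, $m([t])=0$, and $G$ is non-bipartite. An independent set meets each triangle at most once, so $G\setminus T$ has at most two components for every $T$. If $|T|\le 1$, one of those components still contains a triangle. Hence $b_G(T)\le|T|$ for every $T\subseteq V(G)$, and the cut-vertex sets $\{v_1\},\{v_2\},\{v_1,v_2\}$ all give equality. The only witnesses are disconnectors with $b_G(T)<|T|$ made of non-cut vertices. For example, take $T=\{p,r\}$: $G\setminus T$ is the path $q\,v_1\,v_2\,s$, and putting back $p$ or $r$ closes a triangle. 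So $T\in\mathcal{D}(G)$ by \Cref{cremark_5}, and $b_G(T)=1<2$.

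Large blocks behave the same way. Take $K_5$ on $\{a,b,x_1,x_2,x_3\}$ with pendant edges $au$ and $bw$. The components of $G\setminus T$ are the rest of the $K_5$, plus possibly $\{u\}$ (only if $a\in T$) and $\{w\}$ (only if $b\in T$). So $b_G(T)>|T|$ forces $T\subseteq\{a,b\}$, but then the triangle $x_1x_2x_3$ survives and $b_G(T)\le|T|$. The actual witness is $T=\{a,x_1,x_2\}$, with $b_G(T)=2<3$.

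Your fallback does not rescue this. \Cref{prop on the unmixed property} only certifies equalities $b_G(S)=|S|$. A descent of unmixedness to a smaller graph, as in \Cref{int_prop2}, is only available for pendant odd cycles of cactus graphs.

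The paper instead works with a \emph{maximum} clique $K_{n_i}$.
\begin{itemize}
\item If $n_i=3$, then $G$ is a non-bipartite cactus graph other than $K_3$, and \Cref{cm Cactus graph} applies. That result is itself proved with witnesses like $\{p,r\}$ above.
\item If $n_i\ge 4$, then for each cut vertex $x$ of $K_{n_i}$ the set $\{x\}$ lies in $\mathcal{D}(G)$, and the side containing $K_{n_i}\setminus\{x\}$ has a triangle. Unmixedness therefore forces the other side $C_x$ to be bipartite: $K_{n_x}$ is $K_2$ or $K_3$, and only paths hang further out.
\end{itemize}
This rigidity is what your choice of an arbitrary block with $n_l\ge 3$ does not provide. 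It makes a finite case split on the number $|\mathcal{N}|$ of non-cut vertices of $K_{n_i}$ possible, and the witnesses there go both ways.
\begin{itemize}
\item When $|\mathcal{N}|\ge 3$, the paper uses sets such as $V(K_{n_i})\setminus\{x_1,x_2\}$ and variants, which contain non-cut vertices and have $b_G(T)<|T|$.
\item When $1\le|\mathcal{N}|\le 2$, it uses sets with $b_G(T)>|T|$, such as $V(K_{n_i})\setminus\mathcal{N}$.
\item The case $\mathcal{N}=\emptyset$ gets a separate analysis.
\end{itemize}
As written, your enlargements $T_1,\dots,T_4$ are never specified. The examples above show that no specification within your framework (cut vertices only, and only the inequality $b_G(T)>|T|$) can succeed.
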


\begin{proof}
Let us assume that $\II_G$ is unmixed. If at least three complete graphs intersect together in $G$ then there exists $i\in[t]$ so that $c_G(V(K_{r_i})) \geq 3$, contradicting \Cref{cthm_3}. Therefore at most two complete graphs can intersect in $G$. Let $i\in [t]$ be such that ${n_i}= \max \{ n_j: j\in [t]\}$.
If $n_i= 3$, then $G$ is a non-bipartite cactus graph. We have proved that a non-bipartite cactus graph is unmixed if and only if $G = K_3$,  \Cref{cm Cactus graph}. Since we already assumed that $\cap_{j\in [t]} K_{n_j} = \emptyset $, the graph $G$ cannot be $K_3$. Hence, once again we see that $\mathcal{I}_G$ is not unmixed, contradicting our initial assumption. Therefore, $n_i \geq 4$. 
%

\new{Note that the degree of every vertex of $K_{n_i}$ is at least $n_i-1$. Let $\mathcal{N}=\{ v\in V(K_{n_i}) : \deg_G v =n_i-1 \} $. For any $x \in V(K_{n_i}) \setminus \mathcal{N}$,  $\deg_G x \geq n_i$} and hence there exists another complete graph that contains the vertex $x$. We denote this complete graph by $K_{n_x}$ (for example see \Cref{fig:n_i= 4}). Since $V(K_{n_{x}}) \cap V(K_{n_i}) = \{x\}$, $x$ is a cut vertex, and hence $\{x\} \in \mathcal{D}(G)$. Moreover,  $G\setminus \{x\}$ has two components, one containing $K_{n_i} \setminus \{x\}$ and the other containing $K_{n_{x}} \setminus \{x\}$. Also, $b_G(\{x\}) = 1$ since $\II_G$ is assumed to be unmixed. Since $n_i -1 \geq 3$, the component containing $K_{n_{x}} \setminus \{x\}$ must be bipartite. We denote this component by $C_x$.
Therefore, $K_{n_{x}}$ is either $K_2$ or $K_3$ and only tree graphs are be attached to  $V(K_{n_{x}} \setminus \{x\})$. Let $y\in V(K_{n_{x}} \setminus \{x\})$ and consider the tree graphs attached to $y$. Note that if a nonempty tree graph is attached to $y$, then $\{y\} \in \mathcal{D}(G)$. Since $b_G(\{y\}) = 1$, there must be exactly one such tree attached to $y$. Further, if $y'$ is a vertex of this tree and $\deg_G(y') \geq 2$, then $\{y'\} \in \mathcal{D}(G)$. So, $b_G(\{y'\}) = 1$ implies that $\deg_G(y') = 2$. Hence that tree must be a path. We denote the path attached to $y$ by $P_y$ (see \Cref{fig:n_i= 4} for an illustration).
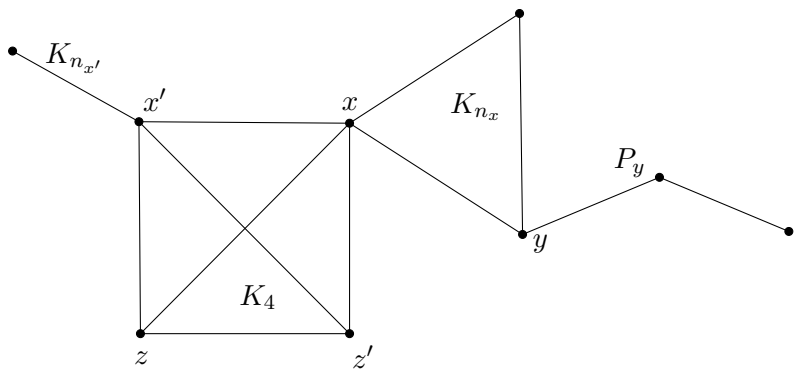
\begin{figure}[H]
    \centering
   \begin{tikzpicture}[line cap=round,line join=round,>=triangle 45,x=1cm,y=1cm]
\clip(-6,-1) rectangle (7,5);
\draw  (-2.8,2.82)-- (-2.78,0);
\draw  (-2.78,0)-- (0,0);
\draw  (-2.8,2.82)-- (0,2.8);
\draw  (0,2.8)-- (0,0);
\draw  (-2.8,2.82)-- (0,0);
\draw  (-2.78,0)-- (0,2.8);
\draw  (0,2.8)-- (2.26,4.26);
\draw  (0,2.8)-- (2.3,1.32);
\draw  (2.26,4.26)-- (2.3,1.32);
\draw  (2.3,1.32)-- (4.12,2.08);
\draw  (4.12,2.08)-- (5.84,1.36);
\draw  (-2.8,2.82)-- (-4.48,3.76);
\draw (-0.24,3.28) node[anchor=north west] {$x$};
\draw (1.2,3.34) node[anchor=north west] {$K_{n_x}$};
\draw (2.3,1.46) node[anchor=north west] {$y$};
\draw (3.38,2.6) node[anchor=north west] {$P_y$};
\draw (-2.88,3.4) node[anchor=north west] {$x'$};
\draw (-4.18,4) node[anchor=north west] {$K_{n_{x'}}$};
\draw (-1.6,0.78) node[anchor=north west] {$K_4$};
\draw (-3,-0.1) node[anchor=north west] {$z$};
\draw (-0.1,-0.02) node[anchor=north west] {$z'$};
\begin{scriptsize}
\draw [fill=black] (-2.8,2.82) circle (1.5pt);
\draw [fill=black] (-2.78,0) circle (1.5pt);
\draw [fill=black] (0,0) circle (1.5pt);
\draw [fill=black] (0,2.8) circle (1.5pt);
\draw [fill=black] (2.26,4.26) circle (1.5pt);
\draw [fill=black] (2.3,1.32) circle (1.5pt);
\draw [fill=black] (4.12,2.08) circle (1.5pt);
\draw [fill=black] (5.84,1.36) circle (1.5pt);
\draw [fill=black] (-4.48,3.76) circle (1.5pt);
\end{scriptsize}
\end{tikzpicture}
    \caption{Graph with $K_{n_i}= K_4$ and $\mathcal{N}=\{z,z'\}$}
    \label{fig:n_i= 4}
\end{figure}

\noindent \textbf{Case 1:} $ |\mathcal{N}| \geq 4$. Let $x_1, x_2, x_3, x_4 \in \mathcal{N}$. Observe that the set $T_{0} = V(K_{n_i}) \setminus \{x_1,x_2\}$ belongs to $\mathcal{D}(G)$. The bipartite components of $G\setminus T_0$ are $C_x$, $x\in T_0 \cap \mathcal{N}$ and the edge $\{x_1,x_2\}$. Thus, we get a contradiction to unmixedness since
\[ b_G(T_0) = |T_0 \cap \mathcal{N}| + 1 \leq |T_0\setminus \{x_3, x_4\}| + 1 < |T_0\setminus \{x_3, x_4\}| + 1 = |T_0| - 2 + 1 = |T_0| - 1  . \]

\noindent \textbf{Case 2:} $|\mathcal{N}| = 3$. Let $x_1,x_2,x_3 \in  \mathcal{N}$. Since $n_i \geq 4$, there exists another vertex $x_4\in V(K_{n_i})$ and hence $\deg (x_4) > n_i$. So, $K_{n_{x_4}}$ exists. If $K_{n_{x_4}}= K_2$, then $T_1 = V(K_{n_i}) \setminus \{x_1,x_4\} \in \mathcal{D}(G)$. The bipartite components of $G\setminus T_1$ are $C_x$, $x\in T_1\setminus \{x_2, x_3\}$ and the component containing $\{x_1, x_4\}$. So, $b_G(T_1) = |T_1| -2 +1= |T_1| -1$, a contradiction. If $K_{n_{x_4}}= K_3$, take $y_4 \in V(K_{n_{x_4}}) \setminus \{x_4\}$. Consider $T_2 = T_1 \cup \{y_4\}$. The connected components of $G\setminus T_2$ are $C_x, x\in T_1\setminus \{x_2, x_3\}$, the component containing $\{x_1, x_4\}$ and $P_{y_4}$, if it exists. From the previous discussions, it follows that all of them are bipartite, and that putting back any vertex of $T_2$ makes the component containing $\{x_1, x_4\}$ non-bipartite. Hence $T_2 \in \mathcal{D}(G)$. Counting the numbers now gives a contradiction that
$$b_G(T_2) \leq | \{ x\in T_1\setminus \{x_2, x_3\} \} | + 1 + 1 \leq (|T_1| -2 ) + 2 = |T_1| < |T_2|. $$

\noindent \textbf{Case 3:} $|\mathcal{N}| = 1 \text{ or } 2$. Consider $T_3 = V(K_{n_i}) \setminus \mathcal{N}$. The components of $G\setminus T_3$ are $C_x$, $x\in T_3$ and the single component containing vertices in $\mathcal{N}$. Since $|\mathcal{N}| = 1 \text{ or } 2$, all these components are bipartite. So, $T_3  \in \mathcal{D}(G)$ and $b_G(T_3) = |T_3| +1$, which yields a contradiction.\\

\noindent \textbf{Case 4:} $|\mathcal{N}| = 0$. Then there exists $K_{n_{x}}$, which is $K_2$ \text{or} $K_3$, and $C_x$ as discussed above for every $x\in V(K_{n_i})$.\\
\textbf{Subcase 4.1:} Suppose there exist at least two vertices say $x_1, x_2$ of $K_{n_i}$ such that $K_{n_{x_i}} = K_2$ for each $i$. Consider $T_4 = V(K_{n_i}) \setminus \{x_1,x_2 \} \in \mathcal{D}(G)$. For each $x\in T_4$, $C_x$ is a bipartite component of $G\setminus T_4$, and apart from these, there is another connected component \new{that} contains the the edge $\{x_1,x_2\}$. Hence, $T_4$ is a cut set and in particular $T_4 \in \mathcal{D}(G)$.
Since all of the components are bipartite, $b_G(T_4) = |T_4| + 1$, which is a contradiction to the unmixed property.\\
\textbf{Subcase 4.2:} Suppose there is at most one vertex $y$ in $K_{n_i}$ such that $K_{n_{y}} = K_2$, and for all other vertices $x$ of $K_{n_i}$, $K_{n_{x}} = K_3$. Since $n_i \geq 4$, there exist $x_1,x_2, x_3 \in V(G)$ such that $K_{n_{x_i}} = K_3$ for each $i$. We choose $y_j\in V(K_{n_{x_j}}) \setminus \{x_j\}$ for $j =1,2,3$. For $j_1,j_2\in \{1,2,3 \}$, consider $T_{j_1j_2} = (V(K_{n_i}) \setminus \{x_{j_1},x_{j_2} \}) \sqcup \{y_{j_1},y_{j_2}\}$. The connected components of $G\setminus T_{j_1j_2}$ are $C_x$, for $x\in  T_{j_1j_2} \setminus \{y_{j_1},y_{j_2}\}$, the component containing the edge $\{x_{j_1},x_{j_2}\}$, and possibly the path graphs $P_{y_{j_1}} \setminus \{y_{j_1}\}$ or $P_{y_{j_2}} \setminus \{y_{j_2}\}$, if they exist. Any vertex in $S_{j_1j_2} \setminus \{y_{j_1},y_{j_2}\}$ is a cut vertex, while the vertices $y_{j_1}$ and $y_{j_2}$ make the bipartite connected component containing the edge $\{x_{j_1},x_{j_2}\}$ non-bipartite if put back. Hence, $T_{j_1j_2} \in \mathcal{D}(G)$ and $b_G(T_{j_1j_2}) \geq |T_{j_1j_2}|-1$.  Since $\mathcal{I}_G$ is unmixed, $b_G(T_{j_1j_2}) = |T_{j_1j_2}|$. This implies exactly one of $P_{y_{j_1}}$ or $P_{y_{j_2}}$ exists. Taking $j_1 = 1$ and varying $j_2 \in \{ 2,3 \}$, we see that either both 
$P_{y_2}$ and $P_{y_3}$ exist or neither exist. Now choosing $j_1 = 2 , j_2 = 3$ gives a contradiction to the existence of exactly one of $P_{y_2}$ and $P_{y_3}$.


\end{proof}

\section{Classification of unmixed chordal graphs} \label{sec:unmixed}
Recall that the chordal graph $G$ is a clique sum of $K_{n_1}, K_{n_2} , \ldots, K_{n_t}$. We observed in \Cref{sec:non-unmixed} that the unmixed property of $\II_G$ is very rare for chordal graphs. Specifically, we saw that when $m([t]) = 0$, $\II_G$ is not unmixed under the following conditions:
\begin{enumerate}
    \item $c_G(V(K_{r_i})) \geq 3$.
    \item $|V(K_{r_i})| = 1 \quad \forall i \in [t]$.
\end{enumerate}
We begin by considering chordal graphs $G$ for which $m([t]) \neq 0$. 
\begin{theorem} \label{m([t])> 0}
    If $m([t]) \neq 0$, then $\mathcal{I}_G$ is unmixed if and only if $G = K_3$.
\end{theorem}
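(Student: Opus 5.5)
The ``if'' direction is already known: $\II_{K_3}$ is a complete intersection (\cite[Theorem 3.5]{Kum21}), as used in \Cref{I_G is unmixed only if G has there forms}. For the converse, assume $\II_G$ is unmixed with $G$ connected, non-bipartite and chordal, and set $W=\bigcap_{i\in[t]}V(K_{n_i})\neq\emptyset$. Every vertex of $W$ is adjacent to every other vertex of $G$. Throughout, I will use that $\II_G$ unmixed forces $b_G(S)=|S|$ for every $S\in\mathcal{D}(G)$. I will certify that a set is sign-split using \Cref{cremark_5} when some bipartite component is reconnected by every vertex of $S$, and \Cref{cprop_4} otherwise.

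\textbf{Case $|W|\ge 2$ (this includes $t=1$, $G=K_n$).} Pick $u,v\in W$ and put $S=V(G)\setminus\{u,v\}$. Then $G\setminus S$ is the single edge $\{u,v\}$, so $b_G(S)=1$. Every $s\in S$ is adjacent to both $u$ and $v$, so putting $s$ back creates a triangle. This lowers $b+c$ from $2$ to $1$. Hence $S$ is a disconnector, and it is sign-split by \Cref{cremark_5}. Unmixedness gives $|S|=n-2=1$, so $n=3$ and $G=K_3$.

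\textbf{Case $W=\{w\}$.} Here $t\ge 2$. First I would note that a minimal vertex separator between two non-adjacent vertices is a clique containing $w$. Each of its vertices has neighbours in both full components, and reinserting a vertex that merges $k\ge2$ components strictly lowers $c+b$. So such separators are disconnectors, sign-split by \Cref{cremark_5} or \Cref{cprop_4}.
\begin{itemize}
\item If $\{w\}$ alone separates $G$, then $G\setminus\{w\}$ is a disjoint union of pieces $K_{n_j}\setminus\{w\}$ glued together. By \Cref{cthm_3}, $c_G(\{w\})\le 2$. Unmixedness forces exactly one bipartite component, and the other component must be non-bipartite, so it contains a clique $K_{n_j}$ with $n_j\ge 4$. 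Take $u\in V(K_{n_j})\setminus\{w\}$ and $S=V(K_{n_j})\setminus\{w,u\}$ together with the vertices needed to isolate the edge $\{w,u\}$. I expect this to produce a disconnector with too few bipartite components, mirroring Case~1 of \Cref{r_i=1}.
\item If $\{w\}$ does not separate $G$, then $G\setminus\{w\}$ is connected. I would take two distinct maximal cliques $K_{n_1},K_{n_2}$ and a minimal separator $S\ni w$ between $a\in K_{n_1}\setminus K_{n_2}$ and $b\in K_{n_2}\setminus K_{n_1}$. I would compare $b_G(S)$ with $b_G(S\setminus\{w\})$. Adding $w$ back joins every component into one, and this component is non-bipartite once it contains an edge. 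I expect this to give two disconnectors whose bipartite counts cannot both equal their cardinalities, as happens in the diamond $K_4\setminus e$ with the set $\{a,b\}$.
\end{itemize}

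The main obstacle is the case $W=\{w\}$: the universal vertex no longer supplies a second common neighbour for the test set. I would first try to reduce to the situation of \Cref{r_i=1}, by showing $|V(K_{r_i})|\le 2$ via \Cref{cthm_3}. Then I would adapt the explicit sets $T_0,\dots,T_4$ of that proof, with $w$ playing the role of the shared vertex.
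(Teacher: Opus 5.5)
Your case $|W|\ge 2$ is correct and complete. The gap is the case $W=\{w\}$: there you only state expectations, and the one concrete construction you propose fails.

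\textbf{Why the proposed construction fails.} Since $w$ is adjacent to every vertex, ``isolating the edge $\{w,u\}$'' forces $S=V(G)\setminus\{w,u\}$. Because $u\notin W$, some vertex $s$ is not adjacent to $u$. Putting $s$ back produces the path $s,w,u$, which is still a single bipartite component. So $b+c$ does not drop, and $S$ is not a disconnector.

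Your second bullet is not an argument. Moreover, the diamond has two universal vertices, so it is not an instance of this case at all.

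\textbf{The cited results do not apply.} You cannot use \Cref{cthm_3} or \Cref{r_i=1} here. Both are proved under the standing assumption $m([t])=0$ of Section~\ref{sec:non-unmixed}. The proof of \Cref{cthm_3} needs a second set $L_2$ from \Cref{rmk-exists-L02}, and \Cref{r_i=1} rules out $K_3$ using $\bigcap_j K_{n_j}=\emptyset$. Even setting that aside, a cone such as $w$ joined to a path on four vertices has $|V(K_{r_i})|=2$ and $c_G(V(K_{r_i}))=2$, so neither hypothesis holds.

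\textbf{The missing idea.} Replace the single vertex $v$ of your first case by a maximal independent set.
\begin{enumerate}
\item Put $G'=G\setminus\{w\}$, let $I$ be a maximal independent set of $G'$, and set $S=V(G')\setminus I$.
\item Then $G\setminus S$ is the star on $\{w\}\cup I$, which is connected and bipartite.
\item Every $s\in S$ has a neighbour $a\in I$, so reinserting $s$ creates the triangle $\{s,w,a\}$. Hence $S\in\mathcal{D}(G)$ by \Cref{cremark_5} and $b_G(S)=1$, so unmixedness forces $|S|=1$ for every such $I$.
\item Since $G$ is non-bipartite, $G'$ has an edge $\{a,b\}$. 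Taking $I\ni a$ shows $V(G')\setminus\{b\}$ is independent. Taking $I\ni b$ shows $N_{G'}(b)=\{a\}$. So $G'$ is the edge $\{a,b\}$ plus isolated vertices.
\item If an isolated vertex exists, then $\{w\}\in\mathcal{D}(G)$ with $b_G(\{w\})\ge 2$, a contradiction. Hence $G=K_3$.
\end{enumerate}
This handles every $W$ at once and needs no algorithm.

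It is essentially what the paper extracts from its algorithm. With $L_1^0=[t]$, the tree $H_1$ is a star centred at $x_1^0$ with pairwise non-adjacent leaves, and $V(G)=V(H_1)\sqcup S(G)$. Then \Cref{H_n is path} and \Cref{S_2 cardinality 1} give $|V(H_1)|\le 3$ and $|S(G)|=1$. This leaves only a few graphs on at most four vertices to check by hand.
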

\begin{proof}
If $G= K_3$, then by \cite[Theorem 3.5]{Kum21} $\II_G$ is a complete intersection, and hence unmixed. Now we assume that $m([t]) \neq 0$ and $\mathcal{I}_G$ is unmixed. Since $m([t]) \neq 0$, the choice of $L_1^0$ is unique, namely $L_1^0 = [t]$. Then we have $A_1^0 = [t]$ and $A_2^0 = A_0^0 = \emptyset$. This implies $D_0= \{1\}$. So, $H_0$ has a single vertex $x_1^0$. Further, after the first step we get $A_2^1 = A_1^0 = [t]$ and $A_1^1= A_0^1= \emptyset$. Hence, the algorithm stops after the first step. So, $H_n = H_1$ and $S(G) = S_2(G)$ as $A_0^1 = \emptyset$. In this case $H_1$ is the only connected component of $G\setminus S(G)$. This implies $V(G) = V(H_1) \sqcup S(G)$. Note that $H_1$ is a path graph (by \Cref{ccoro_2}) and $V(H_1) = V(H_0) \cup \{x_q^1: q\in D_1\}$. Since $\II_G$ is unmixed and $S(G) \neq \emptyset$, $|S(G) |=1$ (by \Cref{S_2 cardinality 1}). So, it follows from \Cref{cprop_2} that $H_1$ is either $P_2$ or $P_3$. 

   \begin{itemize}
       \item[(1)] Suppose $H_1=P_2$. 
       Then $|V(G)| = |V(H_1)| + |S(G)| = 3$. Since $G$ is non-bipartite, $G= K_3$.

       \item[(2)]  Suppose $H_1 = P_3$. 
       Then $|V(G)| = |V(H_1)| + |S(G)| = 4$. Since $G$ is non-bipartite, it is either a clique sum of $K_3$ and $K_2$ at the vertex $x_1^0$ or a clique sum of two $K_3$ along an edge. In the first case $\{x_1^0\} \in \mathcal{D}(G)$, but $b(G\setminus \{x_1^0\}) =2$, a contradiction. In the second case if $u, v$ are the vertices that are not common to both $K_3$, then $\{u,v\} \in \mathcal{D}(G)$ but $b(G\setminus \{u,v\}) = 1$, a contradiction.
   \end{itemize}  
\end{proof}

Thus, the only non-bipartite chordal graph with $m([t]) \neq 0$ which is unmixed is $K_3$. We are thus left to consider the unmixedness property for non-bipartite chordal graphs which have the property that $m([t]) = 0$, $\max \lbrace |V(K_{r_i})| : i \in [t] \rbrace = 2$ for all $i\in [t]$ and $c_G(V(K_{r_i})) = 2$. We label the subclass of graphs defined by these properties by $\mathfrak{G}$. Note that the graph classes $\mathfrak{G_1}, \mathfrak{G_2}$ and $\mathfrak{G_3}$, as introduced in \Cref{fig:fig 13} are contained in $\mathfrak{G}$. \Cref{I_G unmixed 1} and \Cref{I_G unmixed 2} show that these subclasses are precisely the ones within $\mathfrak{G}$ such that $\II_G$ is unmixed.
Before proceeding to prove these theorems, we note the following special properties of $G \in \mathfrak{G}$.
\begin{remark}\label{rmk of I_G unmixed 1} 
$(1)$ Suppose $G \in \mathfrak{G}$. Let $i\in [t]$ be such that $|V(K_{r_i})| = 2$. Since $V(K_{r_i}) \in \mathcal{D}(G)$ and $c_G(V(K_{r_i})) = 2$, if $\II_G$ is unmixed, then $b_G(V(K_{r_i})) = c_G(V(K_{r_i})) = 2$. Hence, if $|V(K_{r_i})| = 2$, then $G\setminus V(K_{r_i})$ is bipartite and has two components.\\
$(2)$ Suppose $G \in \mathfrak{G}$. Let $j,j'\in [t]$ so that $j \neq j'$. Then $K_{n_j}\cap K_{n_{j'}}$ is contained either in $K_{r_j}$ or in $K_{r_{j'}}$. Since $|V(K_{r_i})| \leq 2$, for all $i\in [t]$, $|V(K_{n_j}\cap K_{n_{j'}})|\leq 2$ and if $|V(K_{n_j}\cap K_{n_{j'}})|=2$, then $K_{n_j}\cap K_{n_{j'}}$ is \new{the} same as $K_{r_j}$ or $K_{r_{j'}}$. Taken along with $(1)$, we get that if $\II_G$ is unmixed and $|V(K_{n_j}\cap K_{n_{j'}})|=2$, then $G\setminus V(K_{n_j}\cap K_{n_j'})$ must be bipartite.  
\end{remark}

\begin{theorem} \label{I_G unmixed 1}
Suppose $G \in \mathfrak{G}$. If there exists $i$ such that $|V(K_{r_i})|= 2$ and at least three complete graphs intersect with $K_{r_i}$ then $\mathcal{I}_G$ is unmixed if and only if $G\in \mathfrak{G}_1$ (\Cref{fig:fig 13}).
\end{theorem}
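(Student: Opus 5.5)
The ``if'' direction is already contained in \Cref{I_G is unmixed only if G has there forms}, so it remains to show that unmixedness of $\II_G$ forces $G\in\mathfrak{G}_1$. Assume $\II_G$ is unmixed. Write $K_{r_i}=\{a,b\}$, and let $K_{n_{j_1}},K_{n_{j_2}},K_{n_{j_3}}$ be three distinct cliques containing the edge $\{a,b\}$. By \Cref{rmk of I_G unmixed 1}(1), $G\setminus\{a,b\}$ is bipartite and has exactly two components. Since $c_G(\{a,b\})=2$, two of these cliques, say $K_{n_{j_1}},K_{n_{j_2}}$, lie (away from $\{a,b\}$) in the same component. Bipartiteness then forces $n_{j_c}=3$ for all three cliques, because $K_{n}\setminus\{a,b\}$ must be a single vertex or an edge, and two such pieces in one component cannot both be edges without creating odd cycles. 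I also expect to show that no fourth clique contains $\{a,b\}$: otherwise either some component would contain three apex vertices, or a component would become non-bipartite, or a suitable disconnector would have too many bipartite components. So there are exactly three triangles $\{a,b,c_1\},\{a,b,c_2\},\{a,b,c_3\}$, where $c_1,c_2$ lie in one component $C$ of $G\setminus\{a,b\}$ and $c_3$ lies in the other component.

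Next I identify the skeleton. Because $c_1,c_2$ are in the same component $C$ and $G$ is chordal, $c_1$ and $c_2$ should be adjacent. If they were not, a shortest $c_1$--$c_2$ path in $C$ together with $a$ would give an induced cycle of length at least $4$, unless $a$ is adjacent to interior vertices, and those would have to lie in further cliques through $a$. Granting adjacency, the edge $\{c_1,c_2\}$ forms triangles $\{a,c_1,c_2\}$ and $\{b,c_1,c_2\}$, which would put $\{a,b,c_1,c_2\}$ in a $K_4$ and contradict $r\le 2$ and bipartiteness. So the right picture is that $C$ contains the path $c_1 - a' - c_2$ forming the outer triangles of $\mathfrak{G}_1$. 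I therefore expect the clean labelling to be the central triangle $\alpha_1\alpha_2\alpha_3$ with the three outer triangles $\beta_j$ attached along its edges, i.e.\ the three intersection cliques of size $2$ are the sides of one triangle. Concretely, I would use \Cref{rmk of I_G unmixed 1}(2) repeatedly: every $2$-vertex intersection of cliques is a separator whose removal leaves a bipartite graph with two components. Applying this to all size-$2$ separators shows that every clique has size $3$, that there are at most four triangles, and that they are arranged as a central triangle with one triangle glued on each side. The remaining parts of $G$ must then be trees hanging off vertices, because every further clique intersection has size $1$ and size-$2$ intersections are exhausted.

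Finally, I would pin down the pendant trees. For any cut vertex $v$ of $G$, $\{v\}\in\mathcal{D}(G)$, so unmixedness gives $b_G(\{v\})=1$. Applying this to every vertex of an attached tree shows, as in the proof of \Cref{r_i=1}, that each attached tree is a path and that at most one path hangs at a vertex. Then I would check, using explicit disconnectors in the style of $T_0,\dots,T_4$ in \Cref{r_i=1} (for instance sets $\{\alpha_i,\beta_j\}$ or $\{\alpha_i\}\cup\{\text{vertex of a path}\}$, which are sign-split by \Cref{cremark_5}), that paths can only be attached at $\beta_1,\beta_2,\beta_3$, and that each $\beta_j$ must carry a path. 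For example, if $\beta_1$ carries no path, then $\{\alpha_2,\alpha_3\}$ yields $b\ne 2$. This gives exactly $\mathfrak{G}_1$.

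The main obstacle is the middle step: deriving the global triangle-with-three-ears configuration only from local separator information, in particular ruling out extra cliques or longer chains of triangles glued along edges. For this I would first try induction on the number of size-$2$ intersections, using \Cref{cthm_3} to forbid $c_G\ge 3$ and using bipartiteness after removing each separator.
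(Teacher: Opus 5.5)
\textbf{There is a genuine gap, and it starts with how you read the hypothesis.}

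You take ``at least three complete graphs intersect with $K_{r_i}$'' to mean three maximal cliques containing the edge $\{a,b\}$. No graph in $\mathfrak{G}_1$ has such an edge: each side of the central triangle lies only in the central triangle and one ear. Your own second paragraph in fact shows that this configuration cannot occur in $\mathfrak{G}$. Two maximal cliques through $\{a,b\}$ whose remainders lie in one component of $G\setminus\{a,b\}$ are forced to share a vertex outside $\{a,b\}$, contradicting $|V(K_{n_j}\cap K_{n_{j'}})|\le 2$.

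The intended meaning is that at least three of the $K_{n_j}$ contain $\alpha_1$ or $\alpha_2$, where $V(K_{r_{i_2}})=\{\alpha_1,\alpha_2\}$; the third clique may touch only one of them. After reaching your contradiction you simply assert ``the right picture'', which is exactly what has to be proved. Your proposed fallback (induction using \Cref{cthm_3}) adds nothing, since $c_G(V(K_{r_i}))=2$ is already part of the definition of $\mathfrak{G}$.

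\textbf{The missing idea} is the claim in the paper's proof. Take a shortest path in the bipartite component $C_2$ of $G\setminus\{\alpha_1,\alpha_2\}$ from $K_{n_j}\setminus\{\alpha_1,\alpha_2\}$ to $K_{n_{i_2}}\setminus\{\alpha_1,\alpha_2\}$. Its last edge lies in some $K_{n_{j'}}$ with $j'>i_2$. This clique cannot be a $K_2$, by \Cref{x and y adjacent} and chordality: otherwise the path closes up through $\alpha_l\in K_{n_j}\cap\{\alpha_1,\alpha_2\}$ into an induced cycle of length at least $4$. Since $C_2$ is bipartite, $K_{n_{j'}}$ must therefore meet $\{\alpha_1,\alpha_2\}$. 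This gives a clique $K_{n_{i_3}}$ with $K_{n_{i_3}}\cap K_{n_{i_2}}=\{\alpha_2,\alpha_3\}$. Only then do bipartiteness of $C_2$ and of $G\setminus\{\alpha_2,\alpha_3\}$ (\Cref{rmk of I_G unmixed 1}(2)) make $K_{n_{i_1}}$ and $K_{n_{i_3}}$ triangles.

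\textbf{Even granting this, separator-bipartiteness does not give the rest as you claim.}

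First, it still allows $K_{n_{i_2}}\in\{K_3,K_4\}$. With $K_{n_{i_2}}$ on $\{\alpha_1,\ldots,\alpha_4\}$, both $G\setminus\{\alpha_1,\alpha_2\}$ and $G\setminus\{\alpha_2,\alpha_3\}$ can be bipartite with two components. The paper excludes this case with two further disconnectors: $\{\alpha_3,\alpha_4,\beta_1\}$, which forces paths at $\beta_1$ and $\alpha_4$, and then $\{\alpha_1,\alpha_3,\alpha_4\}$.

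Second, it gives at most four triangles, not exactly four. The ear on $\{\alpha_1,\alpha_3\}$ has to be produced. The paper removes $\{\alpha_1,\alpha_3\}$ together with the extra vertices of all cliques through $\{\beta_1,\alpha_2\}$ or $\{\alpha_2,\beta_2\}$. Unmixedness then demands one more bipartite component, which only a second clique through $\{\alpha_1,\alpha_3\}$ can supply.

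Third, your test for the pendant paths is wrong. Removing $\{\alpha_2,\alpha_3\}$ leaves two bipartite components whether or not $P_{\beta_1}$ exists. The right set is $\{\beta_1,\alpha_3\}$, the apex together with the opposite central vertex. Its complement is a tree plus $P_{\beta_1}\setminus\{\beta_1\}$, so $b=2$ forces $P_{\beta_1}$ to exist.
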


\begin{proof}
If $G\in \mathfrak{G}_1$, then $\II_G$ is unmixed by \Cref{I_G is unmixed only if G has there forms}. Let us assume that $\mathcal{I}_G$ is unmixed. Since $G\in \mathfrak{G}$, $c_G(V(K_{r_i})) = 2$ for all $i\in [t]$.
Let $i_2 \in [t]$ be such that it satisfies the hypothesis of the theorem, i.e., $|V(K_{r_{i_2}})|= 2$ and at least three complete graphs intersect with $K_{r_{i_2}}$. Let $i_1\in \lambda(i_2)$. Consider $T_1 = V(K_{r_{i_2}}) = \{\alpha_1, \alpha_2\}$. Then by \Cref{rmk of I_G unmixed 1}(1), $G\setminus T_1$ is bipartite and has two components. So, $K_{n_{i_1}}$ and  $ K_{n_{i_2}}$ are either $K_3$ or $K_4$. Let  $C_1$ and $C_2$ be the bipartite components of $G\setminus T_1$ containing $K_{n_{i_1}}\setminus T_1$ and  $K_{n_{i_2}}\setminus T_1$ respectively. 
 Since at least three complete graphs intersect with $K_{r_{i_2}}$, there exists $j\in [t]$ such that $j\neq i_1, i_2$ and $K_{n_j} \cap K_{r_{i_2}}\neq \emptyset$. Since $c_G(T_1) = 2$, either $K_{n_j} \cap C_1 \neq \emptyset $ or $K_{n_j} \cap C_2 \neq \emptyset $. Without loss of generality assume that $K_{n_j} \cap C_2 \neq \emptyset $.\\[2ex]
 \textbf{Claim:} There exists $i_3\in [t]$ such that $K_{n_{i_3}} \cap K_{r_{i_2}} \neq \emptyset$ and $(V(K_{n_{i_3}}) \setminus T_1) \cap (V(K_{n_{i_2}}) \setminus T_1) \neq \emptyset$.\\
 Proof of the claim: Take $K_{n_j}$ as above. Let $ \alpha_l \in V(K_{n_j} \cap K_{r_{i_2}})$ for some $l\in \{1,2\}$. Now suppose $(V(K_{n_j}) \setminus T_1) \cap (V(K_{n_{i_2}}) \setminus T_1) = \emptyset$. 
 Since $C_2$ is connected, there exists a shortest path $P : x = x_0,\ldots, x_n = y$ in $C_2$, where $x \in V(K_{n_j}) \setminus T_1$ and $y \in V(K_{n_{i_2}}) \setminus T_1$. 
 Since $x_{n-1}$ and $y$ are adjacent, there exists $j'\in [t]$ so that $x_{n-1},y \in V(K_{n_{j'}})$. If $j'<i_2$, then $V(K_{n_{j'}}\cap K_{n_{i_2}}) \subseteq V(K_{r_{i_2}}) = T_1$. But $y \notin T_1$ and $y\in V(K_{n_{j'}}\cap K_{n_{i_2}})$. So, $j'>i_2$. Now if $K_{n_{j'}} = K_2$, then $V(K_{r_{j'}}) = \{y\}$ and $i_2\in \lambda(j')$. Since both $x_{n-1}$ and $\alpha_l$ does not belongs to $V(K_{r_{j'}})$, they cannot be adjacent (\Cref{x and y adjacent}). We know that $x_0, \alpha_l \in E(G)$. Let $p = \max \{j \in \{ 0,\ldots, x_{n-2}\}: \{x_j, \alpha_l\} \in E(G) \}$. Then $\{x_p, \ldots, x_{n-1}, x_n, \alpha_l\}$ is an induced $(n-p+2)$-cycle of $G$ and $n-p+2 \geq 4$. Since $G$ is chordal, this is a contradiction. Hence $|V(K_{n_{j'}})| \geq 3$. Since $V(K_{n_j'}\setminus T_1) \subseteq V(C_2)$ and $C_2$ is bipartite, $K_{n_{j'}} \cap T_1 \neq \emptyset$. We take $i_3 = j'$.\\
 Note that in this case two complete graphs intersect in at most two vertices. Since $T_1 \subset V(K_{n_{i_2}})$, $|V(K_{n_{i_3}} \cap T_1)|=1$.  Let $\alpha_3\in V(K_{n_{i_3}} \cap K_{n_{i_2}}) \setminus T_1$ and $\alpha_2 \in V(K_{n_{i_3}}) \cap T_1$. Then
 $V(K_{n_{i_3}}) \cap V(K_{n_{i_2}}) = \{\alpha_2, \alpha_3\}$. Since $C_2$ is bipartite $K_{n_{i_3}}= K_3$. Let $T_2 = \{\alpha_2, \alpha_3\}$. By \Cref{rmk of I_G unmixed 1}(2), $G\setminus T_2$ is also bipartite. So $K_{n_{i_1}} = K_3$. But $K_{n_{i_2}}$ is either $K_3$ or $K_4$.
 Let us assume that $V(K_{n_{i_1}}) = \{ \alpha_1, \alpha_2, \beta_1 \}$, $V(K_{n_{i_3}}) = \{ \alpha_2, \alpha_3, \beta_2 \}$ and $V(K_{n_{i_2}})$ is either $\{ \alpha_1, \alpha_2, \alpha_3 \}$ or $\{  \alpha_1, \alpha_2, \alpha_3, \alpha_4\} $.
\begin{figure}[H]
\centering

\begin{tikzpicture}[line cap=round,line join=round,>=triangle 45,x=1cm,y=1cm]
\clip(-10,-2.5) rectangle (6,5);
\draw  (-7.4,1.67)-- (-5,3);
\draw  (-7.4,1.67)-- (-5.02,0.37);
\draw  (-5,3)-- (-5.02,0.37);
\draw  (-5,3)-- (-2.72,1.91);
\draw  (-5.02,0.37)-- (-2.72,1.91);
\draw  (-5.02,0.37)-- (-2.74,-1.13);
\draw  (-2.72,1.91)-- (-2.74,-1.13);
\draw  (-1.12,1.65)-- (1.2,2.99);
\draw  (1.2,2.99)-- (4,3);
\draw  (4,3)-- (4.04,0.69);
\draw  (1.2,2.99)-- (4.04,0.69);
\draw  (4.04,0.69)-- (2.8,-1.47);
\draw  (-1.12,1.65)-- (1.28,0.71);
\draw  (1.2,2.99)-- (1.28,0.71);
\draw  (1.28,0.71)-- (4.04,0.69);
\draw  (1.28,0.71)-- (2.8,-1.47);
\draw  (1.28,0.71)-- (4,3);
\draw (-5.4,3.5) node[anchor=north west] {$\alpha_1$};
\draw (0.8,3.51) node[anchor=north west] {$\alpha_1$};
\draw (-5.4,0.3) node[anchor=north west] {$\alpha_2$};
\draw (3.66,3.51) node[anchor=north west] {$\alpha_4$};
\draw (3.88,0.69) node[anchor=north west] {$\alpha_3$};
\draw (-8,1.87) node[anchor=north west] {$\beta_1$};
\draw (2.22,-1.3) node[anchor=north west] {$\beta_2$};
\draw (-1.54,2.33) node[anchor=north west] {$\beta_1$};
\draw (-2.8,2.35) node[anchor=north west] {$\alpha_3$};
\draw (-3,-1.22) node[anchor=north west] {$\beta_2$};
\draw (0.8,0.73) node[anchor=north west] {$\alpha_2$};
\draw (2,0.39) node[anchor=north west] {$K_{n_{i_3}}$};
\draw (2,2.71) node[anchor=north west] {$K_{n_{i_2}}$};
\draw (-6.12,2.15) node[anchor=north west] {$K_{n_{i_1}}$};
\draw (0,2.13) node[anchor=north west] {$K_{n_{i_1}}$};
\draw (-3.68,0.47) node[anchor=north west] {$K_{n_{i_3}}$};
\draw (-4.46,2.17) node[anchor=north west] {$K_{n_{i_2}}$};
\draw (-6.66,-0.57) node[anchor=north west] {(A)};
\draw (4.82,-0.51) node[anchor=north west] {(B)};
\begin{scriptsize}
\draw [fill=black] (-7.4,1.67) circle (2pt);
\draw [fill=black] (-5,3) circle (2pt);
\draw [fill=black] (-5.02,0.37) circle (2pt);
\draw [fill=black] (-2.72,1.91) circle (2pt);
\draw [fill=black] (-2.74,-1.13) circle (2pt);
\draw [fill=black] (-1.12,1.65) circle (2pt);
\draw [fill=black] (1.2,2.99) circle (2pt);
\draw [fill=black] (4,3) circle (2pt);
\draw [fill=black] (4.04,0.69) circle (2pt);
\draw [fill=black] (2.8,-1.47) circle (2pt);
\draw [fill=black] (1.28,0.71) circle (2pt);
\end{scriptsize}
\end{tikzpicture}
\caption{Clique sum of $K_{n_{i_1}}$, $K_{n_{i_2}}$ and $K_{n_{i_3}}$}
\label{fig 2}
\end{figure}
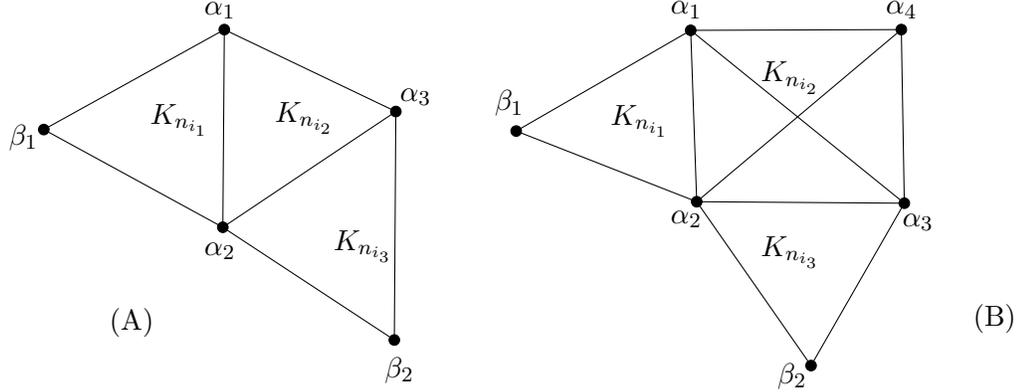
Graphs in \Cref{fig 2}(A) and (B) illustrate these possibilities and the reader may want to refer to them for the arguments that come ahead. Since $G\setminus T_1$ and $G\setminus T_2$ are bipartite, and $c_G(T_1) = c_G(T_2) = 2$, we can determine the attachments of other complete graphs at the vertices of $K_{n_{i_1}}$, $K_{n_{i_2}}$ or $K_{n_{i_3}}$ as follows. Suppose $i\notin \{i_1, i_2, i_3\}$. Then the following hold:
\begin{itemize}
    \item $K_{n_i}$ does not contain $\{ \alpha_1, \alpha_2\}$ or $\{ \alpha_2, \alpha_3\}$.
    \item If $\alpha_1 \in V(K_{n_i})$, then exactly one of $\beta_1$ or $\alpha_3$ or $\alpha_4$ belongs to $V(K_{n_i})$.
    \item If $\alpha_2 \in V(K_{n_i})$, then exactly one of $\beta_2$ or $\beta_1$ or $\alpha_4$ belongs to $V(K_{n_i})$.
    \item If $\alpha_3 \in V(K_{n_i})$ then exactly one of $\beta_2$ or $\alpha_1$ or $\alpha_4$ also belongs to $V(K_{n_i})$.
    \item Since $G\setminus T_1$ and $G\setminus T_2$ are bipartite, $K_{n_i}$ does not contain any of the edges $\{ \alpha_1, \alpha_4\}$, $\{\alpha_3, \alpha_4\}$, $\{\beta_1, \alpha_1\} $ and $\{\beta_2, \alpha_3\}$.
\end{itemize}
Let $e$ be one of the edges $\{\beta_1, \alpha_2\}, \{ \alpha_2, \beta_2\}$ or $\{\alpha_1, \alpha_3\}$. Suppose $e \in E(K_{n_i})$. Then:
\begin{itemize}
    \item $K_{n_i}= K_3$.
    \item If $x \in V(K_{n_i} \setminus e)$, then only tree graphs can be attached to $x$, and further since $\II_G$ is unmixed any such tree must be a path graph. If it exists, we denote the path attached to $x$ by $P_x$.
\end{itemize}
Note that path graphs $P_{\beta_1}$ or $P_{\beta_2}$ attached to $\beta_1$ or $\beta_2$ respectively, may exist, but there is no path graph attached to $\alpha_1$, $\alpha_2$ or $\alpha_3$.  

\noindent\textbf{Case 1: $K_{n_{i_2}} = K_3$:} (As in \Cref{fig 2}(A)). 
For any edge $\{z,w\}$ of $G$, let $L_{z,w} := \{ j\in [t] : \{z,w\}\in E(K_{n_j})\}$. First we want to show that there exists a complete graph $K_{n_i}= K_3$ that contains $\{\alpha_1, \alpha_3\}$. For that purpose we choose 
$$T_3 = \{ \alpha_1, \alpha_3\} \bigcup \bigcup_{i\in L_{\beta_1, \alpha_2}} (V(K_{n_i}) \setminus \{\beta_1, \alpha_2\}) \bigcup \bigcup_{ i \in L_{\beta_2, \alpha_2}} (V(K_{n_i}) \setminus \{\beta_2, \alpha_2\}).$$
Then the path $P$ containing the vertices $\{\beta_1, \alpha_2,\beta_2\}$ is a component of $G\setminus T_3$ and for every $s\in T_3$ $G[V(P)\cup \{s\}]$ is non-bipartite. So, $T_3\in \mathcal{D}(G)$. Then $|T_3| = b_G(T_3)$. Apart from $P$, the other possible components of $G\setminus T_3$ are $P_x\setminus \{x\}$, where $x\in V(K_{n_j}) \setminus \{\beta_1, \alpha_2, \beta_2\}$ and $j\in L_{\beta_1, \alpha_2} \cup L_{\beta_2, \alpha_2}$, and the component containing $K_{n_i} \setminus T_3$ whenever $K_{n_i}$ contains $\alpha_1, \alpha_3$. Since $|T_3| = b_G(T_3)$, there must be some $K_{n_i}= K_3$ such that $\alpha_1, \alpha_3\in V(K_{n_i})$. Let $V(K_{n_i}) = \{ \alpha_1$, $\alpha_3, \beta_3 \} $. Take $T_4 = \{\alpha_1$, $\alpha_3\} \in \mathcal{D}(G)$. Since $G\setminus T_4$ is bipartite, there is no $i'\in [t]\setminus \{i_1,i_2,i_3\}$ such that $V(K_{n_{i'}})$ contains $\{\beta_1, \alpha_2\}$ or $\{\beta_2, \alpha_2\}$. Take $T_5 = \{\beta_1, \alpha_3\} \in \mathcal{D}(G)$, then $\{ \alpha_1,  \alpha_2, \beta_2, \beta_3\}$ is part of a bipartite connected component. Since $b_G(T_5) = 2$, $P_{\beta_1}$ exists. Similarly, $\{ \beta_2, \alpha_1\}$ and $\{ \beta_3, \alpha_2\}$ are in $\mathcal{D}(G)$. So, $P_{\beta_2}$ and $P_{\beta_3}$ exist. Hence $G$ has the form as in Class $\mathfrak{G}_1$ as shown in \Cref{fig:fig 13}.\\ 
\noindent\textbf{ Case 2: $K_{n_{i_2}} = K_4$:} (As in \Cref{fig 2}(B)). We choose $T_6 = \{ \alpha_4, \alpha_3, \beta_1 \}$. Let $B_1$ be the bipartite component that contains $\{\alpha_1, \alpha_2, \beta_2\}$. Then for all $s\in T_6$, $G[V(B_1)\cup \{s\}]$ is non-bipartite. So, $T_6\in \mathcal{D}(G)$. Since $G\setminus T_1$ is bipartite, there does not exist any $K_{n_j}, j\neq i_2$, that contains both $\alpha_4$ and $\alpha_3$. Furthermore, there is no path attached to $\alpha_3$ because $c_G(T_2) = 2$. So, the possible components of $G\setminus T_6$ are $B_1$, $P_{\beta_1}\setminus \{\beta_1\}$ and $P_{\alpha_4} \setminus \{\alpha_4\}$, if $P_{\beta_1}$ and $P_{\alpha_4}$ exist. Since $b_G(T_6) = |T_6| = 3$, both $P_{\beta_1}$ and $P_{\alpha_4}$ must exist. Now if we take $T_7 = \{ \alpha_1, \alpha_3, \alpha_4 \} \in \mathcal{D}(G)$, then it can be seen that $P_{\alpha_4} \setminus \{\alpha_4\}$ and $\{ \beta_1, \alpha_2, \beta_2\}$ are part of two bipartite components of $G\setminus T_7$. So, $b(G\setminus T_7) = 2$, a contradiction to the unmixedness of $\II_G$. Hence this case cannot occur. This completes the proof.
\end{proof}
We are now left with classifying those graphs $G$ in $\mathfrak{G}$ such that for any $x\in V(G)$, $x$ belongs to at most two $K_{n_i}$ for $i\in [t]$. We classify when such graphs have the unmixedness property in the next theorem.
\begin{theorem} \label{I_G unmixed 2}
Let $G\in \mathfrak{G}$ and at most two maximal complete graphs intersect in $G$. Then $\mathcal{I}_G$ is unmixed if and only if $G \in \mathfrak{G}_2 \cup \mathfrak{G}_3$ (\Cref{fig:fig 13}).
\end{theorem}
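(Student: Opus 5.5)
The ``if'' direction is already contained in \Cref{I_G is unmixed only if G has there forms}, so I would only prove that unmixedness forces $G \in \mathfrak{G}_2\cup\mathfrak{G}_3$. Assume $\II_G$ is unmixed. Since $G\in\mathfrak{G}$, we have $m([t])=0$, every $|V(K_{r_i})|\le 2$, and $c_G(V(K_{r_i}))=2$. By \Cref{r_i=1} there is an index $i$ with $|V(K_{r_i})|=2$. Set $T_1=V(K_{r_i})=\{\alpha_1,\alpha_2\}$ and let $i_1\in\lambda(i)$. By \Cref{rmk of I_G unmixed 1}(1), $G\setminus T_1$ is bipartite with exactly two components $C_1\supseteq K_{n_{i_1}}\setminus T_1$ and $C_2\supseteq K_{n_i}\setminus T_1$. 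Each of these pieces has at most two vertices, so $K_{n_{i_1}},K_{n_i}\in\{K_3,K_4\}$. The hypothesis that at most two maximal cliques meet at any vertex says that no third clique touches $\alpha_1$ or $\alpha_2$. Consequently every vertex $v$ of $G\setminus T_1$ is adjacent to $T_1$ only if $v\in K_{n_{i_1}}\cup K_{n_i}$.

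\textbf{Case $K_{n_i}=K_{n_{i_1}}=K_4$.} This is the skeleton of $\mathfrak{G}_2$, with $K_{n_{i_1}}=\{\alpha_1,\alpha_2,x_1,y_1\}$ and $K_{n_i}=\{\alpha_1,\alpha_2,x_2,\cdot\}$ glued along an edge. First I would rule out the case where both are $K_4$. To do this I would take $T=V(K_{n_i})\setminus\{\alpha_1, w\}$ (or a similar set using the two non-$T_1$ vertices) and check with \Cref{cremark_5}/\Cref{cprop_4} that it lies in $\mathcal{D}(G)$. Then I would count bipartite components and obtain $b_G(T)\ne|T|$, in the same way as in Case 2 of \Cref{I_G unmixed 1}.

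\textbf{Case one $K_4$ and one $K_3$.} This should give $\mathfrak{G}_2$ (a $K_4$ on $\alpha_1,\alpha_2,x_1,y_1$ together with a $K_3$ on $\alpha_1,\alpha_2,x_2$). The key constraints come from three facts: the vertices outside $T_1$ lie in bipartite components; each of them lies in at most two cliques; and every single cut vertex $v$ must satisfy $b_G(\{v\})=1$, as in the proof of \Cref{r_i=1}. From these, anything attached at $x_1,y_1,x_2$ is a pendant structure. I would show it must be a path by the degree argument used for $P_y$ in \Cref{r_i=1}, noting that a triangle hanging there would create another $K_{r_j}$ with $c\ge2$ and a non-bipartite remainder, contradicting \Cref{rmk of I_G unmixed 1}. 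Then, exactly as in \Cref{I_G unmixed 1}, I would test disconnector sets such as $\{x_2, x_1\}$ and $\{x_2, y_1\}$ (these separate off a bipartite piece containing the remaining vertices, and each vertex reconnects to a triangle). Unmixedness forces $P_{x_1}$ and $P_{y_1}$ to exist and forbids a path at $x_2$, which is the $\mathfrak{G}_2$ shape.

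\textbf{Case both $K_3$.} Here the structure is a diamond $\{\alpha_1,\alpha_2,x_1,x_2\}$. I would show that a path hangs at exactly one of $x_1,x_2$, giving $\mathfrak{G}_3$. The set $\{\alpha_1,\alpha_2\}$ gives no information, since both sides are already bipartite. Instead I would use sets like $\{x_1\}$ and $\{x_2\}$, which must have $b=1$, and $\{x_1,x_2\}$. For the latter, the remaining edge $\alpha_1\alpha_2$ becomes non-bipartite when either vertex is restored, so it is sign-split, and its bipartite components are $\{\alpha_1,\alpha_2\}$ plus the pendant pieces. This count should force exactly one pendant path.

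The main obstacle I anticipate is the global step: showing that no further cliques of size $\ge3$ (further copies of these gadgets) occur elsewhere in $G$. Having a second index $j$ with $|V(K_{r_j})|=2$ would give a second separating edge, and I would need to derive a contradiction from \Cref{rmk of I_G unmixed 1}(1) applied to both. This is where I would try first: every separating edge makes the rest bipartite, so two such gadgets cannot coexist. The remaining $K_3$/$K_2$ attachments are pinned down by \Cref{H_n is path} and \Cref{S_2 cardinality 1}.
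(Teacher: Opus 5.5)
Your reduction is the same as the paper's. Only $K_{n_{i_1}}$ and $K_{n_i}$ meet $T_1$, so every other maximal clique lies in the bipartite graph $G\setminus T_1$ and is just an edge. This settles at once the ``global step'' you flag as the main obstacle. So $G$ is the two cliques glued along $\alpha_1\alpha_2$, with at most one pendant path at each clique vertex outside $T_1$. Your $K_3$--$K_3$ case is correct: $\{x_1,x_2\}$ is always a sign-split disconnector with $b_G(\{x_1,x_2\})=1+(\text{number of pendant paths})$. This forces exactly one path and gives $\mathfrak{G}_3$.

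The gap is in the two cases containing a $K_4$: the disconnector sets you propose do not give the conclusions you claim.

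\textbf{Case $K_4$--$K_3$.} Take $K_{n_{i_1}}=\{\alpha_1,\alpha_2,x_1,y_1\}$ and $K_{n_i}=\{\alpha_1,\alpha_2,x_2\}$.
\begin{itemize}
\item The graph $G\setminus\{x_1,x_2\}$ still contains the triangle $\alpha_1\alpha_2y_1$, and $G\setminus\{x_2,y_1\}$ contains $\alpha_1\alpha_2x_1$. So your sets are not separating off a bipartite piece.
\item They are disconnectors only when both corresponding paths already exist, and then $b_G=2$ automatically. Hence they can neither force $P_{x_1},P_{y_1}$ nor exclude $P_{x_2}$.
\item What works is a set containing an $\alpha$ whose removal leaves a bipartite piece. $G\setminus\{\alpha_1,y_1\}$ is the tree through $x_1\alpha_2x_2$ (with $P_{x_1},P_{x_2}$) together with $P_{y_1}\setminus\{y_1\}$. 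So $b_G(\{\alpha_1,y_1\})=2$ forces $P_{y_1}$ to exist, and likewise $\{\alpha_1,x_1\}$ forces $P_{x_1}$.
\item After that, $b_G(\{x_1,y_1,x_2\})=3$ excludes $P_{x_2}$.
\end{itemize}

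\textbf{Case $K_4$--$K_4$.} Your set $T=V(K_{n_i})\setminus\{\alpha_1,w\}$ has two elements and fails.
\begin{itemize}
\item $\{x_2,y_2\}$ is a disconnector only if both paths exist, and then $b_G=2=|T|$, so there is no contradiction.
\item $\{\alpha_2,x_2\}$ and $\{\alpha_2,y_2\}$ are not disconnectors at all: the triangle $\alpha_1x_1y_1$ survives, and $\alpha_2$ reconnects nothing.
\item The analogy with Case 2 of \Cref{I_G unmixed 1} actually needs a two-stage argument with larger sets. First, $b_G(\{x_1,y_1,x_2,y_2\})=4$ (the edge $\alpha_1\alpha_2$ plus path remnants) shows exactly three of the four pendant paths exist; say none at $x_2$.
\item Then $\{x_2,\alpha_2,y_1\}$ is a sign-split disconnector. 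Its complement is the path through $x_1\alpha_1y_2$ plus $P_{y_1}\setminus\{y_1\}$, so $b_G=2<3$, a contradiction.
\end{itemize}
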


\begin{proof}
If $G\in \mathfrak{G}_2\cup \mathfrak{G}_3$, then $\II_G$ is unmixed by \Cref{I_G is unmixed only if G has there forms}. Assume now that $G \in \mathfrak{G}$, at most two maximal complete graphs intersect in $G$ and $\mathcal{I}_G$ is unmixed. Let $i_2\in [t]$ be such that $|V(K_{r_{i_2}})| = 2$ and $i_1\in \lambda(i_2)$. Take $T_1= V(K_{r_{i_2}}) = \{\alpha_1, \alpha_2\}$. Since at most two complete graphs intersect in $G$, $K_{n_{i_1}}$ and $K_{n_{i_2}}$ are the only graphs that intersect with $K_{r_{i_2}}$. So, $G\setminus T_1$ has two components containing $K_{n_{i_1}}\setminus T_1$ and $K_{n_{i_2}}\setminus T_1$. Since $\II_G$ is unmixed, both of them are bipartite, i.e., $G\setminus T_1$ is bipartite. This condition along with the given hypothesis imply that $K_{n_{i_1}}$ and $K_{n_{i_2}}$ are either $K_3$ or $K_4$, and only tree graphs can be attached to the vertices of $V(K_{n_{i_1}}) \setminus T_1$ or $V(K_{n_{i_2}}) \setminus T_1$. If there is a tree attached to some $x\in V(K_{n_{i_1}} \cup K_{n_{i_2}}) \setminus T_1$, then $\{x\} \in \mathcal{D}(G)$. Since $\II_G$ is unmixed, $b_G(\{x\}) = 1$. So, at most one tree can be attached to $x$. Again, because of the unmixednes of $\II_G$, such a tree graph must be a path. If it exists, we denote the path attached to $x$ by $P_x$. Since we are in the case where $m([t])= 0$, one such path exists in $G$. Let $x_1 \in V(K_{n_{i_1}}) \setminus T_1 $ be such that $P_{x_1}$ exists.\\

\noindent
\textbf{Case 1: $K_{n_{i_1}}= K_{n_{i_2}}= K_3$:} Then  $V(K_{n_{i_1}})= \{ \alpha_1, \alpha_2, x_1\}$. Let $V(K_{n_{i_2}}) = \{ \alpha_1, \alpha_2, x_2 \}$. If $P_{x_2}$ exists, then the set $\{ x_1, x_2 \} \in \mathcal{D}(G)$. But then $b_G(\{ x_1, x_2 \}) = 3$, a contradiction. So, there is no path attached to $x_2$. Hence $G$ is in Class $\mathfrak{G}_3$, as shown in \Cref{fig:fig 13}.\\

\noindent
\textbf{Case 2: $K_{n_{i_1}} = K_4$ and $K_{n_{i_2}}= K_3$:}
Let $V(K_{n_{i_1}})= \{ \alpha_1, \alpha_2, x_1, y_1\}$ and $V(K_{n_{i_2}}) = \{ \alpha_1, \alpha_2, x_2\}$. Take $T_1 = \{ y_1, \alpha_1 \} \in \mathcal{D}(G)$. Then $b_G(T_1) = 2$. So, $P_{y_1}$ exists. Now if $P_{x_2}$ exists, then we choose  $T_2= \{ x_1, y_1, x_2 \} \in \mathcal{D}(G)$. But then $b_G(T_2) = 4$, a contradiction. Therefore, $P_{x_2}$ does not exists. Hence $G$ is in Class $\mathfrak{G}_2$  as shown in \Cref{fig:fig 13}. \\


\noindent
\textbf{Case 3: $K_{n_{i_1}} = K_3$ and $K_{n_{i_2}}= K_4$:}
Let $V(K_{n_{i_2}}) = \{ \alpha_1, \alpha_2, x_2, y_2\}$ and $V(K_{n_{i_1}})= \{ \alpha_1, \alpha_2, x_1\}$. We take $T_3 = \{ x_1, x_2, y_2\} \in \mathcal{D}(G)$. So, $b_G(T_3) = 3$. This implies exactly one of $P_{x_2}$ or $P_{y_2}$ exists. Without loss of generality, $P_{x_2}$ exists. Choosing $T_4 =\{y_2, \alpha_2\} \in \mathcal{D}(G)$, we get that $b_G(T_4) = 1$, which is a contradiction to the unmixed property of $\II_G$. Hence this case cannot occur.\\


\noindent
\textbf{Case 4: $K_{n_{i_1}}= K_{n_{i_2}}= K_4$:} Let $V(K_{n_{i_2}}) = \{ \alpha_1, \alpha_2, x_2, y_2\}$ and $V(K_{n_{i_1}})= \{ \alpha_1, \alpha_2, x_1, y_1\}$. The set $T_5 = \{ x_1, y_1, x_2, y_2 \} \in \mathcal{D}(G)$. So, $b_G(T_5) = 4$. This implies exactly one vertex of $T_5$ does not have a path attached. Without loss of generality, assume that $x_2$ is that vertex. Then the set $\{ x_2, \alpha_2, y_1\} \in \mathcal{D}(G)$, but $b_G(\{ x_2, \alpha_2, y_1\} ) = 2$, a contradiction. Therefore, this case cannot occur. This completes the proof.
\end{proof}
\new{Finally, we complete the classification of all chordal graphs whose corresponding parity binomial edge ideals are unmixed.
\begin{corollary}
 Let $G$ be a chordal graph. Then $\II_G$ is unmixed if and only if $G$ is a path graph or $K_3$ or $G\in \mathfrak{G}_1 \cup \mathfrak{G}_2\cup \mathfrak{G}_3$. 
 
\end{corollary}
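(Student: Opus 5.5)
The plan is to assemble the results of Sections 3--6 through a case analysis on the structure of $G$. The ``if'' direction is exactly \Cref{I_G is unmixed only if G has there forms}, so only the ``only if'' direction needs work. Assume $\II_G$ is unmixed, and by the remark on disjoint unions reduce to $G$ connected.

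\textbf{Bipartite case.} If $G$ is bipartite, then as a chordal bipartite connected graph it is a tree. In that case $\II_G \cong J_G$ by a change of variables, and \cite[Corollary 1.2]{EHHNMJ} forces $G$ to be a path.

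\textbf{Non-bipartite case.} Write $G$ as a clique sum of $K_{n_1},\ldots,K_{n_t}$ as in \Cref{not:chordal}, and split according to $m([t])$.

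If $m([t])\neq 0$, then \Cref{m([t])> 0} gives $G=K_3$.

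If $m([t])=0$, we show $G\in\mathfrak{G}$ in three steps.
\begin{itemize}
\item By \Cref{cthm_3}, $c_G(V(K_{r_i}))\le 2$ for every $i\ge 2$.
\item I expect the bound $|V(K_{r_i})|\le 2$ to be the main obstacle. The plan is to argue that if $|V(K_{r_i})|\ge 3$, then $V(K_{r_i})\in\mathcal{D}(G)$ together with unmixedness gives $b_G(V(K_{r_i}))=|V(K_{r_i})|\ge 3$. Hence $c_G(V(K_{r_i}))\ge 3$, and \Cref{cthm_3} again gives a contradiction. This is the ``immediate consequence'' noted after \Cref{cthm_3}.
\item By \Cref{r_i=1}, some $|V(K_{r_i})|=2$. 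For every such $i$, \Cref{rmk of I_G unmixed 1}(1) shows that $G\setminus V(K_{r_i})$ has exactly two components.
\end{itemize}
The subtlety is that the defining condition of $\mathfrak{G}$ as stated involves $c_G(V(K_{r_i}))=2$. For indices with $|V(K_{r_i})|=1$, the value $c_G=2$ is automatic: a single vertex shared by the earlier part and $K_{n_i}$ separates, and $c_G\le 2$ by \Cref{cthm_3}. So the three defining properties of $\mathfrak{G}$ hold.

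\textbf{Finishing inside $\mathfrak{G}$.} Split according to whether some $K_{r_i}$ with $|V(K_{r_i})|=2$ meets at least three maximal cliques.
\begin{itemize}
\item If it does, \Cref{I_G unmixed 1} gives $G\in\mathfrak{G}_1$.
\item Otherwise, I would verify that no vertex lies in three maximal cliques, so that the hypothesis of \Cref{I_G unmixed 2} holds. Suppose a vertex $v$ lies in three maximal cliques. The clique-sum ordering then produces a $K_{r_j}$ containing $v$ that meets at least three cliques. If $|V(K_{r_j})|=1$, this gives $c_G(\{v\})\ge 3$, contradicting \Cref{cthm_3}. If $|V(K_{r_j})|=2$, it contradicts the case assumption.
\end{itemize}
In the second case, \Cref{I_G unmixed 2} yields $G\in\mathfrak{G}_2\cup\mathfrak{G}_3$, which completes the classification.
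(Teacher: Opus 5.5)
Your assembly of the paper's theorems (\Cref{I_G is unmixed only if G has there forms}, \Cref{m([t])> 0}, \Cref{cthm_3}, \Cref{r_i=1}, \Cref{I_G unmixed 1}, \Cref{I_G unmixed 2}) is the same as the paper's. Your derivation that $G\in\mathfrak{G}$ when $m([t])=0$ is sound, with one caveat. \Cref{rmk of I_G unmixed 1}(1) already presupposes $c_G(V(K_{r_i}))=2$, so cite the direct argument instead: $c_G\ge b_G=|V(K_{r_i})|=2$ by unmixedness, and $c_G\le 2$ by \Cref{cthm_3}.

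\textbf{The gap.} The step that fails as written is your check of the hypothesis of \Cref{I_G unmixed 2}. From a single index $j$ with $V(K_{r_j})=\{v\}$ and $K_{r_j}$ meeting three maximal cliques, you cannot conclude $c_G(\{v\})\ge 3$. Take $K_{n_1}=\{v,w,a\}$, $K_{n_2}=\{v,w,b\}$, $K_{n_3}=\{v,c,d\}$. Then $V(K_{r_3})=\{v\}$ meets all three cliques, yet $G\setminus\{v\}$ has only the two components $\{w,a,b\}$ and $\{c,d\}$. Choosing the middle index does not help either: with $K_{n_1}=\{v,w,a\}$, $K_{n_2}=\{v,b,b'\}$, $K_{n_3}=\{v,w,c\}$, the same failure occurs for $j=2$. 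In such configurations the contradiction has to come from a \emph{different} index whose $K_r$ has two vertices, and your argument never examines it.

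\textbf{The repair.} Suppose $v\in K_{n_a}\cap K_{n_b}\cap K_{n_c}$ with $a<b<c$.
\begin{enumerate}
\item Then $v$ lies in both $K_{r_b}$ and $K_{r_c}$, and each of them meets these three cliques.
\item Your case assumption, together with $|V(K_{r_i})|\le 2$, therefore forces $V(K_{r_b})=V(K_{r_c})=\{v\}$.
\item Every $K_{r_s}$ lies inside a single earlier clique, so $K_{r_s}\setminus\{v\}$ lies in one component. Hence attaching $K_{n_s}$ never merges two components of $(\bigcup_{l<s}K_{n_l})\setminus\{v\}$.
\item Consequently $K_{n_a}\setminus\{v\}$, $K_{n_b}\setminus\{v\}$ and $K_{n_c}\setminus\{v\}$ lie in three distinct components of $G\setminus V(K_{r_c})$. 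This contradicts \Cref{cthm_3}.
\end{enumerate}
The paper itself passes over this dichotomy silently in the sentence preceding \Cref{I_G unmixed 2}.

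\textbf{Connectivity.} The disjoint-union remark does not reduce the statement to connected graphs. For example, $\II_{K_3\sqcup K_3}$ is unmixed but $K_3\sqcup K_3$ is not in the list. The corollary rests on the paper's standing assumption that $G$ is connected.
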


The above result also classifies the unmixed parity binomials associated with two other subclasses of chordal graphs such as block graphs and generalized block graphs. We define block graphs in Definition \ref{def:block_graph}. The generalized block graphs are defined below. This graph class is prominently studied in the field of combinatorial commutative algebra (\cite{kiani2013regularity}, \cite{chaudhry2017generalized}, \cite{AR2}). 
\begin{definition}
    A graph $G$ is a generalized block graph if it satisfies:
   \begin{enumerate}
       \item $G$ is chordal.
       \item For any three maximal cliques $K_1, K_2, K_3$ in the graph, if their total intersection is non-empty $(K_1 \cap K_2 \cap K_3 \neq \emptyset)$, then their pairwise intersections must all be equal, i.e., $$K_1 \cap K_2 = K_2 \cap K_3 = K_1 \cap K_3.$$
    \end{enumerate} 
\end{definition}

\begin{corollary}
Let $G$ be a simple graph and $\II_G$ be unmixed. Then,
\begin{itemize}
     \item $G$ is a block graph if and only if $G$ is a path or $K_3$.
     \item $G$ is a generalized block graph if and only if $G$ is a path or $K_3$ or $G\in \mathfrak{G}_2 \cup \mathfrak{G}_3$.
 \end{itemize}

\end{corollary}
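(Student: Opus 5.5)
The plan is to derive this from the preceding corollary, which says that for a chordal graph $G$, $\II_G$ is unmixed if and only if $G$ is a path, $K_3$, or $G\in \mathfrak{G}_1\cup\mathfrak{G}_2\cup\mathfrak{G}_3$. The first step is to note that both block graphs and generalized block graphs are chordal. For block graphs this holds because every block is a clique, so every cycle lies inside a clique and has chords. Consequently, under the hypothesis that $\II_G$ is unmixed, $G$ must be one of the graphs in that list. The statement then reduces to a purely combinatorial check: which of the listed graphs are block graphs, and which are generalized block graphs. The ``if'' directions are immediate. Paths and $K_3$ are block graphs, and every block graph is a generalized block graph, since in a block graph two maximal cliques meet in at most one vertex, so any nonempty triple intersection forces all pairwise intersections to equal that single vertex.

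For the block graph part, I would check that no graph in $\mathfrak{G}_1\cup\mathfrak{G}_2\cup\mathfrak{G}_3$ is a block graph. Each of these graphs contains two maximal cliques sharing the edge $\{\alpha_1,\alpha_2\}$: in $\mathfrak{G}_2$ a $K_4$ and a $K_3$, in $\mathfrak{G}_3$ two $K_3$'s, and in $\mathfrak{G}_1$ the central triangle together with a neighbouring triangle. The union of these two cliques is $2$-connected but is not complete, so it lies in a block that is not a clique. Alternatively, one can use the fact that in a block graph distinct maximal cliques meet in at most one vertex.

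For the generalized block graph part, paths and $K_3$ qualify trivially. For $\mathfrak{G}_2$ and $\mathfrak{G}_3$, I would list the maximal cliques: the two cliques sharing $\{\alpha_1,\alpha_2\}$, and the path edges. Any two path edges, or a path edge and a clique, meet in at most one vertex, and no three maximal cliques share a vertex except possibly at attachment points. At such a point (for instance $x_1$ in $\mathfrak{G}_3$), only two maximal cliques meet, so condition (2) is vacuous. For $\mathfrak{G}_1$, I would exhibit a violation. The triangles $\{\alpha_1,\alpha_2,\alpha_3\}$, $\{\alpha_1,\alpha_2,\beta_1\}$ and $\{\alpha_1,\alpha_3,\beta_3\}$ (using the labelling of \Cref{fig:fig 13}) all contain $\alpha_1$. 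However, the first two meet in $\{\alpha_1,\alpha_2\}$ while the last two meet only in $\{\alpha_1\}$, so the pairwise intersections are not all equal. Hence graphs in $\mathfrak{G}_1$ are not generalized block graphs.

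The only mildly delicate step is reading off the clique structure of $\mathfrak{G}_1$ correctly from the figure, so that the violating triple is identified accurately. Everything else is direct inspection once the preceding corollary has been invoked.
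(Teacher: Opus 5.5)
Your proposal is correct and follows the route the paper intends. The paper gives no separate proof; it only remarks that the statement follows from the preceding classification of chordal graphs with unmixed $\II_G$, and your argument (chordality of both classes, then checking the cliques of $\mathfrak{G}_1,\mathfrak{G}_2,\mathfrak{G}_3$, including the violating triple through $\alpha_1$ in $\mathfrak{G}_1$) supplies exactly those omitted details. As everywhere in the paper, $G$ must be taken connected here, since for instance a disjoint union of two paths is a block graph with unmixed $\II_G$.
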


}
\section{Cohen-Macaulay parity binomial edge ideals}

In the previous section, we characterized unmixed parity binomial edge ideals of non-bipartite chordal graphs. We now characterize the Cohen-Macaulayness. Since any Cohen-Macaulay ideal is unmixed, we need to understand which are the Cohen-Macaulay ones among the graphs given in \Cref{sec:unmixed}. First we prove a result that allow us to reduce the study to the basic structures.
\begin{proposition}
Let $G$ be a graph on $[n]$ having a pendant vertex $u$ and $G'$ be the graph obtained by adding a whisker to $u$. If $R$ denotes the polynomial ring containing $\II_{G'}$, then $\depth(R/\II_{G'}) = \depth(R/\II_G)-1$.
\end{proposition}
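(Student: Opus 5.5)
The plan is to realise $\II_{G'}$ as $\II_G$ plus one new generator and to show that this generator is a non-zerodivisor modulo $\II_G$. A regular element lowers depth by exactly one, which gives the claim.

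\textbf{Setup.} Let $v$ be the new vertex of the whisker, so $E(G') = E(G)\cup\{\{u,v\}\}$, and write $R = K[x_i,y_i : i\in V(G)][x_v,y_v]$. Then
\[ \II_{G'} = \II_G R + (f), \qquad f = x_ux_v - y_uy_v. \]
I read $\depth(R/\II_G)$ as the depth of $R/\II_G R$, where $x_v,y_v$ are free variables. This equals $\depth(S/\II_G)+2$ for the smaller ring $S$. So it suffices to show that $f$ is a non-zerodivisor on $R/\II_G R$; then $\depth R/\II_{G'} = \depth R/\II_G R - 1$.

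\textbf{Step 1: reduce to minimal primes.} Since char$(K)\neq 2$, $\II_G$ is radical \cite{Kahle2016}. The extension $S\to R$ is a polynomial extension, so $\II_G R$ is still radical. Its associated primes are therefore the extended minimal primes $P R$, where $P = \mathfrak m_T + \mathfrak q$ with $T\in\mathcal D(G)$ and $\mathfrak q$ a sign-split minimal prime of $\mathcal J_{G\setminus T}$. Moreover $R/PR \cong (S/P)[x_v,y_v]$. Hence $f\in PR$ if and only if both coefficients $x_u,y_u$ lie in $P$. So I must show that no minimal prime of $\II_G$ contains both $x_u$ and $y_u$.

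\textbf{Step 2: the pendant vertex is never in a disconnector.} Let $w$ be the unique neighbour of $u$ in $G$, and suppose $u\in T$ for a disconnector $T$. Adding $u$ back to $G\setminus T$ either creates a new isolated (bipartite) component, if $w\in T$, or attaches a leaf to the component containing $w$. In the first case $c+b$ drops by $2$ when $u$ is removed. In the second case neither the number of components nor bipartiteness changes. In either case the strict inequality in \Cref{def: disconnector} fails for $s=u$.

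I expect this step to be the main obstacle. In the case $w\in T$, $u$ is an isolated vertex of $G\setminus T$, which increases $c+b$. So the argument must check the inequality carefully in both directions and conclude that $u\notin T$ in every case, not merely in the case $w\notin T$. If $u$ can in fact lie in some disconnector, I would instead argue directly that $\mathfrak m_T + \mathfrak q$ with $u\in T$ is not minimal, because removing $u$ from $T$ yields a smaller prime of the same family.

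\textbf{Step 3: no prime contains both $x_u$ and $y_u$.} Given $u\notin T$, the vertex $u$ lies in some component $C$ of $G\setminus T$. There are two cases.
\begin{itemize}
\item[(i)] If $C$ is non-bipartite, the summand is $\p^\pm(C)$. In the variables of $u$ it contains only $x_u\pm y_u$, and it contains no variable; modulo this prime $x_u\mapsto \mp y_u$, which is nonzero.
\item[(ii)] If $C$ is bipartite, the summand is $\mathcal J_C$. This is a saturation by $\prod x_iy_i$, so it contains no monomials, and in particular neither $x_u$ nor $y_u$.
\end{itemize}
Since $P$ is a sum of primes in disjoint sets of variables, $x_u,y_u\notin P$. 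By Step 1, $f$ is regular on $R/\II_G R$, and the depth formula follows.
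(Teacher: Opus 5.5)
Your proposal is correct and follows the paper's proof. Both write $\II_{G'}=\II_G+(f)$, note that the pendant vertex $u$ lies in no $T\in\mathcal{D}(G)$, and conclude that $f$ avoids every minimal (hence, by radicality, every associated) prime of $\II_G$, so $f$ is regular and depth drops by one. Your Steps 1--3 only fill in details the paper leaves implicit, and your Step 2 computation already settles both cases $w\in T$ and $w\notin T$, so the fallback hedge there is unnecessary.
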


\begin{proof}
Let $\{u,n+1\}$ be the new edge in $G'$ and $g = x_ux_{n+1}-y_uy_{n+1}$. Then $\II_{G'} = \II_G + (g)$. Since $u$ is the pendant vertex of $G$, $u\notin T$ for every $T\in \mathcal{D}(G)$. Hence $g \notin \mathfrak{p}$, for every $\mathfrak{p} \in \text{Min}(\II_G)$. Therefore $g$ is regular on $\frac{R}{\II_G}$ which yields $\depth(R/\II_{G'}) = \depth(R/\II_G)-1$.
\end{proof}

As an immediate consequence, we see that the Cohen-Macaulayness of $\II_G$ and $\II_{G'}$ are equivalent:
\begin{corollary} \label{cor:cm-whisker}
Let $G$ be a graph on $[n]$ having a pendant vertex $u$ and $G'$ be the graph obtained by adding a whisker to $u$. Then $\II_G$ is Cohen-Macaulay if and only if $\II_{G'}$ is Cohen-Macaulay.
\end{corollary}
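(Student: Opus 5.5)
The plan is to deduce \Cref{cor:cm-whisker} directly from the preceding proposition together with a dimension comparison. Let $R$ be the polynomial ring of $G$ on $2n$ variables and $R' = R[x_{n+1},y_{n+1}]$ the ring of $G'$. Cohen-Macaulayness of $R/\II_G$ means $\depth(R/\II_G)=\dim(R/\II_G)$, and likewise for $G'$. The proposition (read with the appropriate rings) gives $\depth(R'/\II_{G'}) = \depth(R'/\II_G R')-1 = \depth(R/\II_G)+2-1 = \depth(R/\II_G)+1$. Here adjoining the two new variables raises depth by $2$, and then the regular element $g$ lowers it by $1$. So it suffices to show that $\dim(R'/\II_{G'}) = \dim(R/\II_G)+1$.

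For the dimension, I would argue exactly as in the proof of the proposition. We have $g=x_ux_{n+1}-y_uy_{n+1}$, and $u\notin T$ for every $T\in\D(G)$. Hence $g$ avoids every minimal prime of $\II_G R'$, since these are the extensions of the minimal primes of $\II_G$. A general fact then gives $\dim(R'/(\II_GR'+(g))) = \dim(R'/\II_GR')-1 = \dim(R/\II_G)+2-1$. The fact needed is that modding out an element lying in no minimal prime drops dimension by exactly one, which holds in the graded setting because $g$ is homogeneous and of positive degree. Alternatively, one can use the height formula $\operatorname{height} Q_S^\sigma = |S|+n-b_G(S)$ recalled in \Cref{sec:prelim}. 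Since $\II_{G'}$ is radical, its dimension is $2(n+1)$ minus the minimal height of its minimal primes. Both routes give $\dim(R'/\II_{G'})-\depth(R'/\II_{G'}) = \dim(R/\II_G)-\depth(R/\II_G)$.

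Equality of these two differences yields the equivalence: one side is zero if and only if the other is. The only point needing care is the bookkeeping of the ambient ring in the proposition, whose statement writes $\depth(R/\II_G)$ for the ring containing $\II_{G'}$. Read that way, both depth and dimension drop by exactly one when passing from $R'/\II_GR'$ to $R'/\II_{G'}$, so the difference $\dim-\depth$ is preserved regardless of convention. I expect the main (mild) obstacle to be justifying that the dimension drops by exactly one. I would handle it via the graded fact for a homogeneous element outside all minimal primes: by Krull's principal ideal theorem applied after localizing at the homogeneous maximal ideal, the drop is at most one, and it is at least one because $g$ lies in no minimal prime.
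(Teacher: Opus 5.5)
Your proposal is correct and follows essentially the same route as the paper: the paper obtains the corollary immediately from the preceding proposition, which shows that $g=x_ux_{n+1}-y_uy_{n+1}$ avoids every minimal prime and hence is regular because $\II_G$ is radical. That argument implicitly uses the standard fact that passing modulo a homogeneous regular element lowers both depth and dimension by one. You simply make the dimension drop explicit via Krull's principal ideal theorem at the homogeneous maximal ideal, and you account for the two new variables; your alternative height-formula route is not needed.
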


\new{We saw in \Cref{sec:unmixed} that the only unmixed parity binomial edge ideals of non-bipartite chordal graphs are either $K_3$ or graphs in $\G_1 \cup \G_2 \cup \G_3$. If $G = K_3$ or $G \in \G_3$, then it follows from \cite[Theorem 3.5]{Kum21} and \cite[Theorem 4.13]{Kum21} that $\II_G$ is Cohen-Macaulay. Therefore, we only need to verify the Cohen-Macaulay property for the graph classes $\mathfrak{G}_1$ and $\mathfrak{G_2}$.} 

It may be observed that graphs in $\G_i$ are obtained by adding paths to the pendent vertices of $G_i$ given in \Cref{fig:cm_2}. Hence, by \Cref{cor:cm-whisker}, any graph $G \in \G_i$ is Cohen-Macaulay if and only if $G_i$ (in \Cref{fig:cm_2}) is Cohen-Macaulay. We now take the help of Macaulay 2, \cite{M2}, to check which of these are Cohen-Macaulay.

\begin{figure}[H]
\centering   
   \begin{tikzpicture}[line cap=round,line join=round,>=triangle 45,x=0.9cm,y=0.9cm]
\clip(-7.5,-4) rectangle (8,3);
\draw  (0,1.396606043771816)-- (-1,0);
\draw  (0,1.3966060437718166)-- (1,0);
\draw  (-1,0)-- (1,0);
\draw  (1,0)-- (0,-1.4044993170019082);
\draw  (-1,0)-- (0,-1.4044993170019082);
\draw  (0,1.3966060437718166)-- (1.814199525010988,1.6054603908470506);
\draw  (1,0)-- (1.814199525010988,1.6054603908470506);
\draw  (-1,0)-- (-1.9574642721711766,1.4703193427395462);
\draw  (-1.9574642721711766,1.4703193427395462)-- (0,1.3966060437718166);
\draw  (0,-1.4044993170019082)-- (0,-3);
\draw  (1.814199525010988,1.6054603908470506)-- (3.0304689579785262,2.3548789303523017);
\draw  (-1.9574642721711766,1.4703193427395462)-- (-3.21059035462258,2.293451181212527);
\draw (-1.2,-2.4) node[anchor=north west] {$G_1$};
\begin{scriptsize}
\draw [fill=black] (0,1.3966060437718166) circle (2pt);
\draw [fill=black] (-1,0) circle (2pt);
\draw [fill=black] (1,0) circle (2pt);
\draw [fill=black] (0,-1.4044993170019082) circle (2pt);
\draw [fill=black] (1.814199525010988,1.6054603908470506) circle (2pt);
\draw [fill=black] (-1.9574642721711766,1.4703193427395462) circle (2pt);
\draw [fill=black] (0,-3) circle (2pt);
\draw [fill=black] (3.0304689579785262,2.3548789303523017) circle (2pt);
\draw [fill=black] (-3.21059035462258,2.293451181212527) circle (2pt);
\end{scriptsize}
\end{tikzpicture}

    \centering
    \begin{tikzpicture}[line cap=round,line join=round,>=triangle 45,x=0.5cm,y=0.5cm]
\clip(-5,-2) rectangle (28,7);
\draw  (0,4)-- (4,4);
\draw  (4,4)-- (4,0);
\draw  (0,4)-- (0,0);
\draw  (0,0)-- (4,0);
\draw  (4,4)-- (8,2);
\draw  (4,0)-- (8,2);
\draw  (0,4)-- (4,0);
\draw  (0,0)-- (4,4);
\draw (0,4)-- (-3.839046052249362,4.783484464135771);
\draw  (0,0)-- (-3.913255750467009,-0.7636904776333466);
\draw (12,2)-- (16,0);
\draw  (12,2)-- (16,4);
\draw  (16,4)-- (16,0);
\draw (16,0)-- (20,2);
\draw  (16,4)-- (20,2);
\draw  (20,2)-- (24,2);
\draw (15.5,-0.5) node[anchor=north west] {$G_3$};
\draw (2,-1) node {$G_2$};
\begin{scriptsize}
\draw [fill=black] (4,4) circle (2pt);
\draw[color=black] (4.240248063684496,4.640696137664811);
\draw [fill=black] (0,4) circle (2pt);
\draw[color=black] (0.23090217990705908,4.640696137664811) ;
\draw[color=black] (2.115893155115854,3.8926838459152897) ;
\draw [fill=black] (4,0) circle (2pt);
\draw[color=black] (4.240248063684496,0.6313502538873781)  ;
\draw[color=black] (3.6119177386148977,2.3667387707462666)  ;
\draw [fill=black] (0,0) circle (2pt);
\draw[color=black] (0.23090217990705908,0.5715092705474165) ;
\draw[color=black] (-0.3675076534925584,2.3667387707462666) ;
\draw[color=black] (2.115893155115854,-0.11666203786214291) ;
\draw [fill=black] (8,2) circle (2pt);
\draw[color=black] (8.249593947461934,2.6360231957760947)  ;
\draw[color=black] (5.885875105533445,2.9352281124759028)  ;
\draw[color=black] (6.304761988913176,0.9305551705871865)  ;
\draw[color=black] (1.7568472550760836,2.0076928707064967) ; 
\draw[color=black] (2.4450185634856436,2.0076928707064967)  ;
\draw [fill=black] (-3.839046052249362,4.783484464135771) circle (2pt);
\draw[color=black] (-3.5989207538504924,5.418628921084313)  ;
\draw[color=black] (-1.7139297786416976,5.209185479394447)  ;
\draw [fill=black] (-3.913255750467009,-0.7636904776333466) circle (2pt);
\draw[color=black] (-3.688682228860435,-0.11666203786214291)  ;
\draw[color=black] (-1.9532937120015443,0.4518273038674931) ;
\draw [fill=black] (12,2) circle (2pt);
\draw[color=black] (12.22901933956939,2.6360231957760947)  ;
\draw [fill=black] (16,0) circle (2pt);
\draw[color=black] (16.23836522334683,0.6313502538873781)  ;
\draw[color=black] (13.904566873088319,0.9305551705871865) ;
\draw [fill=black] (16,4) circle (2pt);
\draw[color=black] (16.23836522334683,4.640696137664811)  ;
\draw[color=black] (14.323453756468052,2.9352281124759028) ;
\draw[color=black] (15.63995538994721,2.3667387707462666)  ;
\draw [fill=black] (20,2) circle (2pt);
\draw[color=black] (20.247711107124264,2.6360231957760947) ;
\draw[color=black] (18.33279964024549,0.9305551705871865)  ;
\draw[color=black] (17.883992265195776,2.9352281124759028) ;
\draw [fill=black] (24,2) circle (2pt);
\draw[color=black] (24.22713649923172,2.6360231957760947)  ;
\draw[color=black] (22.102781590663078,1.8880109040265736)  ;
\end{scriptsize}
\end{tikzpicture}
\caption{}
    \label{fig:cm_2}
\end{figure}

Macaulay 2 computations show that $\dim (R/\II_{G_1}) = 11$, the projective dimension of $R/\II_{G_1}$ is $9$ and hence the depth $(R/\II_{G_1}) = 9.$ Therefore $\II_{G_1}$ is unmixed, but not Cohen-Macaulay. Similarly, for $G_2$, it can be seen that $\dim(R/\II_{G_2}) = 7$ and depth$(R/\II_{G_2}) = 6$. Thus $\II_{G_2}$ is unmixed and not Cohen-Macaulay. 
These observations along with \Cref{cor:cm-whisker} yields the required characterization:

\begin{theorem}\label{thm:c-m}
Let $G$ be a chordal graph and $char (K) \neq 2$. Then $\frac{R}{\II_G}$ is Cohen-Macaulay if and only if $G$ is a path graph or $K_3$ or $G\in \mathfrak{G}_3$.
\end{theorem}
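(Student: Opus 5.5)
The plan is to combine the unmixedness classification with the reduction to basic graphs. Since Cohen-Macaulay ideals are unmixed, the corollary classifying unmixed parity binomial edge ideals of chordal graphs says that if $R/\II_G$ is Cohen-Macaulay, then $G$ is a path, $K_3$, or $G \in \G_1 \cup \G_2 \cup \G_3$. So it suffices to do two things: show that paths, $K_3$ and graphs in $\G_3$ are Cohen-Macaulay, and show that no graph in $\G_1 \cup \G_2$ is.

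\textbf{Sufficiency.} For a path, $G$ is bipartite and $\II_G$ becomes $J_G$ after a linear change of variables. It is a complete intersection, by Kumar's characterization of bipartite complete intersections (disjoint unions of paths). For $K_3$, an odd cycle, $\II_G$ is a complete intersection by \cite[Theorem 3.5]{Kum21}. For $G\in\G_3$, Cohen-Macaulayness is \cite[Theorem 4.13]{Kum21}. Alternatively, one checks the basic graph $G_3$ of \Cref{fig:cm_2} and propagates along the attached path using \Cref{cor:cm-whisker}.

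\textbf{Necessity.} Every graph in $\G_1$ (resp.\ $\G_2$) arises from the basic graph $G_1$ (resp.\ $G_2$) of \Cref{fig:cm_2} by repeatedly adding a whisker at a pendant vertex. By \Cref{cor:cm-whisker}, each whisker addition lowers both depth and dimension by one, so Cohen-Macaulayness is preserved in both directions along this chain. Note that the dimension drop uses the height formula $|S|+n-b_G(S)$ together with the fact that a pendant vertex lies in no disconnector. Hence it suffices that $R/\II_{G_1}$ and $R/\II_{G_2}$ are not Cohen-Macaulay. The Macaulay2 computations give $\depth = 9 < 11 = \dim$ for $G_1$ and $\depth = 6 < 7 = \dim$ for $G_2$, which settles this.

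The main obstacle is that this necessity step rests on a machine computation. The computation is done over a particular field, so to cover every field of characteristic $\neq 2$ one needs the depth to be independent of the field. I would argue this through the Gr\"obner basis of \cite{Kahle2016}, which has coefficients $\pm1$: the initial ideal does not depend on $K$, and the computation can be certified over $\mathbb{Q}$ and in each relevant characteristic. Failing that, I would state the result for the characteristics actually checked.
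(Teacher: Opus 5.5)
Your proposal is correct and is essentially the paper's proof. Cohen--Macaulay implies unmixed, the unmixed classification leaves paths, $K_3$ and $\G_1\cup\G_2\cup\G_3$, Kumar's theorems settle $K_3$ and $\G_3$, and \Cref{cor:cm-whisker} reduces $\G_1$ and $\G_2$ to the Macaulay2 computations for $G_1$ and $G_2$ (the paper cites \cite{EHHNMJ} for the bipartite case rather than Kumar's complete-intersection result, an immaterial difference).

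The field-dependence issue you raise is one the paper passes over, and it needs only a single computation over $\mathbb{Q}$, not one per characteristic. Since the generators are pure-difference binomials, the Gr\"obner basis is defined over $\mathbb{Z}$, so the $\mathbb{Z}$-form of $R/\II_G$ is a free $\mathbb{Z}$-module and $\dim$ does not depend on the field. Upper semicontinuity of Betti numbers under reduction mod $p$ then makes the depth in characteristic $p$ at most the depth over $\mathbb{Q}$, so non-Cohen--Macaulayness over $\mathbb{Q}$ carries over to every field.
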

\begin{proof}
If $G$ is bipartite, then we already know that $\frac{R}{\II_G}$ is Cohen-Macaulay if and only if $G$ is a path graph, \cite{EHHNMJ}. Now if $G$ is non-bipartite, then it is unmixed if and only if $G= K_3$ or $G\in \mathfrak{G}_1\cup \mathfrak{G}_2\cup \mathfrak{G}_3$. Then the result follows from \Cref{cor:cm-whisker} and the discussion above.
\end{proof}

\section*{Acknowledgments}

The first named author would like to acknowledge the support from the Prime Minister's Research Fellowship (PMRF) scheme for carrying out this research work. We sincerely thank the anonymous referees who read the paper meticulously and made several comments that improved the exposition considerably.

\section*{Data Availability Statement}
This article has no associated data.

\section*{Disclosure Statement}
No potential conflict of interest was reported by the authors.

 \bibliographystyle{plain}
\bibliography{Bibliography}
\end{document}